\documentclass{article}
\usepackage{amsmath,amssymb,amsthm}
\usepackage{dsfont}
\usepackage{enumerate}
\allowdisplaybreaks[4]
\usepackage[colorlinks=true,linkcolor=blue,citecolor=green]{hyperref}

\usepackage[margin=2cm]{geometry}

\newlength{\bibitemsep}
\newlength{\bibparskip}
\let\oldthebibliography\thebibliography
\renewcommand\thebibliography[1]{\oldthebibliography{#1}\setlength{\parskip}{\bibitemsep}\setlength{\itemsep}{\bibparskip}}

\theoremstyle{thmstyleone}%
\newtheorem{theorem}{Theorem}%
\newtheorem{proposition}{Proposition}[section]%

\theoremstyle{thmstyletwo}%
\newtheorem{remark}{Remark}%

\theoremstyle{thmstylethree}%
\newtheorem{definition}{Definition}%

\usepackage{dsfont}
\usepackage{enumerate}

\allowdisplaybreaks[4]
\newtheorem{lemma}[proposition]{Lemma}

\numberwithin{equation}{section}

\begin{document}

\title{The Plasma-Charge Model: Boundary Effects and Global Well-posedness}
\author{Jingpeng Wu\thanks{Department of Applied Mathematics, Nanjing Forestry University, Nanjing, 210000, China ({\protect e-mail:\url{jp.wu@njfu.edu.cn}}).}}

\maketitle

\begin{abstract}
This article focuses on the Vlasov-Poisson system with point charges in bounded convex domains, accounting the interactions of point charges with the self-consistent electric field and the boundary, which were not addressed in the previous work \cite{Wu24}. We provide a rigorous characterization of the charge-boundary effect and establish the global well-posedness of the associated initial-boundary value problems under various boundary conditions. The analysis rests upon delicate estimates of the Green and Robin functions in convex domains, the Pfaffelmoser and Lions-Perthame methods for global well-posedness, and a desingularization argument that justifies the point-charge dynamics.\\
{\bf Keywords:} Vlasov-Poisson, initial-boundary value problems, point charges, classical solutions\\
{\bf MR Subject Classification:} 35Q83 (Primary), 35A09 (Secondary), 82C40, 82D10
\end{abstract}

\tableofcontents

\section{Introduction}\label{sec-intr}

In this paper, we consider the time evolution of a one species plasma coupled with $M$ point charges of same sign in a bounded convex domain $\Omega\in\mathbb{R}^3$, which can be described by the Vlasov-Poisson system coupled with $M$ Newton's equations of motion, referred to as the plasma-charge model for short:
\begin{align}
&\partial_tf+v\cdot\nabla_xf+\left(\underbrace{\nabla_x\phi_{\#}}_{\text{plasma-plasma}}+ \sum_{\alpha}\underbrace{\nabla_xG_{\#}(x,\xi_{\alpha}(t))}_{\text{plasma-charge}}\right)\cdot\nabla_vf=0,\label{eq-Vlasov}\\
&\Delta\phi_{\#}=\rho,\quad \rho(t,x):=\int_{\mathbb{R}^3}f(t,x,v)\,\mathrm{d}v,\label{eq-Poisson}\\
&\dot{\xi}_{\alpha}=\eta_{\alpha},\quad\dot{\eta}_{\alpha}=\underbrace{\nabla_x\phi_{\#}(t,\xi_{\alpha})}_{\text{charge-plasma}}+\sum_{\beta:\beta\ne\alpha}\underbrace{\nabla_xG_{\#}(\xi_{\alpha},\xi_{\beta})}_{\text{charge-charge}}+\underbrace{\nabla_xH_{\#}(\xi_{\alpha})}_{\text{charge-boundary}},\label{eq-Newton}
\end{align}
where
\begin{equation*}
H_{\#}(x)=\frac{1}{2}R_{\#}(x)-\int_{\partial\Omega}G_{\#}(x,y)h_{\#}^{\rm cha}(y)\,\mathrm{d}S_y.
\end{equation*}
Here $f=f(t,x,v)\ge 0$ denotes the distribution function of the plasma particles at time $t\ge 0$, position $x\in\Omega$, and velocity $v\in\mathbb{R}^3$. $\xi_{\alpha}(t)\in\Omega$, $\eta_{\alpha}(t)\in\mathbb{R}^3$ denote the position and velocity of the $\alpha$-th point charge at time $t$. $\rho$ denotes the spatial density, which determines the plasma-induced electric potential $\phi_{\#}$ according to Coulomb's law, subject to the Neumann boundary if $\#=\mathrm{N}$ or the Dirichlet boundary if $\#=\mathrm{D}$. $G_{\#},R_{\#}$ denote the Green function and Robin function respectively, which will be specified later. 

Throughout this paper, $h_{\mathrm{D}}^{\rm cha}=0$ and $h_{\mathrm{N}}^{\rm cha}\in C^3(\partial\Omega;\mathbb{R}_+)$ be a given function satisfying
\begin{equation*}
\int_{\partial\Omega}h_{\mathrm{N}}^{\rm cha}(x)\,\mathrm{d}S_x=M.
\end{equation*}

The initial data associated to \eqref{eq-Vlasov}-\eqref{eq-Newton} are
\begin{equation}\label{eq-initial-data}
f(0,x,v)=\mathring{f}(x,v),\quad\xi_{\alpha}(0)=\mathring{\xi}_{\alpha},\quad\eta_{\alpha}(0)=\mathring{\eta}_{\alpha},\quad\alpha=1,\dots,M.
\end{equation}

To introduce the boundary conditions, We denote 
\begin{equation*}
\Pi=\Omega\times\mathbb{R}^3,\quad\bar{\Pi}=\bar{\Omega}\times\mathbb{R}^3,\quad\Gamma=\bar{\Pi}\setminus\Pi=\partial\Omega\times\mathbb{R}^3.
\end{equation*}
We split $\Gamma$ into three parts
\begin{align*}
\Gamma_1&=\left\{(x,v)\in\Gamma:  v\cdot n_x=0\right\},\\
\Gamma_+&=\left\{(x,v)\in\Gamma:  -v\cdot n_x>0\right\},\\
\Gamma_-&=\left\{(x,v)\in\Gamma:  -v\cdot n_x<0\right\}.
\end{align*}
We equip \eqref{eq-Vlasov} with either the absorption boundary
\begin{equation}\label{eq-absorption-boundary}
f(t,x,v)=g^{\rm pla}(t,x,v),\quad(x,v)\in\Gamma_+,
\end{equation}
or the reflection boundary
\begin{equation}\label{eq-reflection-boundary}
f(t,x,v)=f(t,x,\mathcal{R}_xv),\quad(x,v)\in\Gamma_+,
\end{equation}
where $g^{\rm pla}$ is a given function.  The reflection operator $\mathcal{R}_x:\mathbb{R}^3\to\mathbb{R}^3$ for $x\in\partial\Omega$ is defined by
\begin{equation*}
\mathcal{R}_xv=v-2(v\cdot n_x)n_x
\end{equation*}
where $n_x$ is the outer normal to $\partial\Omega$ at $x$.

We equip \eqref{eq-Poisson} with either the Dirichlet boundary for $\#=\mathrm{D}$
\begin{equation}\label{eq-Dirichlet-boundary}
\phi_{\mathrm{D}}(t,x)=h_{\mathrm{D}}(x)=0,\quad\forall x\in\partial\Omega,
\end{equation}
or the Neumann boundary for $\#=\mathrm{N}$
\begin{equation}\label{eq-Neumann-boundary}
n_x\cdot\nabla\phi_{\mathrm{N}}(t,x)=h_{\mathrm{N}}(x),\quad\forall x\in\partial\Omega,
\end{equation}
where $h_{\mathrm{N}}$ is a given function.

Note the zero functions $h_{\mathrm{D}}^{\rm cha},h_{\mathrm{D}}$ are introduced only for convenience. For the sake of brevity, we use the following notations throughout the paper:
\begin{align*}
\phi_{\#}^{\rm pla}(t,x)&=\phi_{\#}(t,x)+\sum_{\alpha}G_{\#}(x,\xi_{\alpha}(t)),\\
\phi_{\#,\alpha}^{\rm cha}(t,x)&=\phi_{\#}(t,x)+\sum_{\beta:\beta\ne\alpha}G_{\#}(x,\xi_{\beta}(t))+H_{\#}(x),\\
E_{\#}(t,x)&=\nabla_x\phi_{\#}(t,x),\quad F_{\#}(t,x)=\sum_{\alpha}\nabla_xG_{\#}(x,\xi_{\alpha}(t)),\\
F_{\#,\alpha}(t)&=\sum_{\beta:\beta\ne\alpha}\nabla_xG_{\#}(\xi_{\alpha}(t),\xi_{\beta}(t)),\\
E_{\#}^{\rm pla}(t,x)&=\nabla_x\phi_{\#}^{\rm pla}(t,x)=E_{\#}(t,x)+ F_{\#}(t,x),\\
E_{\#,\alpha}^{\rm cha}(t)&=\nabla_x\phi_{\#,\alpha}^{\rm cha}(t,\xi_{\alpha}(t))=E_{\#}(t,\xi_{\alpha}(t))+F_{\#,\alpha}(t)+\nabla_xH_{\#}(\xi_{\alpha}(t)).
\end{align*}
The equations \eqref{eq-Vlasov}, \eqref{eq-Newton} can then be rewritten as
\begin{align}
&\partial_tf+v\cdot\nabla_xf+E_{\#}^{\rm pla}\cdot\nabla_vf=0,\label{eq-Vlasov-short}\\
&\dot{\xi}_{\alpha}(t)=\eta_{\alpha}(t),\quad\dot{\eta}_{\alpha}(t)=E_{\#,\alpha}^{\rm cha}(t).\label{eq-Newton-short}
\end{align}

There are six types of effects among the plasma, point charges and boundary, i.e., the plasma-plasma effect, the plasma-charge effect, the charge-plasma effect, the charge-charge effect, the plasma-boundary effect, the charge-boundary effect. For A-B effect, we means the effect of B on A. Note the plasma-boundary effect has been implied in the boundary conditions on $\phi$ and $f$, hence we only mark the other five effects in \eqref{eq-Vlasov}-\eqref{eq-Newton}. One of the central difficulties in studying the plasma-charge model in domains is to mathematically control these effects, as either they are contained in the nonlinear terms of the equations or they may develop singularities on certain singular sets defined later.

\subsection{Background}

With the absence of the point charges and boundary, the Vlasov-Poisson system has been extensively studied. There is only the plasma-plasma effect in the system. Local in time classical solutions were studied in \cite{Kur52}. Global classical solutions were obtained in \cite{Ior61,UO78,Wol80a}  for low dimensions and \cite{Bat77,BD85,Hor81,Hor82} for special initial data.  Three dimensional global well-posedness with general data were established finally by two different methods in \cite{Pfa92} and \cite{LP91}, called the Pfaffelmoser's method and the Lions-Perthame's method respectively. We refer the reader to \cite{Rei07} for a more complete literature review. 

In the presence of boundary, the initial-boundary value problems of the Vlasov-Poisson system are more delicate to study. At this point, the plasma-boundary effect manifests. To handle this effect, the so called velocity lemma was introduced in \cite{Guo94,Guo95}. Then,  the Lions-Perthame's method was developed to handle the plasma-plasma effect and the global well-posedness in a half space with reflection-Neumann or absorption-Dirichlet boundary condition was obtained in \cite{Guo94}, which was further extended in \cite{Hwa04} for the case of convex domain with absorption-Dirichlet boundary condition. The global well-posedness in a half space with reflection-Dirichlet boundary condition was established in \cite{HV09} by a new proof of the velocity lemma and the extended Pfaffelmoser's method. Based on tools from differential geometry, the global well-posedness for the case of convex domain with reflection-Neumann or reflection-Dirichlet boundary condition was finally obtained in \cite{HV10,HJV13}.

The presence of point charges can be considered as a singular perturbation of the Vlasov-Poisson system and significantly complicates the analysis. In this case, the effects involving point charges appear. When all plasma particles and point charges have same sign, they mutually repel each other. At this point, the singularities arising from effects related to point charges can be avoided by applying the energy conservation of the system and the bound of a pointwise energy function. This idea was initiated in \cite{CM10} for the global well-posedness with initial data vanishing in the neighbor of the singular set in $\mathbb{R}^2$. Then, it was further developed for three dimensional case by \cite{MMP11}, in which the Pfaffelmoser's method was extended. The Lions-Perthame's method was then developed in \cite{DMS15,WZ21,WZ23PC}, in which the plasma particles were allowed to overlap the point charges. Benefited from the vigorous development of the Vlasov-Poisson systems, ample results for the repulsive plasma-charge model have been obtained very recently, see \cite{Wu24} for more detailed literature review. In contrast, the attractive plasma-charge model has been rarely studied. The global existence of strong solutions has been established in \cite{CMMP12} for two dimensions, without proving uniqueness. The global existence of classical solutions for three dimensions is totally open.

In \cite{CM10}, the global well-posedness of the plasma-charge model in a bounded domain of the plane with electric confinement was established. At this point, an external gradient force keeps the system inside the domain, hence the effects involving boundary did not actually appear. Recently, the case with the presence of both point charges and boundary was studied in \cite{Wu24} by adapting the Pfaffelmoser's method, in which, for the global well-posedness, the point charges are assumed to be fixed at certain locations within the domain.

So far, the charge-boundary effect has not been considered in the literature. This paper is devoted to investigating this problem, offering a rigorous characterization of charge-boundary effect and the corresponding well-posedness result.

\subsection{Main results}

Before describing the main results, we introduce the local coordinate transformation and the conditions that the initial data and boundary data must satisfy.

We assume that
\begin{equation}\label{eq-assum-Omega}
\Omega\;\text{is a }\;C^4\;\text{convex bounded domain with a small coefficience}\;\delta_0>0
\end{equation}
such that the set of points $(x,v)\in([\partial\Omega+B_{\delta_0}^3]\cap\Omega)\times\mathbb{R}^3$ can be parameterized uniquely by means of the unique values $(\mu_1,\mu_2,x_{\perp},w_1,w_2,v_{\perp})$ solving the equations:
\begin{equation*}
\begin{split}x&=x_{\parallel}(\mu_1,\mu_2)-x_{\perp}n(\mu_1,\mu_2),\\
v&=\sum_{i=1,2}w_iu_i-v_{\perp}n(\mu_1,\mu_2),\quad u_i:=\partial_{\mu_i} x_{\parallel}(\mu_1,\mu_2),
\end{split}
\end{equation*}
where $(\mu_1,\mu_2)$ is a set of coordinates on the surface $\partial\Omega$ satisfying that the parametric lines are lines of curvature, see \cite[p81,(6-6),(6-8)]{str88}. $x_{\parallel}(\mu_1,\mu_2)$ denote the point of $\partial\Omega$ characterized by the values of the parameters $(\mu_1,\mu_2)$ and $n(\mu_1,\mu_2)$ or $n_{x_{\parallel}}$ the outer normal to $\partial\Omega$ at the point $x_{\parallel}(\mu_1,\mu_2)$. We denote $v_{\parallel}(\mu_1,\mu_2)=w_1u_1+w_2u_2$ as the tangential component of $v$. By the definition of reflection, for $x\in\partial\Omega$, we have
\begin{equation*}
\mathcal{R}_xv=v_{\parallel}(\mu_1,\mu_2)+v_{\perp}n(\mu_1,\mu_2).
\end{equation*}
The three parts of $\Gamma$ can be rewritten by
\begin{align*}
\Gamma_1&=\left\{(x,v)\in\Gamma: v_{\perp}=0\right\},\\
\Gamma_+&=\left\{(x,v)\in\Gamma: v_{\perp}>0\right\},\\
\Gamma_-&=\left\{(x,v)\in\Gamma: v_{\perp}<0\right\}.
\end{align*}

Under this new coordinates, the Vlasov equation takes the form given in the following lemma.
\begin{lemma}\label{lem-Vlasov-new-coordinates}
The Vlasov equation \eqref{eq-Vlasov-short} can be rewritten for $(x,v)\in([\partial\Omega+B_{\delta_0}^3]\cap\Omega)\times\mathbb{R}^3$ using the set of coordinates $(\mu_1,\mu_2,x_{\perp},w_1,w_2,v_{\perp})$ in the form
\begin{equation*}
\partial_{t}f+\sum_{i=1,2}\frac{w_i\partial_{\mu_i}f}{1+\kappa_ix_{\perp}}+v_{\perp}\partial_{x_{\perp}}f+\sum_{i=1,2}\bar{E}_{\#,i}^{\rm pla}\partial_{w_i}f+\bar{E}_{\#,\perp}^{\rm pla}\partial_{v_{\perp}}f=0,
\end{equation*} 
where
\begin{equation*}
\bar{E}_{\#,i}^{\rm pla}=E_{\#,i}^{\rm pla}-\frac{v_{\perp}w_i\kappa_i}{1+\kappa_ix_{\perp}}-\sum_{j,\ell=1,2}\frac{\Gamma^i_{j,\ell}w_jw_{\ell}}{1+\kappa_jx_{\perp}},\quad \bar{E}_{\#,\perp}^{\rm pla}=E_{\#,\perp}^{\rm pla}+\sum_{j=1,2}\frac{w_j^2b_j}{1+\kappa_jx_{\perp}},
\end{equation*}
$\kappa_j$'s are the principal curvatures, $b_j$'s are the coefficients $e,g$ from the second fundamental form according to the notation in \cite{str88} and $\Gamma_{j,\ell}^i$'s are the Christoffel symbols of the surface $\partial\Omega$. The electric field $E_{\#}^{\rm pla}$ has been written in the form
\begin{equation*}
E_{\#}^{\rm pla}=E_{\#,1}^{\rm pla}u_1+E_{\#,2}^{\rm pla}u_2-E_{\#,\perp}^{\rm pla}n(\mu_1,\mu_2).
\end{equation*}
For convenience, $\delta_0$ has been set to be less than $(2+2\max_{j=1,2,x\in\partial\Omega}\vert\kappa_j(x)\vert)^{-1}$.
\end{lemma}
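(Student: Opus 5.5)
This is a change of variables along characteristics. We fix a characteristic $(x(t),v(t))$ of \eqref{eq-Vlasov-short}, i.e. $\dot x=v$, $\dot v=E_{\#}^{\rm pla}(t,x)$, lying in $([\partial\Omega+B_{\delta_0}^3]\cap\Omega)\times\mathbb{R}^3$. We express it in the coordinates $(\mu_1,\mu_2,x_\perp,w_1,w_2,v_\perp)$ and compute the time derivatives of these six functions. Since $f$ is constant along characteristics, the transformed equation is
\[
\partial_t f+\sum_i\dot\mu_i\,\partial_{\mu_i}f+\dot x_\perp\,\partial_{x_\perp}f+\sum_i\dot w_i\,\partial_{w_i}f+\dot v_\perp\,\partial_{v_\perp}f=0 .
\]
It therefore suffices to identify $\dot\mu_i,\dot x_\perp,\dot w_i,\dot v_\perp$. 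Uniqueness of the parametrization, together with $\delta_0<(2+2\max|\kappa_j|)^{-1}$ (so that $1+\kappa_jx_\perp\ge 1/2>0$), guarantees that the map is a $C^3$ diffeomorphism and the chain rule applies.

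\textbf{Step 1 (position equations).} Differentiate $x=x_\parallel(\mu)-x_\perp n(\mu)$ in time. Because the parametric lines are lines of curvature, Rodrigues' formula gives $\partial_{\mu_i}n=\kappa_i u_i$ (with the paper's sign convention for $\kappa_i$). Hence
\[
\dot x=\sum_i\dot\mu_i(1+\kappa_i x_\perp)u_i-\dot x_\perp n .
\]
Comparing with $v=\sum_i w_iu_i-v_\perp n$, and using that $u_1,u_2,n$ are linearly independent, we get $\dot\mu_i=w_i/(1+\kappa_ix_\perp)$ and $\dot x_\perp=v_\perp$.

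\textbf{Step 2 (velocity equations).} Differentiate $v=\sum_i w_iu_i-v_\perp n$:
\[
\dot v=\sum_i\dot w_iu_i+\sum_{i,j}w_i\dot\mu_j\,\partial_{\mu_j}u_i-\dot v_\perp n-v_\perp\sum_j\dot\mu_j\kappa_ju_j .
\]
Next use the Gauss formula $\partial_{\mu_j}u_\ell=\sum_i\Gamma^i_{j\ell}u_i+(\text{second fundamental form coefficient})\,n$. In lines-of-curvature coordinates the mixed coefficient $f$ vanishes, so only $\partial_{\mu_j}u_j$ has a normal part, namely $b_j n$ with $b_1=e$, $b_2=g$, up to the orientation sign of $n$. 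Substituting $\dot\mu_j=w_j/(1+\kappa_jx_\perp)$ and equating the result with $E_{\#}^{\rm pla}=\sum_iE_{\#,i}^{\rm pla}u_i-E_{\#,\perp}^{\rm pla}n$ component by component (legitimate since $\{u_1,u_2,n\}$ is a frame):
\begin{itemize}
\item The $u_i$ component gives $\dot w_i=E_{\#,i}^{\rm pla}-v_\perp w_i\kappa_i/(1+\kappa_ix_\perp)-\sum_{j,\ell}\Gamma^i_{j\ell}w_jw_\ell/(1+\kappa_jx_\perp)$.
\item The $n$ component gives $\dot v_\perp=E_{\#,\perp}^{\rm pla}+\sum_jw_j^2b_j/(1+\kappa_jx_\perp)$.
\end{itemize}
These are exactly $\bar E_{\#,i}^{\rm pla}$ and $\bar E_{\#,\perp}^{\rm pla}$. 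Here we use the symmetry of the Christoffel symbols, and the index placement of $\Gamma^i_{j\ell}w_jw_\ell$ matches the statement once the $\dot\mu_j$ factor is attached to the derivative index $j$.

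\textbf{Main obstacle.} The delicate point is bookkeeping of sign conventions rather than any analysis. The sign in Rodrigues' formula with the outer normal, the sign of $e,g$ relative to $n$ versus $-n$, and the convention of \cite{str88} for the Gauss formula all have to be fixed consistently so that the signs come out as in the statement. My plan is to fix these using the unit sphere as a check, where $\kappa_i$, $e$, $g$ and the expected centripetal term in $\dot v_\perp$ are explicit, and then to carry those conventions through the general computation.
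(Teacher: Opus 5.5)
Your approach is the one the paper has in mind: its proof is only ``standard by Gauss--Weingarten equations and lines of curvature.'' It goes through, except that your Rodrigues formula has the wrong sign. With the outer normal the statement requires $\partial_{\mu_i}n=-\kappa_i u_i$, so that $b_i=\kappa_i|u_i|^2$ and both are $\le 0$ on a convex domain, as the paper uses in the velocity lemma. Your $\partial_{\mu_i}n=\kappa_iu_i$ would instead give $1-\kappa_ix_\perp$ in Step 1 and the opposite sign of the $v_\perp w_i\kappa_i$ term in Step 2, which contradicts your own displayed lines. With $\partial_{\mu_i}n=-\kappa_iu_i$ and $b_1=e$, $b_2=g$ taken relative to $n$, both steps yield exactly the stated $\dot\mu_i$, $\dot x_\perp$, $\bar{E}_{\#,i}^{\rm pla}$ and $\bar{E}_{\#,\perp}^{\rm pla}$. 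Your planned unit-sphere check confirms this, since there $\kappa_i=-1$ and $b_i=-|u_i|^2$.
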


\begin{proof}
It is standard by Gauss-Weingarten equations and the fact that the parametric lines are lines of curvature.
\end{proof}


Firstly, we introduce the conditions for avoiding singularities from boundary and point charges. There are two types of singular sets we have to deal with. The first one is the well known grazing set $\Gamma_1$ which is brought by the boundary effect. The second is defined by
\begin{equation*}
\Gamma_2(t,v):=\left\{(x,v)\in\bar{\Pi}: \exists \alpha\in\mathcal{I}_M \;\text{satisfying}\;x=\xi_{\alpha}(t)\right\},
\end{equation*}
which is brought by the point charges. To make sure that the solutions stay away from the singular sets, we assume that there exists $\delta_1>0$ such that
\begin{equation}\label{eq-initial-singular-sets}
\left\{\begin{split}
&\min_{\alpha}\left\{\operatorname{d}_{\partial\Omega}(\mathring{\xi}_{\alpha}),\min_{\beta:\beta\ne\alpha}\vert\mathring{\xi}_{\alpha}-\mathring{\xi}_{\beta}\vert\right\}>\delta_1,\\
&\mathring{f}(x,v)\equiv 0\;\;\text{\rm if}\,\operatorname{dist}((x,v),\Gamma_1\cup\Gamma_2(0,v))\le\delta_1,\quad\forall (x,v)\in\Pi,\\
&g^{\rm pla}(t,x,v)\equiv 0\;\;\text{\rm if}\,\operatorname{dist}((x,v),\Gamma_1)\le\delta_1,\quad\forall (t,x,v)\in\bar{\mathbb{R}}_+\times\Gamma_+.
\end{split}\right.
\end{equation}

Next, we introduce the compatibility conditions. In order to prove solvability of \eqref{eq-Vlasov} with \eqref{eq-initial-data}, \eqref{eq-absorption-boundary}, we need to assume that $\mathring{f},g^{\rm pla}$ satisfy the following compatibility conditions: for $(x,v)\in\Gamma_+$
\begin{equation}\label{eq-absorption-compatibility}
\left\{\begin{split}
&\mathring{f}(x,v)=g^{\rm pla}(0,x,v),\\
&\partial_tg^{\rm pla}(0,x,v)+v\cdot\nabla_x\mathring{f}(x,v)+E_{\#}^{\rm pla}(0,x)\cdot\nabla_v\mathring{f}(x,v)=0,
\end{split}\right.
\end{equation}

In order to prove solvability of \eqref{eq-Vlasov} with \eqref{eq-initial-data}, \eqref{eq-reflection-boundary}, we need to assume that $\mathring{f}$ satisfies the following compatibility conditions: for $(x,v)\in\Gamma_+$
\begin{equation}\label{eq-reflection-compatibility}
\left\{\begin{split}
&\mathring{f}(x,v)=\mathring{f}(x,\mathcal{R}_xv),\\
&\left[v_{\perp}\nabla_{x}^{\perp}+2E_{\#,\perp}^{\rm pla}(0,x)\nabla_{v}^{\perp}\right]\mathring{f}(x,v)=-v_{\perp}\nabla_{x}^{\perp}\mathring{f}(x,\mathcal{R}_xv).
\end{split}\right.
\end{equation}
where $\nabla_{x}^{\perp},\nabla_{v}^{\perp}$ are the normal components to $\partial\Omega$ of the gradients $\nabla_x,\nabla_v$ respectively. These compatibility conditions can be deduced by the reflection condition $f(t,x,v)=f(t,x,\mathcal{R}_xv)$ and the Vlasov equation in a geometric form given in Lemma~\ref{lem-Vlasov-new-coordinates}.

In order to prove solvability of \eqref{eq-Poisson} with \eqref{eq-Neumann-boundary}, we need to assume that $h_{\mathrm{N}}$ satisfying the compatibility condition
\begin{equation}\label{eq-Neumann-compatibility}
\int_{\Pi}\mathring{f}(x,v)\,\mathrm{d}x\,\mathrm{d}v=\int_{\partial\Omega}h_{\mathrm{N}}(x)\,\mathrm{d}S_x.
\end{equation}

At last, we introduce the regularity conditions on the data. For the absorption case, we assume that $\mathring{f}$, $g^{\rm pla}$, $h_{\mathrm{N}}$ satisfy
\begin{equation}\label{eq-assum-regularity-absorption}
\mathring{f}\in C_c^{1}(\bar{\Pi};\bar{\mathbb{R}}_+),\quad g^{\rm pla}\in C_c^{1}(\bar{\mathbb{R}}_+\times\Gamma_+;\bar{\mathbb{R}}_+),\quad h_{\mathrm{N}}\in C^2(\partial\Omega;\mathbb{R}_+).
\end{equation}
For the reflection case, we assume that $\mathring{f}$, $h_{\mathrm{N}}$ satisfy
\begin{equation}\label{eq-assum-regularity-reflection}
\mathring{f}\in C_c^{1,\mu}(\bar{\Pi};\bar{\mathbb{R}}_+),\quad h_{\mathrm{N}}\in C^3(\partial\Omega;\mathbb{R}_+),
\end{equation}
for some $\mu>0$.

Now we are ready to present our main results. The first main results in the paper are the following local well-posedness.
\begin{theorem}[Local well-posedness for absorption case]\label{thm-local-wellposedness-absorption}
Let $\Omega\subset\mathbb{R}^3$ satisfy \eqref{eq-assum-Omega}. Let $\mathring{f}$, $g^{\rm pla}$, $h_{\mathrm{N}}$, $(\mathring{\xi}_{\alpha},\mathring{\eta}_{\alpha})\in\Pi$ satisfy \eqref{eq-initial-singular-sets}, \eqref{eq-absorption-compatibility}, \eqref{eq-Neumann-compatibility}, \eqref{eq-assum-regularity-absorption}. Then there exists a unique classical solution $f,\{\xi_{\alpha},\eta_{\alpha}\}$ to the plasma-charge model \eqref{eq-Vlasov}-\eqref{eq-Newton} with  \eqref{eq-initial-data}, \eqref{eq-absorption-boundary} and \eqref{eq-Dirichlet-boundary} or \eqref{eq-Neumann-boundary},  on some time interval $[0,\mathring{T})$. For any $0<T<\mathring{T}$, $f\in C_c^{1}([0,T]\times\bar{\Pi};\bar{\mathbb{R}}_+)$, $\phi\in C^{1;2}([0,T]\times\bar{\Omega})$ and $\xi_{\alpha}\in C^2([0,T];\Omega)$, $\eta_{\alpha}\in C^1([0,T];\mathbb{R}^3)$ for all $\alpha\in\mathcal{I}_M$. Assume that $\mathring{T}$ is chosen maximal, then $\mathring{T}=+\infty$ if the following hold 
\begin{align}
\sup_{0\le t<\mathring{T}}\left[\sum_{\alpha}\vert\eta_{\alpha}(t)\vert+\frac{1}{\operatorname{d}_{\partial\Omega}(\xi_{\alpha}(t))}+\sum_{\alpha\ne\beta}\frac{1}{\vert\xi_{\alpha}(t)-\xi_{\beta}(t)\vert}\right]&<\infty,\label{eq-continuation-point-charge-absorption}\\
\sup_{0\le t<\mathring{T}}\left\{\vert v\vert+\sum_{\alpha}\frac{1}{\vert x-\xi_{\alpha}(t)\vert}:(x,v)\in\operatorname{supp} f(t)\right\}&<\infty.\label{eq-continuation-plasma-absorption}
\end{align}
\end{theorem}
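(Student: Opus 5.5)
We construct the solution by an iteration scheme on a short interval, using the separation assumptions \eqref{eq-initial-singular-sets} to keep both singular sets $\Gamma_1$ and $\Gamma_2$ away from the support. For a fixed short $T_0>0$ we look for the solution in a closed set $\mathcal{X}$. It consists of triples $(f,\{\xi_\alpha,\eta_\alpha\})$ such that:
\begin{itemize}
\item $\xi_\alpha$ stays in a $\delta_1/4$-neighborhood of $\mathring{\xi}_\alpha$, and $|\eta_\alpha|\le |\mathring\eta_\alpha|+1$;
\item $\operatorname{supp}f(t)$ lies in a fixed compact set of $\bar\Pi$ whose spatial part stays at distance $\ge\delta_1/2$ from every $\xi_\alpha(t)$, and whose points stay at distance $\ge\delta_1/2$ from $\Gamma_1$.
\end{itemize}
Given an iterate $(f^n,\xi^n,\eta^n)$, we define the next one as follows.
\begin{enumerate}
\item Solve the Poisson problem \eqref{eq-Poisson} with \eqref{eq-Dirichlet-boundary} or \eqref{eq-Neumann-boundary} for $\rho^n$. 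Solvability in the Neumann case uses \eqref{eq-Neumann-compatibility} together with mass conservation, which holds as long as the flux through $\partial\Omega$ balances.
\item Form $E^{{\rm pla},n}$ and $E^{{\rm cha},n}_\alpha$ from $\phi^n$, $G_\#$ and $H_\#$.
\item Solve the linear Vlasov equation along characteristics with the inflow data $g^{\rm pla}$ from \eqref{eq-absorption-boundary}.
\item Solve the Newton ODEs \eqref{eq-Newton-short} with this field.
\end{enumerate}

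The key quantitative inputs are Green/Robin function estimates in convex domains:
\begin{itemize}
\item $|\nabla_x G_\#(x,y)|\lesssim |x-y|^{-2}$ and $|\nabla_x^2G_\#(x,y)|\lesssim|x-y|^{-3}$;
\item $\nabla H_\#$ and $\nabla^2H_\#$ are bounded on $\{\operatorname{d}_{\partial\Omega}\ge\delta_1/4\}$. This uses smoothness of the regular part of $G_\#$ away from the boundary diagonal, and $h^{\rm cha}_{\mathrm N}\in C^3$.
\item $E_\#$ is controlled in $C^{1}$ by $\|\rho\|_{L^\infty}$ and $\|\rho\|_{C^{0,\gamma}}$ via Schauder theory. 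This uses $h_{\mathrm N}\in C^2$ and $\Omega\in C^4$.
\end{itemize}
On $\mathcal{X}$ these give uniform bounds on $E^{{\rm pla},n}$ near the plasma support, because the support is separated from the charges. They also give uniform bounds on $E^{{\rm cha},n}_\alpha$, because the charges are separated from each other and from $\partial\Omega$. Hence for $T_0$ small the velocities, displacements and supports move by $O(T_0)$. This shows that the map sends $\mathcal{X}$ into itself.

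Near $\partial\Omega$ we use the coordinates of Lemma~\ref{lem-Vlasov-new-coordinates}. On the support, $\bar E^{\rm pla}_{\#,\perp}$ is bounded, and characteristics start at distance $\ge\delta_1$ from $\Gamma_1$. Therefore for small $T_0$ no characteristic through the support reaches a grazing point: $|v_\perp|$ changes by at most $CT_0$ before exit. The exit time is then a $C^1$ function of $(t,x,v)$ on the support. Together with the compatibility conditions \eqref{eq-absorption-compatibility}, this yields $f^{n+1}\in C^1$ across the interface $\{t=0\}\times\Gamma_+$ where initial and boundary data meet.

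Next we prove uniform $C^1$ bounds for the iterates over a possibly smaller interval, via Gronwall applied to derivatives along characteristics. We then show contraction in a weaker norm: $L^\infty$ for $f$ and $C^0$ for $(\xi,\eta)$. The field differences are controlled by $\|\rho^{n}-\rho^{n-1}\|_{L^\infty}$ through log-Lipschitz or Hölder estimates of $E$, and by $\sum_\alpha|\xi^n_\alpha-\xi^{n-1}_\alpha|$ through $\nabla^2G$ and $\nabla^2H$ on the separated region. The limit is a classical solution, and the same difference estimate gives uniqueness.

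For the continuation criterion, suppose \eqref{eq-continuation-point-charge-absorption} and \eqref{eq-continuation-plasma-absorption} hold on $[0,\mathring T)$ with $\mathring T<\infty$. Then the charges stay uniformly away from each other and from $\partial\Omega$, and the plasma support stays bounded and uniformly away from the charges. Consequently $\rho$ is bounded and $E^{\rm pla}$ and $E^{\rm cha}$ are bounded. Distance from $\Gamma_1$ along forward characteristics is then preserved up to a lower bound by a velocity-lemma type argument: outgoing characteristics exit, and incoming data vanish near $\Gamma_1$. Hence the local existence time from data at any $t<\mathring T$ is bounded below uniformly, so the solution extends past $\mathring T$, a contradiction.

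The main obstacle is the $C^1$ control near $\partial\Omega$: showing that the exit-time map stays smooth, which needs uniform separation from $\Gamma_1$ over time. I would handle this first with the crude short-time bound on $v_\perp$. For the continuation part I would use Guo's velocity lemma adapted to $\bar E_\perp$ as in Lemma~\ref{lem-Vlasov-new-coordinates}.
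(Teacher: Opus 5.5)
Your short-time construction is a workable local argument. It uses fixed separation from $\Gamma_1$ and from the charges, crude field bounds, Riccati-type $C^1$ bounds, and contraction in $L^\infty$. The continuation step, however, has a genuine gap.

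In your scheme the existence time depends on the separation of $\operatorname{supp} f$ from $\Gamma_1$. Since you only get $C^1$ bounds ``over a possibly smaller interval'', it also depends on $\|f\|_{C^1}$ at the starting time. When you restart at $t<\mathring T$, you need both quantities controlled uniformly in $t$. Neither appears in \eqref{eq-continuation-point-charge-absorption}--\eqref{eq-continuation-plasma-absorption}, and boundedness of $E^{\rm pla}_\#$ and $E^{\rm cha}_{\#,\alpha}$ gives neither. Two things are missing:
\begin{enumerate}
\item[(a)] An a priori bound on $\sup_{t<\mathring T}\|f(t)\|_{C^1}$. One route is a logarithmic estimate $\|\nabla E_\#(t)\|_\infty\le C(1+\ln^+\|\nabla\rho(t)\|_\infty)$ valid up to $\partial\Omega$, combined with the bound $|v_\perp|^{-1}$ on derivatives of the exit time. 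This leads to $y'\le Cy(1+\ln y)$.
\item[(b)] A correct use of the velocity lemma, which needs more than $E\in L^\infty$. Lemma~\ref{lem-velocity-lemma} uses $|\bar E^{\rm pla}_\perp-\bar E^{\rm pla}_{\perp,0}|\lesssim\sqrt{x_\perp}$ and bounds on the derivatives of $\bar E^{\rm pla}_{\perp,0}$ along the flow. For Neumann data, $\bar E^{\rm pla}_{\mathrm N,\perp,0}=-h_{\mathrm N}+\sum_jw_j^2b_j$ is time-independent because $\partial_{n_x}G_{\mathrm N}=0$, so $\rho\in L^\infty$ suffices. For Dirichlet data it contains $\partial_n\phi_{\mathrm D}|_{\partial\Omega}$, whose time and tangential derivatives are not controlled by $\|\rho\|_\infty$. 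So (b) must be bootstrapped together with (a) (cf.\ Remark~\ref{rem-velocity-lemma-Dirichlet}).
\end{enumerate}

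Separately, your Step 1 fails as written for Neumann data. With absorption, $\|f(t)\|_1$ changes by the boundary flux (\eqref{eq-conservation-laws} with $\gamma=1$). Hence $\int_\Omega\rho(t)=\int_{\partial\Omega}h_{\mathrm N}$ generally fails for $t>0$, and the Neumann problem has no solution. Define $\phi_{\mathrm N}$ instead through the Green representation $\int_\Omega G_{\mathrm N}\rho-\int_{\partial\Omega}G_{\mathrm N}h_{\mathrm N}$, as the paper does; this implicitly adds a uniform background.

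The paper's sketch is organized so that continuation is automatic. It bounds the iterates with the pointwise energy $e^n$, which combines $|v|^2$, $|x-\xi^n_\alpha|^{-1}$, $|\eta^n_\alpha|^2$, $\operatorname{d}_{\partial\Omega}(\xi_\alpha)^{-1}$ and $|\xi^n_\alpha-\xi^n_\beta|^{-1}$. Differentiating along characteristics and bounding $\|E^{n-1}_\#\|_\infty$ by Lemma~\ref{lem-pre-estimates-electric-field} gives $Q^n(t)\le\mathring Q+\mathring C\int_0^t[(Q^{n-1})^2+(Q^n)^4]\,\mathrm{d}s$. Hence $Q^n\le\mathring K$ on $[0,\mathring T]$, with $\mathring T$ depending only on $\mathring Q$, $\|\mathring f\|_1$ and $\|\mathring f\|_\infty$. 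These are exactly what the continuation criterion controls. The grazing-set control and the convergence of the scheme on that same interval are delegated to the velocity lemma and \cite[Section 5]{HV10}. Your fixed-neighbourhood scheme is more elementary and treats $\Gamma_1$ more explicitly in the local step. But you then owe precisely the uniformity argument (a)--(b), which the choice of $e^n$ builds in.
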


\begin{theorem}[Local well-posedness for reflection case]\label{thm-local-wellposedness-reflection}
Let $\Omega\subset\mathbb{R}^3$ satisfy \eqref{eq-assum-Omega}. Let $\mathring{f}$, $h_{\mathrm{N}}$, $(\mathring{\xi}_{\alpha},\mathring{\eta}_{\alpha})\in\Pi$ satisfy \eqref{eq-initial-singular-sets}, \eqref{eq-reflection-compatibility}, \eqref{eq-Neumann-compatibility}, \eqref{eq-assum-regularity-reflection}. Then there exists a unique classical solution $f,\{\xi_{\alpha},\eta_{\alpha}\}$ to the plasma-charge model \eqref{eq-Vlasov}-\eqref{eq-Newton} with  \eqref{eq-initial-data}, \eqref{eq-reflection-boundary} and \eqref{eq-Dirichlet-boundary} or \eqref{eq-Neumann-boundary},  on some time interval $[0,\mathring{T})$. For any $0<T<\mathring{T}$, $f\in C_c^{1;1,\lambda}([0,T]\times\bar{\Pi};\bar{\mathbb{R}}_+)$, $\phi\in C^{1;3,\lambda}([0,T]\times\bar{\Omega})$ for some $0<\lambda<\mu$ and $\xi_{\alpha}\in C^2([0,T];\Omega)$, $\eta_{\alpha}\in C^1([0,T];\mathbb{R}^3)$ for all $\alpha\in\mathcal{I}_M$. Assume that $\mathring{T}$ is chosen maximal, then $\mathring{T}=+\infty$ if the following hold 
\begin{align}
\sup_{0\le t<\mathring{T}}\left[\sum_{\alpha}\vert\eta_{\alpha}(t)\vert+\frac{1}{\operatorname{d}_{\partial\Omega}(\xi_{\alpha}(t))}+\sum_{\alpha\ne\beta}\frac{1}{\vert\xi_{\alpha}(t)-\xi_{\beta}(t)\vert}\right]&<\infty,\label{eq-continuation-point-charge-reflection}\\
\sup_{0\le t<\mathring{T}}\left\{\vert v\vert+\sum_{\alpha}\frac{1}{\vert x-\xi_{\alpha}(t)\vert}:(x,v)\in\operatorname{supp} f(t)\right\}&<\infty.\label{eq-continuation-plasma-reflection}
\end{align}
\end{theorem}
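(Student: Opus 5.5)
We construct the solution by an iteration scheme that keeps the plasma and the point charges quantitatively away from the two singular sets $\Gamma_1$ and $\Gamma_2$ on a short interval. Under \eqref{eq-initial-singular-sets} we have initial separation $\delta_1$. We fix a time $T_0>0$ (determined at the end) and an iteration class. It consists of triples $(f^n,\xi^n_\alpha,\eta^n_\alpha)$ on $[0,T_0]$ such that $\operatorname{d}_{\partial\Omega}(\xi^n_\alpha)\ge\delta_1/2$, $\vert\xi^n_\alpha-\xi^n_\beta\vert\ge\delta_1/2$ and $\vert\eta^n_\alpha\vert\le C_0$. In addition, $\operatorname{supp}f^n(t)$ must stay at distance at least $\delta_1/2$ from $\Gamma_1\cup\Gamma_2(t,v)$, with velocities bounded by $C_0$. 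Given the $n$-th iterate, we solve \eqref{eq-Poisson} with \eqref{eq-Dirichlet-boundary} or \eqref{eq-Neumann-boundary} for $\phi^n_\#$; in the Neumann case the compatibility \eqref{eq-Neumann-compatibility} is propagated by mass conservation. We then form $E^{\rm pla,n}_\#$ and $E^{\rm cha,n}_{\#,\alpha}$. We solve the linear Vlasov equation \eqref{eq-Vlasov-short} with this field along characteristics, using \eqref{eq-absorption-boundary} or \eqref{eq-reflection-boundary}. We also solve the ODE \eqref{eq-Newton-short} with the frozen field. Away from the point charges, $F_\#$ is smooth. On the support of $f^n$ it is bounded in $C^1$ by the separation $\delta_1/2$ and standard Green-function bounds $\vert\nabla_xG_\#(x,y)\vert\lesssim\vert x-y\vert^{-2}$, $\vert\nabla^2_xG_\#(x,y)\vert\lesssim\vert x-y\vert^{-3}$ in convex $C^4$ domains. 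Similarly, $\nabla_xH_\#$ and its derivative are bounded on $\{\operatorname{d}_{\partial\Omega}\ge\delta_1/2\}$ by the Robin function estimates.

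The key a priori estimates come next. Since $\Vert\rho^n\Vert_\infty\lesssim C_0^3\Vert\mathring f\Vert_\infty$ up to the boundary contribution of $g^{\rm pla}$, elliptic theory gives $\Vert E^n_\#\Vert_\infty\le C$. Hence characteristics and the charge trajectories move by at most $CT_0$. For $T_0$ small, the charges remain $\delta_1/2$-separated from each other and from $\partial\Omega$. The support likewise stays $\delta_1/2$-away from $\Gamma_2$, since $f$ is transported and both $x$ and $\xi_\alpha$ move at bounded speed. Distance from the grazing set $\Gamma_1$ requires the velocity lemma of \cite{Guo94,HV10}. We use the coordinates of Lemma~\ref{lem-Vlasov-new-coordinates} and the quantity $\alpha(x,v)\approx v_\perp^2+x_\perp(\ldots)$, which satisfies $\vert(\partial_t+v\cdot\nabla_x+E\cdot\nabla_v)\alpha\vert\lesssim(1+\vert v\vert+\Vert\nabla E\Vert_\infty)\alpha$. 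Gronwall's inequality then keeps $\alpha$ bounded below on the support. This needs $\nabla_xE^n_\#$ bounded near $\partial\Omega$. We obtain it from a $C^{1}$ (absorption) or $C^{1,\lambda}$ (reflection) bound on $\rho^n$ and Schauder estimates. The $C^{1,\mu}$ data and the $C^3$ Neumann data are used precisely here for the reflection case, where derivatives of $f$ across specular reflection must be controlled using \eqref{eq-reflection-compatibility}. Derivative bounds for $f^n$ follow by differentiating along characteristics. The compatibility conditions \eqref{eq-absorption-compatibility} and \eqref{eq-reflection-compatibility} guarantee that no jump in first derivatives is created at $\Gamma_\pm$, and the boundary-crossing times are smooth away from $\Gamma_1$.

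Convergence follows by estimating differences in a weaker norm. We take $\Vert f^{n+1}-f^n\Vert_{L^\infty}$ together with $\vert\xi^{n+1}-\xi^n\vert+\vert\eta^{n+1}-\eta^n\vert$. The Lipschitz dependence of $\nabla_xG_\#(x,\xi)$ on $\xi$, valid away from the diagonal, and of the Poisson field on $\rho$ gives a contraction for small $T_0$. Compactness and the uniform $C^1$ (resp. $C^{1,\lambda}$) bounds then upgrade the limit to a classical solution. Uniqueness uses the same difference estimate. The continuation criterion follows by contradiction. If $\mathring T<\infty$ and \eqref{eq-continuation-point-charge-absorption}--\eqref{eq-continuation-plasma-absorption} hold, then all separations and velocities are controlled uniformly on $[0,\mathring T)$. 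Hence the local time $T_0$, which depends only on these quantities and the norms, is uniform, and we restart near $\mathring T$.

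The main obstacle I expect is the reflection case. There one needs uniform $C^{1,\lambda}$ control of $f$ through infinitely many specular bounces, with a field $E^{\rm pla}_\#$ that includes the moving point-charge term $F_\#$. I would first prove a velocity lemma adapted to a time-dependent $F_\#$, treating $F_\#$ as a smooth external field on the support. I would then follow the geometric derivative estimates of \cite{HV10,HJV13}, with the extra charge field included.
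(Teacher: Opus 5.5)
Your outline is correct and has the same architecture as the paper's sketch: a Picard-type iteration, uniform-in-$n$ control keeping $\operatorname{supp}f^n$ away from $\Gamma_1\cup\Gamma_2$ and the charges apart from each other and from $\partial\Omega$, the velocity lemma for the grazing set, the regularity and convergence machinery of \cite[Section 5]{HV10}, and a restart argument. The difference is in how the uniform bounds are produced, and that affects the continuation step.

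The paper packs all singular quantities into one pointwise energy $e^n$: the terms $\frac12|v|^2$, $|x-\xi^n_\alpha|^{-1}$, $\frac12|\eta^n_\alpha|^2$, $\operatorname{d}_{\partial\Omega}(\xi^n_\alpha)^{-1}$ and $|\xi^n_\alpha-\xi^n_\beta|^{-1}$. It uses Lemma~\ref{lem-pre-estimates-electric-field} to get $\|E^{n-1}_\#\|_\infty\lesssim (Q^{n-1})^2$, and derives $Q^n(t)\le\mathring Q+\mathring C\int_0^t[(Q^{n-1})^2+(Q^n)^4]\,\mathrm{d}s$. Hence $\mathring T$ depends only on $\mathring Q$, $\|\mathring f\|_1$ and $\|\mathring f\|_\infty$. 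These are exactly the quantities in \eqref{eq-continuation-point-charge-reflection}--\eqref{eq-continuation-plasma-reflection}, so the continuation criterion is read off directly.

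Your fixed-threshold class (separations $\delta_1/2$, speeds $\le C_0$, $\|\rho^n\|_\infty\lesssim C_0^3\|\mathring f\|_\infty$) is more elementary and gives the same control. However, your $T_0$ also depends on two further things:
\begin{itemize}
\item derivative norms of $f^n$, since the constant in your small-time $L^\infty$ contraction involves $\sup_n\|\nabla_v f^n\|_\infty$;
\item the distance of the support from $\Gamma_1$.
\end{itemize}
At the restart you must therefore add that both stay bounded as $t\uparrow\mathring T$. This follows from the velocity lemma and Gronwall-type propagation of $\nabla f$, since the criterion bounds $\rho$, the fields on the support and the charge positions. Alternatively, replace the small-time contraction by the usual $(Ct)^n/n!$ estimate on the whole interval.

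Three smaller points:
\begin{itemize}
\item The velocity lemma preserves $x_\perp+v_\perp^2$ only up to a multiplicative constant. It does not keep the support at distance $\delta_1/2$ from $\Gamma_1$, so the grazing threshold in your class must be a smaller constant, of order $\delta_1^2$.
\item The comparability of your $\alpha$ with $v_\perp^2+x_\perp$ rests on the favorable sign of the normal field on $\partial\Omega$. In the Neumann case this comes from $h_{\mathrm N}>0$ and $b_j\le 0$ by convexity, as in Lemma~\ref{lem-velocity-lemma}; for $\#=\mathrm{D}$ see Remark~\ref{rem-velocity-lemma-Dirichlet}. You should state this.
\item The bounds on $\nabla H_\#$ away from $\partial\Omega$ come from interior regularity of the harmonic part $\bar g_\#$, not from Proposition~\ref{prn-Robin-estimate}, which only bounds $R_\#$ itself.
\end{itemize}
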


The second main  result is the following global well-posedness of the model with the absorption-Neumann boundary condition.
\begin{theorem}\label{thm-absorption-Neumann-case}
For Neumann boundary case, the solution given in Theorem~\ref{thm-local-wellposedness-absorption} is global, i.e., $\mathring{T}=+\infty$.
\end{theorem}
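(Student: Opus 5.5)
By the continuation criterion of Theorem~\ref{thm-local-wellposedness-absorption}, it suffices to show that on any finite interval $[0,T]$ with $T<\mathring{T}$ the quantities in \eqref{eq-continuation-point-charge-absorption} and \eqref{eq-continuation-plasma-absorption} stay bounded by constants depending only on $T$ and the data. I will proceed in three stages: an energy inequality, a pointwise energy function that keeps characteristics away from the charges, and a Pfaffelmoser-type velocity bound.

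\textbf{Step 1: energy.} For the Neumann case I write the total energy
\begin{equation*}
\mathcal{E}(t)=\frac12\int_{\Pi}|v|^2f+\frac12\sum_\alpha|\eta_\alpha|^2+\frac12\int_\Omega|\nabla\phi_{\mathrm N}^{\rm tot}|^2\text{-type terms},
\end{equation*}
expanded as the self-interaction of $\rho$ through $G_{\mathrm N}$, the cross terms $\int\rho\, G_{\mathrm N}(\cdot,\xi_\alpha)$, the pair terms $G_{\mathrm N}(\xi_\alpha,\xi_\beta)$, and the charge-boundary potential $H_{\mathrm N}(\xi_\alpha)$. I then differentiate along \eqref{eq-Vlasov-short}--\eqref{eq-Newton-short}. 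Under absorption, the boundary flux $\int_{\Gamma}(v\cdot n)(\tfrac12|v|^2+\phi^{\rm pla})f$ does not vanish. On $\Gamma_+$ it is controlled by the given $g^{\rm pla}$, which has compact support and is smooth. On $\Gamma_-$ it has a favourable sign once I use that the potentials are bounded below up to constants: the Neumann Green function is bounded on $\partial\Omega\times K$ for $K$ compact in $\Omega$, and the outflow term carries $-|v\cdot n|$.

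Next I need sign and lower bounds. The estimates on $G_{\mathrm N}$ and $R_{\mathrm N}$ in convex domains (the Green/Robin estimates announced in the abstract, which I assume are proved in the body) should give $G_{\mathrm N}(x,y)\ge c/|x-y|-C$. They should also give that $H_{\mathrm N}(x)\to+\infty$ like $1/\operatorname{d}_{\partial\Omega}(x)$ as $x\to\partial\Omega$, with the correct repulsive sign; the $h^{\rm cha}_{\mathrm N}$ term is bounded and only shifts by a constant. The self-interaction energy of $\rho$ is nonnegative up to a constant, since it is a Dirichlet integral minus a mean-value correction fixed by \eqref{eq-Neumann-compatibility}. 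With these, a Gronwall argument gives $\mathcal E(t)\le C(T)$. This immediately bounds $\sum_\alpha|\eta_\alpha|$, $1/\operatorname{d}_{\partial\Omega}(\xi_\alpha)$ and $1/|\xi_\alpha-\xi_\beta|$, which is \eqref{eq-continuation-point-charge-absorption}, together with $\int|v|^2f\le C$. Interpolation then yields $\|\rho\|_{L^{5/3}}\le C$.

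\textbf{Step 2: pointwise energy along characteristics.} Following \cite{MMP11,Wu24}, I set
\begin{equation*}
h(t,x,v)=\tfrac12|v|^2+\sum_\alpha G_{\mathrm N}(x,\xi_\alpha(t))+\phi_{\mathrm N}(t,x).
\end{equation*}
Along a characteristic, $\tfrac{d}{dt}h=\partial_t\phi_{\mathrm N}+\sum_\alpha\nabla_yG_{\mathrm N}(x,\xi_\alpha)\cdot\eta_\alpha$. The second term is bounded by $C|v|$-free quantities times $|x-\xi_\alpha|^{-2}$. I handle it by the standard device of \cite{MMP11}: subtract a corrector $\sum_\alpha(v-\eta_\alpha)\cdot\nabla\ldots$, or compare with a modified energy containing $\frac12|v-\eta_\alpha|^2$ near each charge, using the boundedness of $\eta_\alpha$ from Step 1. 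This gives $h\le C(1+P(t))$, where $P(t)$ is the maximal velocity on $\operatorname{supp}f$. Since $G_{\mathrm N}\ge c/|x-\xi|-C$, the support stays at distance $\gtrsim P(t)^{-2}$ from the charges. Characteristics reaching $\Gamma_-$ simply leave, so absorption causes no reflection issues. Characteristics starting from $g^{\rm pla}$ on $\Gamma_+$ begin away from $\Gamma_1$ and from the charges, because the charges stay a distance $\ge c$ from the boundary by Step 1.

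\textbf{Step 3: velocity bound.} I split $E^{\rm pla}_{\mathrm N}=E_{\mathrm N}+F_{\mathrm N}$. The field $F_{\mathrm N}$ is singular only near the charges, and it is handled exactly as in \cite{MMP11}: the time integral of $|\nabla_xG(x,\xi_\alpha)|$ along a characteristic is controlled by the energy function of Step 2, giving an $O(P^{\theta})$ contribution with $\theta<1$. For $E_{\mathrm N}$ I use the decomposition $G_{\mathrm N}=\frac{-1}{4\pi|x-y|}+$(reflected part), which is bounded by $C/|x-y^*|$, with $y^*$ the reflection near the boundary, in convex domains. I then run Pfaffelmoser's good/bad/ugly splitting as adapted to domains in \cite{Wu24}, using the $L^{5/3}$ bound and $\|\rho\|_\infty\le CP^3$. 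The result is $\int_{t-\Delta}^t|E_{\mathrm N}|\le C\Delta P^{4/3+\epsilon}$, which gives $P(T)\le C(T)$; by Step 2 this yields \eqref{eq-continuation-plasma-absorption}. The main obstacle I expect is Step 1's lower bound and sign for $H_{\mathrm N}$ together with the boundary-flux control, that is, the charge-boundary effect. The Robin function estimate $R_{\mathrm N}(x)\sim 1/(4\pi\cdot 2\operatorname{d}_{\partial\Omega}(x))$, with uniform error in a convex $C^4$ domain, is the crux, and I would prove it first by comparison with the half-space Neumann function in the local coordinates of Lemma~\ref{lem-Vlasov-new-coordinates}.
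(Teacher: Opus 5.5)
Your Step 1 is essentially the paper's argument for \eqref{eq-continuation-point-charge-absorption}: energy plus the Robin bound of Proposition~\ref{prn-Robin-estimate}, which the paper proves by layer potentials rather than half-space comparison. The real difficulty, however, is the plasma bound \eqref{eq-continuation-plasma-absorption}, and your Step 3 does not establish it.

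\textbf{The Step 3 estimate does not close.} The bound you end with, $\int_{t-\Delta}^t|E_{\mathrm N}(s,X(s))|\,\mathrm{d}s\le C\Delta P^{4/3+\epsilon}$, is no better than the trivial pointwise bound $|E_{\mathrm N}|\le C(1+P^{4/3})$. That bound follows from Lemma~\ref{lem-pre-estimates-electric-field} with $\|\rho\|_{5/3}\le C$ and $\|f\|_\infty\le C$. It gives only $P(t)\le P(t-\Delta)+C\Delta P^{4/3+\epsilon}$, i.e.\ $\dot P\lesssim P^{4/3+\epsilon}$. This permits finite-time blow-up and reproduces the local theory, not $P(T)\le C(T)$.

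\textbf{What a Pfaffelmoser argument must supply.} Its whole point is to gain an exponent at most $1$ on the time-averaged field. In the paper's reflection case this is $Q_{t,\delta}\le Q_{t-\delta,\delta}+CQ_{T,T}\delta$ with $\delta=c_0Q_{T,T}^{-1}$, which gives exponential growth. That gain is exactly what you defer:
\begin{enumerate}
\item separating the near-charge and near-boundary regimes on each time step;
\item bounding the time a characteristic with large $h_\alpha$ spends near $\xi_\alpha$, via convexity of $|Y(s)-\xi_\alpha(s)|^2$;
\item the bounce and separation analysis near $\partial\Omega$;
\item the comparison between different $h_\alpha$.
\end{enumerate}
Citing \cite{Wu24} does not supply this, since there the charges are fixed.

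\textbf{Inflow under absorption.} The change of variables $(y,w)\mapsto(Y(s),W(s))$ from time $t-\Delta$ is no longer onto $\operatorname{supp}f(s)$. Particles injected through $\Gamma_+$ during $(t-\Delta,s)$ carry part of $\rho(s)$ and need a separate estimate, which you never address.

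\textbf{Step 2.} Your energy containing $\phi_{\mathrm N}$ produces a term $\partial_t\phi_{\mathrm N}$ along characteristics that you do not control. The charge-centred $h_\alpha=\frac12|v-\eta_\alpha|^2+|x-\xi_\alpha|^{-1}+K_1$ avoids this, but then $\int|E_{\mathrm N}|$ must be estimated anyway, which is again Step 3.

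\textbf{What the paper does instead.} For the absorption case the paper uses the Lions--Perthame method, which sidesteps all of the above.
\begin{enumerate}
\item Singular moments $L_k$ are bounded with a cut-off around each charge (Lemmas~\ref{lem-Lk-by-Hk-E-LP} and \ref{lem-L0-by-Hk}). Combined with Lemma~\ref{lem-Hk-by-Lk-E-LP}, this gives Proposition~\ref{prn-Hk-by-Hk-E-LP}.
\item $\rho$ is represented along backward straight lines, which by convexity hit $\partial\Omega$ once and pick up $g^{\rm pla}$ there (Lemma~\ref{lem-represen-rho-back-time-line-character}). The inflow is therefore built into the formula.
\item This bounds $\|E_{\mathrm N}\|_{k+3}$ by a logarithmic long-time term plus a small short-time term in $H_k$ (Lemma~\ref{lem-EN-by-Hk-LP}).
\item A Gr\"onwall argument then gives $H_k(t)\le\exp(\exp(C(1+t)))$ for $k>3$ (Proposition~\ref{prn-Hk-unif-bound-LP}).
\end{enumerate}
To keep your route, you would have to prove a genuinely improved averaged field bound (exponent $\le 1$) that includes the inflow contribution. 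As written, the proposal does not.
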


The third main result is the following global well-posedness of the model with the reflection-Neumann boundary condition.
\begin{theorem}\label{thm-reflection-Neumann-case}
For Neumann boundary case, the solution given in Theorem~\ref{thm-local-wellposedness-reflection} is global, i.e., $\mathring{T}=+\infty$.
\end{theorem}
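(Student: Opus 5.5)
By the continuation criterion of Theorem~\ref{thm-local-wellposedness-reflection}, it suffices to show that on any finite interval $[0,T]$ with $T<\mathring{T}$ the quantities in \eqref{eq-continuation-point-charge-reflection} and \eqref{eq-continuation-plasma-reflection} are bounded by constants depending only on $T$ and the data. The plan has three stages, in this order: a priori conservation laws, point-charge and near-charge bounds, and a velocity-support bound.

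\textbf{Step 1 (conservation laws).} Because the reflection condition \eqref{eq-reflection-boundary} is mass and energy preserving, I first derive conservation of mass $\Vert f(t)\Vert_{L^1}$ and of all $L^p$ norms of $f$ along characteristics. The same computation gives conservation of a total energy
\begin{equation*}
\mathcal{E}(t)=\int_{\Pi}\tfrac12\vert v\vert^2f\,\mathrm{d}x\,\mathrm{d}v+\sum_{\alpha}\tfrac12\vert\eta_{\alpha}\vert^2-\tfrac12\int_{\Omega}\phi_{\mathrm N}\rho\,\mathrm{d}x-\sum_{\alpha}\int_\Omega G_{\mathrm N}(x,\xi_\alpha)\rho\,\mathrm{d}x-\tfrac12\sum_{\alpha\ne\beta}G_{\mathrm N}(\xi_\alpha,\xi_\beta)-\sum_\alpha H_{\mathrm N}(\xi_\alpha)
\end{equation*}
(with sign conventions so that repulsion makes the potential terms nonnegative up to bounded corrections). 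The boundary terms involving $h_{\mathrm N}$ and $h^{\rm cha}_{\mathrm N}$ combine with the Neumann Green function normalisation; this is exactly why $H_{\mathrm N}$ contains $\tfrac12R_{\mathrm N}$ minus the boundary integral against $h^{\rm cha}_{\mathrm N}$. To exploit this, I will need the estimates on $G_{\mathrm N}$ and $R_{\mathrm N}$ in convex domains: $-G_{\mathrm N}(x,y)\ge c\vert x-y\vert^{-1}-C$, $\vert\nabla_x G_{\mathrm N}(x,y)\vert\lesssim\vert x-y\vert^{-2}$, and, crucially, $-R_{\mathrm N}(x)\ge c\,\operatorname{d}_{\partial\Omega}(x)^{-1}-C$ with $\vert\nabla R_{\mathrm N}\vert\lesssim \operatorname{d}_{\partial\Omega}^{-2}$. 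I will prove these via the method of images near the convex boundary. With these, energy conservation bounds $\sum_\alpha\vert\eta_\alpha\vert$, $\operatorname{d}_{\partial\Omega}(\xi_\alpha)^{-1}$, $\vert\xi_\alpha-\xi_\beta\vert^{-1}$ and the kinetic energy, which gives \eqref{eq-continuation-point-charge-reflection}. Interpolation then yields $\Vert\rho\Vert_{L^{5/3}}\le C$.

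\textbf{Step 2 (plasma near charges).} Next I introduce the pointwise energy $h_\alpha(t,x,v)=\tfrac12\vert v-\eta_\alpha\vert^2-G_{\mathrm N}(x,\xi_\alpha)$ along characteristics, following \cite{MMP11,Wu24}. Its time derivative involves $(E_{\mathrm N}+\text{other fields})\cdot(v-\eta_\alpha)$ and $\nabla_xG_{\mathrm N}(x,\xi_\alpha)$ terms that are regular because the field difference $\nabla G_{\mathrm N}(x,\xi_\alpha)-\nabla G_{\mathrm N}(\xi_\alpha,\cdot)$ is controlled. Reflection does not change $h_\alpha$ at the boundary since $\xi_\alpha$ stays away from $\partial\Omega$. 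Hence $\vert x-\xi_\alpha\vert^{-1}$ on the support is bounded by $C(1+\sup\vert v\vert^2)$, and the near-charge bound reduces to the velocity bound.

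\textbf{Step 3 (velocity support, the main obstacle).} I will run Pfaffelmoser's argument adapted to convex domains with specular reflection, as in \cite{HV10,HJV13}. Using the geometric form of Lemma~\ref{lem-Vlasov-new-coordinates} and a velocity lemma, I control grazing: near $\partial\Omega$ the characteristics keep a quantity such as $v_\perp^2+2E_\perp x_\perp+\sum_j w_j^2 b_j x_\perp$ comparable along trajectories. Convexity ($b_j\ge0$) together with the bounds on $E$ coming from $h_{\mathrm N}\in C^3$ keeps the support away from $\Gamma_1$. I then estimate $\int_{t-\Delta}^{t}\vert E^{\rm pla}_{\mathrm N}(s,X(s))\vert\,\mathrm ds$ by splitting into good, bad and ugly velocity sets, with reflected trajectories handled through the image Green function. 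The charge field $F_{\mathrm N}$ is handled like an extra singular source, via the $h_\alpha$ bound of Step 2: time spent near $\xi_\alpha$ is short, which gives $\int\vert F\vert\,\mathrm ds\lesssim P^{\theta}$ with $\theta<1$. The resulting bound $\sup\vert v\vert\le C(T)$ yields \eqref{eq-continuation-plasma-reflection}. The delicate part I expect is combining the velocity lemma with the $\operatorname{d}_{\partial\Omega}^{-2}$ growth of $\nabla H_{\mathrm N}$; since this only acts on charges kept away from the boundary by Step 1, I expect it to be manageable.
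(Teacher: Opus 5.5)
\textbf{There is a genuine gap in Steps 2--3.}

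Your Step 1 is essentially the paper's argument. Energy conservation plus the Neumann Robin bound $R_{\mathrm N}(x)\le c_1-c_2\operatorname{d}_{\partial\Omega}(x)^{-1}$ give \eqref{eq-continuation-point-charge-reflection} and $\|\rho\|_{5/3}\le C$. An image parametrix $G(x,y)+G(x,y^*)$ is a legitimate alternative to the paper's layer-potential proof of that bound.

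\textbf{The charge field is not a perturbation.} Your claim that passages near a charge give $\int|F_{\mathrm N}(s,X(s))|\,\mathrm{d}s\lesssim P^{\theta}$ with $\theta<1$ is false. Take a nearly head-on encounter with relative speed $u=|V-\eta_\alpha|$. The particle turns around at distance $r\sim u^{-2}$, spends time $\sim u^{-3}$ there, and feels a force $\sim u^{4}$. So $\int|F_{\mathrm N}|\,\mathrm{d}s\approx 2u$, which is of order $P$: ``short time near the charge'' is exactly offset by the size of the force. Hence $\sup|v|$ cannot be propagated by integrating $|E^{\rm pla}_{\mathrm N}|$ along characteristics.

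What rescues the argument is that a Coulomb encounter preserves $|V-\eta_\alpha|$. To exploit this, you must run the Pfaffelmoser bootstrap on $Q_{t,\delta}=\max_\alpha\sup\sqrt{h_\alpha}$ itself. Its derivative along characteristics has no self-charge singularity; it contains only $E_{\mathrm N}(s,X)$, the fields of the other charges, and $E_{\mathrm N}(s,\xi_\alpha(s))$, which enters through $\dot\eta_\alpha$.

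Your ordering ``near-charge bound reduces to velocity bound'' is therefore inverted. $h_\alpha$ is not conserved, its growth is driven by exactly the fields Step 3 must control, and the velocity bound cannot be obtained before $h_\alpha$ is controlled. Also, reflection does not preserve $h_\alpha$; only $|v|$ is reflection invariant.

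\textbf{The smooth field also needs new ingredients.} Even $\int|E_{\mathrm N}(s,X(s))|\,\mathrm{d}s$ is not handled by the standard good/bad/ugly splitting plus \cite{HV10}. The ugly-set estimate needs the relative velocity $V-W$ to be stable over the time window. This fails whenever $X$ or a source trajectory $Y$ passes near a charge. In addition, charge deflections must not be mixed with boundary bounces. You are missing three things from the paper:
\begin{enumerate}
\item[(i)] \emph{Separation of singular sets.} With $\delta=c_0Q_{T,T}^{-1}$, a particle close to one charge stays close to it during the whole window, hence far from $\partial\Omega$ and from the other charges.
\item[(ii)] \emph{The resulting case split.} In Case~I (far from all charges) one uses the bounce analysis of \cite{HV10} on the finer step $\tilde\delta=c_0Q_{T,T}^{-5/4}$. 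In Case~II (near one charge, no bounces) one uses the \cite{MMP11} analysis. There, convexity of $s\mapsto|Y(s)-\xi_{\alpha_0}(s)|^2$ for high-energy $Y$ makes the near-charge time a single interval of length $\lesssim a/R$. The relative velocity is then stable on the at most two remaining intervals.
\item[(iii)] \emph{A bound on the plasma field at the charges}, $\int_{t-\delta}^{t}|E_{\mathrm N}(s,\xi_\alpha(s))|\,\mathrm{d}s$. The crude bound $\|E_{\mathrm N}\|_\infty\lesssim Q^{4/3}$ does not give the order $Q\delta$ needed to close the bootstrap. The paper uses the singular moment $L_0$ (Lemma~\ref{lem-L0-by-Hk}).
\end{enumerate}

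By contrast, the difficulty you flag, the $\operatorname{d}_{\partial\Omega}^{-2}$ growth of $\nabla H_{\mathrm N}$, is harmless. Step 1 keeps the charges $\delta_2$ away from the boundary for all time.
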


It is worth noting that the exact representation of the charge-boundary effect in \eqref{eq-Newton} is not very obvious. The final main result is the rigorous justification of this representation in Theorem~\ref{thm-justification-boundary-effect}. Due to the substantial preliminary explanation required for this theorem, we place it in Section~\ref{sec-justification}.

\subsection{Main ideas and contributions}

The analysis of the plasma-charge model in a convex domain involves two fundamental challenges: the interplay of multiple singularities and the physical justification of boundary-induced forces on point charges.

Regarding the first challenge, the primary difficulty stems from the coexistence of four distinct types of singularities: plasma-boundary, charge-boundary, plasma-charge, and charge-charge singularities.
\begin{itemize}
\item \textit{Local well-posedness:} For Theorems~\ref{thm-local-wellposedness-absorption} and \ref{thm-local-wellposedness-reflection}, the plasma-boundary singularity is addressed by invoking a specialized velocity lemma (Lemma~\ref{lem-velocity-lemma}), which relies on a refined characterization of the distance function $\boldsymbol{\beta}$ near the grazing set. To handle the remaining singularities, we construct a carefully designed pointwise energy functional $e^n$. See Section~\ref{sec-local-well-posedness}.

\item \textit{Control of charge-boundary singualrity:} Establishing uniform control over the inverse distance $\frac{1}{\operatorname{d}_{\partial\Omega}(\xi_{\alpha}(t))}$ in \eqref{eq-continuation-point-charge-absorption}, \eqref{eq-continuation-point-charge-reflection} is pivotal for proving global well-posedness. A central component of this analysis is the derivation of pointwise estimates for the Robin functions (Proposition~\ref{prn-Robin-estimate}). These estimates provide a precise characterization of the boundary-induced field, reflecting the attractive or repulsive nature of the Dirichlet and Neumann conditions, respectively. Particularly, in the Neumann case, this allows us to exploit energy conservation to ensure that the point charges remain uniformly bounded away from the boundary. While such estimates in the Dirichlet setting can be established via the maximum principle or the asymptotic expansions in \cite[Lemma 8.3]{Flu99} (as widely utilized in vortex dynamics \cite{MP84,Tur87,Mar22}), the corresponding pointwise estimates for the Neumann case appear to be new. We prove Proposition~\ref{prn-Robin-estimate} by utilizing layer potential techniques. See Section~\ref{sec-Green-Robin}.

\item \textit{The role of convexity}: Two key agruments rely on the convexity of the domain $\Omega$. First, in the proof of the velocity lemma, this geometric assumption is a well-established necessity. Second, the convexity of $\Omega$ is utilized in the derivation of Proposition~\ref{prn-Robin-estimate}, whether this constraint is indispensable remains unclear.

\item \textit{Global continuation:} The plasma-related continuation criteria in \eqref{eq-continuation-point-charge-absorption}-\eqref{eq-continuation-plasma-reflection} are addressed via two distinct analytical frameworks, depending on the boundary conditions. For the absorption case (Theorem~\ref{thm-absorption-Neumann-case}), we generalize the Lions-Perthame's method developed in \cite{Guo94} (without point charges) by incorporating singular moment estimates established in \cite{WZ23PC} (without boundary). Due to the presence of boundary, we need to incorporate a boundary cut-off into the mehtod in \cite{WZ23PC}, see Lemma~\ref{lem-Lk-by-Hk-E-LP}. This allows us to derive the following Gr\"onwall-type inequality:
\begin{equation*}
H_k\lesssim L_{k-2}+\,\text{Lions-Perthame terms}\, \lesssim H_{k-1}+\int_0^tH_{k}(s)\,\mathrm{d}s+\,\text{Lions-Perthame terms},
\end{equation*}
which brings our to the Lions-Perthame’s argument in \cite{Guo94}. Due to the presence of point charges, we need to integrate the technology first developed by \cite{DMS15} into the method in \cite{Guo94}. See Section~\ref{sec-global-absorption}.

For the reflection case (Theorem~\ref{thm-reflection-Neumann-case}), we adapt the Pfaffelmoser's method based on the separation technique of singular sets introduced in \cite{Wu24}. The main idea is that within a small time length $\delta$, a plasma particle can only be close to either boundary or point charges. During this period, we only need to analyze either the plasma-charge effect by the method in \cite{MMP11} (without boundary) or the plasma-boundary effect by the method in \cite{HV10} (without point charges). Furthermore, We utilized different time steps $\delta$ for these two scenarios. Indeed, on the one hand, in \cite{MMP11}, when $M>1$, $\delta$ cannot be chosen too small, in order to ensure the following transitivity by Lemma~\ref{lem-estimate-a0-ne-bar-alpha}: If for some $\alpha$ there holds
\begin{equation*}
\sqrt{h_{\alpha}}(t,X,V)\le Q_{t-\delta,\delta}+CQ_{T,T}\delta,
\end{equation*}
then 
\begin{equation*}
\sqrt{h_{\beta}}(t,X,V)\le Q_{t-\delta,\delta}+CQ_{T,T}\delta,
\end{equation*}
for all $1\le \beta\le M$. On the other hand, in \cite{HV10}, to obtain stability property of the velocity of plasma particles from a non-flat boundary, we first partition $\delta$ into smaller $\tilde{\delta}$, then sum the contributions from all $\tilde{\delta}$ intervals to obtain the total contribution for the $\delta$ interval. See Section~\ref{sec-global-reflection}.
\end{itemize}

The second challenge concerns the physical consistency of the charge-boundary interaction term $H_{\#}$ in the Newtonian dynamics \eqref{eq-Newton}. While the attractive/repulsive nature of Dirichlet/Neumann boundary forces is well-understood in classical electrostatics, its rigorous integration into the Vlasov-Poisson system remains, to the author's knowledge, unexplored in the literature. We seek a justification by deriving the point-charge model from a modified Vlasov-Poisson system via a desingularization limit. This approach is inspired by the classical derivation of point vortex model from Euler equations \cite{MP83,Mar88}. However, the Vlasov-Poisson system introduces distinct analytical obstacles: the phase-mixing effect renders standard moment of inertia estimates and rearrangement inequalities, central to the fluid context, largely inapplicable. To overcome this, we focus on the interaction between point charges and the boundary rather than the self-energy of individual particles. By introducing a modified system that isolates these interactions, we rigorously deduce the point charge model incorporating boundary effects as a desingularization limit (Theorem~\ref{thm-justification-boundary-effect}). See Section~\ref{sec-justification}.

\subsection{Notation}

Let $\Omega\subset\mathbb{R}^d$, by $\vert\Omega\vert$ we denote the volume of $\Omega$, by $\operatorname{diam}\Omega$ we denote the diameter of $\Omega$, by $n_x$ we denote the outward unit normal to $\partial\Omega$ at point $x$, by $\mathrm{d}S_x$ we denote the surface measure on $\partial\Omega$. For $x\in\Omega$, we denote $\operatorname{d}_{\partial\Omega}(x):=\inf_{y\in\partial\Omega}\vert x-y\vert$. For $x\in\partial\Omega$, we denote $\partial_{n_x}:=n_x\cdot\nabla_x$. Let $B^d(x,\delta)$ be a ball in $\mathbb{R}^d$ with center $x$ radius $\delta$. In particular, a ball in $\mathbb{R}^d$ with center $0$ radius $r$ can be denoted by $B_r^d$. Let $\mathbb{R}_+^d=\{(x_1,\bar{x})\in\mathbb{R}\times\mathbb{R}^{d-1}: x_1>0\}$, and $\bar{\mathbb{R}}_+^d=\{(x_1,\bar{x})\in\mathbb{R}\times\mathbb{R}^{d-1}: x_1\ge 0\}$. For $x=(x_1,\bar{x})\in\mathbb{R}_+^d$, we define $x'=(-x_1,\bar{x})$.

The space \(C_c^{1;1,\lambda}([0,T]\times\bar{\Pi})\) is introduced in \cite{HV10}, which means that
\[
C_c^{1;1,\lambda}=C_{t,x,v}^1\cap L_t^{\infty}C_{x,v}^{1,\lambda}.
\]
\(C^{1;3,\lambda}([0,T]\times\bar{\Omega})\) has analogous definition.

Let $\chi$ be a smooth cutoff function, $\chi(t)=1$ for $t\in[0,1]$, $\chi(t)=0$ for $t\ge 2$, $-2\le\chi'(t)\le 0$, $0\le\chi(t)\le 1$ for $t\in\bar{\mathbb{R}}_+$. Denote $\tilde{\chi}=1-\chi$, $\chi_{\varsigma}(t)=\chi(\varsigma^{-1} t)$, $\tilde{\chi}_{\varsigma}(t)=\tilde{\chi}(\varsigma^{-1} t)$. The indicator function of a set $A$ is denoted as $\mathds{1}_{A}(x)$ such that $\mathds{1}_{A}(x)=1$ if $x\in A$, and $\mathds{1}_{A}(x)=0$ otherwise. When the index set $\mathcal{I}$ and the index $i\in\mathcal{I}$ are clear, we always denote $\{a_i\}$ as the set $\{a_i: i\in\mathcal{I}\}$. We denote $\mathcal{I}_M=\{1,2,\dots,M\}$ and use the following summation signs throughout the paper for short:
\begin{equation*}
\sum_{\beta:\beta\ne\alpha}=\sum_{\beta\in\mathcal{I}_M,\beta\ne\alpha},\quad\sum_{\alpha\ne\beta}=\sum_{\alpha\in\mathcal{I}_M}\sum_{\beta:\beta\ne\alpha},\quad\sum_{\alpha}=\sum_{\alpha\in\mathcal{I}_M}.
\end{equation*}
The notations $\min_{\alpha}$, $\sup_{\alpha}$ have similar definitions.

$C$ is denoted as a universal constant depending on $\|\mathring{f}\|_{C^{1}(\Pi)}$, $\|g^{\rm pla}\|_{C^1(\bar{\mathbb{R}}_+\times\Gamma_+)}$, $\|h_{\mathrm{N}}\|_{C^2(\Pi)}$ for the absorption case or $\|\mathring{f}\|_{C^{1,\mu}(\Pi)}$, $\|h_{\mathrm{N}}\|_{C^3(\Pi)}$ for the reflection case, and $\max_{\alpha}(\vert\mathring{\xi}_{\alpha}\vert,\vert\mathring{\eta}_{\alpha}\vert)$, $\delta_0$, $\delta_1$, $M$, the support of $\mathring{f}$, $\operatorname{diam}\Omega$, the $C^4$ regularity of $\partial\Omega$, which may differ on different occasions.

\section{The Green functions and Robin functions}\label{sec-Green-Robin}

Throughout this section $\Omega$ is always a bounded convex $C^4$ domain in $\mathbb{R}^d$, $d\ge 2$. The fundamental solution of the Laplacian in $\mathbb{R}^d$ is defined by
\begin{equation}\label{eq-definition-G}
G(x,y):=\left\{\begin{split}
\frac{1}{2\pi}\ln\vert x-y\vert,\quad d=2,\\
\frac{\vert x-y\vert^{2-d}}{(2-d)\omega_d},\quad d\ge3.
\end{split}\right.
\end{equation}
$\omega_d$ denotes the surface area of the unit sphere in $\mathbb{R}^d$. 

\begin{definition}
Green function $G_{\Omega,\mathrm{D}}$ of $\Omega$ with Dirichlet boundary condition is defined by $G_{\Omega,\mathrm{D}}(x,y)=0$ if $x\in\partial\Omega$ and $G_{\Omega,\mathrm{D}}(\cdot,y)-G(\cdot,y)$ is harmonic in $\Omega$ for every $y\in\Omega$.
\end{definition}

\begin{definition}
Green function $G_{\Omega,\mathrm{N}}$ of $\Omega$ with Neumann boundary condition is defined by
\begin{align*}
\Delta_x\left[G_{\Omega,\mathrm{N}}(x,y)-G(x,y)\right]=-\frac{1}{\vert\Omega\vert},&\quad (x,y)\in\Omega\times\Omega,\\
\partial_{n_x}G_{\Omega,\mathrm{N}}(x,y)=0,&\quad (x,y)\in\partial\Omega\times\Omega.
\end{align*}
\end{definition}

\begin{definition}
The Robin function associating with $G_{\Omega,\#}$ is defined by
\begin{equation*}
R_{\Omega,\#}(x):=G_{\Omega,\#}(x,x)-G(x,x).
\end{equation*}
\end{definition}

When $\Omega$ is unambiguous, we denote $G_{\Omega,\#}$, $R_{\Omega,\#}$ by $G_{\#}$, $R_{\#}$. Note $G_{\mathrm{D}}$ exists uniquely and is symmetric in the variables $x,y$. While $G_{\mathrm{N}}$ is not unique, it is available to let $G_{\mathrm{N}}$ being symmetric in the variables $x,y$. We denote the harmonic parts of the Green functions by
\begin{align*}
g_{\mathrm{D}}(x,y)&:=G_{\mathrm{D}}(x,y)-G(x,y),\\
g_{\mathrm{N}}(x,y)&:=G_{\mathrm{N}}(x,y)-G(x,y)-\tilde{g}(x,y),
\end{align*}
where $\tilde{g}$ is a smooth function on $\Omega\times\Omega$ and symmetric in the variables $x,y$ satisfying $\Delta_x\tilde{g}(x,y)\equiv-\frac{1}{\vert\Omega\vert}$. We set
\begin{equation*}
\tilde{g}(x,y)=-\frac{\vert x-y\vert^2}{2d\vert\Omega\vert}.
\end{equation*}
Denote
\begin{equation*}
\bar{g}_{\mathrm{D}}(x,y)=g_{\mathrm{D}}(x,y),\quad \bar{g}_{\mathrm{N}}=\tilde{g}(x,y)+g_{\mathrm{N}}(x,y).
\end{equation*}
Then the Robin functions are given by
\begin{equation*}
R_{\mathrm{D}}(x)=\bar{g}_{\mathrm{D}}(x,x)=g_{\mathrm{D}}(x,x),\quad R_{\mathrm{N}}(x)=\bar{g}_{\mathrm{N}}(x,x)=g_{\mathrm{N}}(x,x).
\end{equation*}

\subsection{Exact representation of Green functions for special cases}\label{subsec-special-Green}

For $\Omega=B_1^2$, where $B_1^2$ is the unit ball in $\mathbb{R}^2$, Green functions are given by
\begin{align*}
G_{B_1^2,\mathrm{D}}(x,y)&=\frac{1}{2\pi}\ln\frac{\vert x-y\vert}{\left\vert x\vert y\vert-y\vert y\vert^{-1}\right\vert},\\
G_{B_1^2,\mathrm{N}}(x,y)&=\frac{1}{2\pi}\ln\left(\vert x-y\vert\left\vert x\vert y\vert-y\vert y\vert^{-1}\right\vert\right)+\frac{\vert x\vert^2+\vert y\vert^2}{4\pi}.
\end{align*}
Robin functions are given by
\begin{align*}
R_{B_1^2,\mathrm{D}}(x)&=-\frac{1}{2\pi}\ln\left\vert x\vert x\vert-x\vert x\vert^{-1}\right\vert,\\
R_{B_1^2,\mathrm{N}}(x)&=\frac{1}{2\pi}\ln\left\vert x\vert x\vert-x\vert x\vert^{-1}\right\vert+\frac{\vert x\vert^2}{2\pi}.
\end{align*}

We can also define the Green functions for unbounded domains. For $\Omega=\mathbb{R}_+^d$, noting that $\frac{1}{\vert \Omega\vert}=0$, Green functions are given by
\begin{equation*}
G_{\mathbb{R}_+^d,\#}(x,y)=G(x,y)+(-1)^{\#}G(x,y'),
\end{equation*}
where $(-1)^\mathrm{D}=-1$, $(-1)^\mathrm{N}=1$. In this case the Robin functions only depend on $x_1$ for $x=(x_1,\bar{x})\in\mathbb{R}_+^d$, given by
\begin{equation*}
R_{\mathbb{R}_+^d,\#}(x)=(-1)^{\#}G(x,x')=(-1)^{\#}\left\{\begin{split}
\frac{1}{2\pi}\ln(2x_1),\quad &d=2,\\
\frac{(2x_1)^{2-d}}{(2-d)\omega_d},\quad &d\ge 3.
\end{split}\right.
\end{equation*}

In both cases, Robin functions satisfy 
\begin{equation*}
\lim_{\operatorname{d}_{\partial\Omega}(x)\to 0}R_{\Omega,\mathrm{D}}(x)=+\infty,\quad\lim_{\operatorname{d}_{\partial\Omega}(x)\to 0}R_{\Omega,\mathrm{N}}(x)=-\infty.
\end{equation*}
In the sequel of this section, we will prove that this fact holds also for any bounded convex domain $\Omega$.

\subsection{Layer potentials}

By the definitions of the Green functions on $\Omega$, for all $y\in\Omega$, $g_{\mathrm{D}}(\cdot,y)$ satisfies
\begin{equation}\label{eq-regular-part-D}
\left\{\begin{split}
\Delta_xg_{\mathrm{D}}=0,\quad &x\in\Omega,\\
g_{\mathrm{D}}=\bar{h}_{\mathrm{D}},\quad &x\in\partial\Omega,
\end{split}\right.
\end{equation}
where $\bar{h}_{\mathrm{D}}(x,y):=-G(x,y)$. Similarly, $g_{\mathrm{N}}(\cdot,y)$ satisfies
\begin{equation}\label{eq-regular-part-N}
\left\{\begin{split}
\Delta_xg_{\mathrm{N}}=0,\quad &x\in\Omega,\\
\partial_{n_x}g_{\mathrm{N}}=\bar{h}_{\mathrm{N}},\quad &x\in\partial\Omega,
\end{split}\right.
\end{equation}
where $\bar{h}_{\mathrm{N}}(x,y):=-\partial_{n_x}[G(x,y)+\tilde{g}(x,y)]$.

According to  \cite{FJR78} for $C^1$ bounded domain, we can solve the boundary value problems \eqref{eq-regular-part-D} and \eqref{eq-regular-part-N} by the following formula:
\begin{equation}\label{eq-layer-representation}
g_{\#}(x,y)=\mathcal{S}_{\#}[T_{\#}[\bar{h}_{\#}(\cdot,y)](\cdot)](x),\quad x,y\in\Omega,
\end{equation}
where $\mathcal{S}_{\#}$, $T_{\#}$ are operators defined on $L^p(\partial\Omega)$ as follows. The boundary-to-domain double layer potential $\mathcal{S}_{\mathrm{D}}$ is defined by
\begin{equation*}
\mathcal{S}_{\mathrm{D}}g(x):=\int_{\partial\Omega}
\partial_{n_{w}}G(x,w)g(w)\,\mathrm{d}S_w,\quad x\in\Omega.
\end{equation*}
The boundary-to-domain single layer potential $\mathcal{S}_{\mathrm{N}}$ is defined by
\begin{equation*}
\mathcal{S}_{\mathrm{N}}g(x):=\int_{\partial\Omega}
-G(x,w)g(w)\,\mathrm{d}S_w,\quad x\in\Omega.
\end{equation*}
The operator $T_{\#}$ is the inverse of a Fredholm type operator defined by
\begin{equation*}
T_{\#}:=\left(\frac{1}{2}\operatorname{Id}+\mathcal{K}_{\#}\right)^{-1},
\end{equation*}
where the boundary-to-boundary integral operator $\mathcal{K}_{\#}$ is defined by
\begin{align*}
\mathcal{K}_{\mathrm{D}}g(x)&:=\operatorname{p.v.}\int_{\partial\Omega}
\partial_{n_{w}}G(x,w)g(w)\,\mathrm{d}S_w,\quad x\in\partial\Omega,\\
\mathcal{K}_{\mathrm{N}}g(x)&:=\operatorname{p.v.}\int_{\partial\Omega}
-\partial_{n_{x}}G(x,w)g(w)\,\mathrm{d}S_w,\quad x\in\partial\Omega.
\end{align*}
For convenience, we denote the kernels of $\mathcal{S}_{\mathrm{D}},\mathcal{S}_{\mathrm{N}},\mathcal{K}_{\mathrm{D}},\mathcal{K}_{\mathrm{N}}$ by
\begin{align*}
&S_{\mathrm{D}}(x,w):=\partial_{n_{w}}G(x,w),\quad S_{\mathrm{N}}(x,w):=-G(x,w),\quad x\in\Omega,\,w\in\partial\Omega,\\
&K_{\mathrm{D}}(x,w):=\partial_{n_{w}}G(x,w),\quad K_{\mathrm{N}}(x,w):=-\partial_{n_{x}}G(x,w),\quad x,w\in\partial\Omega.
\end{align*}


First of all, to deal with the operator $\mathcal{K}_{\#}$, we need the following lemmas.
\begin{lemma}\label{lem-geometric-lemma}
There exists a constant $C>0$ such that $\forall x,w\in\partial\Omega,\,x\ne w$
\begin{align*}
\vert n_x\cdot(x-w)\vert&\le C\vert x-w\vert^2,\\
\vert K_{\#}(x,w)\vert&\le C\vert x-w\vert^{2-d}.
\end{align*}
\end{lemma}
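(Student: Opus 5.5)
The plan is to prove the geometric inequality first, using only the $C^2$ (indeed $C^4$) regularity and compactness of $\partial\Omega$, and then to read off the kernel bound from an explicit formula for $\nabla G$.

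\textbf{Step 1: the geometric inequality, near case.} By compactness and regularity of $\partial\Omega$, there is $r_0>0$ (one may take $r_0\le\delta_0$) with the following property. For every $x\in\partial\Omega$, after a rigid motion sending $x$ to $0$ and $n_x$ to $-e_d$ (or $e_d$), the set $\partial\Omega\cap B^d(x,r_0)$ is the graph $\{(y',\psi_x(y')):\ y'\in U\subset\mathbb{R}^{d-1}\}$. Here $\psi_x(0)=0$ and $\nabla\psi_x(0)=0$. Moreover $\|D^2\psi_x\|_{L^\infty}\le C_0$, with $C_0$ uniform in $x$ because the second fundamental form of $\partial\Omega$ is bounded. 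For $w=(w',\psi_x(w'))$ with $|x-w|<r_0$, Taylor's theorem gives
\[
|n_x\cdot(x-w)|=|\psi_x(w')|\le \tfrac{C_0}{2}|w'|^2\le \tfrac{C_0}{2}|x-w|^2 .
\]

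\textbf{Step 1: the geometric inequality, far case.} If $|x-w|\ge r_0$, then
\[
|n_x\cdot(x-w)|\le|x-w|\le r_0^{-1}|x-w|^2 .
\]
Taking $C=\max(C_0/2,\,r_0^{-1})$ proves the first inequality. The main obstacle is the uniformity of $r_0$ and $C_0$ in $x$. I would obtain it by a finite covering of $\partial\Omega$ by coordinate patches together with the implicit function theorem. Alternatively, I would use the tubular-neighborhood parametrization already assumed in \eqref{eq-assum-Omega} to get a uniform graph radius.

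\textbf{Step 2: compute the kernel.} For $x\ne w$, differentiating \eqref{eq-definition-G} gives, in every dimension $d\ge2$ (including $d=2$, where $\nabla\frac{1}{2\pi}\ln|z|=\frac{z}{2\pi|z|^2}$),
\[
\nabla_w G(x,w)=\frac{w-x}{\omega_d|x-w|^{d}} ,
\qquad
\nabla_x G(x,w)=\frac{x-w}{\omega_d|x-w|^{d}} .
\]
Hence
\[
K_{\mathrm D}(x,w)=\frac{n_w\cdot(w-x)}{\omega_d|x-w|^d},\qquad K_{\mathrm N}(x,w)=-\frac{n_x\cdot(x-w)}{\omega_d|x-w|^d}.
\]

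\textbf{Step 3: conclude the kernel bound.} Apply the first inequality, with the roles of $x$ and $w$ swapped in the Dirichlet case (allowed since both points lie on $\partial\Omega$). This gives
\[
|K_\#(x,w)|\le \frac{C|x-w|^2}{\omega_d|x-w|^d}=C'|x-w|^{2-d}.
\]
Convexity is not needed here; only the boundedness of the curvature of $\partial\Omega$ is used.
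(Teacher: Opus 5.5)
Your proof is correct and follows the paper's route. The paper simply cites the first inequality as a standard geometric fact from \cite{Gun67} and says the kernel bound follows immediately; you supply the usual local-graph/Taylor argument behind that citation, and you make the ``immediate'' step explicit through the formulas $K_{\mathrm D}(x,w)=\frac{n_w\cdot(w-x)}{\omega_d|x-w|^d}$ and $K_{\mathrm N}(x,w)=-\frac{n_x\cdot(x-w)}{\omega_d|x-w|^d}$, with the $x\leftrightarrow w$ swap for the Dirichlet kernel.
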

\begin{proof}
The first inequality is a well known geometric lemma, see \cite[Chapter 1]{Gun67}. The second can be deduced immediately from the first.
\end{proof}

\begin{lemma}\label{lem-boundary-boundary-integral}
Let $-\infty<d_1,d_2<d-1$, there exists a constant $C>0$ such that $\forall w,y\in\partial\Omega,\,w\ne y$,
\begin{equation*}
\int_{\partial\Omega}\frac{1}{\vert w-z\vert^{d_1}\vert y-z\vert^{d_2}}\,\mathrm{d}S_z\le \begin{cases}
C\vert w-y\vert^{d-1-d_1-d_2},&\text{\rm if}\;\;d_1+d_2>d-1,\\
C(1+\vert\ln\vert w-y\vert\vert),&\text{\rm if}\;\;d_1+d_2=d-1,\\
C,&\text{\rm if}\;\;d_1+d_2<d-1.
\end{cases}
\end{equation*}
\end{lemma}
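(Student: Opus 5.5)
We split the integral $\int_{\partial\Omega}\vert w-z\vert^{-d_1}\vert y-z\vert^{-d_2}\,\mathrm{d}S_z$ according to the distance of $z$ to $w$ and $y$. The model case is the flat one: $\partial\Omega$ is a compact $C^4$ hypersurface of dimension $d-1$. The basic tool we will use is the surface-measure estimate
\begin{equation*}
\int_{\partial\Omega\cap B^d(w,r)}\vert w-z\vert^{-a}\,\mathrm{d}S_z\le C r^{d-1-a}\quad (a<d-1),\qquad \int_{\partial\Omega\setminus B^d(w,r)}\vert w-z\vert^{-a}\,\mathrm{d}S_z\le \begin{cases}Cr^{d-1-a},& a>d-1,\\ C(1+\vert\ln r\vert),&a=d-1,\\ C,&a<d-1,\end{cases}
\end{equation*}
valid uniformly in $w\in\partial\Omega$ and $0<r\le\operatorname{diam}\Omega$. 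To prove it, we note that $\mathcal{H}^{d-1}(\partial\Omega\cap B^d(w,r))\le Cr^{d-1}$ uniformly. This follows from local graph representations of $\partial\Omega$ with uniformly bounded Lipschitz constants (compactness and $C^1$ regularity), or for large $r$ from $\mathcal{H}^{d-1}(\partial\Omega)<\infty$. We then decompose into dyadic annuli $\{2^{-k-1}r\le\vert w-z\vert<2^{-k}r\}$ (respectively $\{2^k r\le\vert w-z\vert<2^{k+1}r\}$, stopping at $\operatorname{diam}\Omega$) and sum the resulting geometric series. The logarithm appears exactly when the exponent equals $d-1$, because there are about $\vert\ln r\vert$ nonempty annuli.

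With $r:=\vert w-y\vert$, we split $\partial\Omega$ into three regions.
\begin{itemize}
\item $A_1=\{\vert z-w\vert<r/2\}$. Here $\vert y-z\vert\ge r/2$ and $\vert y-z\vert\le\operatorname{diam}\Omega$, so $\vert y-z\vert^{-d_2}\le C\max(r^{-d_2},1)$. The first estimate with $a=d_1<d-1$ then gives a bound $Cr^{d-1-d_1}\max(r^{-d_2},1)$.
\item $A_2=\{\vert z-y\vert<r/2\}$. This is symmetric, giving $Cr^{d-1-d_2}\max(r^{-d_1},1)$.
\item $A_3$, the rest. Here $\vert z-y\vert\le\vert z-w\vert+r\le3\vert z-w\vert$ and symmetrically, so $\vert w-z\vert\sim\vert y-z\vert$. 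The integrand is therefore comparable to $\vert w-z\vert^{-(d_1+d_2)}$ on $\{\vert z-w\vert\ge r/2\}$, with the usual adjustment if one exponent is negative: then we bound that factor by $C\vert w-z\vert^{-d_i}$, which still uses the comparability. The second estimate with $a=d_1+d_2$ gives the three cases $Cr^{d-1-d_1-d_2}$, $C(1+\vert\ln r\vert)$ and $C$.
\end{itemize}

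It remains to check that the contributions of $A_1$ and $A_2$ are dominated by the claimed right-hand side. If $d_2\ge0$, the $A_1$ bound is $Cr^{d-1-d_1-d_2}$. This matches the first case. In the other two cases ($d_1+d_2\le d-1$) the exponent is nonnegative, so with $r\le\operatorname{diam}\Omega$ it is bounded by $C$. If $d_2<0$, the bound is $Cr^{d-1-d_1}\le C$. In that situation $d_1+d_2<d_1<d-1$, so we are in the third case and the bound $C$ is what is claimed. The same reasoning applies to $A_2$.

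The only real point is the uniform surface-ball measure bound for a compact $C^1$ hypersurface, which we regard as standard. Everything else is bookkeeping of exponent signs, with constants depending on $d_1$, $d_2$, $\operatorname{diam}\Omega$ and the regularity of $\partial\Omega$.
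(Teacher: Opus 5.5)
Your proof is correct and takes essentially the same approach as the paper. The paper omits the argument and defers to the standard integration lemmas for upper Ahlfors-regular (quasi-metric) measures. Those lemmas are exactly your uniform surface-ball bound with the resulting ball/complement estimates, combined with the three-region splitting by $\vert w-y\vert$ and the comparability $\vert w-z\vert\sim\vert y-z\vert$ on the far region. Your handling of the negative-exponent cases is also correct.
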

\begin{proof}
These estimates can be deduced by the fact that $\partial\Omega$ is a compact Lyapunov boundary or by the fact from the integration in quasi-metric space, see e.g., \cite[Lemmas 7.2.1-7.2.2]{MMM22}. We omit it for brevity.
\end{proof}

To deal with $\mathcal{S}_{\#}$, we need the following lemmas.
\begin{lemma}\label{lem-boundary-domain-integral-d-1-a}
Let $a\in\mathbb{R}$, there exists a constant $C>0$ such that $\forall x\in\Omega$,
\begin{equation*}
\int_{\partial\Omega}\frac{1}{\vert x-w\vert^a}\,\mathrm{d}S_w\le\begin{cases}
C\operatorname{d}_{\partial\Omega}(x)^{d-1-a},&\text{\rm if}\;\;a>d-1,\\
C(1+\vert\ln\operatorname{d}_{\partial\Omega}(x)\vert),&\text{\rm if}\;\;a=d-1,\\
C,&\text{\rm if}\;\;a<d-1.
\end{cases}
\end{equation*}
\end{lemma}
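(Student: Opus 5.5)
The plan is to reduce the surface integral to a planar integral near the point of $\partial\Omega$ closest to $x$, and to bound the far part trivially. Set $\delta=\operatorname{d}_{\partial\Omega}(x)$ and let $x^*\in\partial\Omega$ be a closest boundary point, so that $\vert x-x^*\vert=\delta$. The basic pointwise inequality is
\begin{equation*}
\vert x-w\vert\ge\max\{\delta,\tfrac12\vert w-x^*\vert\},\qquad w\in\partial\Omega .
\end{equation*}
Indeed, $\vert x-w\vert\ge\delta$ by definition of $\delta$. Also $\vert w-x^*\vert\le\vert w-x\vert+\delta\le 2\vert x-w\vert$. This gives $\vert x-w\vert\gtrsim\delta+\vert w-x^*\vert$, and it needs no convexity.

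First I treat the case $a\le 0$, which is trivial. Then $\vert x-w\vert^{-a}\le(\operatorname{diam}\Omega)^{-a}$, and the integral is at most $C\vert\partial\Omega\vert$.

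For $a>0$, the integral is at most
\begin{equation*}
C\int_{\partial\Omega}\bigl(\delta+\vert w-x^*\vert\bigr)^{-a}\,\mathrm{d}S_w .
\end{equation*}
I split $\partial\Omega$ into $\partial\Omega\cap B^d(x^*,r_0)$ and its complement, where $r_0>0$ is a fixed radius given by the $C^4$ regularity of $\partial\Omega$. The radius is chosen so that for every $x^*\in\partial\Omega$, the set $\partial\Omega\cap B^d(x^*,r_0)$ is a graph over the tangent plane at $x^*$. On this set the projection $w\mapsto\bar w\in\mathbb{R}^{d-1}$ is bi-Lipschitz with uniform constants, so $\mathrm{d}S_w\le C\,\mathrm{d}\bar w$ and $\vert w-x^*\vert\ge\vert\bar w\vert$.

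On the complement, $\vert w-x^*\vert\ge r_0$, so the integrand is bounded by $r_0^{-a}$. The far part therefore contributes at most $C$, which is dominated by all three right-hand sides once $\delta\le\operatorname{diam}\Omega$.

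On the near part I pass to polar coordinates in $\mathbb{R}^{d-1}$:
\begin{equation*}
C\int_0^{r_0}\frac{s^{d-2}}{(\delta+s)^a}\,\mathrm{d}s .
\end{equation*}
I split this at $s=\delta$, assuming $\delta<r_0$; if $\delta\ge r_0$ the whole integral is bounded, which is consistent with all cases.
\begin{itemize}
\item On $[0,\delta]$ the integrand is at most $\delta^{-a}s^{d-2}$, giving $C\delta^{d-1-a}$.
\item On $[\delta,r_0]$ the integrand is comparable to $s^{d-2-a}$.
\end{itemize}
Integrating $s^{d-2-a}$ over $[\delta,r_0]$ gives $C\delta^{d-1-a}$ if $a>d-1$, $C(1+\vert\ln\delta\vert)$ if $a=d-1$, and $C$ if $a<d-1$. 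When $a<d-1$, the term $\delta^{d-1-a}$ from $[0,\delta]$ is itself bounded. Combining the two pieces gives exactly the three cases claimed.

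The only step requiring care is the uniform local graph representation: the radius $r_0$ and the Lipschitz constants must be independent of $x^*$. This follows from the compactness and $C^2$ (indeed $C^4$) regularity of $\partial\Omega$. It is the same Lyapunov-boundary property used in Lemma~\ref{lem-boundary-boundary-integral}, and I would cite it in the same way rather than reprove it.
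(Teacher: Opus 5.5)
Your proof is correct and follows essentially the paper's route. Both arguments bound the part of $\partial\Omega$ within distance comparable to $\operatorname{d}_{\partial\Omega}(x)$ of the nearest boundary point using $\vert x-w\vert\ge\operatorname{d}_{\partial\Omega}(x)$, and the rest using $\vert x-w\vert\ge\frac{1}{2}\vert w-x^*\vert$. The only difference is that the paper cites an integration lemma in quasi-metric spaces for the resulting annular integral, whereas you compute it directly in local graph coordinates.

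Your separate treatment of $a\le 0$ is actually a small improvement. The paper's step $\vert x-w\vert^{-a}\le\operatorname{d}_{\partial\Omega}(x)^{-a}$ is only valid for $a\ge 0$.
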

\begin{proof}
Note that $\Omega$ satisfies \eqref{eq-assum-Omega}. Hence for all $x\in\Omega$ satisfying $\operatorname{d}_{\partial\Omega}(x)\le \delta_0$, it holds
\begin{equation*}
n_{x_{\parallel}}\parallel x-x_{\parallel},\quad\vert x-x_{\parallel}\vert=\operatorname{d}_{\partial\Omega}(x).
\end{equation*}

Now we fix a point $x\in\Omega$. If $\operatorname{d}_{\partial\Omega}(x)\ge \delta_0/2$, note $\vert x-w\vert\ge\operatorname{d}_{\partial\Omega}(x)$, it is obvious that
\begin{align*}
\int_{\partial\Omega}\frac{1}{\vert x-w\vert^a}\,\mathrm{d}S_w&\le\frac{\vert\partial\Omega\vert}{\operatorname{d}_{\partial\Omega}(x)^{a}}\\
&\le\vert\partial\Omega\vert\min\left(\frac{2^{d-1}\delta_0^{-d+1}}{\operatorname{d}_{\partial\Omega}(x)^{a-d+1}},2^{a}\delta_0^{-a}(1+\vert\ln\operatorname{d}_{\partial\Omega}(x)\vert),2^{a}\delta_0^{-a}\right).
\end{align*}

If $\operatorname{d}_{\partial\Omega}(x)<\delta_0/2$, we split the integral into two parts:
\begin{equation*}
\int_{\partial\Omega}\frac{1}{\vert x-w\vert^a}\,\mathrm{d}S_w=\int_{A_1}\frac{1}{\vert x-w\vert^a}\,\mathrm{d}S_w+\int_{A_2}\frac{1}{\vert x-w\vert^a}\,\mathrm{d}S_w,
\end{equation*}
where
\begin{equation*}
A_1=\partial\Omega\cap B_{x_{\parallel},2\operatorname{d}_{\partial\Omega}(x)},\quad A_2=\partial\Omega\setminus A_1.
\end{equation*}
Since $\partial\Omega$ is $C^4$, there exists a constant $C>0$ such that $\vert A_1\vert\le C\operatorname{d}_{\partial\Omega}(x)^{d-1}$. Note $\vert x-w\vert\ge\operatorname{d}_{\partial\Omega}(x)$, 
\begin{equation*}
\int_{A_1}\frac{1}{\vert x-w\vert^a}\,\mathrm{d}S_w\le \frac{\vert A_1\vert}{\operatorname{d}_{\partial\Omega}(x)^{a}}\le \frac{C}{\operatorname{d}_{\partial\Omega}(x)^{a-d+1}}.
\end{equation*}
For $w\in A_2$, we have $\vert x-w\vert\ge \vert x_{\parallel}-w\vert-\vert x-x_{\parallel}\vert\ge\frac{1}{2}\vert x_{\parallel}-w\vert$. By the integration in quasi-metric space, \cite[Lemma 7.2.1]{MMM22}, we have
\begin{align*}
\int_{A_2}\frac{1}{\vert x-w\vert^a}\,\mathrm{d}S_w&\le \int_{\partial\Omega\setminus B_{x_{\parallel},2\operatorname{d}_{\partial\Omega}(x)}}\frac{C}{\vert x_{\parallel}-w\vert^a}\,\mathrm{d}S_w\\
&\le\begin{cases}
C\operatorname{d}_{\partial\Omega}(x)^{d-1-a},&\text{\rm if}\;\;a>d-1,\\
C(1+\vert\ln\operatorname{d}_{\partial\Omega}(x)\vert),&\text{\rm if}\;\;a=d-1,\\
C,&\text{\rm if}\;\;a<d-1.
\end{cases}
\end{align*}
This ends the proof.
\end{proof}

\begin{lemma}\label{lem-boundary-domain-integral}
There exists a constant $C>0$ such that 
\begin{equation*}
\sup_{x\in\Omega}\int_{\partial\Omega}\frac{\vert n_w\cdot(x-w)\vert}{\vert x-w\vert^d}\,\mathrm{d}S_w\le C.
\end{equation*}
\end{lemma}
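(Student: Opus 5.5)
The plan is to exploit convexity directly, so that the absolute value can be removed and the integral computed exactly by the divergence theorem. For $x\in\Omega$ and $w\in\partial\Omega$, convexity of $\Omega$ means $\Omega$ lies on the inner side of the supporting hyperplane at $w$. Hence $n_w\cdot(y-w)\le 0$ for all $y\in\bar\Omega$, and in particular
\begin{equation*}
n_w\cdot(x-w)\le 0,\qquad\text{so}\qquad \vert n_w\cdot(x-w)\vert=n_w\cdot(w-x).
\end{equation*}
The integrand is therefore, up to the constant $\omega_d$, exactly the double layer kernel:
\begin{equation*}
\frac{\vert n_w\cdot(x-w)\vert}{\vert x-w\vert^d}=\frac{n_w\cdot(w-x)}{\vert w-x\vert^{d}}=\omega_d\,\partial_{n_w}G(x,w).
\end{equation*}
This uses $\nabla_wG(x,w)=\frac{w-x}{\omega_d\vert w-x\vert^d}$ for every $d\ge2$, which follows from \eqref{eq-definition-G}.

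Next I will apply Gauss's divergence theorem to the harmonic function $w\mapsto G(x,w)$ on $\Omega\setminus B^d(x,\varepsilon)$, with $\varepsilon<\operatorname{d}_{\partial\Omega}(x)$. This is the classical Gauss identity. The flux of $\nabla_wG(x,\cdot)$ through the small sphere $\partial B^d(x,\varepsilon)$ equals $1$, since $\vert\nabla_wG\vert=\frac{\varepsilon^{1-d}}{\omega_d}$ there and the sphere has area $\omega_d\varepsilon^{d-1}$. Therefore
\begin{equation*}
\int_{\partial\Omega}\partial_{n_w}G(x,w)\,\mathrm{d}S_w=1\quad\text{for every }x\in\Omega.
\end{equation*}
Combining the two displays gives
\begin{equation*}
\int_{\partial\Omega}\frac{\vert n_w\cdot(x-w)\vert}{\vert x-w\vert^d}\,\mathrm{d}S_w=\omega_d
\end{equation*}
for every $x\in\Omega$, so the lemma holds with $C=\omega_d$, uniformly in $x$.

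The only points needing care are the following. The boundary is $C^4$, hence regular enough for the divergence theorem. The sign convention is fixed by $n_w$ being the outer normal. The main (mild) obstacle is justifying the sign claim $n_w\cdot(x-w)\le0$; this is exactly the supporting hyperplane property of convex sets, where the tangent plane at the smooth boundary point $w$ is the supporting hyperplane.

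If one wanted to avoid convexity, a fallback would be to use Lemma~\ref{lem-geometric-lemma} near $x_\parallel$ together with Lemma~\ref{lem-boundary-domain-integral-d-1-a}. In that approach one writes $n_w\cdot(x-w)=n_w\cdot(x-x_\parallel)+n_w\cdot(x_\parallel-w)$, bounds the first term by $\operatorname{d}_{\partial\Omega}(x)\vert x-w\vert^{-d}$, which integrates to $O(1)$, and bounds the second by $\vert x_\parallel-w\vert^{2}\vert x-w\vert^{-d}$, which integrates to $O(1)$ as in Lemma~\ref{lem-boundary-boundary-integral}. The convexity argument is, however, cleaner and exact.
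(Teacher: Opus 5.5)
Your proof is correct, but it takes a different route from the paper's.

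\textbf{The paper's route.} The paper never removes the absolute value. For $\operatorname{d}_{\partial\Omega}(x)<\delta_0/2$ it splits $\partial\Omega$ into three regions: the cap $A_1=\partial\Omega\cap B_{x_\parallel,2\operatorname{d}_{\partial\Omega}(x)}$, an intermediate region $A_2$ within $\delta_0$ of $x_\parallel$, and a far region $A_3$. On $A_1$ and $A_3$ it bounds the integral by area times the trivial size of the kernel. On $A_2$ it uses Lemma~\ref{lem-geometric-lemma} to get $\vert n_w\cdot(x-w)\vert\le C\vert x_\parallel-w\vert^2+\operatorname{d}_{\partial\Omega}(x)$. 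It then closes with Lemma~\ref{lem-boundary-domain-integral-d-1-a}, applied with $a=d-2$ and $a=d$. This is essentially your fallback sketch, except that the quadratic term is handled by Lemma~\ref{lem-boundary-domain-integral-d-1-a} rather than the boundary--boundary lemma.

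\textbf{Your route.} You use convexity to get $n_w\cdot(w-x)>0$, so the integrand is exactly $\omega_d\,\partial_{n_w}G(x,w)$. Gauss's identity then gives the value $\omega_d$ for every $x\in\Omega$. Your sign and flux computations are right, and the case $d=2$ is included since $\omega_2=2\pi$.

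\textbf{Comparison.} Your argument buys brevity and the sharp constant. Convexity is a standing hypothesis of the section, and the paper itself uses the same sign fact $\partial_{n_z}G(z,y)\ge 0$ in the proof of Lemma~\ref{lem-estimate-Green}. The paper's argument buys independence from convexity. It uses only the $C^2$ geometry of $\partial\Omega$, namely the quadratic bound of Lemma~\ref{lem-geometric-lemma} and the tubular neighbourhood. It therefore survives on nonconvex domains, where the double-layer kernel changes sign and Gauss's identity controls only the signed integral, not the integral of its absolute value.
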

\begin{proof}
The proof is similar to that of Lemma~\ref{lem-boundary-domain-integral-d-1-a}. We fix a point $x\in\Omega$. If $\operatorname{d}_{\partial\Omega}(x)\ge \delta_0/2$, it is obvious that
\begin{equation*}
\int_{\partial\Omega}\frac{\vert n_w\cdot(x-w)\vert}{\vert x-w\vert^d}\,\mathrm{d}S_w\le 2^{d-1}\delta_0^{1-d}\vert\partial\Omega\vert.
\end{equation*}

If $\operatorname{d}_{\partial\Omega}(x)<\delta_0/2$, we split the integral into three parts:
\begin{equation*}
\int_{\partial\Omega}\frac{\vert n_w\cdot(x-w)\vert}{\vert x-w\vert^d}\,\mathrm{d}S_w=\sum_{i=1}^{3}\int_{A_i}\frac{\vert n_w\cdot(x-w)\vert}{\vert x-w\vert^d}\,\mathrm{d}S_w=:\sum_{i=1}^{3}I_i,
\end{equation*}
where
\begin{equation*}
A_1=\partial\Omega\cap B_{x_{\parallel},2\operatorname{d}_{\partial\Omega}(x)},\quad A_2=\partial\Omega\cap B_{x_{\parallel},\delta_0}\setminus A_1,\quad A_3=\partial\Omega\setminus(A_1\cup A_2).
\end{equation*}
For $w\in A_1$, we have $\vert A_1\vert\le C\operatorname{d}_{\partial\Omega}(x)^{d-1}$ and $\vert x-w\vert\ge\operatorname{d}_{\partial\Omega}(x)$, hence
\begin{equation*}
I_1\le\operatorname{d}_{\partial\Omega}(x)^{1-d}\vert A_1\vert\le C.
\end{equation*}
For $w\in A_3$, we have $\vert x-w\vert\ge\vert x_{\parallel}-w\vert-\vert x-x_{\parallel}\vert\ge \delta_0$, hence
\begin{equation*}
I_3\le 2^{d-1}\delta_0^{1-d}\vert A_3\vert\le C.
\end{equation*}
For $w\in A_2$, by Lemma~\ref{lem-geometric-lemma}, we have
\begin{equation*}
\vert n_w\cdot(x-w)\vert\le\vert n_w\cdot(x_{\parallel}-w)\vert+\vert n_w\cdot(x_{\parallel}-x)\vert\le C\vert x_{\parallel}-w\vert^{2}+\operatorname{d}_{\partial\Omega}(x).
\end{equation*}
Hence by Lemma~\ref{lem-boundary-domain-integral-d-1-a},
\begin{equation*}
I_2\le\int_{A_2}\frac{C}{\vert x-w\vert^{d-2}}\,\mathrm{d}S_w+\int_{A_2}\frac{\operatorname{d}_{\partial\Omega}(x)}{\vert x-w\vert^{d}}\,\mathrm{d}S_w\le C.
\end{equation*}
Summing up the estimates of $I_1,I_2,I_3$, we get the conclusion.
\end{proof}

By the above lemmas, we get the following properties of $\mathcal{K}_{\#},\mathcal{S}_{\#}$.
\begin{lemma}\label{lem-Km-L1-Linfty}
Let $\mathcal{K}_{\#}^i$ denote the $i$-th composition of $\mathcal{K}_{\#}$, and $K_{i,\#}(x,y)$ the corresponding integral kernel. Then $K_{1,\#}(x,y)$ is bounded for $d=2$. For $d\ge 3$, then $\forall x,y\in\partial\Omega,\,x\ne y$,
\begin{equation*}
\vert K_{i,\#}(x,y)\vert\le \begin{cases}
C\vert x-y\vert^{i+1-d},&\text{\rm if}\;\;1\le i\le d-2,\\
C(1+\vert \ln\vert x-y\vert\vert),&\text{\rm if}\;\;i=d-1,\\
C,&\text{\rm if}\;\;i=d.
\end{cases}
\end{equation*}

As a consequence, $\mathcal{K}_{\#}$ is bounded from $L^p(\partial\Omega)$ to $L^q(\partial\Omega)$ for $q\in[1,\frac{(d-1)p}{d-1-p})$ if $p\in[1,d-1)$, for $q=\infty$ if $p>d-1$ and for $q=\infty$ if $p\ge 1$, $d=2$. $\mathcal{K}_{\#}^m$ is bounded from $L^{1}(\partial\Omega)$ to $L^{\infty}(\partial\Omega)$ for $m=d$ if $d\ge 3$ and for $m=1$ if $d=2$.
\end{lemma}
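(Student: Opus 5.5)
Since $K_{i,\#}$ is the kernel of $\mathcal{K}_{\#}^i$, it satisfies the recursion
\begin{equation*}
K_{i+1,\#}(x,y)=\int_{\partial\Omega}K_{i,\#}(x,z)K_{\#}(z,y)\,\mathrm{d}S_z .
\end{equation*}
I will prove the stated kernel bounds by induction on $i$, using this recursion together with Lemma~\ref{lem-boundary-boundary-integral}.

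\textbf{Base case.} For $i=1$, Lemma~\ref{lem-geometric-lemma} gives $\vert K_{\#}(x,y)\vert\le C\vert x-y\vert^{2-d}$, which is the claimed bound. In $d=2$ this already means $K_{1,\#}$ is bounded.

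\textbf{Case $1\le i\le d-2$.} I insert $\vert K_{i,\#}(x,z)\vert\le C\vert x-z\vert^{i+1-d}$ and $\vert K_{\#}(z,y)\vert\le C\vert z-y\vert^{2-d}$ into the recursion. This puts us in Lemma~\ref{lem-boundary-boundary-integral} with $d_1=d-1-i$ and $d_2=d-2$, both strictly less than $d-1$. Their sum is $d_1+d_2=2d-3-i$. The three regimes then go as follows.
\begin{itemize}
\item If $i<d-2$, then $d_1+d_2>d-1$ and the lemma gives the exponent $d-1-d_1-d_2=i+2-d$, i.e. the bound for $K_{i+1,\#}$.
\item If $i=d-2$, then $d_1+d_2=d-1$ and we obtain the logarithmic bound for $i+1=d-1$.
\item For the last step, from $i=d-1$ to $i=d$, I use $\vert K_{d-1,\#}(x,z)\vert\le C(1+\vert\ln\vert x-z\vert\vert)\le C_\epsilon\vert x-z\vert^{-\epsilon}$ for small $\epsilon>0$ (recall $\partial\Omega$ is bounded). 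Then $d_1=\epsilon$ and $d_2=d-2$, so $d_1+d_2<d-1$ and the lemma gives a uniform constant.
\end{itemize}
This convenient absorption of the logarithm into a small power is the only place requiring care. The main obstacle is checking that each application of Lemma~\ref{lem-boundary-boundary-integral} stays in its hypothesis range $d_1,d_2<d-1$ and hits the right regime; that bookkeeping is routine.

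\textbf{Mapping properties.}
\begin{itemize}
\item For $\mathcal{K}_{\#}$, the kernel bound $\vert x-y\vert^{2-d}$ on the $(d-1)$-dimensional compact surface $\partial\Omega$ is a Riesz-type potential of order $1$. For $p<d-1$, the Hardy--Littlewood--Sobolev bound (or simply Young's inequality, since the kernel lies uniformly in $L^r(\partial\Omega)$ in either variable for $r<\frac{d-1}{d-2}$) gives $L^p\to L^q$ boundedness for every $q<\frac{(d-1)p}{d-1-p}$. Young's form suffices because only the non-endpoint range is claimed; the exponents match via $1+\frac1q=\frac1r+\frac1p$.
\item For $p>d-1$, Hölder's inequality with $p'<\frac{d-1}{d-2}$ shows the kernel is uniformly in $L^{p'}$, hence $L^\infty$ output.
\item For $d=2$ the kernel is bounded, so $L^1\to L^\infty$ follows directly.
\item Finally, $K_{d,\#}$ is bounded for $d\ge3$ and $K_{1,\#}$ is bounded for $d=2$, so $\sup_x\int\vert K_{m,\#}(x,y)g(y)\vert\,\mathrm{d}S_y\le C\|g\|_{L^1}$ gives $\mathcal{K}_{\#}^m:L^1\to L^\infty$.
\end{itemize}
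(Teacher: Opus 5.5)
Your proposal is correct and follows essentially the same route as the paper. The paper also starts from $|K_{1,\#}(x,y)|\le C|x-y|^{2-d}$ (Lemma~\ref{lem-geometric-lemma}), runs the induction on the composed kernels via Lemma~\ref{lem-boundary-boundary-integral}, and derives the mapping properties by the Young-type kernel argument. Your ordering of the recursion, with $K_{i,\#}$ on the left instead of $K_{1,\#}$, is equivalent by associativity. Your explicit absorption of the logarithm into $|x-z|^{-\epsilon}$ for the step $i=d-1\to d$ supplies a detail the paper leaves implicit.
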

\begin{proof}
The case $d=2$ is trivial by Lemma~\ref{lem-geometric-lemma}. For $d\ge 3$, by the definition of $\mathcal{K}_{\#}$,
\begin{equation*}
K_{1,\mathrm{D}}(x,y)=\partial_{n_y}G(x,y),\quad K_{1,\mathrm{N}}(x,y)=-\partial_{n_x}G(x,y).
\end{equation*}
It is no difficult to deduce by Fubini-Tonelli theorem that
\begin{equation*}
K_{i+1,\#}(x,y)=\int_{\partial\Omega}K_{1,\#}(x,w)K_{i,\#}(w,y)\,\mathrm{d}S_w.
\end{equation*}
Note by Lemma~\ref{lem-geometric-lemma}, we have $\vert K_{1,\#}(x,y)\vert\le C\vert x-y\vert^{2-d}$. It can be deduced by induction and Lemma~\ref{lem-boundary-boundary-integral} that for $1\le i\le d-2$
\begin{equation*}
\vert K_{i,\#}(x,y)\vert\le C\vert x-y\vert^{i+1-d}.
\end{equation*}
Applying Lemma~\ref{lem-boundary-boundary-integral} again for $i=d-1,d$, we obtain
\begin{equation*}
\vert K_{d-1,\#}(x,y)\vert\le C(1+\vert \ln\vert x-y\vert\vert),\quad\vert K_{d,\#}(x,y)\vert\le C.
\end{equation*}

The consequence can be deduced by the argument of proving Young's inequality and the estimates of the kernel $K_{i,\#}(x,y)$.
\end{proof}

\begin{lemma}\label{lem-S-Linfty-Linfty}
$\mathcal{S}_{\#}$ is a bounded operator from $L^{\infty}(\partial\Omega)$ to $L^{\infty}(\Omega)$. 
\end{lemma}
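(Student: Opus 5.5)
The plan is to bound, for each $x\in\Omega$, the $L^1(\partial\Omega)$-norm of the kernel $S_{\#}(x,\cdot)$ uniformly in $x$. The estimate
\begin{equation*}
\vert\mathcal{S}_{\#}g(x)\vert\le\|g\|_{L^{\infty}(\partial\Omega)}\int_{\partial\Omega}\vert S_{\#}(x,w)\vert\,\mathrm{d}S_w
\end{equation*}
then reduces the statement to showing $\sup_{x\in\Omega}\int_{\partial\Omega}\vert S_{\#}(x,w)\vert\,\mathrm{d}S_w\le C$. We treat the two cases $\#=\mathrm{D}$ and $\#=\mathrm{N}$ separately, since the kernels have different singularity orders.

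For the double layer case $\#=\mathrm{D}$, differentiating \eqref{eq-definition-G} gives, for every $d\ge2$,
\begin{equation*}
\partial_{n_w}G(x,w)=\frac{n_w\cdot(w-x)}{\omega_d\vert x-w\vert^{d}}
\end{equation*}
(for $d=2$, $\omega_2=2\pi$, and the same formula holds). Hence $\vert S_{\mathrm{D}}(x,w)\vert=\omega_d^{-1}\vert n_w\cdot(x-w)\vert\,\vert x-w\vert^{-d}$, and Lemma~\ref{lem-boundary-domain-integral} yields the uniform bound immediately. This is the only potentially delicate point: the kernel is of order $\vert x-w\vert^{1-d}$, which is not uniformly integrable as $x$ approaches $\partial\Omega$. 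The cancellation coming from the normal component, as encoded in Lemma~\ref{lem-geometric-lemma}, is what makes it work, and that is already contained in Lemma~\ref{lem-boundary-domain-integral}. Convexity could even give a sign and the exact value $\int_{\partial\Omega}\partial_{n_w}G(x,w)\,\mathrm{d}S_w=1$ by Gauss' formula, but this is not needed.

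For the single layer case $\#=\mathrm{N}$ with $d\ge3$, we have $\vert S_{\mathrm{N}}(x,w)\vert=C\vert x-w\vert^{2-d}$. Applying Lemma~\ref{lem-boundary-domain-integral-d-1-a} with $a=d-2<d-1$ gives a bound by a constant uniformly in $x\in\Omega$. For $d=2$, the kernel is $\frac{1}{2\pi}\vert\ln\vert x-w\vert\vert$. Since $\vert x-w\vert\le\operatorname{diam}\Omega$, we have $\vert\ln\vert x-w\vert\vert\le C+C\vert x-w\vert^{-1/2}$, so Lemma~\ref{lem-boundary-domain-integral-d-1-a} with $a=1/2<1=d-1$ again gives a uniform bound.

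Combining the two cases proves the lemma. Measurability and finiteness of $\mathcal{S}_{\#}g(x)$ for each $x\in\Omega$ are clear, because the kernel is continuous in $w$ when $x$ is at positive distance from $\partial\Omega$. No genuine obstacle remains beyond the double layer estimate already established.
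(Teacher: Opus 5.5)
Your proof is correct and is the paper's argument: a uniform $L^1(\partial\Omega)$ bound on the kernel, using Lemma~\ref{lem-boundary-domain-integral-d-1-a} with $a=d-2$ for $\mathcal{S}_{\mathrm{N}}$ and Lemma~\ref{lem-boundary-domain-integral} for $\mathcal{S}_{\mathrm{D}}$. You also handle the logarithmic kernel for $d=2$ explicitly, via $\vert\ln\vert x-w\vert\vert\le C+C\vert x-w\vert^{-1/2}$, which fills in a case the paper's proof passes over.
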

\begin{proof}
For the single layer potential $\mathcal{S}_{\mathrm{N}}$, by Lemma~\ref{lem-boundary-domain-integral-d-1-a},
\begin{equation*}
\| \mathcal{S}_{\mathrm{N}}g\|_{L^{\infty}(\Omega)}\le C\| g\|_{L^{\infty}(\partial\Omega)}\sup_{x\in\Omega}\int_{\partial\Omega}\frac{1}{\vert x-w\vert^{d-2}}\,\mathrm{d}S_w\le C\| g\|_{L^{\infty}(\partial\Omega)}.
\end{equation*}

For the double layer potential $\mathcal{S}_{\mathrm{D}}$, by Lemma~\ref{lem-boundary-domain-integral},
\begin{equation*}
\| \mathcal{S}_{\mathrm{D}}g\|_{L^{\infty}(\Omega)}\le C\| g\|_{L^{\infty}(\partial\Omega)}\sup_{x\in\Omega}\int_{\partial\Omega}\frac{\vert n_{w}\cdot(x-w)\vert}{\vert x-w\vert^{d}}\,\mathrm{d}S_w\le C\| g\|_{L^{\infty}(\partial\Omega)}.
\end{equation*}
\end{proof}

Finally, we need a conclusion for the $L^1$ boundedness of $T_{\#}[\bar{h}_{\#}]$, which has been established in \cite[p227, (21)]{Gun67} for $d=3$.
\begin{lemma}\label{lem-T-L1-L1}
Let $g$ is a continuous function on $\partial\Omega$, then
\begin{equation*}
\|T_{\#}g\|_{L^1(\partial\Omega)}\le C\|g\|_{L^1(\partial\Omega)}.
\end{equation*}
\end{lemma}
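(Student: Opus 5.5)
The plan is to treat $\frac12\operatorname{Id}+\mathcal{K}_{\#}$ as a compact perturbation of the identity on $L^1(\partial\Omega)$ and apply Riesz--Fredholm theory. The conclusion will then follow once we know the operator is injective on $L^1(\partial\Omega)$, or on the relevant compatible subspace. There are three steps.

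\emph{Step 1: compactness.} By Lemma~\ref{lem-geometric-lemma}, $\vert K_{\#}(x,w)\vert\le C\vert x-w\vert^{2-d}$, which is weakly singular on the $(d-1)$-dimensional surface $\partial\Omega$. Split $K_{\#}=K_{\#}\chi_{\varepsilon}(\vert x-w\vert)+K_{\#}\tilde{\chi}_{\varepsilon}(\vert x-w\vert)$. The second piece has a continuous kernel, hence is compact on $L^1(\partial\Omega)$ by Arzel\`a--Ascoli. The first piece has $L^1\to L^1$ norm at most $\sup_w\int_{\partial\Omega}\chi_\varepsilon(\vert x-w\vert)\vert x-w\vert^{2-d}\,\mathrm{d}S_x\le C\varepsilon$, by Lemma~\ref{lem-boundary-boundary-integral} localized, i.e.\ the quasi-metric integration used in Lemma~\ref{lem-boundary-domain-integral-d-1-a}. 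Thus $\mathcal{K}_{\#}$ is a norm limit of compact operators, and so is compact on $L^1(\partial\Omega)$. The same argument works on $C(\partial\Omega)$.

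\emph{Step 2: injectivity via bootstrapping.} Suppose $\psi\in L^1(\partial\Omega)$ satisfies $\frac12\psi+\mathcal{K}_{\#}\psi=0$. Then $\psi=(-2)^m\mathcal{K}_{\#}^m\psi$. By Lemma~\ref{lem-Km-L1-Linfty}, $\mathcal{K}_{\#}^d$ maps $L^1$ to $L^\infty$, so $\psi\in L^\infty$. One more application of $\mathcal{K}_{\#}$, whose kernel is weakly singular and continuous off the diagonal, shows that $\psi$ is continuous. The question therefore reduces to the classical statement that $\frac12\operatorname{Id}+\mathcal{K}_{\#}$ has trivial kernel on $C(\partial\Omega)$.

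For this I would invoke potential theory (Gunter \cite{Gun67}, Kellogg): form the associated layer potential $u=\mathcal{S}_{\#}\psi$. Its jump relations, with the paper's sign convention, show that $u$ solves a homogeneous boundary value problem on one side of $\partial\Omega$. Uniqueness for that problem forces $u\equiv0$ there. Continuity of the appropriate trace or normal derivative across $\partial\Omega$ then transfers this to the other side, and the jump relation gives $\psi=0$.

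\emph{Step 3: bounded inverse.} The Fredholm alternative gives that $\frac12\operatorname{Id}+\mathcal{K}_{\#}$ is surjective with bounded inverse on $L^1(\partial\Omega)$, i.e.\ $\|T_{\#}g\|_{L^1}\le C\|g\|_{L^1}$. For continuous $g$, the solution $T_{\#}g$ is the same whether one inverts on $C$ or on $L^1$, by the uniqueness from Step 2.

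I expect the main obstacle to be Step 2 in the Neumann case. Depending on the sign convention, $\frac12\operatorname{Id}+\mathcal{K}_{\mathrm{N}}$ may be the interior Neumann operator, which has a one-dimensional cokernel given by the compatibility condition $\int_{\partial\Omega}g=0$. In that case I would instead work on the closed subspace $L^1_0(\partial\Omega)=\{g:\int_{\partial\Omega}g\,\mathrm{d}S=0\}$. The data $\bar h_{\mathrm{N}}(\cdot,y)$ lie in this subspace: the divergence theorem gives $\int_{\partial\Omega}\partial_{n}(G+\tilde g)=1-1=0$. I would normalize $T_{\mathrm{N}}$ on this subspace, for example by fixing the mean, and then run Steps 1--3 there, where Fredholm theory again gives a bounded inverse.
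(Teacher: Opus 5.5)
Your argument is correct, but it is not the paper's route. The paper gives no self-contained proof. It defers to G\"unter's $d=3$ estimate \cite[p227, (21)]{Gun67}, which comes from the resolvent (series) expansion of the solution of the second-kind Fredholm equation in \cite[Chapter 3]{Gun67}, and it asserts that this carries over line by line. In that approach $T_{\#}g$ is written as $2g$ plus an integral against a resolvent kernel with the same weak singularity as $K_{\#}$, and the $L^1$ bound comes from integrating that kernel. You instead apply Riesz--Fredholm theory directly on $L^1(\partial\Omega)$. Compactness comes from splitting the kernel. Injectivity comes from bootstrapping an $L^1$ null vector to a continuous one via Lemma~\ref{lem-Km-L1-Linfty} and then invoking classical uniqueness. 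Your route needs no pointwise control of a resolvent kernel and works unchanged for every $d\ge 2$. Its cost is a non-explicit constant, whereas the expansion is more constructive. Both routes rely on the same uniqueness input from potential theory, since the resolvent exists only when the parameter is not a characteristic value.

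Your hedge about sign conventions can be dropped: with the paper's conventions there is no ambiguity. $\mathcal S_{\mathrm N}$ is the single-layer potential with kernel $-G$, and its interior normal derivative is exactly $(\frac12\operatorname{Id}+\mathcal K_{\mathrm N})\psi$; this is what makes \eqref{eq-layer-representation} solve \eqref{eq-regular-part-N}. So $\frac12\operatorname{Id}+\mathcal K_{\mathrm N}$ is the interior Neumann operator. Its kernel is spanned by the equilibrium density $\psi_0$, with $\int_{\partial\Omega}\psi_0\,\mathrm dS\ne0$ (on a ball $\psi_0$ is constant), and its range is the mean-zero subspace. Hence $T_{\mathrm N}$ does not exist as literally defined, and the lemma for arbitrary continuous $g$ is meaningless when $\#=\mathrm N$. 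Your restriction to $L^1_0(\partial\Omega)$, normalized by mean zero, is therefore required rather than optional. It suffices for the application because $\bar h_{\mathrm N}(\cdot,y)$ has mean zero, as you checked.

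Two points there should be made explicit.
\begin{enumerate}
\item Gauss's identity $\int_{\partial\Omega}\partial_{n_x}G(x,w)\,\mathrm dS_x=\frac12$ for $w\in\partial\Omega$ gives $\int_{\partial\Omega}\mathcal K_{\mathrm N}\psi\,\mathrm dS=-\frac12\int_{\partial\Omega}\psi\,\mathrm dS$. So $\mathcal K_{\mathrm N}$ leaves $L^1_0$ invariant, and Step~1 restricts to it.
\item In Step~2 the homogeneous interior Neumann problem yields only $u\equiv c$ in $\Omega$, not $u\equiv0$. You need the exterior side to kill $c$. The potential $u$ is continuous across $\partial\Omega$, and it is $O(\vert x\vert^{1-d})$ at infinity because $\int_{\partial\Omega}\psi\,\mathrm dS=0$; this forces $c=0$. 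Then $u\equiv0$ outside, and the jump of $\partial_n u$ gives $\psi=0$.
\end{enumerate}
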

\begin{proof}
We can imitate the proof of \cite[p227, (21)]{Gun67} line by line, which is based on the expansion of the solutions to the Fredholm integral equation of the second kind in \cite[Chapter 3]{Gun67}. We omit the details for brevity.
\end{proof}

\subsection{Estimates on the Green functions}

\begin{lemma}\label{lem-estimate-Green}
Let $G_{\#}$ be a Green function on $\Omega$, then there exists a constant $C>0$ such that $\forall x,y\in\Omega$, $x\ne y$,
\begin{equation}\label{eq-Green-elementary-estimate-1}
\vert G_{\#}(x,y)\vert\le\begin{cases}
C(1+\vert\ln\vert x-y\vert\vert),&\text{\rm if}\;\; d=2,\\
C\vert x-y\vert^{2-d},&\text{\rm if}\;\; d\ge3,
\end{cases}
\end{equation}
and
\begin{equation}\label{eq-Green-elementary-estimate-2}
\vert \nabla_xG_{\#}(x,y)\vert\le C\vert x-y\vert^{1-d},\quad\vert \nabla_x^2G_{\#}(x,y)\vert\le C\vert x-y\vert^{-d}.
\end{equation}
In particular, there exists $A_{\Omega}\ge 0$ such that
\begin{equation}\label{eq-Green-upper-bound}
\sup_{x,y\in\Omega}G_{\mathrm{D}}(x,y)\le 0,\quad\sup_{x,y\in\Omega}G_{\mathrm{N}}(x,y)\le A_{\Omega}.
\end{equation}
\end{lemma}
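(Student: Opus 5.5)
Split $G_{\#}=G+\bar g_{\#}$ and estimate the two pieces separately. Since $G$ satisfies \eqref{eq-Green-elementary-estimate-1}--\eqref{eq-Green-elementary-estimate-2} trivially, everything reduces to bounds on the regular part $\bar g_{\#}(x,y)$. Two regimes are needed. When $\vert x-y\vert$ is comparable to $\operatorname{d}_{\partial\Omega}(y)$ or larger, we use a representation/maximum-principle argument. When $\vert x-y\vert\ll \operatorname{d}_{\partial\Omega}(y)$, we use interior estimates for harmonic functions.

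\textbf{Dirichlet case.} Since $g_{\mathrm D}(\cdot,y)$ is harmonic with boundary values $-G(\cdot,y)$, the maximum principle gives the following for $d\ge3$:
\[
0\le g_{\mathrm D}(x,y)\le \sup_{w\in\partial\Omega}\vert G(w,y)\vert\le C\operatorname{d}_{\partial\Omega}(y)^{2-d}.
\]
The maximum principle applied to $G_{\mathrm D}(\cdot,y)$ on $\Omega\setminus B(y,\epsilon)$, where $G_{\mathrm D}\to-\infty$ near $y$, gives $G_{\mathrm D}\le0$. Together these give $G(x,y)\le G_{\mathrm D}(x,y)\le 0$, which is \eqref{eq-Green-elementary-estimate-1} and \eqref{eq-Green-upper-bound} for $d\ge 3$. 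In $d=2$ the same argument gives $\vert G_{\mathrm D}\vert\le C(1+\vert\ln\vert x-y\vert\vert)$, using boundedness of $\Omega$. For the gradient bounds, fix $x\ne y$ and set $r=\frac12\vert x-y\vert$. Apply the standard interior gradient estimate on $B(x,r)\cap\Omega$ to the harmonic function $G_{\mathrm D}(\cdot,y)$, which is harmonic there. If $B(x,r)\subset\Omega$, the bound $\vert G_{\mathrm D}\vert\le C r^{2-d}$ on $B(x,r)$ gives $\vert\nabla G_{\mathrm D}\vert\le Cr^{1-d}$ and $\vert\nabla^2G_{\mathrm D}\vert\le Cr^{-d}$. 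If $B(x,r)$ meets $\partial\Omega$, use boundary Schauder estimates instead, which are available since $\partial\Omega\in C^4$ and $G_{\mathrm D}=0$ on $\partial\Omega$. To make the constants uniform in the scale, rescale $B(x,r)\cap\Omega$ to unit size; the flattened boundary then has uniformly bounded $C^{2,\alpha}$ norm for $r\le \delta_0$, and for $r>\delta_0$ the scale is bounded anyway.

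\textbf{Neumann case.} Here there is no maximum principle, so we use the layer representation \eqref{eq-layer-representation}: $g_{\mathrm N}=\mathcal S_{\mathrm N}T_{\mathrm N}\bar h_{\mathrm N}(\cdot,y)$. The key steps are as follows.
\begin{enumerate}[(i)]
\item Compute $\bar h_{\mathrm N}(w,y)=-\partial_{n_w}G(w,y)+O(1)$. On the convex domain, $n_w\cdot(w-y)\ge0$ for $y\in\Omega$, so $\partial_{n_w}G(w,y)=\frac{n_w\cdot(w-y)}{\omega_d\vert w-y\vert^d}\ge 0$. Its integral over $\partial\Omega$ is bounded by Lemma~\ref{lem-boundary-domain-integral} (or simply by Gauss's theorem, giving exactly $1$). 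Hence $\|\bar h_{\mathrm N}(\cdot,y)\|_{L^1(\partial\Omega)}\le C$ uniformly in $y$.
\item By Lemma~\ref{lem-T-L1-L1}, $\|T_{\mathrm N}\bar h_{\mathrm N}\|_{L^1}\le C$. To get pointwise information, write $T_{\mathrm N}=2\sum_{i<d}(-2\mathcal K_{\mathrm N})^i+(\text{remainder})$, with the remainder built from $\mathcal K^d_{\mathrm N}$. By Lemma~\ref{lem-Km-L1-Linfty}, this expresses $T\bar h$ as $2\bar h$ plus terms controlled by kernels $K_{i}$ acting on $\bar h$, plus an $L^\infty$ part.
\item Plug into $\mathcal S_{\mathrm N}$. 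The $L^\infty$ part is handled by Lemma~\ref{lem-S-Linfty-Linfty}. The leading part is $\int_{\partial\Omega}G(x,w)\,\partial_{n_w}G(w,y)\,\mathrm dS_w$ together with iterates. These are estimated by splitting $\partial\Omega$ according to whether $w$ is closer to $x$ or to $y$, in the style of Lemma~\ref{lem-boundary-domain-integral-d-1-a}. This should give $\vert g_{\mathrm N}(x,y)\vert\le C\max(\vert x-y\vert,\,\operatorname{d}_{\partial\Omega}(x)+\operatorname{d}_{\partial\Omega}(y))^{2-d}$, which is at most $C\vert x-y\vert^{2-d}$.
\item The upper bound $G_{\mathrm N}\le A_\Omega$ in \eqref{eq-Green-upper-bound} uses signs. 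In the leading term, $-G\ge0$ and $\partial_nG\ge0$ (again by convexity), so the singular contribution to $\bar g_{\mathrm N}$ is negative. Only bounded terms can be positive, and $G\le0$ for $d\ge3$; in $d=2$, $\ln\vert x-y\vert\le\ln\operatorname{diam}\Omega$.
\end{enumerate}
The derivative bounds then follow exactly as in the Dirichlet case, using Neumann boundary Schauder estimates in the rescaled boundary balls together with the bound from (iii).

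The main obstacle is step (iii)/(iv) for Neumann: controlling the iterated operators pointwise, not just in $L^1$, and keeping track of signs so that the positive part of $G_{\mathrm N}$ stays bounded. I would first attempt a crude route. The goal is only \eqref{eq-Green-elementary-estimate-1}, and since $\vert \bar g_{\mathrm N}\vert\lesssim \vert x-y\vert^{2-d}$ suffices there, I would bound $\vert T\bar h\vert$ by $\vert\bar h\vert$ convolved with the kernels of Lemma~\ref{lem-Km-L1-Linfty} and then apply Lemma~\ref{lem-boundary-boundary-integral}. For the sign claim \eqref{eq-Green-upper-bound}, I would fall back on a comparison argument: the Neumann problem for $G_{\mathrm N}(\cdot,y)-G(\cdot,y)$ has data $-\partial_nG\le0$ up to bounded terms, and convexity of $\Omega$ makes the harmonic extension of such data bounded above.
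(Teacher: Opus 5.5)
Your Dirichlet argument is correct and is essentially the paper's: $G_{\mathrm D}\le 0$ comes from the maximum principle. Your extra observation that $g_{\mathrm D}\ge 0$, hence $G\le G_{\mathrm D}\le 0$, even yields \eqref{eq-Green-elementary-estimate-1} for $d\ge 3$ without citing \cite{Ei58}. The layer-potential bookkeeping in (i)--(iii) is also workable.

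\textbf{The gap: step (iv), the bound $G_{\mathrm N}\le A_\Omega$ for $d\ge 3$.} From (ii),
\[
g_{\mathrm N}(x,y)=2\sum_{i=0}^{d-1}\mathcal S_{\mathrm N}\left[(-2\mathcal K_{\mathrm N})^i\bar h_{\mathrm N}(\cdot,y)\right](x)+\mathcal S_{\mathrm N}\left[(-2\mathcal K_{\mathrm N})^d T_{\mathrm N}\bar h_{\mathrm N}(\cdot,y)\right](x).
\]
You check the sign only of the $i=0$ term and then assert that ``only bounded terms can be positive''.
\begin{itemize}
\item The iterates with $1\le i\le d-2$ are not bounded. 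A careful version of your (iii) estimates gives them size up to $\vert x-y\vert^{i+2-d}$, logarithmic when $i=d-2$.
\item This growth really occurs. On a ball in $d=3$, the $i=1$ term is of order $\ln\left(1/\max(\vert x-y\vert,\operatorname{d}_{\partial\Omega}(x),\operatorname{d}_{\partial\Omega}(y))\right)$ in absolute value.
\item Nothing in your argument prevents these terms from being positive and unbounded.
\item Your fallback ``comparison argument'' is not available. $\bar h_{\mathrm N}(\cdot,y)$ has zero mean, so it is not of one sign. The Neumann problem has no maximum principle that turns a sign on the data into a pointwise upper bound; that upper bound is exactly what must be proved.
\end{itemize}

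\textbf{Two ways to close it.}
\begin{itemize}
\item \emph{The paper's route.} The paper absorbs the error into the strength of the free-space singularity, not merely into $G\le 0$. It uses from \cite{Ei58} that $\vert G_{\mathrm N}-\bar G_{\mathrm N}\vert\le C\vert x-y\vert^{2-d+\gamma}$ for some $\gamma>0$, where $\bar G_{\mathrm N}=G+2\mathcal S_{\mathrm N}[\bar h_{\mathrm N}(\cdot,y)]$. It then notes $G(x,y)+C\vert x-y\vert^{2-d+\gamma}\le C\varepsilon^{2-d+\gamma}$, while the leading boundary term is $\le 0$ by convexity, exactly as in your (iv). Your iterate estimates from (iii) would supply that error bound.
\item \emph{Within your own expansion.} Propagate the sign through every iterate. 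On a convex domain the kernel of $-2\mathcal K_{\mathrm N}$, namely $2\partial_{n_z}G(z,w)=\frac{2\,n_z\cdot(z-w)}{\omega_d\vert z-w\vert^d}$, is nonnegative. The kernel of $\mathcal S_{\mathrm N}$ is $-G\ge 0$. Hence each term $2\mathcal S_{\mathrm N}(-2\mathcal K_{\mathrm N})^i[-\partial_{n}G(\cdot,y)]$ is $\le 0$. The $\tilde g$-contributions and the remainder are bounded by Lemmas~\ref{lem-Km-L1-Linfty}--\ref{lem-T-L1-L1}.
\end{itemize}

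\textbf{A smaller issue in $d=2$.} Your rescaled Schauder step feeds $\sup_{B(x,r)}\vert G_{\#}(\cdot,y)\vert\le C(1+\vert\ln r\vert)$ into the gradient estimate, so it loses a factor $\vert\ln\vert x-y\vert\vert$. You need the oscillation of $G_{\#}(\cdot,y)$ on $B(x,r)\cap\Omega$ to be bounded, and $\vert G_{\mathrm D}(\cdot,y)\vert\le C$ there when the ball meets $\partial\Omega$. The paper avoids this by treating $d=2$ separately via the conformal map to the disc.
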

\begin{proof}
The pointwise estimates \eqref{eq-Green-elementary-estimate-1}-\eqref{eq-Green-elementary-estimate-2} are well known, which can be proved by the layer potential technique for $d\ge 3$, see e.g., \cite{Ei58} for details. For $\#=\mathrm{D}$, $d\ge 3$, \eqref{eq-Green-upper-bound} can be deduced immediately by the maximum principle. To prove \eqref{eq-Green-upper-bound} for $\#=\mathrm{N}$, $d\ge 3$, we note in \cite{Ei58}, it has been proved that there exists a small constant $\gamma>0$ such that
\begin{equation*}
\vert G_{\mathrm{N}}(x,y)-\bar{G}_{\mathrm{N}}(x,y)\vert\le C\vert x-y\vert^{2-d+\gamma},
\end{equation*}
where
\begin{equation*}
\bar{G}_{\mathrm{N}}(x,y):=G(x,y)-2\int_{\partial\Omega}G(x,z)\bar{h}_{\mathrm{N}}(z,y)\,\mathrm{d}S_z.
\end{equation*}
Note there exists a constant $\varepsilon>0$ such that for $\vert x-y\vert\le \varepsilon$
\begin{equation*}
G(x,y)+C\vert x-y\vert^{2-d+\gamma}\le 0.
\end{equation*}
Hence we have
\begin{align*}
G_{\mathrm{N}}(x,y)&\le\bar{G}_{\mathrm{N}}(x,y)+C\vert x-y\vert^{2-d+\gamma}\\
&\le C\varepsilon^{2-d+\gamma}-2\int_{\partial\Omega}G(x,z)\bar{h}_{\mathrm{N}}(z,y)\,\mathrm{d}S_z\\
&=C\varepsilon^{2-d+\gamma}+2\int_{\partial\Omega}G(x,z)\partial_{n_z}[G(z,y)+\tilde{g}(z,y)]\,\mathrm{d}S_z
\end{align*}  
By Lemma~\ref{lem-boundary-domain-integral-d-1-a}
\begin{align*}
\tilde{A}_{\Omega}:=&\sup_{x,y\in\Omega}\left\vert 2\int_{\partial\Omega}G(x,z)\partial_{n_z}\tilde{g}(z,y)\,\mathrm{d}S_z\right\vert\\
=&\sup_{x,y\in\Omega}\left\vert-2\int_{\partial\Omega}G(x,z)\frac{(z-y)\cdot n_z}{d\vert\Omega\vert}\,\mathrm{d}S_z\right\vert<\infty.
\end{align*}
By the convexity of $\Omega$,
\begin{equation*}
\partial_{n_z}G(z,y)=\frac{(z-y)\cdot n_z}{\omega_d\vert z-y\vert^d}\ge 0.
\end{equation*}
Hence
\begin{align*}
G_{\mathrm{N}}(x,y)&\le \tilde{A}_{\Omega}+C\varepsilon^{2-d+\gamma}+2\int_{\partial\Omega}G(x,z)\partial_{n_z}G(z,y)\,\mathrm{d}S_z\\
&\le \tilde{A}_{\Omega}+C\varepsilon^{2-d+\gamma}=:A_{\Omega}.
\end{align*} 

For $d=2$, these estimates can be proved by the conformal mapping method. Indeed, according to \cite[Appendix]{BV07}, there exists a conformal mapping $\psi\in C^{3,a}(\bar{\Omega};B_1^2)$ and a mapping $w\in C^{4,a}(\bar{\Omega})$ with $0<a<1$ such that
\begin{align*}
G_{\Omega,\mathrm{D}}(x,y)&=G_{B_1^2,\mathrm{D}}(\psi(x),\psi(y)),\\
G_{\Omega,\mathrm{N}}(x,y)&=G_{B_1^2,\mathrm{N}}(\psi(x),\psi(y))+w(x)+w(y).
\end{align*}
According to the representations given in Subsection~\ref{subsec-special-Green}, it is no difficult to check that \eqref{eq-Green-elementary-estimate-1}-\eqref{eq-Green-upper-bound} hold.

\end{proof}

\subsection{Estimates on the Robin functions}

Now we deduce the crucial proposition in this section.

\begin{proposition}\label{prn-Robin-estimate}
There exist constants $c_1,c_2>0$ such that $\forall x\in\Omega$, there hold
\begin{equation*}
R_{\mathrm{D}}(x)\ge\begin{cases}
c_2\operatorname{d}_{\partial\Omega}(x)^{2-d}-c_1,&d\ge 3,\\
-c_2\ln\operatorname{d}_{\partial\Omega}(x)-c_1,&d=2,
\end{cases}
\qquad R_{\mathrm{N}}(x)\le\begin{cases}
c_1-c_2\operatorname{d}_{\partial\Omega}(x)^{2-d},&d\ge 3,\\
c_1+c_2\ln\operatorname{d}_{\partial\Omega}(x),&d=2.
\end{cases}
\end{equation*}
\end{proposition}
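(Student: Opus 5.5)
Our approach is to compare $R_{\#}(x)$ near the boundary with the half-space Robin function $R_{\mathbb{R}^d_+,\#}$ at the nearest boundary point. The case $d=2$ is disposed of first by the conformal representation from the proof of Lemma~\ref{lem-estimate-Green}. There $R_{\Omega,\mathrm{D}}(x)=R_{B_1^2,\mathrm{D}}(\psi(x))+\frac{1}{2\pi}\ln|\psi'(x)|$, and the Neumann analogue carries the extra term $2w(x)$. Since $\psi\in C^{3,a}$ is a diffeomorphism up to the boundary, $1-|\psi(x)|\simeq\operatorname{d}_{\partial\Omega}(x)$ and $|\psi'|$ is bounded above and below. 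The explicit formulas $R_{B_1^2,\mathrm{D}}=-\frac1{2\pi}\ln(1-|x|^2)$ and $R_{B_1^2,\mathrm{N}}=\frac1{2\pi}\ln(1-|x|^2)+\frac{|x|^2}{2\pi}$ then give both inequalities directly. For $d\ge3$, if $\operatorname{d}_{\partial\Omega}(x)\ge\delta_0/2$, both bounds reduce to two-sided bounds on $R_{\#}$ on a compact subset of $\Omega$. These follow from continuity of $g_{\#}(x,x)$, or from \eqref{eq-layer-representation} with Lemmas~\ref{lem-S-Linfty-Linfty} and \ref{lem-T-L1-L1}. So we only need to treat $\operatorname{d}_{\partial\Omega}(x)=:r<\delta_0/2$, with $x=x_\parallel-rn_{x_\parallel}$.

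\textbf{Dirichlet case.} Here we use the maximum principle together with convexity. Let $P$ be the supporting hyperplane at $x_\parallel$ and $H$ the half space containing $\Omega$. Since $\Omega\subset H$, the function $u=G_{\mathrm{D}}(\cdot,x)-G_{H,\mathrm{D}}(\cdot,x)$ is harmonic in $\Omega$. On $\partial\Omega$ we have $u=-G_{H,\mathrm{D}}(\cdot,x)\ge0$, because the half-space Dirichlet Green function is $\le0$. Hence $u\ge0$ in $\Omega$, and evaluating the harmonic parts at $x$ gives $R_{\mathrm{D}}(x)\ge R_{H,\mathrm{D}}(x)=-G(x,x^*)=\frac{(2r)^{2-d}}{(d-2)\omega_d}$, where $x^*$ is the reflection of $x$ across $P$. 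This is the claim with $c_1=0$ near the boundary.

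\textbf{Neumann case.} This is the main obstacle, since there is no maximum principle and the image charge is repulsive. The plan is to write $R_{\mathrm{N}}(x)=g_{\mathrm{N}}(x,x)$ via the single-layer representation \eqref{eq-layer-representation}, and to expand $T_{\mathrm{N}}=(\frac12\mathrm{Id}+\mathcal{K}_{\mathrm{N}})^{-1}$ as the Neumann-type series $2\sum_{i<d}(-2\mathcal{K}_{\mathrm{N}})^i$ plus a remainder involving $\mathcal{K}_{\mathrm{N}}^d$. The leading term is
\[
-2\int_{\partial\Omega}G(x,w)\,\bar h_{\mathrm{N}}(w,x)\,\mathrm{d}S_w=2\int_{\partial\Omega}G(x,w)\,\partial_{n_w}G(w,x)\,\mathrm{d}S_w+O(1),
\]
with the $\tilde g$ contribution bounded as in the proof of Lemma~\ref{lem-estimate-Green}. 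By convexity $\partial_{n_w}G(w,x)\ge0$, and $G<0$, so this term is nonpositive. Restricting the integral to $A_1=\partial\Omega\cap B(x_\parallel,2r)$, where $|x-w|\simeq r$ and $(w-x)\cdot n_w\gtrsim r$ (by Lemma~\ref{lem-geometric-lemma}, for $\delta_0$ small), and using $|A_1|\gtrsim r^{d-1}$, we get that it is $\le -c\,r^{2-d}$. We need $c$ large enough to absorb the corrections; in fact the whole-boundary integral approximates the half-space image value $G(x,x^*)$ up to $O(r^{2-d+\gamma})$, which is the cleaner statement. To get it, we would subtract the flat-boundary computation: for a plane, $\int_P G(x,w)\partial_{n_w}G(w,x)\,\mathrm{d}S_w=\frac12G(x,x^*)$ exactly. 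The curvature error is controlled by Lemma~\ref{lem-geometric-lemma}, giving $O(r^{3-d})$ or a logarithmic error via Lemma~\ref{lem-boundary-domain-integral-d-1-a}.

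The remaining terms $\mathcal{S}_{\mathrm{N}}(\mathcal{K}_{\mathrm{N}}^i\bar h_{\mathrm{N}})$ for $i\ge1$ must be shown to be $o(r^{2-d})$. The key input is that $|\bar h_{\mathrm{N}}(w,x)|\lesssim r|x-w|^{-d}+|x-w|^{2-d}$, which has $L^1(\partial\Omega)$ norm $O(1)$. The iterated kernels $K_{i,\mathrm{N}}$ of Lemma~\ref{lem-Km-L1-Linfty} are then of order $|w-z|^{i+1-d}$, one order milder per iteration. Integrating against $G(x,\cdot)$ with Lemmas~\ref{lem-boundary-boundary-integral} and \ref{lem-boundary-domain-integral-d-1-a} yields bounds $O(r^{3-d})$, or $O(1+|\ln r|)$ when $d=3$, and the $\mathcal{K}^d$ remainder is bounded via Lemmas~\ref{lem-T-L1-L1} and \ref{lem-S-Linfty-Linfty}. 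Collecting terms gives $R_{\mathrm{N}}(x)\le -c_2r^{2-d}+C(r^{3-d}+|\ln r|+1)$, and shrinking the neighborhood of the boundary gives the stated bound. The delicate step is the first-iterate estimate $\mathcal{K}_{\mathrm{N}}\bar h_{\mathrm{N}}(\cdot,x)$ near $x_\parallel$, where the kernel is only $|w-z|^{2-d}$ but $\bar h_{\mathrm{N}}$ concentrates at scale $r$. I expect to need the cancellation $\int_{\partial\Omega}\partial_{n_w}G(w,x)\,\mathrm{d}S_w=1$ (Gauss) to show the convolution is $O(r^{3-d}+1)$ rather than $O(r^{2-d})$ when measured against $G(x,\cdot)$.
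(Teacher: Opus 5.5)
Your proposal is correct. For the Neumann case with $d\ge 3$, which is the genuinely new estimate, it follows the paper's argument:

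\begin{itemize}
\item the same finite identity $k_{\mathrm N}=2\bar h_{\mathrm N}+2k_{\mathrm N,d}$, obtained by iterating $(\frac12\operatorname{Id}+\mathcal K_{\mathrm N})k=\bar h_{\mathrm N}$;
\item the same leading term $2\mathcal S_{\mathrm N}[\bar h_{\mathrm N}(\cdot,x)](x)=\bar R(x)+O(1)$, whose sign comes from $\partial_{n_w}G(w,x)\ge 0$ by convexity;
\item the same control of the $\mathcal K_{\mathrm N}^d$ remainder through Lemmas~\ref{lem-Km-L1-Linfty}, \ref{lem-S-Linfty-Linfty} and \ref{lem-T-L1-L1}.
\end{itemize}

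The only local difference in that case is how you get $\bar R(x)\le -c\,r^{2-d}$. You discard everything except the cap $A_1$, which is legitimate because the integrand is pointwise nonpositive. The paper instead applies Gauss--Green on $\Omega\setminus B(x,r/2)$ and then uses the supporting hyperplane to excise a cone from the resulting volume integral. Your route is more direct.

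You depart genuinely from the paper in two places.
\begin{itemize}
\item \textbf{Dirichlet.} You compare with the Green function of the supporting half-space at $x_\parallel$ via the maximum principle, a route the paper only mentions in its introduction. It avoids layer potentials entirely and gives the explicit bound $R_{\mathrm D}(x)\ge \frac{(2r)^{2-d}}{(d-2)\omega_d}$ with $c_1=0$ for every $x\in\Omega$. This holds globally because the tangent plane at a nearest boundary point is always a supporting plane at distance $r$. The same comparison also settles the Dirichlet case for $d=2$.
\item \textbf{$d=2$.} You use the conformal representation quoted in Lemma~\ref{lem-estimate-Green}. This yields sharp two-sided asymptotics $\mp\frac{1}{2\pi}\ln\operatorname{d}_{\partial\Omega}(x)+O(1)$. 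It rests on external facts, however: boundary regularity of $\psi$ with $\psi'\neq 0$ on $\bar\Omega$, and the Neumann representation with the corrector $w$.
\end{itemize}
The paper instead runs one layer-potential computation for both boundary conditions and all $d\ge 2$. In that form the attractive/repulsive dichotomy appears as the single quantity $\mp\frac12\bar R(x)$. Incidentally, the disk Neumann function should contain $-\frac{|x|^2+|y|^2}{4\pi}$ rather than $+$; this term is bounded and does not affect your argument.

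The hedging at the end of your Neumann paragraph is unnecessary. You need neither the Gauss identity $\int_{\partial\Omega}\partial_{n_w}G(w,x)\,\mathrm{d}S_w=1$, nor a large constant in the leading term, nor the half-space approximation of $\bar R$. Absolute-value bounds already make every iterate lower order:
\begin{itemize}
\item Since $|x-z|\ge\frac13(r+|x_\parallel-z|)$ for $z\in\partial\Omega$, one has $\int_{\partial\Omega}|x-z|^{2-d}|z-w|^{2-d}\,\mathrm{d}S_z\lesssim (r+|x_\parallel-w|)^{3-d}$ for $d\ge 4$, and $\lesssim 1+|\ln(r+|x_\parallel-w|)|$ for $d=3$.
\item Integrating this against $|\bar h_{\mathrm N}(w,x)|\lesssim r|x-w|^{-d}+|x-w|^{2-d}$ bounds the first iterate by $O(r^{3-d})$ (resp.\ $O(1+|\ln r|)$). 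These are exactly the bounds you assert, and the higher iterates are at least as good.
\end{itemize}
The paper settles for the cruder $O(1+r^{5/2-d})$, obtained from $|x-z|^{2-d}\le r^{5/2-d}|x-z|^{-1/2}$ and H\"older. Either way the corrections are $o(r^{2-d})$, so any positive constant coming from the cap $A_1$ suffices.
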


\begin{proof}
In the following, we will denote $\bar{c}_1>0$ as an inessential big constant, $\bar{c}_2>0$ as an inessential small constant and they may differ on different occasions. Then $c_1$ is the biggest of those $\bar{c}_1$ and $c_2$ is the smallest of those $\bar{c}_2$.

Denote 
\begin{equation*}
k_{\#}(x,y):=T_{\#}[\bar{h}_{\#}(\cdot,y)](x),\quad x\in\partial\Omega,\,y\in\Omega.
\end{equation*}
By the definition of $T_{\#}$, we have
\begin{equation*}
\frac{1}{2}k_{\#}(x,y)+\mathcal{K}_{\#}[k_{\#}(\cdot,y)](x)=\bar{h}_{\#}(x,y),\quad x\in\partial\Omega,\,y\in\Omega.
\end{equation*}
Applying the operator $\mathcal{K}_{\#}$ on the above equality $d-1$ times to obtain
\begin{align*}
&\frac{1}{2}\mathcal{K}_{\#}[k_{\#}(\cdot,y)](x)+\mathcal{K}_{\#}^2[k_{\#}(\cdot,y)](x)=\mathcal{K}_{\#}[\bar{h}_{\#}(\cdot,y)](x),\\
&\cdots\\
&\frac{1}{2}\mathcal{K}_{\#}^{d-1}[k_{\#}(\cdot,y)](x)+\mathcal{K}_{\#}^d[k_{\#}(\cdot,y)](x)=\mathcal{K}_{\#}^{d-1}[\bar{h}_{\#}(\cdot,y)](x).
\end{align*}
Then we can cancel the terms $\mathcal{K}_{\#}^i[k_{\#}(\cdot,y)](x)$, $i=1,2,\dots,d-1$ by the above equations to obtain
\begin{align*}
\frac{1}{2}k_{\#}(x,y)-\bar{h}_{\#}(x,y)&=-(-2)^{d-1}\mathcal{K}_{\#}^d[k_{\#}(\cdot,y)](x)+\sum_{i=1}^{d-1}(-2)^i\mathcal{K}_{\#}^i[\bar{h}_{\#}(\cdot,y)](x)\\
&=:k_{\#,d}(x,y).
\end{align*}
Applying the operator $\mathcal{S}_{\#}$ on the above equality to obtain
\begin{equation*}
\frac{1}{2}\mathcal{S}_{\#}[k_{\#}(\cdot,y)](x)-\mathcal{S}_{\#}[\bar{h}_{\#}(\cdot,y)](x)=\mathcal{S}_{\#}[k_{\#,d}(\cdot,y)](x),\quad x,y\in\Omega.
\end{equation*}
Hence by the representation \eqref{eq-layer-representation} of $g_{\#}$, we have
\begin{equation*}
g_{\#}(x,y)=2\mathcal{S}_{\#}[\bar{h}_{\#}(\cdot,y)](x)+2\mathcal{S}_{\#}[k_{\#,d}(\cdot,y)](x),
\end{equation*}
and
\begin{equation}\label{eq-rewrite-R}
R_{\#}(x)=2\mathcal{S}_{\#}[\bar{h}_{\#}(\cdot,x)](x)+2\mathcal{S}_{\#}[k_{\#,d}(\cdot,x)](x).
\end{equation}
Now we focus on the analysis on the right hand side. 

\noindent {\bf Step 1. The estimates on $\mathcal{S}_{\#}[k_{\#,d}]$}

By the definition of $k_{\#,d}$,
\begin{align*}
\mathcal{S}_{\#}[k_{\#,d}(\cdot,x)](x)=&-(-2)^{d-1}\mathcal{S}_{\#}[\mathcal{K}_{\#}^d[T_{\#}[\bar{h}_{\#}(\cdot,x)](\cdot)](\cdot)](x)\\
&+\sum_{i=1}^{d-1}(-2)^i\mathcal{S}_{\#}[\mathcal{K}_{\#}^i[\bar{h}_{\#}(\cdot,x)](\cdot)](x).
\end{align*}
For the first term on the right hand side, by Lemmas~\ref{lem-Km-L1-Linfty}-\ref{lem-S-Linfty-Linfty}, we have
\begin{align*}
\|\mathcal{S}_{\#}[\mathcal{K}_{\#}^d[T_{\#}[\bar{h}_{\#}(\cdot,x)](\cdot)](\cdot)](x)\|_{L_x^{\infty}(\Omega)}&\le \bar{c}_1\|\mathcal{K}_{\#}^d[T_{\#}[\bar{h}_{\#}(\cdot,y)](\cdot)](x)\|_{L_{y}^{\infty}(\Omega;L_{x}^{\infty}(\Omega))}\\
&\le \bar{c}_1\|T_{\#}[\bar{h}_{\#}(\cdot,y)](x)\|_{L_{y}^{\infty}(\Omega;L_x^{1}(\partial\Omega))}.
\end{align*}
By Lemma~\ref{lem-T-L1-L1}, the definition of $\bar{h}_{\#}$ and Lemmas~\ref{lem-boundary-domain-integral-d-1-a}-\ref{lem-boundary-domain-integral}, we have
\begin{equation*}
\|T_{\#}[\bar{h}_{\#}(\cdot,y)](x)\|_{L_{y}^{\infty}(\Omega;L_x^{1}(\partial\Omega))}\le \bar{c}_1\|\bar{h}_{\#}(x,y)\|_{L_{y}^{\infty}(\Omega;L_x^{1}(\partial\Omega))}\le \bar{c}_1.
\end{equation*}
Hence, we have
\begin{equation*}
\|\mathcal{S}_{\#}[\mathcal{K}_{\#}^d[T_{\#}[\bar{h}_{\#}(\cdot,x)](\cdot)](\cdot)](x)\|_{L_x^{\infty}(\Omega)}\le \bar{c}_1.
\end{equation*}

Now we estimate the summation term on the right hand side. By Lemma~\ref{lem-Km-L1-Linfty}, for $1\le i\le d-1$,
\begin{align*}
\left\vert\mathcal{S}_{\#}[\mathcal{K}_{\#}^i[\bar{h}_{\#}(\cdot,x)](\cdot)](x)\right\vert&\le\int_{\partial\Omega}\int_{\partial\Omega}\vert S_{\#}(x,z)\vert\vert K_{i,\#}(z,w)\vert\vert\bar{h}_{\#}(w,x)\vert \,\mathrm{d}S_w\,\mathrm{d}S_z\\
&\le \bar{c}_1\int_{\partial\Omega}\int_{\partial\Omega}\frac{\vert S_{\#}(x,z)\vert\vert\bar{h}_{\#}(w,x)\vert}{\vert z-w\vert^{d-2}}\,\mathrm{d}S_w\,\mathrm{d}S_z.
\end{align*}
By the definitions of $S_{\#},\bar{h}_{\#}$, we have
\begin{align*}
\vert S_{\mathcal{D}}(x,z)\vert\vert\bar{h}_{\mathcal{D}}(w,x)\vert&\le\vert\partial_{n_z}G(x,z)\vert\vert G(w,x)\vert,\\
\vert S_{\mathcal{N}}(x,z)\vert\vert\bar{h}_{\mathcal{N}}(w,x)\vert&\le\vert G(x,z)\vert(\vert \partial_{n_w}G(w,x)\vert+\vert\partial_{n_w}\tilde{g}(w,x)\vert)
\end{align*}
Note $\vert\partial_{n_w}\tilde{g}(w,x)\vert\le \bar{c}_1$. Hence by the symmetry of $w,z$, we have for $d\ge 3$
\begin{align*}
&\left\vert\mathcal{S}_{\#}[\mathcal{K}_{\#}^i[\bar{h}_{\#}(\cdot,x)](\cdot)](x)\right\vert\\
&\le \bar{c}_1\int_{\partial\Omega}\int_{\partial\Omega}\frac{1}{\vert x-z\vert^{d-2}\vert z-w\vert^{d-2}}\left(\frac{\vert n_w\cdot(w-x)\vert}{\vert w-x\vert^{d}}+1\right)\,\mathrm{d}S_w\,\mathrm{d}S_z.
\end{align*}
By Fubini-Tonelli theorem and Lemma~\ref{lem-boundary-domain-integral}
\begin{align*}
&\int_{\partial\Omega}\int_{\partial\Omega}\frac{1}{\vert x-z\vert^{d-2}\vert z-w\vert^{d-2}}\left(\frac{\vert n_w\cdot(w-x)\vert}{\vert w-x\vert^{d}}+1\right)\,\mathrm{d}S_w\,\mathrm{d}S_z\\
&\le\sup_{w\in\partial\Omega}\int_{\partial\Omega}\frac{1}{\vert x-z\vert^{d-2}\vert z-w\vert^{d-2}}\,\mathrm{d}S_z\int_{\partial\Omega}\frac{\vert n_w\cdot(w-x)\vert}{\vert w-x\vert^{d}}+1\,\mathrm{d}S_w\\
&\le \bar{c}_1\sup_{w\in\partial\Omega}\int_{\partial\Omega}\frac{1}{\vert x-z\vert^{d-2}\vert z-w\vert^{d-2}}\,\mathrm{d}S_z.
\end{align*}
Note $\vert x-z\vert\ge \operatorname{d}_{\partial\Omega}(x)$, by H\"older's inequality and Lemmas~\ref{lem-boundary-boundary-integral}-\ref{lem-boundary-domain-integral-d-1-a}, we have
\begin{align*}
&\int_{\partial\Omega}\frac{1}{\vert x-z\vert^{d-2}\vert z-w\vert^{d-2}}\,\mathrm{d}S_z\\
&\le\operatorname{d}_{\partial\Omega}(x)^{\frac{5}{2}-d}\int_{\partial\Omega}\frac{1}{\vert x-z\vert^{\frac{1}{2}}\vert z-w\vert^{d-2}}\,\mathrm{d}S_z\\
&\le\operatorname{d}_{\partial\Omega}(x)^{\frac{5}{2}-d}\left(\int_{\partial\Omega}\frac{1}{\vert x-z\vert^{\frac{3(d-1)}{4}}}\,\mathrm{d}S_z\right)^{\frac{2}{3(d-1)}}\left(\int_{\partial\Omega}\frac{1}{\vert z-w\vert^{\frac{(d-2)(d-1)}{d-\frac{8}{3}}}}\,\mathrm{d}S_z\right)^{\frac{d-\frac{8}{3}}{d-1}}\\
&\le \bar{c}_1\operatorname{d}_{\partial\Omega}(x)^{\frac{5}{2}-d}.
\end{align*}

For $d=2$, it is obvious by Lemmas~\ref{lem-boundary-domain-integral-d-1-a}-\ref{lem-boundary-domain-integral} that
\begin{align*}
&\left\vert\mathcal{S}_{\#}[\mathcal{K}_{\#}^i[\bar{h}_{\#}(\cdot,x)](\cdot)](x)\right\vert\\
&\le \bar{c}_1\int_{\partial\Omega}\int_{\partial\Omega}\vert\ln\vert x-z\vert\vert\left(\frac{\vert n_w\cdot(w-x)\vert}{\vert w-x\vert^{2}}+1\right)\,\mathrm{d}S_w\,\mathrm{d}S_z\le \bar{c}_1.
\end{align*}

In summary, we have
\begin{equation}\label{eq-Skd-estimate}
\vert\mathcal{S}_{\#}[k_{\#,d}(\cdot,x)](x)\vert\le\begin{cases}
\bar{c}_1\left[1+\operatorname{d}_{\partial\Omega}(x)^{\frac{5}{2}-d}\right],& d\ge 3,\\
\bar{c}_1,& d=2.
\end{cases}
\end{equation}

\noindent {\bf Step 2. The estimates on $\mathcal{S}_{\#}[\bar{h}_{\#}]$}

By the definitions of $\mathcal{S}_{\#}$ and $\bar{h}_{\#}$, we have
\begin{align*}
\mathcal{S}_{\mathrm{D}}[\bar{h}_{\mathrm{D}}(\cdot,x)](x)&=-\int_{\partial\Omega}\partial_{n_w}G(x,w)G(x,w)\,\mathrm{d}S_w\\
&=-\frac{1}{2}\int_{\partial\Omega}\partial_{n_w}\vert G(x,w)\vert^2\,\mathrm{d}S_w,\\
\mathcal{S}_{\mathrm{N}}[\bar{h}_{\mathrm{N}}(\cdot,x)](x)&=\int_{\partial\Omega}G(x,w)\partial_{n_w}[G(x,w)+\tilde{g}(x,w)]\,\mathrm{d}S_w\\
&=\frac{1}{2}\int_{\partial\Omega}\partial_{n_w}\vert G(x,w)\vert^2\,\mathrm{d}S_w+\int_{\partial\Omega}G(x,w)\partial_{n_w}\tilde{g}(x,w)\,\mathrm{d}S_w.
\end{align*}
They contain the same boundary integral
\begin{equation*}
\bar{R}(x):=\int_{\partial\Omega}\partial_{n_w}\vert G(x,w)\vert^2\,\mathrm{d}S_w.
\end{equation*}
Note it is no difficult to bound the integral including $\tilde{g}$ as follows:
\begin{align*}
\int_{\partial\Omega}\vert G(x,w)\partial_{n_w}\tilde{g}(x,w)\vert\,\mathrm{d}S_w\le \bar{c}_1\int_{\partial\Omega}\vert G(x,w)\vert\,\mathrm{d}S_w\le \bar{c}_1.
\end{align*}

The estimates on $\bar{R}$ are more involved. We claim that
\begin{equation*}
\bar{R}(x)\le\begin{cases}
\frac{2^{1-d}}{(2-d)\omega_d}[\operatorname{d}_{\partial\Omega}(x)]^{2-d},&d\ge 3,\\
1+\vert\ln\operatorname{diam}\Omega\vert+\frac{1}{3\pi}\ln\operatorname{d}_{\partial\Omega}(x),&d=2.
\end{cases}
\end{equation*}
By the claim, we have for $\#=\mathrm{D}$,
\begin{equation}\label{eq-SbarhD-estimate}
\mathcal{S}_{\mathrm{D}}[\bar{h}_{\mathrm{D}}(\cdot,x)](x)\ge\begin{cases}
\bar{c}_2\operatorname{d}_{\partial\Omega}(x)^{2-d},& d\ge 3,\\
-\bar{c}_1-\bar{c}_2\ln\operatorname{d}_{\partial\Omega}(x),& d=2.
\end{cases}
\end{equation}
For $\#=\mathrm{N}$,
\begin{equation}\label{eq-SbarhN-estimate}
\mathcal{S}_{\mathrm{N}}[\bar{h}_{\mathrm{N}}(\cdot,x)](x)\le\begin{cases}
\bar{c}_1-\bar{c}_2\operatorname{d}_{\partial\Omega}(x)^{2-d},&d\ge 3,\\
\bar{c}_1+\bar{c}_2\ln\operatorname{d}_{\partial\Omega}(x),&d=2.
\end{cases}
\end{equation}

\noindent {\bf Step 3. Proof of the claim in Step 2 for $d\ge 3$}

We split $\bar{R}(x)$ into two parts,
\begin{align*}
\bar{R}(x)=&\int_{\partial\Omega\cup\partial\{\vert x-w\vert>\operatorname{d}_{\partial\Omega}(x)/2\}}\partial_{n_w}\vert G(x,w)\vert^2\,\mathrm{d}S_w\\
&-\int_{\partial\{\vert x-w\vert>\operatorname{d}_{\partial\Omega}(x)/2\}}\partial_{n_w}\vert G(x,w)\vert^2\,\mathrm{d}S_w.
\end{align*}
By the Gauss-Green formula
\begin{align*}
\bar{R}(x)=&\int_{\Omega\cap\{\vert x-w\vert>\operatorname{d}_{\partial\Omega}(x)/2\}}\Delta_w\vert G(x,w)\vert^2\,\mathrm{d}w\\
&-\int_{\vert x-w\vert=\operatorname{d}_{\partial\Omega}(x)/2}\frac{x-w}{\vert x-w\vert}\cdot\nabla_w\vert G(x,w)\vert^2\,\mathrm{d}S_w\\
=&2\int_{\Omega\cap\{\vert x-w\vert>\operatorname{d}_{\partial\Omega}(x)/2\}}\vert \nabla_wG(x,w)\vert^2\,\mathrm{d}w\\
&-\int_{\vert x-w\vert=\operatorname{d}_{\partial\Omega}(x)/2}\frac{x-w}{\vert x-w\vert}\cdot\nabla_w\vert G(x,w)\vert^2\,\mathrm{d}S_w.
\end{align*}
By the definition of $G$ in \eqref{eq-definition-G}, we have
\begin{align*}
\bar{R}(x)&=\frac{2}{\omega_d^2}\int_{\Omega\cap\{\vert x-w\vert>\operatorname{d}_{\partial\Omega}(x)/2\}}\frac{1}{\vert x-w\vert^{2d-2}}\,\mathrm{d}w-\frac{2^{d-1}}{(d-2)\omega_d[\operatorname{d}_{\partial\Omega}(x)]^{d-2}}\\
&=:I_1(x)-\frac{2^{d-1}}{(d-2)\omega_d[\operatorname{d}_{\partial\Omega}(x)]^{d-2}}.
\end{align*}
Note
\begin{align*}
I_1(x)=&\frac{2}{\omega_d^2}\int_{\Omega\cap\{\vert x-w\vert>2\operatorname{d}_{\partial\Omega}(x)\}}\frac{1}{\vert x-w\vert^{2d-2}}\,\mathrm{d}w\\
&+\frac{2}{\omega_d^2}\int_{\Omega\cap\{2\operatorname{d}_{\partial\Omega}(x)\ge\vert x-w\vert>\operatorname{d}_{\partial\Omega}(x)/2\}}\frac{1}{\vert x-w\vert^{2d-2}}\,\mathrm{d}w\\
\le&\frac{2}{\omega_d^2}\int_{\Omega\cap\{\vert x-w\vert>2\operatorname{d}_{\partial\Omega}(x)\}}\frac{1}{\vert x-w\vert^{2d-2}}\,\mathrm{d}w+\frac{2^{d-1}-2^{3-d}}{(d-2)\omega_d[\operatorname{d}_{\partial\Omega}(x)]^{d-2}}\\
=:&I_2(x)+\frac{2^{d-1}-2^{3-d}}{(d-2)\omega_d[\operatorname{d}_{\partial\Omega}(x)]^{d-2}}.
\end{align*}
By the convexity of the domain $\Omega$, we have
\begin{align*}
\Omega\cap\{\vert x-w\vert>2\operatorname{d}_{\partial\Omega}(x)\}\subset&\{(r,\theta_1,\dots,\theta_{d-1}): \operatorname{d}_{\partial\Omega}(x)<r/2\le \operatorname{diam}\Omega,\\
&\quad 0\le\theta_1/2,\theta_2,\dots,\theta_{d-2}\le \pi,\,\pi/3\le\theta_{d-1}\le\pi\}.
\end{align*}
Hence by the coordinate transformation,
\begin{equation*}
I_2(x)\le\frac{1}{\omega_d}\int_{\pi/3}^{\pi}\int_{2\operatorname{d}_{\partial\Omega}(x)}^{2\operatorname{diam}\Omega}r^{1-d}\sin\theta_{d-1}\,\mathrm{d}r\,\mathrm{d}\theta_{d-1}\le\frac{3\cdot 2^{1-d}}{(d-2)\omega_d[\operatorname{d}_{\partial\Omega}(x)]^{d-2}}.
\end{equation*}

In summary, we have
\begin{align*}
\bar{R}(x)&\le-\frac{2^{d-1}}{(d-2)\omega_d[\operatorname{d}_{\partial\Omega}(x)]^{d-2}}+\frac{2^{d-1}-2^{3-d}}{(d-2)\omega_d[\operatorname{d}_{\partial\Omega}(x)]^{d-2}}+\frac{3\cdot 2^{1-d}}{(d-2)\omega_d[\operatorname{d}_{\partial\Omega}(x)]^{d-2}}\\
&\le-\frac{2^{1-d}}{(d-2)\omega_d[\operatorname{d}_{\partial\Omega}(x)]^{d-2}}.
\end{align*}

\noindent {\bf Step 4. Proof of the claim in Step 2 for $d=2$}

The proof is similar to the case $d\ge 3$. By the Gauss-Green formula and the representation \eqref{eq-definition-G} of $G$, we have
\begin{align*}
\bar{R}(x)&=\frac{1}{2\pi^2}\int_{\Omega\cap\{\vert x-w\vert>\operatorname{d}_{\partial\Omega}(x)/2\}}\frac{1}{\vert x-w\vert^2}\,\mathrm{d}w+\frac{1}{\pi}\ln\frac{\operatorname{d}_{\partial\Omega}(x)}{2}\\
&=:J_1(x)+\frac{1}{\pi}\ln\frac{\operatorname{d}_{\partial\Omega}(x)}{2}.
\end{align*}
Note
\begin{align*}
J_1(x)=&\frac{1}{2\pi^2}\int_{\Omega\cap\{\vert x-w\vert>2\operatorname{d}_{\partial\Omega}(x)\}}\frac{1}{\vert x-w\vert^2}\,\mathrm{d}w\\
&+\frac{1}{2\pi^2}\int_{\Omega\cap\{2\operatorname{d}_{\partial\Omega}(x)\ge\vert x-w\vert>\operatorname{d}_{\partial\Omega}(x)/2\}}\frac{1}{\vert x-w\vert^2}\,\mathrm{d}w\\
\le&\frac{1}{2\pi^2}\int_{\Omega\cap\{\vert x-w\vert>2\operatorname{d}_{\partial\Omega}(x)\}}\frac{1}{\vert x-w\vert^2}\,\mathrm{d}w+\frac{1}{\pi}\ln 4\\
=:&J_2(x)+\frac{1}{\pi}\ln 4.
\end{align*}
By the convexity of the domain $\Omega$, we have
\begin{equation*}
\Omega\cap\{\vert x-w\vert>2\operatorname{d}_{\partial\Omega}(x)\}\subset\{(r,\theta_1): \operatorname{d}_{\partial\Omega}(x)<r/2\le \operatorname{diam}\Omega,\,\pi/3\le\theta_1\le 5\pi/3\}.
\end{equation*}
Hence by the coordinate transformation,
\begin{equation*}
J_2(x)\le\frac{1}{2\pi^2}\int_{\pi/3}^{5\pi/3}\int_{2\operatorname{d}_{\partial\Omega}(x)}^{2\operatorname{diam}\Omega}r^{-1}\,\mathrm{d}r\,\mathrm{d}\theta_1\\
\le\frac{2}{3\pi}\ln\frac{\operatorname{diam}\Omega}{\operatorname{d}_{\partial\Omega}(x)}.
\end{equation*}

In summary, we have
\begin{equation*}
\bar{R}(x)\le\frac{1}{\pi}\ln\frac{\operatorname{d}_{\partial\Omega}(x)}{2}+\frac{1}{\pi}\ln 4+\frac{2}{3\pi}\ln\frac{\operatorname{diam}\Omega}{\operatorname{d}_{\partial\Omega}(x)}\le 1+\vert\ln\operatorname{diam}\Omega\vert+\frac{1}{3\pi}\ln\operatorname{d}_{\partial\Omega}(x).
\end{equation*}

\noindent {\bf Step 5. End of the proof}

By \eqref{eq-rewrite-R}, \eqref{eq-Skd-estimate} and \eqref{eq-SbarhD-estimate}, we have
\begin{equation*}
R_{\mathrm{D}}(x)\ge \frac{\bar{c}_2}{\operatorname{d}_{\partial\Omega}(x)^{d-2}}-\bar{c}_1\left[1+\operatorname{d}_{\partial\Omega}(x)^{\frac{5}{2}-d}\right]\ge \frac{c_2}{\operatorname{d}_{\partial\Omega}(x)^{d-2}}-c_1,
\end{equation*}
for $d\ge 3$, and $R_{\mathrm{D}}(x)\ge -c_1-c_2\ln\operatorname{d}_{\partial\Omega}(x)$ for $d=2$.

By \eqref{eq-rewrite-R}, \eqref{eq-Skd-estimate} and \eqref{eq-SbarhN-estimate}, we have
\begin{equation*}
R_{\mathrm{N}}(x)\le \bar{c}_1-\frac{\bar{c}_2}{\operatorname{d}_{\partial\Omega}(x)^{d-2}}+\bar{c}_1\left[1+\operatorname{d}_{\partial\Omega}(x)^{\frac{5}{2}-d}\right]\le c_1-\frac{c_2}{\operatorname{d}_{\partial\Omega}(x)^{d-2}},
\end{equation*}
for $d\ge 3$, and $R_{\mathrm{N}}(x)\le c_1+c_2\ln\operatorname{d}_{\partial\Omega}(x)$ for $d=2$.

\end{proof}

\section{Local Well-posedness}\label{sec-local-well-posedness}

\subsection{elementary estimates on the electric field}

Note $h_{\mathrm{D}}\equiv 0$. The solution $\phi_{\#}$ of the Poisson equation \eqref{eq-Poisson} with boundary condition either \eqref{eq-Dirichlet-boundary} or \eqref{eq-Neumann-boundary} can be represented uniquely (up to a constant if $\#=\mathrm{N}$) using Green function:
\begin{equation*}
\phi_{\#}(t,x)=\int_{\Omega} G_{\#}(x,y)\rho(t,y)\,\mathrm{d}y-\int_{\partial\Omega} G_{\#}(x,y)h_{\#}(y)\,\mathrm{d}S_y,
\end{equation*}
and $E_{\#}=\nabla_x\phi_{\#}$ is given by
\begin{equation*}
E_{\#}(t,x)=\int_{\Omega}\nabla_xG_{\#}(x,y)\rho(t,y)\,\mathrm{d}y-\int_{\partial\Omega} \nabla_xG_{\#}(x,y)h_{\#}(y)\,\mathrm{d}S_y.
\end{equation*}

By Lemma~\ref{lem-estimate-Green}, we have
\begin{align*}
\left\vert \phi_{\#}(t,x)\right\vert&\le C\int_{\Omega} \frac{\rho(t,y)}{\vert x-y\vert}\,\mathrm{d}y+C\int_{\partial\Omega} \frac{h_{\#}(y)}{\vert x-y\vert}\,\mathrm{d}S_y,\\
\left\vert E_{\#}(t,x)\right\vert&\le C\int_{\Omega} \frac{\rho(t,y)}{\vert x-y\vert^2}\,\mathrm{d}y+C\int_{\partial\Omega} \frac{h_{\#}(y)}{\vert x-y\vert^2}\,\mathrm{d}S_y.
\end{align*}

The following estimates will be frequently used.
\begin{lemma}\label{lem-elliptic-estimate-electric-field}
Assume $\rho\in L^1\cap L^p(\Omega)$. Then 
\begin{align*}
\|E_{\#}(t)\|_{\frac{3p}{3-p}}&\le C(\|\rho(t)\|_{p}+1)\quad\text{\rm if}\;1<p<3,\\
\|E_{\#}(t)\|_{\infty}&\le C(\|\rho(t)\|_{p}+1)\quad\text{\rm if}\;3<p<\infty,
\end{align*}
where $C$ depends on $p$ and $\|h_{\#}\|_{\infty}$.
\end{lemma}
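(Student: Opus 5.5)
We split $E_{\#}$ into its volume part and its boundary part and bound each separately. By the representation just given,
\begin{equation*}
E_{\#}(t,x)=\underbrace{\int_{\Omega}\nabla_xG_{\#}(x,y)\rho(t,y)\,\mathrm{d}y}_{=:E^{(1)}(t,x)}-\underbrace{\int_{\partial\Omega}\nabla_xG_{\#}(x,y)h_{\#}(y)\,\mathrm{d}S_y}_{=:E^{(2)}(x)}.
\end{equation*}
For $\#=\mathrm{D}$ we have $h_{\mathrm{D}}\equiv0$, so $E^{(2)}=0$.

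\textbf{Volume part.} By \eqref{eq-Green-elementary-estimate-2} of Lemma~\ref{lem-estimate-Green} with $d=3$,
\begin{equation*}
\vert E^{(1)}(t,x)\vert\le C\int_{\Omega}\frac{\rho(t,y)}{\vert x-y\vert^{2}}\,\mathrm{d}y\le C\,\big(\vert\cdot\vert^{-2}*(\rho\mathds{1}_{\Omega})\big)(x).
\end{equation*}
This is the Riesz potential $I_1$ of order one applied to $\rho$ extended by zero.

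For $1<p<3$, the Hardy--Littlewood--Sobolev inequality gives
\begin{equation*}
\|E^{(1)}\|_{L^{3p/(3-p)}(\Omega)}\le C\|\rho\|_{p}.
\end{equation*}

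For $3<p<\infty$, we split at radius $R=1$. The near part is controlled by Hölder's inequality: since $2p'<3$ exactly when $p>3$,
\begin{equation*}
\int_{\vert x-y\vert<1}\frac{\rho}{\vert x-y\vert^{2}}\,\mathrm{d}y\le\|\rho\|_p\,\big\|\vert\cdot\vert^{-2}\big\|_{L^{p'}(B_1)}\le C\|\rho\|_p.
\end{equation*}
The far part is bounded by $\|\rho\|_1$. So we get $C(\|\rho\|_p+\|\rho\|_1)$ in this case.

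The $\|\rho\|_1$ term has to be absorbed into the constant. Since $\Omega$ is bounded, $\|\rho\|_1\le\vert\Omega\vert^{1-1/p}\|\rho\|_p$, so the far part is also $\le C\|\rho\|_p$. Alternatively one can take the far part as bounded by the conserved mass, which is what the constant ``$+1$'' suggests.

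\textbf{Boundary part.} For $\#=\mathrm{N}$ we need $E^{(2)}\in L^\infty$, or at least $E^{(2)}\in L^{3p/(3-p)}$ for the first case. The naive bound $\vert\nabla_xG_{\mathrm{N}}\vert\le C\vert x-y\vert^{-2}$ integrated over $\partial\Omega$ gives only $C\|h\|_\infty(1+\vert\ln\operatorname{d}_{\partial\Omega}(x)\vert)$ by Lemma~\ref{lem-boundary-domain-integral-d-1-a} with $a=d-1=2$. This is the main obstacle.

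\emph{Weaker, estimate-only route.} The logarithm lies in every $L^q(\Omega)$ with $q<\infty$, since $\operatorname{d}_{\partial\Omega}$ behaves like a one-dimensional variable near the boundary. This settles the first inequality directly. For the $L^\infty$ claim it does not suffice, so we need the PDE route below.

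\emph{PDE route (planned).} Note that $\psi(x):=\int_{\partial\Omega}G_{\mathrm{N}}(x,y)h_{\mathrm{N}}(y)\,\mathrm{d}S_y$ solves a Neumann problem:
\begin{equation*}
\Delta\psi=\frac{1}{\vert\Omega\vert}\int_{\partial\Omega}h_{\mathrm{N}}\,\mathrm{d}S\ \text{ in }\Omega,\qquad \partial_n\psi=\pm h_{\mathrm{N}}\ \text{ on }\partial\Omega,
\end{equation*}
with the sign set by the convention for $G_{\mathrm{N}}$. The right-hand side is constant, and $h_{\mathrm{N}}\in C^2$ while $\partial\Omega\in C^4$. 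Schauder theory for the Neumann problem then gives $\psi\in C^{2,\lambda}(\bar\Omega)$, hence $\|\nabla\psi\|_\infty\le C\|h_{\mathrm{N}}\|_{C^{1,\lambda}}$. With this, both inequalities hold, with the constant depending on $h_{\mathrm{N}}$.

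If the constant must depend only on $\|h_{\#}\|_\infty$, we instead aim for a weaker fix. We would use the boundary cancellation $\partial_{n_x}G(x,y)\approx O(\vert x-y\vert^{-1})$ for the tangential–normal structure, together with the layer-potential decomposition of Section~\ref{sec-Green-Robin}, and settle for the log-integrable bound in the first case.

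Summing the volume and boundary contributions gives $C(\|\rho\|_p+1)$ in both cases.
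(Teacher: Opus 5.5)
Your argument is correct. On the volume term it is the paper's argument: the paper's entire proof is ``by the Hardy--Littlewood--Sobolev inequality'', which is exactly your $I_1$ bound for $1<p<3$. For $3<p<\infty$, HLS does not reach $L^\infty$, and your H\"older estimate is what is actually needed; it uses that $|\cdot|^{-2}\in L^{p'}_{\mathrm{loc}}$ iff $p>3$, together with the boundedness of $\Omega$.

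The real difference is the Neumann boundary term $\int_{\partial\Omega}\nabla_xG_{\mathrm N}(x,y)h_{\mathrm N}(y)\,\mathrm{d}S_y$, which the paper's one-line proof silently absorbs into the ``$+1$''. You are right that the kernel bound only gives $C\|h_{\mathrm N}\|_\infty(1+|\ln\operatorname{d}_{\partial\Omega}(x)|)$. That suffices for the $L^{3p/(3-p)}$ estimate with the stated dependence, but the $L^\infty$ estimate needs elliptic regularity.

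The price of your Schauder route is that $C$ depends on a H\"older-type norm of $h_{\mathrm N}$ rather than on $\|h_{\mathrm N}\|_\infty$, and this price cannot be avoided. For merely bounded data, the tangential part of this gradient behaves near $\partial\Omega$ like a Riesz transform of $h_{\mathrm N}$ on the boundary. A jump of $h_{\mathrm N}$ already produces a $|\ln\operatorname{d}_{\partial\Omega}(x)|$ blow-up, as a flat-boundary computation with $h=\mathrm{sign}$ of a tangential coordinate shows.

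So the lemma's claimed dependence on $\|h_\#\|_\infty$ is too optimistic for the second inequality, and your version is the correct one. This is harmless for the paper, since $h_{\mathrm N}\in C^2(\partial\Omega)$ is assumed throughout and constants may depend on $\|h_{\mathrm N}\|_{C^2}$. You should drop your final ``weaker fix'' paragraph: no argument can give the $L^\infty$ bound from $\|h_{\mathrm N}\|_\infty$ alone.

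The PDE step needs two small repairs.

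\begin{enumerate}
\item \textbf{Sign.} With the paper's normalization $\Delta_xG_{\mathrm N}(x,y)=\delta_y-|\Omega|^{-1}$, one has $\Delta\psi=-|\Omega|^{-1}\int_{\partial\Omega}h_{\mathrm N}$ and $\partial_n\psi=-h_{\mathrm N}$. Your displayed problem is therefore the one for $-\psi$.
\item \textbf{Identification of $\psi$.} Rather than reading the boundary condition off the singular boundary integral, argue in the other direction.
\begin{itemize}
\item Take the Schauder solution $u\in C^{2,\lambda}(\bar\Omega)$ of $\Delta u=|\Omega|^{-1}\int_{\partial\Omega}h_{\mathrm N}$, $\partial_nu=h_{\mathrm N}$, which is solvable by the compatibility condition.
\item Insert $u$ into Green's representation formula with $G_{\mathrm N}$.
\item Use that $x\mapsto\int_\Omega G_{\mathrm N}(x,y)\,\mathrm{d}y$ is constant, being harmonic with zero Neumann data. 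This gives $\psi=-u+\mathrm{const}$, hence $\|\nabla\psi\|_\infty\le C\|\nabla u\|_\infty$, bounded by the Schauder estimate.
\end{itemize}
\end{enumerate}
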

\begin{proof}
It can be proved by the Hardy-Littlewood-Sobolev inequality.
\end{proof}

\begin{lemma}\label{lem-pre-estimates-electric-field}
Assume $\rho(x)=\int_{\vert v-v_0\vert<R}f(x,v)\,\mathrm{d}v$ for $R>0$, $v_0\in\mathbb{R}^3$ and $f\in L^{\infty}(\Omega\times\mathbb{R}^3)$. Assume also $\rho\in L^{5/3}(\Omega)$, then
\begin{equation}\label{eq-pre-estimates-electric-field}
\int_{\Omega}\frac{\vert\rho(y)\vert}{\vert x-y\vert^2}\,\mathrm{d}y\le C\|f\|_{\infty}^{4/9}\|\rho\|_{5/3}^{5/9}R^{4/3}.
\end{equation}
\end{lemma}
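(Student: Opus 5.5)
The plan is to interpolate between two elementary bounds on $\rho$, splitting the integral at a radius $r>0$ to be optimized. The two inputs are the pointwise bound
\begin{equation*}
\|\rho\|_{\infty}\le \tfrac{4\pi}{3}R^3\|f\|_{\infty},
\end{equation*}
which is immediate from the definition of $\rho$ as an integral over the ball $\{\vert v-v_0\vert<R\}$, and the assumed $L^{5/3}$ bound. Split
\begin{equation*}
\int_{\Omega}\frac{\vert\rho(y)\vert}{\vert x-y\vert^2}\,\mathrm{d}y=\int_{\Omega\cap\{\vert x-y\vert<r\}}+\int_{\Omega\cap\{\vert x-y\vert\ge r\}}=:I_1+I_2 .
\end{equation*}

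For the near part, extend the domain of integration to the full ball and use the $L^\infty$ bound:
\begin{equation*}
I_1\le \|\rho\|_\infty\int_{\vert z\vert<r}\vert z\vert^{-2}\,\mathrm{d}z=4\pi r\|\rho\|_\infty\le C\|f\|_\infty R^3 r .
\end{equation*}

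For the far part, apply H\"older with exponents $5/3$ and $5/2$, again extending to $\mathbb{R}^3$. The function $\vert z\vert^{-5}$ is integrable outside the ball of radius $r$, which gives
\begin{equation*}
I_2\le \|\rho\|_{5/3}\Big(\int_{\vert z\vert\ge r}\vert z\vert^{-5}\,\mathrm{d}z\Big)^{2/5}=C\|\rho\|_{5/3}r^{-4/5}.
\end{equation*}
Both extensions are harmless: the integrands are nonnegative, so enlarging the domain from $\Omega$ to $\mathbb{R}^3$ only increases the integrals. In particular boundedness or convexity of $\Omega$ plays no role here.

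Finally, choose $r$ to balance the two terms, i.e. $\|f\|_\infty R^3 r=\|\rho\|_{5/3}r^{-4/5}$. This gives $r=(\|\rho\|_{5/3}/(\|f\|_\infty R^3))^{5/9}$. Substituting, the total is bounded by
\begin{equation*}
C(\|f\|_\infty R^3)^{4/9}\|\rho\|_{5/3}^{5/9}=C\|f\|_\infty^{4/9}\|\rho\|_{5/3}^{5/9}R^{4/3},
\end{equation*}
which is exactly \eqref{eq-pre-estimates-electric-field}.

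If $\|\rho\|_{5/3}=0$ or $\|f\|_\infty=0$, then $\rho\equiv0$ and the statement is trivial. There is no real obstacle; the only point requiring care is checking the exponent bookkeeping in this optimization.
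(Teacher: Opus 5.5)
Your proof is correct, and the exponent bookkeeping checks out: the choice $r=(\|\rho\|_{5/3}/(\|f\|_\infty R^3))^{5/9}$ makes both terms of size $(\|f\|_\infty R^3)^{4/9}\|\rho\|_{5/3}^{5/9}$. The paper states this lemma without proof, treating it as a standard estimate, and your argument (split at radius $r$, use $\|\rho\|_\infty\le\frac{4\pi}{3}R^3\|f\|_\infty$ near $x$, use H\"older with exponents $5/3$ and $5/2$ away from $x$) is exactly the standard argument behind it.
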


\subsection{The conservation laws}

The energy of the plasma-charge model \eqref{eq-Vlasov}-\eqref{eq-Newton} is defined by
\begin{align*}
\mathcal{E}(t)=&\frac{1}{2}\iint_{\Omega\times\mathbb{R}^3}\left(\vert v\vert^2-\phi_{\#}\right)f(t,x,v)\,\mathrm{d}x\,\mathrm{d}v+\frac{1}{2}\int_{\partial\Omega}h_{\#}(x)\phi_{\#}(t,x)\,\mathrm{d}x\\
&+\sum_{\alpha}\left[\frac{1}{2}\vert \eta_{\alpha}(t)\vert^2-\phi_{\#}(t,\xi_{\alpha}(t))-H_{\#}(\xi_{\alpha}(t))\right]-\frac{1}{2}\sum_{\alpha\ne\beta}G_{\#}(\xi_{\alpha}(t),\xi_{\beta}(t))\\
&-\frac{\gamma}{2}\int_0^t\iint_{\partial\Omega\times\mathbb{R}^3}\vert v\vert^2v_{\perp}f(s,x,v)\,\mathrm{d}S_{x}\,\mathrm{d}v\,\mathrm{d}s,
\end{align*}
where $\gamma=1$ for the absorption case, $\gamma=0$ for the reflection case. Via the Poisson equation and integration by parts, it can be rewritten as
\begin{align*}
\mathcal{E}(t)=&\frac{1}{2}\iint_{\Omega\times\mathbb{R}^3}\vert v\vert^2f(t,x,v)\,\mathrm{d}x\,\mathrm{d}v+\frac{1}{2}\iint_{\Omega}\vert E_{\#}(t,x)\vert^2\,\mathrm{d}x\\
&+\sum_{\alpha}\left[\frac{1}{2}\vert \eta_{\alpha}(t)\vert^2-\phi_{\#}(t,\xi_{\alpha}(t))-H_{\#}(\xi_{\alpha}(t))\right]-\frac{1}{2}\sum_{\alpha\ne\beta}G_{\#}(\xi_{\alpha}(t),\xi_{\beta}(t))\\
&-\frac{\gamma}{2}\int_0^t\iint_{\partial\Omega\times\mathbb{R}^3}\vert v\vert^2v_{\perp}f(s,x,v)\,\mathrm{d}S_{x}\,\mathrm{d}v\,\mathrm{d}s.
\end{align*}

We collect the well-known a priori estimates on the classical solutions of \eqref{eq-Vlasov}-\eqref{eq-Newton} in the following proposition.
\begin{proposition}\label{prn-conservation-laws}
Let $f,\{\xi_{\alpha},\eta_{\alpha}\}$ be a classical solution given in Theorem~\ref{thm-local-wellposedness-absorption} or Theorem~\ref{thm-local-wellposedness-reflection}, then $\forall t\in[0,\mathring{T})$, $1\le p\le\infty$
\begin{equation}\label{eq-conservation-laws}
\|f(t)\|_p-\gamma\int_0^t\iint_{\partial\Omega\times\mathbb{R}^3}v_{\perp}\vert f(s,x,v)\vert^p\,\mathrm{d}S_{x}\,\mathrm{d}v\,\mathrm{d}s=\|\mathring{f}\|_p,\quad\mathcal{E}(t)=\mathcal{E}(0).
\end{equation}
For the Neumann case, there exist $C,\delta_2>0$ depending only on $\mathcal{E}(0)$, $\|\mathring{f}\|_{1}$, $\|\mathring{f}\|_{\infty}$, $\|h_{\mathrm{N}}\|_{\infty}$, $M$, $A_{\Omega}$, $\operatorname{diam}\Omega$, $s,r$ such that
\begin{gather}
\max_{\alpha}\sup_{t\in[0,\mathring{T})}\vert \eta_{\alpha}(t)\vert\le C,\quad\min_{\alpha}\inf_{t\in[0,\mathring{T})}\operatorname{d}_{\partial\Omega}(\xi_{\alpha}(t))\ge\delta_2,\label{eq-uniform-bound-point-charge-1}\\
\min_{\alpha\ne\beta}\inf_{t\in[0,\mathring{T})}\vert\xi_{\alpha}(t)-\xi_{\beta}(t)\vert\ge\delta_2,\label{eq-uniform-bound-point-charge-2}\\
\|\rho(t)\|_{s}\le C\quad\text{\rm if}\;1\le s\le \frac{5}{3},\label{eq-Lp-uniform-bound-rho}\\
\|E_{\mathrm{N}}(t)\|_{r}\le C\quad\text{\rm if}\;1\le r\le \frac{15}{4}.\label{eq-Lp-uniform-bound-electric-field}
\end{gather}
\end{proposition}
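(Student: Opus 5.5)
The plan has three parts: the conservation laws \eqref{eq-conservation-laws}, the uniform bounds on the point charges, and the moment and field bounds.

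\textbf{Conservation laws.} The $L^p$ identity is the standard computation. Multiply \eqref{eq-Vlasov-short} by $p\vert f\vert^{p-1}$ and integrate over $\Pi$. The field $E_{\#}^{\rm pla}$ depends only on $(t,x)$, so the $\nabla_v$ term integrates to zero. The transport term $v\cdot\nabla_x$ leaves only the boundary flux $\iint_{\Gamma}(v\cdot n_x)\vert f\vert^p$. Since $-v\cdot n_x=v_\perp$, this flux is $-\iint_{\Gamma}v_\perp\vert f\vert^p$.
\begin{itemize}
\item In the reflection case, the change of variables $v\mapsto\mathcal{R}_xv$ swaps $\Gamma_+$ and $\Gamma_-$ and flips the sign of $v_\perp$, so the flux cancels.
\item In the absorption case, the flux stays as the $\gamma$ term. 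Since $f$ vanishes near $\Gamma_2$ and $\Gamma_1$ by \eqref{eq-initial-singular-sets} on the local existence interval, the singularities of $F_{\#}$ at $\xi_\alpha$ cause no trouble.
\end{itemize}
For the energy, differentiate $\mathcal{E}(t)$ in its first form.
\begin{itemize}
\item The kinetic term gives $\iint\vert v\vert^2$-flux terms, which equal the $\gamma$ integrand (reflection preserves $\vert v\vert^2$). It also gives $\iint v\cdot E^{\rm pla}_{\#}f$.
\item By symmetry of $G_{\#}$, the potential terms produce $-\frac{d}{dt}$ of the bilinear interaction $\frac12\int\rho\phi_{\#}$ together with the charge terms.
\item Using $\partial_t\rho+\nabla\cdot j=0$ with $j=\int vf$ plus boundary flux, the plasma work $\int j\cdot(E_{\#}+F_{\#})$ cancels against the time derivative of $-\frac12\int\rho\phi_{\#}-\sum_\alpha\phi_{\#}(\xi_\alpha)$.
\item The charge kinetic energy derivative $\sum\eta_\alpha\cdot E^{\rm cha}_{\#,\alpha}$ cancels the derivatives of $-\phi_{\#}(t,\xi_\alpha)$ (its $\eta$-part), $-H_{\#}(\xi_\alpha)$ and $-\frac12\sum G_{\#}(\xi_\alpha,\xi_\beta)$.
\end{itemize}
The time derivative of the boundary term $\frac12\int_{\partial\Omega}h_{\#}\phi_{\#}$ is exactly what is needed to close the bilinear identity under a Neumann datum. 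I would verify this carefully by Green's representation of $\phi_{\#}$.

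\textbf{Uniform bounds on the point charges (Neumann).} This is the main step. Every term in $\mathcal{E}$ is nonnegative or bounded below except possibly the following, which I bound one by one.
\begin{itemize}
\item By \eqref{eq-Green-upper-bound}, $G_{\mathrm{N}}\le A_\Omega$. Hence $-\frac12\sum G_{\mathrm{N}}(\xi_\alpha,\xi_\beta)\ge -CM^2$, and also $-\phi_{\mathrm{N}}(\xi_\alpha)=-\int G_{\mathrm{N}}(\xi_\alpha,y)\rho+\int_{\partial\Omega}G_{\mathrm{N}}h_{\mathrm{N}}\ge -A_\Omega\|\mathring f\|_1+\int_{\partial\Omega}G_{\mathrm{N}}(\xi_\alpha,y)h_{\mathrm{N}}$.
\item The leftover boundary integral $\int_{\partial\Omega}G_{\mathrm{N}}(\xi,y)h(y)\,dS_y$, for both $h_{\mathrm{N}}$ and the $h^{\rm cha}_{\mathrm{N}}$ term in $H_{\mathrm{N}}$, is bounded in absolute value by $C(1+\vert\ln \operatorname{d}_{\partial\Omega}\vert)$ via Lemma~\ref{lem-estimate-Green} and Lemma~\ref{lem-boundary-domain-integral-d-1-a} with $a=1<2$. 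In fact it is bounded by $C$.
\item The key term is $-\frac12R_{\mathrm{N}}(\xi_\alpha)$. By Proposition~\ref{prn-Robin-estimate} with $d=3$, $-\frac12 R_{\mathrm{N}}(\xi_\alpha)\ge \frac{c_2}{2}\operatorname{d}_{\partial\Omega}(\xi_\alpha)^{-1}-\frac{c_1}{2}$.
\item The $\gamma$ term: on $\Gamma_+$ the absorption data contribute with $v_\perp>0$, and outgoing particles with $v_\perp<0$ make $-\frac{\gamma}{2}\int\ldots$ nonnegative. This sign bookkeeping needs care. The incoming part is bounded by $g^{\rm pla}$ on compact support, hence only grows at most linearly in $t$, and I accept a $T$-dependent constant there if needed.
\end{itemize}
Since the $\eta$ terms are nonnegative, this yields
\[
\tfrac12\vert\eta_\alpha\vert^2+\tfrac{c_2}{2}\operatorname{d}_{\partial\Omega}(\xi_\alpha)^{-1}\le C,
\]
which is \eqref{eq-uniform-bound-point-charge-1}. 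For \eqref{eq-uniform-bound-point-charge-2}, I use the same bound but isolate $-\frac12G_{\mathrm{N}}(\xi_\alpha,\xi_\beta)\ge c\vert\xi_\alpha-\xi_\beta\vert^{-1}-C$. This follows from $G_{\mathrm{N}}=G+\bar g_{\mathrm{N}}$, with $\bar g_{\mathrm{N}}(\cdot,y)$ bounded above once $y$ stays $\delta_2$-away from the boundary (interior harmonic estimates). The remaining pairs are again bounded via $G_{\mathrm{N}}\le A_\Omega$.

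\textbf{Moment and field bounds.} The energy bound gives $\iint\vert v\vert^2f\le C$. Combined with $\|f\|_\infty\le C$, the standard interpolation $\|\rho\|_{5/3}\le C\|f\|_\infty^{2/5}(\iint\vert v\vert^2f)^{3/5}$ gives \eqref{eq-Lp-uniform-bound-rho}, and $L^1$ conservation covers $s=1$. Lemma~\ref{lem-elliptic-estimate-electric-field} with $p=5/3$ gives $\|E_{\mathrm{N}}\|_{15/4}\le C$, and lower $r$ follows since $\Omega$ is bounded. I expect the main obstacle to be the exact sign bookkeeping of the absorption boundary flux in the energy, together with the lower bound of the boundary-potential terms near $\partial\Omega$.
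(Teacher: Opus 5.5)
Your plan follows the paper's proof. The paper also calls \eqref{eq-conservation-laws} standard and bounds $\mathcal E$ from below using $G_{\mathrm N}\le A_\Omega$ from \eqref{eq-Green-upper-bound}, the boundedness of $\int_{\partial\Omega}\vert G_{\mathrm N}(x,y)h(y)\vert\,\mathrm{d}S_y$, and Proposition~\ref{prn-Robin-estimate}. It then uses boundedness of $\bar g_{\mathrm N}$ at pairs of points $\delta_2$-away from $\partial\Omega$ to isolate $-\frac12G(\xi_\alpha,\xi_\beta)$, and finishes with kinetic interpolation and Lemma~\ref{lem-elliptic-estimate-electric-field}. Two things in the absorption case need attention.

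First, do not settle for a $T$-dependent constant, because the statement asserts bounds uniform on $[0,\mathring T)$. By \eqref{eq-assum-regularity-absorption}, $g^{\rm pla}$ is compactly supported in $\bar{\mathbb R}_+\times\Gamma_+$, i.e.\ also in time. Hence $\int_0^\infty\iint_{\Gamma_+}(1+\vert v\vert^2)v_\perp g^{\rm pla}\,\mathrm{d}S_x\,\mathrm{d}v\,\mathrm{d}s<\infty$, not a quantity growing linearly in $t$. This bounds the incoming part of the $\gamma$-term from below independently of $t$. It also bounds $\Vert f(t)\Vert_1$ uniformly, and $\Vert f(t)\Vert_1$ is what must replace $\Vert\mathring f\Vert_1$ in your lower bound for $-\phi_{\mathrm N}(t,\xi_\alpha)$, since mass is not conserved under absorption.

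Second, your worry about the absorption flux in the energy identity is justified, and the paper's proof does not address it either. Set $j=\int_{\mathbb R^3}vf\,\mathrm{d}v$. Integrating $\int_\Omega j\cdot\nabla_x\phi^{\rm pla}_{\#}\,\mathrm{d}x$ by parts leaves the boundary term $\int_{\partial\Omega}\phi^{\rm pla}_{\#}\,j\cdot n_x\,\mathrm{d}S_x$. This term vanishes under reflection, where $j\cdot n_x=0$, and under Dirichlet conditions, where $\phi^{\rm pla}_{\mathrm D}=0$ on $\partial\Omega$. It does not vanish in the absorption--Neumann case, and there $\mathcal E$ as defined has no corresponding boundary-work term. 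So in that case the cancellation you describe fails, and $\mathcal E(t)=\mathcal E(0)$ does not hold literally. You would have to carry this term along and control it separately before the uniform bounds follow, for instance through the finite total influx and a Gr\"onwall argument.
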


\begin{remark}\label{rem-Neumann-distance-lower-bound}
Note
\begin{equation*}
\mathcal{E}(t)=-\frac{1}{2}\sum_{\alpha}R_{\#}(\xi_{\alpha}(t))+\text{other terms}.
\end{equation*}
By Proposition~\ref{prn-Robin-estimate}, when $\operatorname{d}_{\partial\Omega}(\xi_{\alpha}(t))\to 0^+$, we have
\begin{equation*}
-\frac{1}{2}R_{\mathrm{D}}(\xi_{\alpha}(t))\to-\infty,\quad -\frac{1}{2}R_{\mathrm{N}}(\xi_{\alpha}(t))\to+\infty,
\end{equation*}
which implies that as a point charge approaches the boundary, the Dirichlet boundary has an attractive effect on the point charge, while the Neumann boundary has a repulsive effect on the point charge. 
\end{remark}

\begin{remark}
We emphasize that these uniform bounds \eqref{eq-uniform-bound-point-charge-1}-\eqref{eq-Lp-uniform-bound-electric-field} are independent of $t$. For Dirichlet boundary, however, such uniform bounds are unknown.
\end{remark}

\begin{proof}
The conservation laws \eqref{eq-conservation-laws} are standard. Note
\begin{equation*}
\int_{\partial\Omega}\vert G_{\#}(x,y)h_{\#}(y)\vert\,\mathrm{d}S_y\le C\|h_{\#}\|_{L^{\infty}(\partial\Omega)}\int_{\partial\Omega}\vert G_{\#}(x,y)\vert\,\mathrm{d}S_y\le C\|h_{\#}\|_{L^{\infty}(\partial\Omega)}.
\end{equation*}
Combining it with \eqref{eq-Green-upper-bound} and the representation of $\phi_{\#}$ using Green function, we have
\begin{equation*}
\phi_{\#}(t,x)\le A_{\Omega}\|f(t)\|_{L_1(\Omega\times\mathbb{R}^d)}+C\|h_{\#}\|_{L^{\infty}(\partial\Omega)}.
\end{equation*}

Hence it is no difficult to deduce that there exists a constant $C>0$ such that
\begin{equation*}
-C+\frac{1}{2}\iint_{\Omega\times\mathbb{R}^3}\vert v\vert^2f(t)\,\mathrm{d}x\,\mathrm{d}v+\frac{1}{2}\sum_{\alpha}\vert \eta_{\alpha}(t)\vert^2-\frac{1}{2}\sum_{\alpha}R_{\#}(\xi_{\alpha}(t))\le \mathcal{E}(t)=\mathcal{E}(0),
\end{equation*}
By Proposition~\ref{prn-Robin-estimate}, for $\#=\mathrm{N}$, there exist $C,\delta_2>0$ depending only on $\mathcal{E}(0)$, $\|\mathring{f}\|_{L^1}$, $M$, $A_{\Omega}$, $\operatorname{diam}\Omega$ such that
\begin{equation*}
\max_{\alpha}\sup_{t\in[0,T]}\vert \eta_{\alpha}(t)\vert\le C,\quad\min_{\alpha}\inf_{t\in[0,T]}\operatorname{d}_{\partial\Omega}(\xi_{\alpha}(t))\ge\delta_2.
\end{equation*}
Recall that
\begin{equation*}
G_{\mathrm{N}}(x,y)=G(x,y)+\bar{g}_{\mathrm{N}}(x,y).
\end{equation*}
Since $\bar{g}_{\mathrm{N}}$ is smooth in $\Omega\times\Omega$, by \eqref{eq-uniform-bound-point-charge-1}, we have
\begin{equation*}
\vert\bar{g}_{\mathrm{N}}(\xi_{\alpha}(t),\xi_{\beta}(t))\vert\le C
\end{equation*}
where $C>0$ depending only on $\delta_2$. Hence it is no difficult to deduce that
\begin{equation*}
-C-\frac{1}{2}\sum_{\alpha\ne\beta}G(\xi_{\alpha}(t),\xi_{\beta}(t))\le \mathcal{E}(t)=\mathcal{E}(0),
\end{equation*}
where $C$ depends only on $\mathcal{E}(0)$, $\|\mathring{f}\|_{1}$, $\|h_{\#}\|_{\infty}$, $M$, $A_{\Omega}$, $\operatorname{diam}\Omega$. Therefore, by the definition of $G$,  there exists a constant, still denoted as $\delta_2>0$ such that
\begin{equation*}
\min_{\alpha\ne\beta}\inf_{t\in[0,T]}\vert\xi_{\alpha}(t)-\xi_{\beta}(t)\vert\ge\delta_2.
\end{equation*}

\eqref{eq-Lp-uniform-bound-rho} can be obtained by the kinetic interpolation inequality. \eqref{eq-Lp-uniform-bound-electric-field} can be obtained by Lemma~\ref{lem-elliptic-estimate-electric-field} and \eqref{eq-Lp-uniform-bound-rho}.
\end{proof}

\subsection{Velocity lemmas}

The characteristics associated with \eqref{eq-Vlasov-short}, i.e., the trajectories in phase space of the plasma particles, are the solutions $(X(s,t,x,v),V(s,t,x,v))$ of the following differential equations:
\begin{equation}\label{eq-character-ODE}
\left\{\begin{split}
&\partial_sX=V,\quad\partial_sV=E_{\#}^{\rm pla}(s,X),\\
&(X(t,t,x,v),V(t,t,x,v))=(x,v)\in\operatorname{supp}f(t),
\end{split}\right.
\end{equation}
as long as $X(s,t,x,v)$ remains in $\Omega$. Assume at time $s=\bar{s}$, $X(\bar{s},t,x,v)\in\partial\Omega$, we define
\begin{equation}\label{eq-character-V-reflection}
\left\{\begin{split}
&V(\bar{s}+0,t,x,v)=\mathcal{R}_{X(\bar{s})}V(\bar{s}-0,t,x,v),\\
&\vert V(\bar{s},t,x,v)\vert=\vert V(\bar{s}\pm 0,t,x,v)\vert,
\end{split}\right.
\end{equation}
where $V(\bar{s}\pm0,t,x,v)=\lim_{\Delta\to0^+}V(\bar{s}\pm\Delta,t,x,v)$ and $X(\bar{s})=X(\bar{s},t,x,v)$ for short.
Then \eqref{eq-character-ODE}, \eqref{eq-character-V-reflection} together define the generalized characteristics along which $f$ is constant if $f$ satisfies \eqref{eq-Vlasov-short} and the reflection condition in \eqref{eq-Dirichlet-boundary}, \eqref{eq-Neumann-boundary}. 


Let $\#=\mathrm{N}$, we have the following so called velocity lemma.
\begin{lemma}\label{lem-velocity-lemma}
Let $\bar{T},L,\varepsilon>0$. Assume that $\rho\in C^{0;\mu}([0,\bar{T}]\times\bar{\Omega})$ for some $\mu\in(0,1)$, $h_{\mathrm{N}}\in C^2(\partial\Omega;\mathbb{R}_+)$, $\xi_{\alpha}\in C([0,\bar{T}];\Omega)$ are given. Let $X(s)=X(s,0,x,v),V(s)=V(s,0,x,v)$ be the solution of the equations \eqref{eq-character-ODE}, \eqref{eq-character-V-reflection} satisfying $X_{\perp}(s)\in[\partial\Omega+B_{\delta_0}(0)]\cap\Omega$, $\vert V(s)\vert\le L$, $\min_{\alpha}\{\vert X(s)-\xi_{\alpha}(s)\vert,\operatorname{d}_{\partial\Omega}(\xi_{\alpha}(s))\}\ge\varepsilon$ for all $s\in[0,\bar{T}]$. Then there exists a constant $C>1$ depending only on $\bar{T}$, $\varepsilon$, $L$, $\|h_{\mathrm{N}}\|_{C^2(\partial\Omega)}$, $\|\rho\|_{C^{0;\mu}([0,\bar{T}]\times\bar{\Omega})}$ and the $C^4$ regularity of $\partial\Omega$ such that
\begin{equation}\label{eq-velocity-lem}
C^{-1}(X_{\perp}(0)+V_{\perp}^2(0))\le (X_{\perp}(s)+V_{\perp}^2(s))\le C(X_{\perp}(0)+V_{\perp}^2(0)),\quad\forall s\in[0,\bar{T}].
\end{equation}

\end{lemma}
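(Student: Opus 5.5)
We follow the classical velocity lemma of Guo and Hwang--Vel\'azquez: build a functional comparable to $X_{\perp}+V_{\perp}^2$ whose logarithmic derivative along characteristics is bounded. Near the boundary we use the coordinates of Lemma~\ref{lem-Vlasov-new-coordinates}. Along a characteristic we have $\dot X_{\perp}=V_{\perp}$ and
\begin{equation*}
\dot V_{\perp}=\bar E^{\rm pla}_{\mathrm{N},\perp}(s,X)=E^{\rm pla}_{\mathrm{N},\perp}(s,X)+\sum_{j=1,2}\frac{W_j^2b_j}{1+\kappa_jX_{\perp}}.
\end{equation*}
Reflection at the boundary only flips the sign of $V_{\perp}$, so $X_{\perp}+V_{\perp}^2$ is continuous across bounces. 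We introduce
\begin{equation*}
\boldsymbol{\beta}(s)=V_{\perp}^2(s)-2\Big(E^{\rm pla}_{\mathrm{N},\perp}(s,X_{\parallel}(s))+\sum_{j}W_j^2b_j\Big)X_{\perp}(s)+\lambda X_{\perp}(s),
\end{equation*}
where the coefficient of $X_{\perp}$ is evaluated at the boundary projection and $\lambda$ is a large constant. The boundary term needs a sign. Convexity gives $b_j\ge0$ with our orientation. On $\partial\Omega$ the Neumann condition gives $E_{\mathrm{N},\perp}=-h_{\mathrm{N}}\le 0$ up to sign convention. In addition $\partial_{n_x}G_{\mathrm{N}}(x,\xi_\alpha)=0$ on $\partial\Omega$, so the charge field $F_{\mathrm{N}}$ has vanishing normal component there. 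Hence $-2(E_{\perp}+\sum W_j^2 b_j)$ has a definite sign at $x_\perp=0$, and we expect its sign to be nonnegative. Choosing $\lambda$ large, depending on $L$, $\|h_{\mathrm{N}}\|_\infty$ and $\|E\|_\infty$, then gives $C^{-1}(X_\perp+V_\perp^2)\le\boldsymbol\beta\le C(X_\perp+V_\perp^2)$.

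Next we differentiate $\boldsymbol\beta$ along the flow. The cross term $2V_\perp\dot V_\perp-2(E_\perp+\sum W_j^2b_j)(X_\parallel)V_\perp$ reduces to
\begin{equation*}
2V_\perp\big[\bar E_\perp(X)-\bar E_\perp(X_\parallel)\big]=O\big(|V_\perp|\,X_\perp\,\|\nabla_x \bar E^{\rm pla}_{\perp}\|_{\infty}\big),
\end{equation*}
using the Lipschitz bound in the normal direction. This is then bounded by $C(X_\perp+V_\perp^2)$. We also need the time derivative of the coefficient, which brings in $\partial_sE$, $\nabla_xE\cdot V$, and $\dot W_j$ (bounded by $L^2$, curvature and $\|E\|_\infty$). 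Each of these multiplies $X_\perp$, so they are harmless as long as the coefficient is Lipschitz in $(s,x)$. The term $\lambda V_\perp$ is bounded by $\lambda(X_\perp+V_\perp^2)$ after $|V_\perp|\le L$ is used suitably; the needed inequality is $|V_\perp|\le \tfrac12(1+V_\perp^2)$, combined with $X_\perp\ge$ something. This point needs care. The standard fix is to note that $V_\perp$ appears only as $2V_\perp(\dots)$ multiplied by terms vanishing at $X_\perp=0$, which already absorbs the $\lambda$ term. We then obtain $|\dot{\boldsymbol\beta}|\le C\boldsymbol\beta$, and Gr\"onwall in both time directions gives \eqref{eq-velocity-lem}.

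The main obstacle is the regularity of $E^{\rm pla}_{\mathrm{N}}$ needed in the previous step. We need $E^{\rm pla}_{\mathrm N}$ to be $C^1$ in $x$ near $\partial\Omega$ and Lipschitz in $t$ along the curve, with bounds depending only on the listed quantities. For the plasma part, $\rho\in C^{0;\mu}$ gives $\nabla_x E\in C^{0,\mu}(\bar\Omega)$ by Schauder estimates for the Neumann problem, since $h_{\mathrm{N}}\in C^2$ and $\partial\Omega\in C^4$. Time regularity is obtained through the coefficient evaluated at the projection, or by replacing $\partial_s E$ by a mollification in $s$ and using the H\"older modulus. If this fails we follow \cite{HV10} and write the coefficient via $\rho$ through the continuity equation. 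For the charges, $\nabla_xG_{\mathrm{N}}(x,\xi_\alpha(s))$ is smooth for $x$ in the boundary layer, because $\operatorname{d}_{\partial\Omega}(\xi_\alpha)\ge\varepsilon$ and $|X-\xi_\alpha|\ge\varepsilon$. By Lemma~\ref{lem-estimate-Green} and interior elliptic regularity its $C^2$ norms depend only on $\varepsilon$. Its time derivative involves $\dot\xi_\alpha$, which is not controlled by the hypotheses since only continuity of $\xi_\alpha$ is assumed. To avoid this, we keep $F_{\mathrm N,\perp}$ out of the frozen coefficient altogether: its boundary value vanishes, so $F_{\mathrm N,\perp}(s,X)=O(X_\perp)$ uniformly. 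Then $2V_\perp F_\perp(s,X)$ is directly bounded by $C(X_\perp+V_\perp^2)$ without any time differentiation.
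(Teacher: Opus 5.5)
Your overall structure matches the paper's: a ``bouncing ball'' functional $V_\perp^2-2\bar E^{\rm pla}_{\mathrm N,\perp}\big|_{x_\perp=0}X_\perp$, continuity across reflections, and Gr\"onwall in both time directions. The gap is the extra term $\lambda X_\perp$.

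\textbf{Why $\lambda X_\perp$ breaks the argument.} Its derivative along the flow is $\lambda V_\perp$, and this is \emph{not} bounded by $C(X_\perp+V_\perp^2)$ near the grazing set. At $X_\perp=0$ with $0<|V_\perp|\ll 1$ you have $\boldsymbol\beta\approx V_\perp^2$, while $\dot{\boldsymbol\beta}$ contains $\lambda V_\perp$. So $|\dot{\boldsymbol\beta}|/\boldsymbol\beta\sim\lambda/|V_\perp|$ is unbounded and Gr\"onwall fails.

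Your proposed fix is not valid, because $\lambda V_\perp$ is not multiplied by anything that vanishes at $X_\perp=0$. The only $V_\perp$-linear terms that cancel are $2V_\perp\dot V_\perp$ against $-2c\,V_\perp$, where $c$ is the frozen coefficient. That cancellation leaves $2V_\perp\big[\bar E_\perp(X)-\bar E_\perp(X_\parallel)\big]$ only if $c$ equals $\bar E^{\rm pla}_{\mathrm N,\perp}$ at $x_\perp=0$ exactly, with nothing added.

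So the lower bound $\boldsymbol\beta\gtrsim X_\perp+V_\perp^2$ cannot be manufactured by a constant; it must come from the data. This is not a technicality. Take a flat boundary with zero field, and a particle with $X_\perp(0)=a$, $V_\perp(0)=-a/\bar T$. At time $\bar T$ it has $X_\perp=0$ and $V_\perp^2=a^2/\bar T^2$, so the two sides of \eqref{eq-velocity-lem} differ by a factor $\sim\bar T^2/a\to\infty$.

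\textbf{The fix.} Drop $\lambda$ and use the exact boundary identity, as the paper does. Since $\partial_{n_x}\phi_{\mathrm N}=h_{\mathrm N}$ and $\partial_{n_x}G_{\mathrm N}(x,\xi_\alpha)=0$ on $\partial\Omega$, the frozen coefficient is
\begin{equation*}
\bar E^{\rm pla}_{\mathrm N,\perp,0}=-h_{\mathrm N}(X_\parallel)+\sum_j W_j^2 b_j(X_\parallel).
\end{equation*}
In the convention of your formula for $\dot V_\perp$, convexity gives $b_j\le 0$, not $b_j\ge 0$. Hence $h_{\mathrm N}-\sum_j W_j^2b_j\ge\min_{\partial\Omega}h_{\mathrm N}>0$, and it is bounded above in terms of $\|h_{\mathrm N}\|_\infty$, $L$ and the curvature. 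Strict positivity of $h_{\mathrm N}$ on the compact boundary is the essential input; it gives comparability with no $\lambda$.

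The same identity removes what you call the main obstacle. The frozen coefficient does not depend on $s$, $\rho$ or $\xi_\alpha$. Its derivative along the flow therefore involves only $\partial_{\mu_i}h_{\mathrm N}$, $\partial_{\mu_i}b_j$, $\dot{\boldsymbol\mu}_i$ and $\dot{\boldsymbol w}_i$. You need no $\partial_s E$, no mollification, no continuity equation, and no control of $\dot\xi_\alpha$. The remaining term $2V_\perp\big[\bar E_\perp(X)-\bar E_\perp(X_\parallel)\big]$ is handled as you describe. The paper uses only $C^{1/2}$ regularity and bounds it by $|V_\perp|\sqrt{X_\perp}\le\tfrac12(V_\perp^2+X_\perp)$; your Lipschitz bound also works.
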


\begin{remark}\label{rem-velocity-lemma-Dirichlet}
Analogous conclusion holds also for $\#=\mathrm{D}$, see \cite[Lemma 2.4]{Wu24}. However, the assumption $\min_{\alpha}\{\operatorname{d}_{\partial\Omega}(\xi_{\alpha}(s))\}\ge\varepsilon$ can not be verified for all $\bar{T}>0$, which invalidates the argument in the next sections.
\end{remark}

\begin{proof}
In view of Lemma~\ref{lem-Vlasov-new-coordinates}, the equations describing the evolution of the characteristics associated with \eqref{eq-Vlasov-short} in a geometrical form are:
\begin{equation}\label{eq-character-ODE-new-coordinates}
\left\{\begin{split}
\partial_s\boldsymbol{\mu}_i&=\frac{\boldsymbol{w}_i}{1+\boldsymbol{\kappa}_i\boldsymbol{x}_{\perp}},\quad\partial_s\boldsymbol{w}_i=\bar{\boldsymbol{E}}_{\mathrm{N},i}^{\rm pla},\\
\partial_s\boldsymbol{x}_{\perp}&=\boldsymbol{v}_{\perp},\quad\partial_s\boldsymbol{v}_{\perp}=\bar{\boldsymbol{E}}_{\mathrm{N},\perp}^{\rm pla},
\end{split}\right.
\end{equation}
where 
\begin{align*}
\boldsymbol{\mu}_i&=\boldsymbol{\mu}_i(s,t,\mu_1,\mu_2,x_{\perp},w_1,w_2,v_{\perp}),\\
\boldsymbol{\kappa}_i&=\kappa_i(\boldsymbol{\mu}_1,\boldsymbol{\mu}_2),\\
\boldsymbol{\sigma}_i&=\sigma_i(s,\boldsymbol{\mu}_1,\boldsymbol{\mu}_2,\boldsymbol{x}_{\perp},\boldsymbol{w}_1,\boldsymbol{w}_2,\boldsymbol{v}_{\perp}),\\
\bar{\boldsymbol{E}}_{\mathrm{N},i}^{\rm pla}&=\bar{E}_{\mathrm{N},i}^{\rm pla}(s,\boldsymbol{\mu}_1,\boldsymbol{\mu}_2,\boldsymbol{x}_{\perp},\boldsymbol{w}_1,\boldsymbol{w}_2,\boldsymbol{v}_{\perp}),\\
\bar{\boldsymbol{E}}_{\mathrm{N},\perp}^{\rm pla}&=\bar{E}_{\mathrm{N},\perp}^{\rm pla}(s,\boldsymbol{\mu}_1,\boldsymbol{\mu}_2,\boldsymbol{x}_{\perp},\boldsymbol{w}_1,\boldsymbol{w}_2),
\end{align*} 
and $\boldsymbol{w}_i$, $\boldsymbol{x}_{\perp}$, $\boldsymbol{v}_{\perp}$ are defined similarly to $\boldsymbol{\mu}_i$.

We define the distance function toward the grazing set as
\begin{equation*}
\boldsymbol{\beta}=\beta(s,\boldsymbol{\mu}_1,\boldsymbol{\mu}_2,\boldsymbol{x}_{\perp},\boldsymbol{w}_1,\boldsymbol{w}_2,\boldsymbol{v}_{\perp})=\frac{\vert \boldsymbol{v}_{\perp}\vert^2}{2}-\bar{\boldsymbol{E}}_{\mathrm{N},\perp,0}^{\rm pla}\boldsymbol{x}_{\perp},
\end{equation*}
where 
\begin{equation*}
\bar{\boldsymbol{E}}_{\mathrm{N},\perp,0}^{\rm pla}=\bar{E}_{\mathrm{N},\perp}^{\rm pla}(s,\boldsymbol{\mu}_1,\boldsymbol{\mu}_2,0,\boldsymbol{w}_1,\boldsymbol{w}_2)=-\boldsymbol{h}_{\mathrm{N}}+\sum_{j=1,2}\boldsymbol{w}_j^2\boldsymbol{b}_j.
\end{equation*}
We have used the facts that $\partial_{n_x}\phi_{\mathrm{N}}(x)=h_{\mathrm{N}}$, $\partial_{n_x}G_{\mathrm{N}}(x,y)=0$ for $(x,y)\in\partial\Omega\times\Omega$. Since $\min_{x\in\partial\Omega}h_{\mathrm{N}}(x)>0$, $b_j\le 0$ due to the convexity of $\Omega$, there exists a constant $c>0$ depending only on $\|h_{\mathrm{N}}\|_{L^{\infty}(\partial\Omega)}$, $\sum_{j=1,2}\|b_j\|_{L^{\infty}(\partial\Omega)}$, $L$ such that
\begin{equation*}
c(\vert \boldsymbol{v}_{\perp}\vert^2+\boldsymbol{x}_{\perp})\le\boldsymbol{\beta}\le c^{-1}(\vert \boldsymbol{v}_{\perp}\vert^2+\boldsymbol{x}_{\perp}).
\end{equation*}

According to \eqref{eq-character-ODE-new-coordinates}, differentiating $\boldsymbol{\beta}$ with respect to $s$ we obtain:
\begin{align*}
\partial_s\boldsymbol{\beta}=&\boldsymbol{v}_{\perp}\left(\bar{\boldsymbol{E}}_{\mathrm{N},\perp}^{\rm pla}-\bar{\boldsymbol{E}}_{\mathrm{N},\perp,0}^{\rm pla}\right)-\boldsymbol{x}_{\perp}\sum_{i=1,2}\left(\partial_{\boldsymbol{\mu}_i}\bar{\boldsymbol{E}}_{\mathrm{N},\perp,0}^{\rm pla}\frac{\boldsymbol{w}_i}{1+\boldsymbol{\kappa}_i\boldsymbol{x}_{\perp}}+\partial_{\boldsymbol{w}_i}\bar{\boldsymbol{E}}_{\mathrm{N},\perp,0}^{\rm pla}\bar{\boldsymbol{E}}_{\mathrm{N},i}^{\rm pla}\right),
\end{align*}
where
\begin{equation*}
\partial_{\boldsymbol{\mu}_i}\bar{\boldsymbol{E}}_{\mathrm{N},\perp,0}^{\rm pla}=-\partial_{\boldsymbol{\mu}_i}\boldsymbol{h}_{\mathrm{N}}+\sum_{j=1,2}\boldsymbol{w}_j^2\partial_{\boldsymbol{\mu}_i}\boldsymbol{b}_j,\quad \partial_{\boldsymbol{w}_i}\bar{\boldsymbol{E}}_{\mathrm{N},\perp,0}^{\rm pla}=2\boldsymbol{w}_i\boldsymbol{b}_i.
\end{equation*}
Hence we have
\begin{equation*}
\vert\partial_s\boldsymbol{\beta}\vert\le C\vert\boldsymbol{v}_{\perp}\vert\sqrt{\boldsymbol{x}_{\perp}}+C\boldsymbol{x}_{\perp}\le C\boldsymbol{\beta},
\end{equation*}
where $C>0$ depending only on $L$, $\|h_{\mathrm{N}}\|_{C^1(\partial\Omega)}$, $\sum_{i=1,2}\|\kappa_i\|_{L^{\infty}(\partial\Omega)}+\|b_i\|_{C^1(\partial\Omega)}$, $\|\bar{E}_{\mathrm{N}}^{\rm pla}\|_{C^{0;1/2}([0,\bar{T}]\times\bar{\Omega})}$. Recall the definitions of $\bar{E}_{\mathrm{N},i}^{\rm pla}$, $E_{\mathrm{N},i}^{\rm pla}$, $\bar{E}_{\mathrm{N},\perp}^{\rm pla}$, $E_{\mathrm{N},\perp}^{\rm pla}$ in Lemma~\ref{lem-Vlasov-new-coordinates}, by the elliptic estimates, we have
\begin{equation*}
\|\bar{E}_{\mathrm{N}}^{\rm pla}\|_{C([0,\bar{T}];C^{1/2}(\bar{\Omega}))}\le C,
\end{equation*}
where $C>0$ depending only on $\varepsilon$, $\|h_{\mathrm{N}}\|_{C^2(\partial\Omega)}$, $\|\rho\|_{C^{0;\mu}([0,\bar{T}]\times\bar{\Omega})}$ and $C^4$ regularity of $\partial\Omega$. According to the Gr\"onwall inequality we obtain \eqref{eq-velocity-lem}.

\end{proof}
The following lemma proved by contradiction with the help of velocity lemmas, see \cite[Lemma 13]{HV10} for details of the proof.
\begin{lemma}\label{lem-min-dichotomy}
Let $(X,V)$ and $(Y,W)$ be two generalized characteristics on some interval $[t_1,t_2]$. Suppose that
\begin{equation*}
\vert Y_{\perp}(s_0)-X_{\perp}(s_0)\vert=\min_{s\in[t_1,t_2]}\vert Y_{\perp}(s)-X_{\perp}(s)\vert
\end{equation*}
with $s_0\in(t_1,t_2)$. Then either both $Y_{\perp}(s_0)>0$, $X_{\perp}(s_0)>0$ or both $Y_{\perp}(s_0)=X_{\perp}(s_0)=0$.
\end{lemma}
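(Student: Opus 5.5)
We argue by contradiction, following \cite[Lemma 13]{HV10}. Write $D(s):=Y_{\perp}(s)-X_{\perp}(s)$; since $\vert D\vert$ attains an interior minimum at $s_0\in(t_1,t_2)$, the only configuration to exclude is the mixed one. After possibly swapping the roles of $(X,V)$ and $(Y,W)$, assume $X_{\perp}(s_0)=0<Y_{\perp}(s_0)$, so that $\vert D(s_0)\vert=Y_{\perp}(s_0)>0$ and $D>0$ near $s_0$ by continuity. The aim is a time $s$ near $s_0$ at which $D(s)<D(s_0)$, contradicting minimality.

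\textbf{Step 1 (the particle at the wall).} By definition of $\Gamma$, $X_{\perp}(s_0)=0$ means $X(s_0)\in\partial\Omega$. Apply the velocity lemma (Lemma~\ref{lem-velocity-lemma}) on a small interval around $s_0$; its hypotheses (bounded velocity, distance to the charges and to the boundary) hold on $[t_1,t_2]$ for the characteristics under consideration. Comparing the two-sided bound \eqref{eq-velocity-lem} at $s_0$ with a nearby time shows that $V_{\perp}(s_0\pm0)\ne0$. Otherwise $X_{\perp}+V_{\perp}^2$ would vanish at $s_0$, hence for all times, so $X_{\perp}\equiv0$ near $s_0$; this is excluded because the solution stays away from the grazing set $\Gamma_1$, as encoded in \eqref{eq-initial-singular-sets}. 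Thus the collision is transversal: $X_{\perp}$ has a corner at $s_0$. By \eqref{eq-character-ODE-new-coordinates} and the reflection law \eqref{eq-character-V-reflection}, $X_{\perp}(s)=a\vert s-s_0\vert+O(\vert s-s_0\vert^2)$ with $a=\vert V_{\perp}(s_0\pm0)\vert>0$.

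\textbf{Step 2 (the interior particle).} Since $Y_{\perp}(s_0)>0$, the characteristic $(Y,W)$ is smooth near $s_0$, so $Y_{\perp}(s)=Y_{\perp}(s_0)+b(s-s_0)+O(\vert s-s_0\vert^2)$ with $b=W_{\perp}(s_0)$.

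\textbf{Step 3 (contradiction).} Combining Steps 1 and 2,
\begin{equation*}
D(s)-D(s_0)=b(s-s_0)-a\vert s-s_0\vert+O(\vert s-s_0\vert^2).
\end{equation*}
Take $s=s_0+\tau$ if $b\le0$ and $s=s_0-\tau$ if $b>0$. In either case $D(s)-D(s_0)\le-a\tau+O(\tau^2)<0$ for small $\tau>0$. Since $D(s_0)>0$, we also have $D(s)>0$ for small $\tau$, so $\vert D(s)\vert<\vert D(s_0)\vert$, contradicting minimality. In words, a function with a downward kink cannot be subtracted from a differentiable one to produce a local minimum of the absolute value. Hence either both normal components are positive or both vanish.

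\textbf{Main obstacle.} The crucial and most delicate step is Step 1, namely excluding a grazing contact $V_{\perp}(s_0)=0$ of the boundary particle. If instead $X_{\perp}$ touched zero tangentially, $D$ would be differentiable and could have a genuine minimum there. This is precisely where the velocity lemma is needed, with its invariance of $X_{\perp}+V_{\perp}^2$ up to constants and the separation from $\Gamma_1$. Some care is also needed to justify the expansion of $X_{\perp}$ across the reflection using the $C^{1/2}$ regularity of $\bar{E}^{\rm pla}_{\mathrm{N}}$.
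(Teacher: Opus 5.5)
Your proof is correct and follows essentially the same route the paper takes by deferring to \cite[Lemma 13]{HV10}. In the mixed case you use the velocity lemma, together with the vanishing of the data near $\Gamma_1$, to rule out a grazing contact, so that $X_{\perp}$ has a transversal V-shaped corner at $s_0$; then $Y_{\perp}-X_{\perp}>0$ has a concave kink there, which is incompatible with an interior minimum of $\vert Y_{\perp}-X_{\perp}\vert$. You also make explicit the assumption the paper leaves implicit: the characteristics must lie in the support of $f$, so that they never reach the grazing set.
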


\subsection{Proof of Theorems~\ref{thm-local-wellposedness-absorption} and \ref{thm-local-wellposedness-reflection}}\label{subsec-local-well-posedness}

The proof is somewhat standard, which has been established exhaustively, e.g., in \cite{Guo94,HV09,HV10} for the boundary case and \cite{Gla96,Rei07} for the whole space case. Hence we only sketch the proof of existence for the reflection-Dirichlet boundary case.

We define iteratively a sequence of functions $f^n,\{\xi_{\alpha}^n,\eta_{\alpha}^n\}$ as follows. Thanks to Lemma~\ref{lem-velocity-lemma} and Remark~\ref{rem-velocity-lemma-Dirichlet}, such sequence is well defined (see \cite[Proposition 2]{HV10}). Let $(f^0,\xi_{\alpha}^0,\eta_{\alpha}^0)=(\mathring{f},\mathring{\xi}_{\alpha},\mathring{\eta}_{\alpha})$. For $n\ge 1$, let $\rho^{n-1}=\int f^{n-1}\,\mathrm{d}v$ and
\begin{align*}
&E_{\#}^{n-1}(t,x)=\int_{\Omega}\nabla_xG_{\#}(x,y)\rho^{n-1}(t,y)\,\mathrm{d}y-\int_{\partial\Omega} \nabla_xG_{\#}(x,y)h_{\#}(y)\,\mathrm{d}S_y,\\
&E_{\#}^{{\rm pla},n-1}(t,x)=E_{\#}^{n-1}(t,x)+\sum_{\alpha}\nabla_xG(x,\xi_{\alpha}^n(t)),\\
&E_{\#,\alpha}^{{\rm cha},n-1}(t)=E_{\#}^{n-1}(t,\xi_{\alpha}^{n}(t))+\sum_{\beta:\beta\ne\alpha}\nabla_xG_{\#}(\xi_{\alpha}^{n}(t),\xi_{\beta}^n(t))+\nabla_xH_{\#}(\xi_{\alpha}^n(t)),\\
&\partial_tf^{n}+v\cdot\nabla_xf^{n}+E_{\#}^{{\rm pla},n-1}\cdot\nabla_vf^{n}=0,\\
&\dot{\xi}_{\alpha}^{n}=\eta_{\alpha}^{n},\quad\dot{\eta}_{\alpha}^{n}=E_{\#,\alpha}^{{\rm cha},n-1},
\end{align*}
with initial-boundary value conditions
\begin{align*}
&(f^n,{\xi}_{\alpha}^{n},{\eta}_{\alpha}^{n})\vert_{t=0}=(\mathring{f},\mathring{{\xi}}_{\alpha},\mathring{{\eta}}_{\alpha}),\\
&f^n(t,x,v)=f^n(t,x,\mathcal{R}_xv),\quad x\in\partial\Omega.
\end{align*}
For $n\ge 1$, the characteristics $X^n(t)=X^n(t,0,x,v)$, $V^n(t)=V^n(t,0,x,v)$ are given by
\begin{equation*}
\left\{\begin{split}
&\partial_tX^n=V^n,\quad \partial_tV^n=E_{\#}^{{\rm pla},n-1}(t,X^n),\\
&(X^n(0),V^n(0))=(x,v)\in\operatorname{supp}\mathring{f}.
\end{split}\right.
\end{equation*}
It can be deduced that $\|f^{n}(t)\|_{p}=\|\mathring{f}\|_{p}$ for all $n\ge 1$, $1\le p\le\infty$.

For $n\ge 0$, we introduce a function which contains all the information we need:
\begin{align*}
e^n(t,x,v)=&\frac{\vert v\vert^2}{2}+\sum_{\alpha}\left[\frac{1}{\vert x-\xi_{\alpha}^n(t)\vert}+\frac{1}{2}\vert\eta_{\alpha}^n(t)\vert^2+\frac{1}{\operatorname{d}_{\partial\Omega}(\xi_{\alpha}(t))}\right]\\
&+\frac{1}{2}\sum_{a\ne\beta}\frac{1}{\vert\xi_{\alpha}^n(t)-\xi_{\beta}^n(t)\vert}+1
\end{align*}
and the associated quantity for $n\ge 0$
\begin{align*}
Q^n(t):=&\sup\{\sqrt{e^n}(s,x,v):(x,v)\in\operatorname{supp} f^n(s),\,0\le s\le t\},\\
\text{(for\;$n\ge 1$)}=&\sup\{\sqrt{e^n}(s,X^n(s),V^n(s)):(x,v)\in\operatorname{supp}\mathring{f},\,0\le s\le t\}.
\end{align*}
Notice for $n\ge 1$
\begin{equation*}
Q^0(t)=Q^n(0)=\sup\{\sqrt{e^n}(0,x,v):(x,v)\in\operatorname{supp}\mathring{f}\}:=\mathring{Q}.
\end{equation*}
Denote $\mathsf{e}^n(t)=e^n(t,X^n(t),V^n(t))$, differentiating it to obtain
\begin{align*}
\frac{\mathrm{d}}{\mathrm{d}t}\mathsf{e}^n(t)&=V^n(t)\cdot E_{\#}^{{\rm pla},n-1}(t,X^n(t))+\sum_{\alpha}\frac{(\eta_{\alpha}^n(t)-V^n(t))\cdot(X^n(t)-\xi_{\alpha}^n(t))}{\vert X^n(t)-\xi_{\alpha}^n(t)\vert^3}\\
&\quad+\sum_{\alpha}\eta_{\alpha}^n(t)\cdot E_{\#,\alpha}^{{\rm cha},n-1}(t)+\sum_{a\ne\beta}\frac{\eta_{\beta}^n(t)\cdot(\xi_{\alpha}^n(t)-\xi_{\beta}^n(t))}{\vert\xi_{\alpha}^n(t)-\xi_{\beta}^n(t)\vert^3}\\
&\quad-\sum_{\alpha}\frac{1}{[\operatorname{d}_{\partial\Omega}(\xi_{\alpha}(t))]^2}\nabla\operatorname{d}_{\partial\Omega}(\xi_{\alpha}(t))\cdot\eta_{\alpha}(t)\\
&\le C\sqrt{\mathsf{e}^n(t)}\|E_{\#}^{n-1}\|_{{\infty}}+C(\mathsf{e}^n(t))^{5/2}\\
&\le C\sqrt{\mathsf{e}^n(t)}\left(Q^{n-1}(t)\right)^{2}+C(\mathsf{e}^n(t))^{5/2},
\end{align*}
where we have used \eqref{eq-pre-estimates-electric-field} for $\rho^{n-1}(t)=\int_{\vert v\vert\le2Q^{n-1}(t)}f^{n-1}(t)\,\mathrm{d}v$. Integrating the inequality from $0$ to $t$ we have
\begin{equation*}
\sqrt{\mathsf{e}^n(t)}\le \sqrt{\mathsf{e}^n(0)}+C\int_0^t\left(Q^{n-1}(s)\right)^{2}+(\mathsf{e}^n(s))^{2}\,\mathrm{d}s.
\end{equation*}
Then
\begin{equation*}
Q^n(t)\le \mathring{Q}+\mathring{C}\int_0^t\left(Q^{n-1}(s)\right)^{2}+\left(Q^{n}(s)\right)^{4}\,\mathrm{d}s.
\end{equation*}
where $\mathring{C}$ depends on $\|\mathring{f}\|_{1},\|\mathring{f}\|_{\infty}$. We claim that there exist constants $\mathring{T},\mathring{K}$ depending on $\mathring{Q},\mathring{C}$ such that
\begin{equation}\label{eq-pf-thm-local-wellposedness-1}
Q^n(t)\le\mathring{K},\quad\forall n\ge 0,\,t\in[0,\mathring{T}].
\end{equation}
Indeed, $\mathring{T},\mathring{K}$ can be taken as
\begin{equation*}
\mathring{K}=2\mathring{Q}+2,\quad\mathring{T}=\frac{1}{48\mathring{C}(\mathring{K}^{2}+\mathring{Q}^3)},
\end{equation*}
then it is no difficulty to deduce the claim by induction.

With the aid of the uniform estimate \eqref{eq-pf-thm-local-wellposedness-1}, Lemma~\ref{lem-velocity-lemma} and Remark~\ref{rem-velocity-lemma-Dirichlet}, it is sufficient to obtain a solution $f,\{\xi_{\alpha},\eta_{\alpha}\}$ on the interval $[0,\mathring{T}]$ by adapting the argument in \cite[Section 5]{HV10}. Notice that \(\mathring{T}\) depends only on $\|\mathring{f}\|_{1},\|\mathring{f}\|_{\infty}$ and \(\mathring{Q}\), hence the continuation criterion \eqref{eq-continuation-point-charge-reflection}, \eqref{eq-continuation-plasma-reflection} follows.

\section{The global existence for absorption-Neumann boundary}\label{sec-global-absorption}

We define a pointwise energy function as
\begin{equation*}
h(s,x,v):=\frac{\vert v\vert^2}{2}+\sum_{\alpha}\frac{1}{4\pi\vert x-\xi_{\alpha}(s)\vert}+K_1,
\end{equation*}
where $K_1>1$ is a large constant depending only on $\|\mathring{f}\|_{1}$, $\|\mathring{f}\|_{\infty}$, $\mathcal{E}(0)$. Energy moment associated with $h$ is defined by
\begin{equation*}
H_k(t):=\sup_{0\le s\le t}\tilde{H}_k(s),\;\text{where}\;\tilde{H}_k(t):=\int_{\Omega\times\mathbb{R}^3} h^{\frac{k}{2}}f(t,x,v)\,\mathrm{d}x\,\mathrm{d}v.
\end{equation*}
The $k$-th order singular moment is defined  by
\begin{equation*}
L_k(t):=\sum_{\alpha}\int_0^t\int_{\Omega\times\mathbb{R}^3} \frac{h^{\frac{k}{2}}f(s,x,v)}{\vert x-\xi_{\alpha}(s)\vert^2}\,\mathrm{d}x\,\mathrm{d}v\,\mathrm{d}s.
\end{equation*}

To prove Theorem~\ref{thm-absorption-Neumann-case}, it is sufficient to verify the continuation criterion in Theorem~\ref{thm-local-wellposedness-absorption} for $\#=\mathrm{N}$. Note \eqref{eq-continuation-point-charge-absorption} can be obtained by \eqref{eq-uniform-bound-point-charge-1}, \eqref{eq-uniform-bound-point-charge-2}. The condition \eqref{eq-continuation-plasma-absorption} can be verified directly by a moment propagation result for $H_k(t)$, i.e.~Proposition~\ref{prn-Hk-unif-bound-LP} below. The aim of this section is to prove Proposition~\ref{prn-Hk-unif-bound-LP} by the extended Lions-Perthame's method. We split the proof into four steps.

First, we recall the interpolation inequalities.
\begin{lemma}\label{lem-interpo-h-LP}
Let $f,\{\xi_{\alpha},\eta_{\alpha}\}$ be a classical solution given in Theorem~\ref{thm-local-wellposedness-absorption}. For all $0\le k\le l$, $0\le a<b<c$, we have
\begin{equation*}
\left\|\int h^{k/2}f(t,\cdot,v)\,\mathrm{d}v\right\|_{\frac{l+3}{k+3}}\le C \tilde{H}_{l}(t)^{\frac{k+3}{l+3}},\quad \tilde{H}_b(t)\le \tilde{H}_a(t)^{\frac{c-b}{c-a}}\tilde{H}_c(t)^{\frac{b-a}{c-a}}.
\end{equation*}
In particular, $\tilde{H}_k(t)\le C'$ for all $0\le k\le 2$. The constant $C$ depends only on $k,l$, $\|\mathring{f}\|_{\infty}$, and $C'$ depends only on $\mathcal{E}(0)$, $\|\mathring{f}\|_1$.
\end{lemma}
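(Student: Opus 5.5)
The first inequality is the standard kinetic interpolation, now with $h$ in place of $|v|^2/2$. The plan is to split the $v$-integral at a threshold determined by the energy. Fix $(t,x)$ and write $\psi(x):=\sum_{\alpha}\frac{1}{4\pi|x-\xi_{\alpha}(t)|}+K_1\ge 0$, so that $h=\frac{|v|^2}{2}+\psi(x)$.

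For $R>0$ we split
\begin{equation*}
\int h^{k/2}f\,\mathrm{d}v=\int_{h\le R}h^{k/2}f\,\mathrm{d}v+\int_{h>R}h^{k/2}f\,\mathrm{d}v .
\end{equation*}
On $\{h\le R\}$ we have $|v|\le\sqrt{2R}$, so the first piece is at most $C\|f(t)\|_{\infty}R^{\frac{k+3}{2}}$. By \eqref{eq-conservation-laws}, $\|f(t)\|_\infty\le\|\mathring{f}\|_\infty$; in the absorption case this uses that the boundary term is $\le 0$, since $f=g^{\rm pla}$ on $\Gamma_+$, or more simply the maximum principle along characteristics.

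The second piece is at most $R^{\frac{k-l}{2}}\int h^{l/2}f\,\mathrm{d}v$. The point is that $\psi$ only shrinks the set $\{h\le R\}$, so the $v$-volume bound $|\{h\le R\}|\le CR^{3/2}$ holds uniformly in $x$ and no singular factor appears. Optimizing in $R$ gives
\begin{equation*}
\int h^{k/2}f\,\mathrm{d}v\le C\|\mathring{f}\|_\infty^{\frac{l-k}{l+3}}\Big(\int h^{l/2}f\,\mathrm{d}v\Big)^{\frac{k+3}{l+3}} .
\end{equation*}
Raising to the power $\frac{l+3}{k+3}$ and integrating in $x$ yields the first claim.

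The second inequality is H\"older's inequality with respect to the measure $f\,\mathrm{d}x\,\mathrm{d}v$. Write $h^{b/2}=h^{\frac{a}{2}\theta}h^{\frac{c}{2}(1-\theta)}$ with $\theta=\frac{c-b}{c-a}$ and use exponents $1/\theta$ and $1/(1-\theta)$.

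For the uniform bound with $0\le k\le2$, first $\tilde H_0=\|f(t)\|_1\le\|\mathring{f}\|_1$. By the second inequality it then suffices to bound $\tilde H_2$. We have
\begin{equation*}
\tilde H_2=\frac12\iint|v|^2f+\sum_\alpha\iint\frac{f}{4\pi|x-\xi_\alpha|}+K_1\|f\|_1 .
\end{equation*}
The kinetic term is bounded by the energy argument in the proof of Proposition~\ref{prn-conservation-laws}: the lower bound $-C+\frac12\iint|v|^2f\le\mathcal{E}(0)$ holds for $\#=\mathrm N$. In the absorption case the boundary flux term in $\mathcal{E}$ enters with a favorable sign, because outgoing flux ($v_\perp<0$ on $\Gamma_-$) contributes nonnegatively after the minus sign, and the incoming data $g^{\rm pla}$ has a bounded contribution. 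The potential term is handled by $\int\frac{\rho}{|x-\xi_\alpha|}\,\mathrm{d}x\le C\|\rho\|_{5/3}^{}\,\big\||x-\xi_\alpha|^{-1}\big\|_{L^{5/2}(\Omega)}\le C$, using $|\cdot|^{-1}\in L^{5/2}_{\rm loc}$ and \eqref{eq-Lp-uniform-bound-rho}.

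The main obstacle I expect is the bookkeeping of the absorption boundary terms in $\mathcal{E}$ and in $\|f\|_p$, that is, confirming their signs so that the bound on $\tilde H_2$ depends only on $\mathcal{E}(0)$ and $\|\mathring{f}\|_1$. The interpolation itself is routine once one notes that the singular potential only helps.
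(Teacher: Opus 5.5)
Your argument is the standard one: split the velocity integral at $\{h\le R\}$, apply H\"older in the measure $f\,\mathrm{d}x\,\mathrm{d}v$, and bound $\tilde H_2$ by the energy plus $\|\rho\|_{5/3}$. The paper states this lemma without proof, as a recalled fact, and your route is what it implicitly relies on. That includes your correct observation that the singular part of $h$ only shrinks $\{h\le R\}$.

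One step is wrong as written. In the absorption case the boundary term in \eqref{eq-conservation-laws} is not $\le 0$. On $\Gamma_+$ one has $v_\perp>0$ and $f=g^{\rm pla}\ge 0$, so inflow increases $\|f(t)\|_p$. Hence $\|f(t)\|_\infty\le\|\mathring f\|_\infty$ and $\tilde H_0=\|f(t)\|_1\le\|\mathring f\|_1$ fail in general when $g^{\rm pla}\not\equiv 0$. The correct replacements are:
\begin{itemize}
\item the maximum principle along backward characteristics, which end either at $t=0$ or on $\Gamma_+$, gives $\|f(t)\|_\infty\le\max\{\|\mathring f\|_\infty,\|g^{\rm pla}\|_\infty\}$;
\item the $L^1$ balance gives $\|f(t)\|_1\le\|\mathring f\|_1+\int_0^\infty\iint_{\Gamma_+}v_\perp g^{\rm pla}\,\mathrm{d}S_x\,\mathrm{d}v\,\mathrm{d}s$, which is finite because $g^{\rm pla}$ has compact support in $\bar{\mathbb{R}}_+\times\Gamma_+$.
\end{itemize}

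These corrected bounds must also be fed into two other places: the energy lower bound, through $\phi_{\mathrm N}\le A_\Omega\|f(t)\|_1+C$, and the kinetic interpolation giving $\|\rho\|_{5/3}\le C$. With them everything goes through. The only change is that $C$ and $C'$ also depend on $g^{\rm pla}$. The paper's global convention on $C$ allows this, though the lemma's explicit dependence list omits it.

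By contrast, your sign analysis of the flux term in $\mathcal{E}$ is right. Outgoing flux enters favorably, and incoming flux is bounded by the data.
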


\begin{lemma}\label{lem-Hk-by-Lk-E-LP}
Let $f,\{\xi_{\alpha},\eta_{\alpha}\}$ be a classical solution given in Theorem~\ref{thm-local-wellposedness-absorption}. Then for all $k>0$
\begin{equation*}
H_k(t)\le H_k(0)+CL_{k-2}(t)+CH_{k}(t)^{\frac{k+2}{k+3}}\int_0^t(\|E_{\mathrm{N}}(s)\|_{k+3}+1)\,\mathrm{d}s,
\end{equation*}
where $C$ depends only on $\|\mathring{f}\|_1$, $\|\mathring{f}\|_{\infty}$, $\mathcal{E}(0)$ and $k$.
\end{lemma}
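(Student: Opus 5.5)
The approach is to differentiate $\tilde{H}_k$ in time using the Vlasov equation, keeping the absorption boundary flux and the point-charge terms separate. Since $f$ is transported along characteristics, we have $\partial_t(h^{k/2}f)+v\cdot\nabla_x(h^{k/2}f)+E_{\mathrm{N}}^{\rm pla}\cdot\nabla_v(h^{k/2}f)=f(\partial_t+v\cdot\nabla_x+E_{\mathrm{N}}^{\rm pla}\cdot\nabla_v)h^{k/2}$. Integrating over $\Pi$ and using $\nabla_v\cdot E^{\rm pla}_{\mathrm{N}}=0$, the divergence theorem in $x$ produces a boundary term $-\iint_{\partial\Omega\times\mathbb{R}^3}h^{k/2}f\,(v\cdot n_x)\,\mathrm{d}S_x\,\mathrm{d}v$. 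This splits into an outgoing part, which has a favourable sign and is dropped, and an incoming part on $\Gamma_+$, where $f=g^{\rm pla}$. The latter is bounded by a constant $C$ since $g^{\rm pla}\in C^1_c$ and $|x-\xi_\alpha|\ge\delta_2$ on $\partial\Omega$ by \eqref{eq-uniform-bound-point-charge-1}. After integrating in time this contributes $Ct$; I will absorb it either into $H_k(t)^{\frac{k+2}{k+3}}\int_0^t 1\,\mathrm{d}s$, using $H_k\ge K_1^{k/2}\|f\|_1$ bounded below, or simply carry it along as a harmless term.

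Next, compute the material derivative of $h$:
\begin{equation*}
(\partial_t+v\cdot\nabla_x+E^{\rm pla}_{\mathrm{N}}\cdot\nabla_v)h=v\cdot E_{\mathrm{N}}+v\cdot F_{\mathrm{N}}-\sum_\alpha\frac{(v-\eta_\alpha)\cdot(x-\xi_\alpha)}{4\pi|x-\xi_\alpha|^3}.
\end{equation*}
Write $G_{\mathrm{N}}=G+\bar g_{\mathrm{N}}$, with $G$ here in $d=3$. Then $\nabla_xG(x,\xi_\alpha)=\frac{x-\xi_\alpha}{4\pi|x-\xi_\alpha|^3}$ exactly cancels the $v$-part of the last sum. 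This cancellation is the reason for the coefficient $\frac{1}{4\pi}$ in $h$. The remainders are the following. First, $v\cdot\nabla_x\bar g_{\mathrm{N}}(x,\xi_\alpha)$ is bounded by $C|v|$ as long as $x$ stays away from $\partial\Omega$; near the boundary $\nabla_x\bar g_{\mathrm{N}}(\cdot,\xi_\alpha)$ is still smooth, because $\operatorname{d}_{\partial\Omega}(\xi_\alpha)\ge\delta_2$ uniformly. Second, $\eta_\alpha\cdot(x-\xi_\alpha)/|x-\xi_\alpha|^3$ is bounded by $C|x-\xi_\alpha|^{-2}$ because $|\eta_\alpha|\le C$. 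Hence
\begin{equation*}
\frac{\mathrm{d}}{\mathrm{d}t}\tilde H_k\le C+\frac{k}{2}\int h^{\frac{k-2}{2}}f\Big(|v||E_{\mathrm{N}}|+C|v|+C\sum_\alpha|x-\xi_\alpha|^{-2}\Big)\,\mathrm{d}x\,\mathrm{d}v.
\end{equation*}
Integrating in time, the singular term gives exactly $CL_{k-2}(t)$. Since $|v|\le\sqrt{2h}$, the remaining terms are bounded by $\int_0^t\int |E_{\mathrm{N}}|\big(\int h^{\frac{k-1}{2}}f\,\mathrm{d}v\big)\mathrm{d}x\,\mathrm{d}s$ plus $C\int_0^t\tilde H_{k-1}$.

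By Hölder's inequality with exponents $k+3$ and $\frac{k+3}{k+2}$, followed by Lemma~\ref{lem-interpo-h-LP} with $k\to k-1$ and $l=k$, the $E_{\mathrm{N}}$ term is at most $\|E_{\mathrm{N}}(s)\|_{k+3}\,C\tilde H_k(s)^{\frac{k+2}{k+3}}$. Likewise $\tilde H_{k-1}\le C\tilde H_k^{\frac{k+2}{k+3}}$, using the interpolation inequality together with $\tilde H_0\le C$ and $h\ge K_1>1$. For $k<1$, where the interpolation index would be negative, I will use $h^{\frac{k-1}{2}}\le 1$ directly instead. Taking the supremum over $[0,t]$ then yields the claim.

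The main obstacle is the boundary and regularity bookkeeping for the point-charge terms. Specifically, I need $\nabla_x\bar g_{\mathrm{N}}(x,\xi_\alpha)$ to be bounded uniformly up to $\partial\Omega$, which requires the uniform separation $\delta_2$ of the charges from the boundary together with elliptic regularity. I also need to justify the integration by parts rigorously, given that $h$ is singular at $\xi_\alpha$; this is fine because $f$ vanishes near $\xi_\alpha$ on $[0,T]$ by the local theory.
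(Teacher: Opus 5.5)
Your proposal is correct and follows the paper's proof essentially step by step. You differentiate $\tilde H_k$, drop the outgoing boundary flux and bound the incoming one through $g^{\rm pla}$, cancel the Coulomb part of $F_{\mathrm N}$ against $v\cdot\nabla_x h$, control the $\eta_\alpha$ term by the singular moment $L_{k-2}$, and close with H\"older's inequality and the kinetic interpolation lemma. Your extra bookkeeping fills in points the paper passes over silently: you absorb the $Ct$ boundary contribution using $H_k(t)\ge \tilde H_k(0)\ge K_1^{k/2}\|\mathring f\|_1>0$, and you treat $0<k<1$ separately, where the interpolation index $k-1$ would be negative.
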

\begin{proof}
By \eqref{eq-Vlasov-short} and the boundary condition \eqref{eq-absorption-boundary}, we have
\begin{align*}
\frac{\mathrm{d}}{\mathrm{d}s}\tilde{H}_k(s)=&-\int h^{\frac{k}{2}}f(s)v\cdot n_x\,\mathrm{d}S_x\,\mathrm{d}v+\frac{k}{8\pi}\int h^{\frac{k-2}{2}}f(s)\sum_{\alpha}\eta_{\alpha}(s)\cdot\frac{x-\xi_{\alpha}(s)}{\vert x-\xi_{\alpha}(s)\vert^3}\,\mathrm{d}x\,\mathrm{d}v\\
&+\frac{k}{2}\int h^{\frac{k-2}{2}}f(s)v\cdot\left(E_{\mathrm{N}}^{\rm pla}-\sum_{\alpha}\frac{x-\xi_{\alpha}(s)}{4\pi\vert x-\xi_{\alpha}(s)\vert^3}\right)\,\mathrm{d}x\,\mathrm{d}v\\
\le&-\int_{v\cdot n_x\le 0} h^{\frac{k}{2}}g(s)v\cdot n_x\,\mathrm{d}S_x\,\mathrm{d}v+C\sum_{\alpha}\int \frac{h^{\frac{k-2}{2}}f(s)}{\vert x-\xi_{\alpha}(s)\vert^2}\,\mathrm{d}x\,\mathrm{d}v\\
&+C\int h^{\frac{k-1}{2}}f(s)\left(\vert E_{\mathrm{N}}\vert+1\right)\,\mathrm{d}x\,\mathrm{d}v\\
\le& C+C\sum_{\alpha}\int \frac{h^{\frac{k-2}{2}}f(s)}{\vert x-\xi_{\alpha}(s)\vert^2}\,\mathrm{d}x\,\mathrm{d}v+C\int h^{\frac{k-1}{2}}f(s)\left(\vert E_{\mathrm{N}}\vert+1\right)\,\mathrm{d}x\,\mathrm{d}v.
\end{align*}
By H\"older's inequality and Lemma~\ref{lem-interpo-h-LP}, we have
\begin{align*}
&\int h^{\frac{k-1}{2}}f(s)\left(\vert E_{\mathrm{N}}\vert+1\right)\,\mathrm{d}x\,\mathrm{d}v\nonumber\\
&\le\left\|\int h^{\frac{k-1}{2}}f(s,\cdot,v)\,\mathrm{d}v\right\|_{\frac{k+3}{k+2}}\|E_{\mathrm{N}}(s)+1\|_{k+3}\le C\tilde{H}_{k}(s)^{\frac{k+2}{k+3}}(\|E_{\mathrm{N}}(s)\|_{k+3}+1).
\end{align*}
Inserting it into the previous inequality, and integrating in time from $0$ to $t$, we have
\begin{equation*}
H_k(t)\le H_k(0)+CL_{k-2}(t)+CH_{k}(t)^{\frac{k+2}{k+3}}\int_0^t(\|E_{\mathrm{N}}(s)\|_{k+3}+1)\,\mathrm{d}s.
\end{equation*}
\end{proof}

\subsection{The first step: The estimates on the singular moment}

\begin{lemma}\label{lem-Lk-by-Hk-E-LP}
Let $f,\{\xi_{\alpha},\eta_{\alpha}\}$ be a classical solution given in Theorem~\ref{thm-local-wellposedness-absorption}. Then $\forall l>k>0$
\begin{equation*}
L_k(t)\le C\left(H_{k+1}(t)(t+1)+H_{k}(t)L_0(t)+H_{l}(t)^{\frac{k+3}{l+3}}\int_0^t\|E_{\mathrm{N}}(s)\|_{\frac{l+3}{l-k}}\,\mathrm{d}s+\int_0^t\tilde{H}_{k+2}(s)\,\mathrm{d}s\right).
\end{equation*}
where $C$ depends only on $\|\mathring{f}\|_1$, $\|\mathring{f}\|_{\infty}$, $\mathcal{E}(0)$ and $l,k$.
\end{lemma}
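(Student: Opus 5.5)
Following \cite{WZ23PC,DMS15}, I would prove the estimate with a virial-type identity localized near each point charge. For fixed $\alpha$, take the test quantity
\begin{equation*}
\tilde{L}_\alpha(s):=\int h^{\frac{k}{2}}\,\frac{x-\xi_{\alpha}(s)}{\vert x-\xi_{\alpha}(s)\vert}\cdot v\;\chi_{\delta_2/4}(\vert x-\xi_\alpha(s)\vert)\,f(s,x,v)\,\mathrm{d}x\,\mathrm{d}v .
\end{equation*}
The cut-off $\chi_{\delta_2/4}$ is the boundary cut-off mentioned in the introduction. By \eqref{eq-uniform-bound-point-charge-1}--\eqref{eq-uniform-bound-point-charge-2}, its support lies in $\{\vert x-\xi_\alpha\vert\le\delta_2/2\}$, which stays away from $\partial\Omega$ and from the other charges. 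Hence no boundary flux term appears when we differentiate in time, even in the absorption case. The boundedness of $\tilde L_\alpha$ is immediate: $\vert\tilde L_\alpha(s)\vert\le C\tilde H_{k+1}(s)$, since $\vert v\vert\le C h^{1/2}$.

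Next I differentiate $\tilde L_\alpha$ along \eqref{eq-Vlasov-short} and integrate by parts in $(x,v)$. The term from $v\cdot\nabla_x$ acting on $\frac{x-\xi_\alpha}{\vert x-\xi_\alpha\vert}\cdot v$ gives $\frac{\vert v\vert^2-((x-\xi_\alpha)\cdot v)^2/\vert x-\xi_\alpha\vert^2}{\vert x-\xi_\alpha\vert}\ge0$, and the term involving $\dot\xi_\alpha=\eta_\alpha$ is bounded by $C h^{1/2}/\vert x-\xi_\alpha\vert$. The key good term comes from the self-field of charge $\alpha$ in $E_{\mathrm N}^{\rm pla}\cdot\nabla_v$. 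Since $\nabla_x G_{\mathrm N}(x,\xi_\alpha)=\frac{x-\xi_\alpha}{4\pi\vert x-\xi_\alpha\vert^3}+\nabla_x\bar g_{\mathrm N}(x,\xi_\alpha)$, this field contributes exactly
\begin{equation*}
\int h^{k/2}\frac{\chi f}{4\pi\vert x-\xi_\alpha\vert^2}\,\mathrm{d}x\,\mathrm{d}v .
\end{equation*}
Here $h$ is conserved along the pure Coulomb self-field part, so the $v$-derivative does not hit $h$ in a bad way. The remaining pieces of the derivative of $h^{k/2}$ along the flow are:
\begin{itemize}
\item the terms from $\eta_\beta$ (bounded by \eqref{eq-uniform-bound-point-charge-1}) times $1/\vert x-\xi_\beta\vert^2$, which give $h^{(k-2)/2}\cdot\vert x-\xi_\beta\vert^{-2}$ times $h^{1/2}$;
\item the terms from $E_{\mathrm N}$ and the smooth parts of the fields.
\end{itemize}

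Integrating in time over $[0,t]$ and summing over $\alpha$, I obtain
\begin{equation*}
\int_0^t\!\!\int h^{k/2}\chi f\,\vert x-\xi_\alpha\vert^{-2}\le C H_{k+1}(t)+(\text{error terms}).
\end{equation*}
I handle the error terms as follows.
\begin{itemize}
\item The terms of type $h^{(k+1)/2}\vert x-\xi_\alpha\vert^{-1}$ are split by Young's inequality into $\varepsilon\, h^{k/2}\vert x-\xi\vert^{-2}$, which is absorbed into the left side, plus $h^{(k+2)/2}$. The latter yields $\int_0^t\tilde H_{k+2}$.
\item The terms with derivatives of $h^{k/2}$ along the charge motion give $h^{k/2}\cdot\vert x-\xi\vert^{-2}$ with coefficients containing $h^{(k-1)/2}$. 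I bound these by $H_k(t)$ times the $k=0$ singular moment, which gives the $H_k(t)L_0(t)$ term. This is done by estimating $h^{k/2}\le \sup$-in-time moments, following the mechanism of \cite{WZ23PC}.
\item The $E_{\mathrm N}$ term is $\int h^{k/2}\vert E_{\mathrm N}\vert f$. By H\"older's inequality and Lemma~\ref{lem-interpo-h-LP} with exponents $\frac{l+3}{k+3}$ and $\frac{l+3}{l-k}$, it is bounded by $H_l^{\frac{k+3}{l+3}}\|E_{\mathrm N}\|_{\frac{l+3}{l-k}}$.
\item The terms where the cut-off is differentiated live where $\vert x-\xi_\alpha\vert\sim\delta_2$, so they are bounded by $C\tilde H_{k+1}$. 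This gives $C H_{k+1}t$.
\end{itemize}

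Finally, I treat the region $\vert x-\xi_\alpha\vert\ge\delta_2/4$ outside the cut-off. There $\vert x-\xi_\alpha\vert^{-2}\le C$, so its contribution to $L_k$ is at most $Ct H_k\le CtH_{k+1}$. The main obstacle I expect is to handle cleanly the exact cancellation of $h$ along the singular self-field together with the cross terms where $\nabla_v h^{k/2}$ meets the other charges' fields. Getting the precise product $H_kL_0$, rather than a worse power, requires care. Here I would follow the computation of \cite{WZ23PC} essentially verbatim, because the cut-off removes all boundary effects.
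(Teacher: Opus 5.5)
Your skeleton matches the paper's. You localize the virial functional with a cut-off around $\xi_\alpha$ whose support avoids $\partial\Omega$ and the other charges, so the absorption flux vanishes. The Coulomb self-field of charge $\alpha$ supplies the good term $\frac{\chi h^{k/2}}{4\pi|x-\xi_\alpha|^2}$. H\"older's inequality with Lemma~\ref{lem-interpo-h-LP} handles the $E_{\mathrm N}$ terms, and the region away from the charge is bounded trivially.

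The step that fails is your treatment of the terms coming from
\[
\partial_s h^{k/2}=\frac{k}{2}h^{\frac{k-2}{2}}\sum_\beta\frac{(x-\xi_\beta)\cdot\eta_\beta}{4\pi|x-\xi_\beta|^3},
\]
multiplied by your multiplier $\chi\,\frac{x-\xi_\alpha}{|x-\xi_\alpha|}\cdot v=O(h^{1/2})$. They produce $\int_0^t\!\iint\chi\, h^{\frac{k-1}{2}}f\,|x-\xi_\beta|^{-2}$. This is a single integral in which the weight and the singularity sit at the same phase-space point. It cannot be bounded by $H_k(t)L_0(t)$, which is a product of two separate integrals: for $k>1$ the weight $h^{\frac{k-1}{2}}$ is unbounded on the support of $f$ and is not controlled pointwise by any moment.

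What works is absorption. For $\beta\neq\alpha$, the factor $|x-\xi_\beta|^{-2}$ is bounded on the support of $\chi$. For $\beta=\alpha$, use $h^{\frac{k-1}{2}}\le K_1^{-1/2}h^{\frac k2}$: with $K_1$ large, depending on $k$ and $\sup_s|\eta_\alpha(s)|$, the term is a small fraction of the good term. This is the paper's closing remark that the lower-order singular moment is absorbed into $L_k$ by taking $K_1$ large.

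You have also misidentified where $H_kL_0$ comes from. The paper's multiplier is $P_2=\frac{x-\xi_\alpha}{|x-\xi_\alpha|}\cdot(v-\eta_\alpha)$. With the relative velocity one has exactly
\[
(\partial_s+v\cdot\nabla_x)P_2=\frac{|v-\eta_\alpha|^2-P_2^2}{|x-\xi_\alpha|}-\frac{x-\xi_\alpha}{|x-\xi_\alpha|}\cdot\dot\eta_\alpha ,
\]
with no $\eta_\alpha/|x-\xi_\alpha|$ remainder. The price is that $\dot\eta_\alpha=E_{\mathrm N,\alpha}^{\rm cha}$ contains $E_{\mathrm N}(s,\xi_\alpha(s))$, which depends on $s$ alone. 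Hence $\int_0^t\tilde H_k(s)|E_{\mathrm N}(s,\xi_\alpha(s))|\,\mathrm{d}s\le CH_k(t)\bigl(L_0(t)+t\bigr)$ is a genuine product.

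With your absolute-velocity multiplier, $\dot\eta_\alpha$ never appears. You pay instead an $O(|\eta_\alpha||v|/|x-\xi_\alpha|)$ term. Your Young splitting correctly turns it into an absorbable piece plus $\tilde H_{k+2}$, using the uniform bound on $\eta_\alpha$ from Proposition~\ref{prn-conservation-laws}. So once the cross terms are absorbed as above, your route is valid and gives the estimate even without the $H_kL_0$ term. The paper's choice buys an exact nonnegative term at the cost of the charge acceleration.

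A minor point: the terms where $\chi$ is differentiated are quadratic in velocity, so they are of size $\tilde H_{k+2}$, not $\tilde H_{k+1}$. This is harmless given the right-hand side.
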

\begin{proof}
Recall that $\delta_2$ is given in Proposition~\ref{prn-conservation-laws}. We denote 
\begin{equation*}
P_1(s)=\chi_{\frac{\delta_2}{8}}(\vert x-\xi_{\alpha}(s)\vert).
\end{equation*}
We split the integral domains in $L_k(t)$ into two parts to obtain
\begin{align}
&\int_0^t\int \frac{h^{\frac{k}{2}}f(s)}{\vert x-\xi_{\alpha}(s)\vert^2}\,\mathrm{d}x\,\mathrm{d}v\,\mathrm{d}s\nonumber\\
&\le\int_0^t\int \frac{P_1h^{\frac{k}{2}}f(s)}{\vert x-\xi_{\alpha}(s)\vert^2}\,\mathrm{d}x\,\mathrm{d}v\,\mathrm{d}s+\int_0^t\int \frac{(1-P_1)h^{\frac{k}{2}}f(s)}{\vert x-\xi_{\alpha}(s)\vert^2}\,\mathrm{d}x\,\mathrm{d}v\,\mathrm{d}s\nonumber\\
&\le\int_0^t\int \frac{P_1h^{\frac{k}{2}}f(s)}{\vert x-\xi_{\alpha}(s)\vert^2}\,\mathrm{d}x\,\mathrm{d}v\,\mathrm{d}s+64\delta_2^{-2}H_k(t)t.\label{eq-pf-lem-Lk-by-Hk-E-LP-1}
\end{align}

To estimate the first term in the right hand side of \eqref{eq-pf-lem-Lk-by-Hk-E-LP-1}, we introduce
\begin{equation*}
P_2(s)=\frac{x-\xi_{\alpha}(s)}{\vert x-\xi_{\alpha}(s)\vert}\cdot(v-\eta_{\alpha}(s)).
\end{equation*}
Note
\begin{align*}
&\frac{\mathrm{d}}{\mathrm{d}s}\int P_1P_2h^{\frac{k}{2}}f(s)\,\mathrm{d}x\,\mathrm{d}v\\
&=\int\partial_sP_1P_2h^{\frac{k}{2}}f(s)\,\mathrm{d}x\,\mathrm{d}v+\int P_1\partial_sP_2h^{\frac{k}{2}}f(s)\,\mathrm{d}x\,\mathrm{d}v+\int P_1P_2\partial_sh^{\frac{k}{2}}f(s)\,\mathrm{d}x\,\mathrm{d}v\\
&\quad+\int P_1P_2h^{\frac{k}{2}}\partial_sf(s)\,\mathrm{d}x\,\mathrm{d}v\\
&=\int\partial_sP_1P_2h^{\frac{k}{2}}f\,\mathrm{d}x\,\mathrm{d}v-\int P_1P_2h^{\frac{k}{2}}v\cdot n_xf\,\mathrm{d}S_x\,\mathrm{d}v\\
&\quad+\int P_1\partial_sP_2h^{\frac{k}{2}}f+ P_1P_2\partial_sh^{\frac{k}{2}}f+\nabla_x(P_1P_2h^{\frac{k}{2}})\cdot v f+\nabla_v(P_1P_2h^{\frac{k}{2}})\cdot E_{\mathrm{N}}^{\rm pla}f\,\mathrm{d}x\,\mathrm{d}v\\
&=:I_{\alpha1}(s)+I_{\alpha2}(s)+I_{\alpha3}(s).
\end{align*}
For $I_{\alpha1}$,
\begin{equation*}
\vert I_{\alpha1}(s)\vert=\left\vert\int 8\delta_2^{-1}\chi_{\frac{\delta_2}{8}}'(\vert x-\xi_{\alpha}(s)\vert)(P_2)^2h^{\frac{k}{2}}f(s)\,\mathrm{d}x\,\mathrm{d}v\right\vert\le C\tilde{H}_{k+2}(s).
\end{equation*}
For $I_{\alpha2}$, since $P_1=0$ if $\vert x-\xi_{\alpha}(s)\vert\ge\frac{\delta_2}{4}$, hence
\begin{equation*}
I_{\alpha2}\equiv 0.
\end{equation*}
For $I_{\alpha3}(s)$, first, it can be computed directly that
\begin{align*}
P_1\partial_sP_2h^{\frac{k}{2}}&=P_1h^{\frac{k}{2}}\left[\frac{\eta_{\alpha}\cdot(\eta_{\alpha}-v)+(\xi_{\alpha}-x)\cdot E_{\mathrm{N},\alpha}^{\rm cha}(s,\xi_{\alpha})}{\vert \xi_{\alpha}-x\vert}+P_2\frac{(x-\xi_{\alpha})\cdot\eta_{\alpha}}{\vert \xi_{\alpha}-x\vert^2}\right],\\
P_1P_2\partial_sh^{\frac{k}{2}}&=P_1P_2\left[\frac{k}{2}h^{\frac{k-2}{2}}\sum_{\beta}\frac{x-\xi_{\beta}}{4\pi\vert x-\xi_{\beta}\vert^3}\cdot\eta_{\beta}\right],\\
\nabla_x(P_1P_2h^{\frac{k}{2}})\cdot v&=P_2h^{\frac{k}{2}}\left[8\delta_2^{-1}\chi_{\frac{\delta_2}{8}}'(\vert x-\xi_{\alpha}\vert)\frac{x-\xi_{\alpha}}{\vert x-\xi_{\alpha}\vert}\right]\cdot v\\
&\quad+P_1h^{\frac{k}{2}}\left[\frac{v-\eta_{\alpha}}{\vert x-\xi_{\alpha}\vert}-\frac{x-\xi_{\alpha}}{\vert x-\xi_{\alpha}\vert^3}(x-\xi_{\alpha})\cdot(v-\eta_{\alpha})\right]\cdot v\\
&\quad +P_1P_2\left[-\frac{k}{2}h^{\frac{k-2}{2}}\sum_{\beta}\frac{x-\xi_{\beta}}{4\pi\vert x-\xi_{\beta}\vert^3}\right]\cdot v,\\
\nabla_v(P_1P_2h^{\frac{k}{2}})\cdot E_{\mathrm{N}}^{\rm pla}&=P_1h^{\frac{k-2}{2}}\left[h\frac{x-\xi_{\alpha}}{\vert x-\xi_{\alpha}\vert}+\frac{k}{2}P_2v\right]\cdot\left[E_{\mathrm{N}}(s,x)+\sum_{\beta}\nabla_x\bar{g}_{\mathrm{N}}(x,\xi_{\beta})\right]\\
&\quad+P_1h^{\frac{k-2}{2}}\left[h\frac{x-\xi_{\alpha}}{\vert x-\xi_{\alpha}\vert}+\frac{k}{2}P_2v\right]\cdot \sum_{\beta}\frac{x-\xi_{\beta}}{4\pi\vert x-\xi_{\beta}\vert^3}.
\end{align*}
Then
\begin{align*}
&P_1\partial_sP_2h^{\frac{k}{2}}+P_1P_2\partial_sh^{\frac{k}{2}}+\nabla_x(P_1P_2h^{\frac{k}{2}})\cdot v+\nabla_v(P_1P_2h^{\frac{k}{2}})\cdot E_{\mathrm{N}}^{\rm pla}\\
&=P_1h^{\frac{k}{2}}\left[\frac{\vert \eta_{\alpha}-v\vert^2-(P_2)^2+(\xi_{\alpha}-x)\cdot E_{\mathrm{N},\alpha}^{\rm cha}(s,\xi_{\alpha})}{\vert \xi_{\alpha}-x\vert}\right]\\
&\quad+P_2h^{\frac{k}{2}}\left[8\delta_2^{-1}\chi_{\frac{\delta_2}{8}}'(\vert x-\xi_{\alpha}\vert)\frac{x-\xi_{\alpha}}{\vert x-\xi_{\alpha}\vert}\right]\cdot v\\
&\quad+P_1h^{\frac{k-2}{2}}\left[h\frac{x-\xi_{\alpha}}{\vert x-\xi_{\alpha}\vert}+\frac{k}{2}P_2v\right]\cdot\left[E_{\mathrm{N}}(s,x)+\sum_{\beta}\nabla_x\bar{g}_{\mathrm{N}}(x,\xi_{\beta})\right]\\
&\quad+\frac{k}{2}P_1P_2h^{\frac{k-2}{2}}\sum_{\beta}\frac{(x-\xi_{\beta})\cdot\eta_{\beta}}{4\pi\vert x-\xi_{\beta}\vert^3}+P_1h^{\frac{k}{2}}\frac{x-\xi_{\alpha}}{\vert x-\xi_{\alpha}\vert}\cdot \sum_{\beta:\beta\ne\alpha}\frac{x-\xi_{\beta}}{4\pi\vert x-\xi_{\beta}\vert^3}+\frac{P_1h^{\frac{k}{2}}}{4\pi\vert x-\xi_{\alpha}\vert^2}\\
&\ge -Ch^{\frac{k}{2}}(\vert E_{\mathrm{N}}(s,\xi_{\alpha})\vert+1)-Ch^{\frac{k+1}{2}}-Ch^{\frac{k}{2}}(\vert E_{\mathrm{N}}(s,x)\vert+1)\\
&\quad-C\sum_{\beta}\frac{h^{\frac{k-2}{2}}}{\vert x-\xi_{\beta}\vert^2}-Ch^{\frac{k}{2}}+\frac{P_1h^{\frac{k}{2}}}{4\pi\vert x-\xi_{\alpha}\vert^2}\\
&\ge -Ch^{\frac{k}{2}}(\vert E_{\mathrm{N}}(s,\xi_{\alpha})\vert+\vert E_{\mathrm{N}}(s,x)\vert)-Ch^{\frac{k+1}{2}}-C\sum_{\beta}\frac{h^{\frac{k-2}{2}}}{\vert x-\xi_{\beta}\vert^2}+\frac{P_1h^{\frac{k}{2}}}{4\pi\vert x-\xi_{\alpha}\vert^2}.
\end{align*}

Hence
\begin{align*}
&\frac{\mathrm{d}}{\mathrm{d}s}\int P_1P_2h^{\frac{k}{2}}f(s)\,\mathrm{d}x\,\mathrm{d}v=I_{\alpha1}(s)+I_{\alpha2}(s)+I_{\alpha3}(s)\\
&\ge -C\tilde{H}_{k+2}(s)-C\tilde{H}_{l}(s)^{\frac{k+3}{l+3}}\|E_{\mathrm{N}}(s)\|_{\frac{l+3}{l-k}}-C\tilde{H}_{k}(s)\vert E_{\mathrm{N}}(s,\xi_{\alpha})\vert\\
&\quad-C\int\sum_{\beta}\frac{h^{\frac{k-2}{2}}}{\vert x-\xi_{\beta}\vert^2}\,\mathrm{d}x\,\mathrm{d}v+\int \frac{P_1h^{\frac{k}{2}}f(s)}{\vert x-\xi_{\alpha}(s)\vert^2}\,\mathrm{d}x\,\mathrm{d}v.
\end{align*}
Integrating in time from $0$ to $t$, we get
\begin{align*}
&\int_0^t\int\frac{P_1h^{\frac{k}{2}}f(s)}{\vert x-\xi_{\alpha}(s)\vert^2}\,\mathrm{d}x\,\mathrm{d}v\,\mathrm{d}s\\
&\le\int P_1P_2h^{\frac{k}{2}}f(t)\,\mathrm{d}x\,\mathrm{d}v-\int P_1P_2h^{\frac{k}{2}}f(0)\,\mathrm{d}x\,\mathrm{d}v+C\int_0^t\tilde{H}_{k+2}(s)\,\mathrm{d}s\\
&\quad+CH_{l}(t)^{\frac{k+3}{l+3}}\int_0^t\|E_{\mathrm{N}}(s)\|_{\frac{l+3}{l-k}}\,\mathrm{d}s+CH_{k}(t)L_0(t)+L_{k-2}(t).
\end{align*}
Inserting it into \eqref{eq-pf-lem-Lk-by-Hk-E-LP-1}, we have
\begin{align*}
L_k(t)\le CH_{k+1}(t)(t+1)+C\int_0^t\tilde{H}_{k+2}(s)\,\mathrm{d}s&+CH_{l}(t)^{\frac{k+3}{l+3}}\int_0^t\|E_{\mathrm{N}}(s)\|_{\frac{l+3}{l-k}}\,\mathrm{d}s\\
&+CH_{k}(t)L_0(t)+L_{k-2}(t).
\end{align*}
Note
\begin{equation*}
CL_{k-1}(t)\le\frac{1}{2}L_{k}(t)
\end{equation*}
if we take $K_1$ large enough.
\end{proof}

An consequence of Lemma~\ref{lem-Lk-by-Hk-E-LP} is the following estimate on $L_0(t)$.
\begin{lemma}\label{lem-L0-by-Hk}
Let $f,\{\xi_{\alpha},\eta_{\alpha}\}$ be a classical solution given in Theorem~\ref{thm-local-wellposedness-absorption}. Then $\forall t\in[0,\mathring{T})$, $a\in(0,\frac{1}{8})$, $\tilde{k}>2+a$, there holds
\begin{equation*}
L_0(t)\le C'\left(t+1+H_{\tilde{k}}(t)^{\frac{a}{\tilde{k}-2}}t\right),
\end{equation*}
where $C'$ depends only on $a,\tilde{k}$, $\|\mathring{f}\|_1$, $\|\mathring{f}\|_{\infty}$, $\|h_{\mathrm{N}}\|_{\infty}$ and $\mathcal{E}(0)$.
\end{lemma}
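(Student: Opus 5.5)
Apply the identity from the proof of Lemma~\ref{lem-Lk-by-Hk-E-LP} with $k=0$, before the absorption step $CL_{k-1}\le\frac12 L_k$, which is not available here. For $k=0$ the factor $h^{k/2}\equiv 1$, so no $h^{(k-2)/2}$ term is generated: the term $P_1P_2\partial_s h^{k/2}$ and the $\frac{k}{2}P_2 v$ pieces vanish. The good term $\frac{P_1}{4\pi|x-\xi_\alpha|^2}$ is then controlled by quantities not involving singular moments. Integrating in time gives
\begin{equation*}
\int_0^t\int\frac{P_1f(s)}{\vert x-\xi_\alpha(s)\vert^2}\,\mathrm{d}x\,\mathrm{d}v\,\mathrm{d}s\le C\Bigl[\sup_s\int \vert P_2\vert f\Bigr]+C\int_0^t\tilde{H}_{2}(s)\,\mathrm{d}s+C\int_0^t\int (\vert E_{\mathrm{N}}(s,x)\vert+\vert E_{\mathrm{N}}(s,\xi_\alpha)\vert+h^{1/2})f\,\mathrm{d}x\,\mathrm{d}v\,\mathrm{d}s.
\end{equation*}
There is also a boundary term $-\int P_1P_2 v\cdot n_x f\,\mathrm{d}S_x\mathrm{d}v$. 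It vanishes because $P_1$ is supported in $\vert x-\xi_\alpha\vert\le\delta_2/4$, while $\mathrm{d}_{\partial\Omega}(\xi_\alpha)\ge\delta_2$ by \eqref{eq-uniform-bound-point-charge-1}.

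The elementary terms are bounded as follows.
\begin{itemize}
\item $\vert P_2\vert\le \vert v\vert+\vert\eta_\alpha\vert\le C h^{1/2}$, so $\int\vert P_2\vert f\le C\tilde{H}_1\le C$ by Lemma~\ref{lem-interpo-h-LP}.
\item $\tilde H_2\le C$ and $\tilde{H}_1\le C$, so $\int_0^t\tilde H_2+\int_0^t\int h^{1/2}f\le Ct$.
\item $\int\vert E_{\mathrm{N}}(s,x)\vert f\,\mathrm{d}x\,\mathrm{d}v\le\|\rho\|_{5/3}\|E_{\mathrm{N}}\|_{5/2}\le C$ by \eqref{eq-Lp-uniform-bound-rho}--\eqref{eq-Lp-uniform-bound-electric-field}.
\end{itemize}
Together with the part away from the charges (the $64\delta_2^{-2}H_0t$ term, $H_0\le C$), this gives $L_0(t)\le C(t+1)+C\int_0^t\vert E_{\mathrm{N}}(s,\xi_\alpha(s))\vert\,\mathrm{d}s$.

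The main obstacle is the pointwise value $\vert E_{\mathrm{N}}(s,\xi_\alpha(s))\vert$, which is not controlled by the conserved $L^p$ bounds. This is where $H_{\tilde k}^{a/(\tilde k-2)}$ enters. Write
\begin{equation*}
\vert E_{\mathrm{N}}(s,\xi_\alpha)\vert\le C\int\frac{\rho(s,y)}{\vert\xi_\alpha-y\vert^2}\,\mathrm{d}y+C,
\end{equation*}
where the boundary part is bounded since $\mathrm{d}_{\partial\Omega}(\xi_\alpha)\ge\delta_2$. Then split the $y$-integral at radius $r$. The far part is bounded by $r^{-2}\|\rho\|_1$. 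For the near part, use H\"older with $\rho\in L^{p}$, where $p=(\tilde k+3)/3$ from Lemma~\ref{lem-interpo-h-LP}, interpolated against the conserved bound $\|\rho\|_{5/3}\le C$. Concretely, choose an exponent $q$ slightly above $3/2$ so that $\int_{\vert y\vert<r}\vert y\vert^{-2q'}$ converges, giving a contribution $\|\rho\|_q r^{1-3/q}$ up to sign bookkeeping. The interpolation $\|\rho\|_q\le \|\rho\|_{5/3}^{1-\theta}\|\rho\|_{(\tilde k+3)/3}^{\theta}$ gives $\|\rho\|_q\le C\tilde H_{\tilde k}^{3\theta/(\tilde k+3)}$.

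Optimizing in $r$, or simply taking $r=1$ and $q$ close to $3/2$ so that $\theta$ is small, gives $\vert E_{\mathrm{N}}(s,\xi_\alpha)\vert\le C(1+H_{\tilde k}(t)^{\epsilon})$ with $\epsilon$ small. The exponent bookkeeping must still be checked so that the power is exactly $a/(\tilde k-2)$ with $a<1/8$. If the $\rho$-interpolation does not match, I would interpolate instead via $\tilde H_{2+a}\le \tilde H_2^{(\tilde k-2-a)/(\tilde k-2)}\tilde H_{\tilde k}^{a/(\tilde k-2)}$ (the second inequality of Lemma~\ref{lem-interpo-h-LP} with exponents $2<2+a<\tilde k$). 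Combined with $\|\rho\|_{(5+a)/3}\le C\tilde H_{2+a}^{3/(5+a)}$, and since $(5+a)/3>3/2$, this gives $\vert E_{\mathrm{N}}(\xi_\alpha)\vert\le C(1+\tilde H_{2+a}^{\,c})$ with $c<1$ suitable. The condition $a<1/8$ is what makes the H\"older exponents admissible.

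Integrating in time then yields $L_0(t)\le C'(t+1+H_{\tilde k}(t)^{a/(\tilde k-2)}t)$.
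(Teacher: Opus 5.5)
There is a genuine gap, at exactly the step you flagged as the main obstacle. The reduction $L_0(t)\le C(t+1)+C\int_0^t\vert E_{\mathrm{N}}(s,\xi_\alpha(s))\vert\,\mathrm{d}s$ from the $k=0$ identity is fine. The H\"older step that is supposed to bound the field at the charge is wrong.

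The integral $\int_{\vert y\vert<r}\vert y\vert^{-2q'}\,\mathrm{d}y$ is finite only when $q'<3/2$, i.e.\ $q>3$, not $q>3/2$. For $q$ near $3/2$ one has $q'$ near $3$, and the integral diverges; this is the ``sign bookkeeping'' in $r^{1-3/q}$. A pointwise bound on $E_{\mathrm{N}}(s,\xi_\alpha(s))$ therefore needs $\rho(s)\in L^{3+}$, i.e.\ moments of order above $6$.

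Split at radius $r$ between $\|\rho\|_{5/3}\le C$ and $\|\rho\|_{(\tilde k+3)/3}\le C\tilde H_{\tilde k}^{3/(\tilde k+3)}$ and optimize. The best this gives is $\vert E_{\mathrm{N}}(s,\xi_\alpha)\vert\le C\bigl(1+H_{\tilde k}(t)^{4/(3(\tilde k-2))}\bigr)$, and only for $\tilde k>6$. Since $4/3>a$, this is strictly weaker than the claimed power $a/(\tilde k-2)$. It also says nothing for $2+a<\tilde k\le 6$, and no choice of $q$ makes the exponent small.

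Your fallback is worse: $\rho\in L^{(5+a)/3}$ with $(5+a)/3<3$ gives no pointwise control of $E_{\mathrm{N}}$ at all. Nor can you keep the time integral. One has $\int_0^t\vert E_{\mathrm{N}}(s,\xi_\alpha(s))\vert\,\mathrm{d}s\le CL_0(t)+Ct$, and its coefficient in your inequality (the plasma mass $\int P_1f$ near $\xi_\alpha$) is not small. So the $k=0$ identity alone is circular.

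The missing idea is that the smallness closing this loop comes from the energy weight, not from a field bound. The paper splits $L_0$ into three pieces:
\begin{itemize}
\item the region $\vert x-\xi_\alpha\vert>\epsilon$, contributing at most $C\epsilon^{-2}t$;
\item the near region with $h\le R$, where $\vert v\vert\le 2\sqrt{R}$, so $\|f\|_\infty$ gives $CR^{3/2}\epsilon t$;
\item the region $h>R$, bounded by $R^{-a/2}L_a(t)$.
\end{itemize}
Lemma~\ref{lem-Lk-by-Hk-E-LP} with $k=a$, $l=2$ then gives $L_a(t)\le I(t)+C_2L_0(t)$. The term $\vert E_{\mathrm{N}}(s,\xi_\alpha)\vert$ you isolated reappears there as $CH_a(t)L_0(t)$. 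The power $H_{\tilde k}^{a/(\tilde k-2)}$ comes from the cutoff term $\int_0^t\tilde H_{2+a}(s)\,\mathrm{d}s$, through exactly the interpolation $\tilde H_{2+a}\le\tilde H_2^{(\tilde k-2-a)/(\tilde k-2)}\tilde H_{\tilde k}^{a/(\tilde k-2)}$ you wrote down. Taking $\epsilon=1$ and $R$ with $R^{a/2}\ge 2C_2$ lets you absorb $\frac12L_0(t)$ into the left-hand side. This is why one must work at $k=a>0$ rather than $k=0$.
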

\begin{proof}
Let $\epsilon,R>0$ be determined later, we split $\Omega\times\mathbb{R}^3$ into three parts $A_1,A_2,A_3$ as
\begin{align*}
A_1:=&\{(x,v):\vert x-\xi_{\alpha}(s)\vert >\epsilon\},\\
A_2:=&\{(x,v):h(s,x,v)>R\},\\
A_3:=&\{(x,v):\vert x-\xi_{\alpha}(s)\vert \le\epsilon,\,h(s,x,v)\le R\}.
\end{align*}
Then 
\begin{align}
&\int_0^t\iint \frac{f(s,x,v)}{\vert x-\xi_{\alpha}(s)\vert^2}\,\mathrm{d}x\,\mathrm{d}v\,\mathrm{d}s\nonumber\\
&\le\int_0^t\iint_{A_1}+\iint_{A_2}+\iint_{A_3} \frac{f(s,x,v)}{\vert x-\xi_{\alpha}(s)\vert^2}\,\mathrm{d}x\,\mathrm{d}v\,\mathrm{d}s\nonumber\\
&\le C\epsilon^{-2}(\|\mathring{f}\|_1+1)t+R^{-\frac{a}{2}}\int_0^t\iint \frac{h^{\frac{a}{2}}f(s,x,v)}{\vert x-\xi_{\alpha}(s)\vert^2}\,\mathrm{d}x\,\mathrm{d}v\,\mathrm{d}s\nonumber\\
&\quad+\int_0^t\iint_{A_3} \frac{f(s,x,v)}{\vert x-\xi_{\alpha}(s)\vert^2}\,\mathrm{d}x\,\mathrm{d}v\,\mathrm{d}s.\label{eq-lem-L0-by-Hk-1}
\end{align}
Since $\vert v\vert \le 2\sqrt{h}(s,x,v)$, we have
\begin{align*}
&\int_0^t\iint_{A_3} \frac{f(s,x,v)}{\vert x-\xi_{\alpha}(s)\vert^2}\,\mathrm{d}x\,\mathrm{d}v\,\mathrm{d}s\\
&\le \int_0^t\iint_{\vert x-\xi_{\alpha}(s)\vert \le\epsilon,\vert v\vert \le 2\sqrt{R}} \frac{f(s,x,v)}{\vert x-\xi_{\alpha}(s)\vert^2}\,\mathrm{d}x\,\mathrm{d}v\,\mathrm{d}s\\
&\le C(\|\mathring{f}\|_{\infty}+1)R^{3/2}\epsilon t.
\end{align*}
Inserting it into \eqref{eq-lem-L0-by-Hk-1}, we have
\begin{equation}\label{eq-lem-L0-by-Hk-2}
L_0(t)=\sum_{\alpha=1}^N\int_0^t\iint \frac{f(s,x,v)}{\vert x-\xi_{\alpha}(s)\vert^2}\,\mathrm{d}x\,\mathrm{d}v\,\mathrm{d}s\le C\epsilon^{-2}t+R^{-a/2}L_{a}(t)+CR^{3/2}\epsilon t.
\end{equation}

Take $k=a$, $l=2$ in Lemma~\ref{lem-Lk-by-Hk-E-LP}, we have
\begin{align}L_a(t)&\le C\Big[(t+1)H_{1+a}(t)+H_{2}(t)^{\frac{a+3}{5}}\int_0^t\Big(\|E_{\mathrm{N}}(s)\|_{\frac{5}{2-a}}+1\Big)\,\mathrm{d}s\nonumber\\
&\qquad\qquad +\int_0^t H_{2+a}(s)\,\mathrm{d}s+H_{a}(t)L_0(t)\Big]\nonumber\\
&\le C\Big[(t+1)H_{1+a}(t)+H_{2}(t)^{\frac{a+3}{5}}\int_0^t\Big(\|E_{\mathrm{N}}(s)\|_{\frac{5}{2-a}}+1\Big)\,\mathrm{d}s\nonumber\\
&\qquad\qquad +H_{2}(t)^{\frac{\tilde{k}-2-a}{\tilde{k}-2}}H_{\tilde{k}}(t)^{\frac{a}{\tilde{k}-2}}t\Big]+CH_{a}(t)L_0(t)\nonumber\\
&\quad=:I(t)+J(t)L_0(t).\label{eq-lem-L0-by-Hk-3}
\end{align}

By Proposition~\ref{prn-conservation-laws} and Lemma~\ref{lem-interpo-h-LP}, there exist constants $C_1,C_2>0$ depending only on $a,\tilde{k}$, $\|\mathring{f}\|_1$, $\|\mathring{f}\|_{\infty}$, $\|h_{\mathrm{N}}\|_{\infty}$ and $\mathcal{E}(0)$, such that
\begin{equation}\label{eq-lem-L0-by-Hk-4}
I(t)\le C_1\left(t+1+H_{\tilde{k}}(t)^{\frac{a}{\tilde{k}-2}}t\right),\quad J(t)\le C_2.
\end{equation}
Now take $\epsilon=1, R=\max\{1,(2C_2)^{16}\}$ and insert \eqref{eq-lem-L0-by-Hk-3}, \eqref{eq-lem-L0-by-Hk-4} into \eqref{eq-lem-L0-by-Hk-2}, we have
\begin{equation*}
L_0(t)\le Ct+R^{-1/16}C_1\left(t+1+H_{\tilde{k}}(t)^{\frac{a}{\tilde{k}-2}}t\right)+\frac{1}{2}L_0(t)+CR^{3/2}t,
\end{equation*}
which implies immediately the lemma with $C'=2(C+C_1+CR^{3/2})$.

\end{proof}

Now we are able to establish the following proposition, which brings us back to the standard Lions-Perthame's argument.
\begin{proposition}\label{prn-Hk-by-Hk-E-LP}
Let $f,\{\xi_{\alpha},\eta_{\alpha}\}$ be a classical solution given in Theorem~\ref{thm-local-wellposedness-absorption}. Then for all $k\ge 0$, there holds
\begin{equation*}
H_k(t)\le H_k(0)+CH_{k}(t)^{\frac{k+2}{k+3}}\left(\int_0^t\|E_{\mathrm{N}}(s)\|_{k+3}\,\mathrm{d}s+t+1\right)+C\int_0^t H_{k}(s)\,\mathrm{d}s.
\end{equation*}
where $C$ depends only on $k$, $\|\mathring{f}\|_1$, $\|\mathring{f}\|_{\infty}$, $\|h_{\mathrm{N}}\|_{\infty}$ and $\mathcal{E}(0)$.
\end{proposition}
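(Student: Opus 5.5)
We plan to combine Lemma~\ref{lem-Hk-by-Lk-E-LP}, Lemma~\ref{lem-Lk-by-Hk-E-LP} and Lemma~\ref{lem-L0-by-Hk} so as to remove the singular moment $L_{k-2}$ from the estimate of $H_k$. For $k\le 2$ the statement is trivial, since $\tilde H_k\le C'$ by Lemma~\ref{lem-interpo-h-LP}. So we assume $k>2$. Lemma~\ref{lem-Hk-by-Lk-E-LP} gives
\begin{equation*}
H_k(t)\le H_k(0)+CL_{k-2}(t)+CH_k(t)^{\frac{k+2}{k+3}}\int_0^t(\|E_{\mathrm{N}}(s)\|_{k+3}+1)\,\mathrm{d}s,
\end{equation*}
and it remains to bound $L_{k-2}(t)$ by terms of the allowed form.

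We apply Lemma~\ref{lem-Lk-by-Hk-E-LP} with $k-2$ in place of $k$ and choose $l=k$. Then $\frac{(k-2)+3}{l+3}=\frac{k+1}{k+3}$ and $\frac{l+3}{l-(k-2)}=\frac{k+3}{2}\le k+3$, which gives
\begin{equation*}
L_{k-2}(t)\le C\Big(H_{k-1}(t)(t+1)+H_{k-2}(t)L_0(t)+H_k(t)^{\frac{k+1}{k+3}}\int_0^t\|E_{\mathrm{N}}(s)\|_{\frac{k+3}{2}}\,\mathrm{d}s+\int_0^t\tilde H_k(s)\,\mathrm{d}s\Big).
\end{equation*}
The terms are then handled one at a time.
\begin{itemize}
\item The last term is $\le C\int_0^tH_k(s)\,\mathrm{d}s$.
\item Since $\Omega$ is bounded, Hölder gives $\|E_{\mathrm{N}}\|_{\frac{k+3}{2}}\le C\|E_{\mathrm{N}}\|_{k+3}$. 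Because $H_k\ge C^{-1}$ (as $h\ge K_1$ and the mass is bounded below, or after adding a constant), we have $H_k^{\frac{k+1}{k+3}}\le CH_k^{\frac{k+2}{k+3}}$, so the $E_{\mathrm{N}}$ term fits the statement.
\item By the interpolation in Lemma~\ref{lem-interpo-h-LP} between orders $2$ and $k$, $H_{k-1}\le CH_k^{\frac{k-3}{k-2}}\le CH_k^{\frac{k+2}{k+3}}$, so $H_{k-1}(t)(t+1)$ is also admissible.
\end{itemize}

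The main obstacle is the product $H_{k-2}(t)L_0(t)$. We use Lemma~\ref{lem-L0-by-Hk} with $\tilde k=k$ and small $a$:
\begin{equation*}
L_0\le C'\bigl(t+1+H_k^{\frac{a}{k-2}}t\bigr),\qquad H_{k-2}\le CH_k^{\frac{k-4}{k-2}}.
\end{equation*}
The product is then bounded by $C(t+1)H_k^{\frac{k-4+a}{k-2}}$. Taking $a$ small enough that $\frac{k-4+a}{k-2}\le\frac{k+2}{k+3}$ (possible, e.g.\ $a<\min(\tfrac18,\tfrac{6}{k+3})$) makes this term admissible. For $2<k\le4$ we replace the interpolation exponent appropriately, bounding $H_{k-2}\le C$ when $k-2\le2$. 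The term is then at most $C(t+1)H_k^{\frac{a}{k-2}}$, which is again dominated by $H_k^{\frac{k+2}{k+3}}(t+1)$.

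Collecting all bounds gives the proposition. The only delicate bookkeeping is making sure every power of $H_k$ is at most $\frac{k+2}{k+3}$, using the lower bound $H_k\gtrsim1$. Lower powers of $H_k$, or bounded factors, can also be absorbed into the $C\int_0^tH_k$ term when they carry a factor $t$.
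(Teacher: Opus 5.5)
Your proposal is correct and follows essentially the same route as the paper: you insert Lemma~\ref{lem-Lk-by-Hk-E-LP} (with $k-2$ in place of $k$) and Lemma~\ref{lem-L0-by-Hk} into Lemma~\ref{lem-Hk-by-Lk-E-LP}, interpolate the lower moments against $H_k$ using $H_k\gtrsim 1$, and take $a$ small. The differences are cosmetic: you choose $l=k$ and bound $\|E_{\mathrm{N}}\|_{\frac{k+3}{2}}$ by H\"older on the bounded domain, whereas the paper chooses $l=k-1$ and controls $\|E_{\mathrm{N}}\|_{k+2}$; you also treat $k\le 2$ and $2<k\le 4$ separately, with $a<k-2$ as Lemma~\ref{lem-L0-by-Hk} requires, which the paper's fixed choice $a=\frac{1}{16}$ glosses over.
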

\begin{proof}
Combining Lemmas~\ref{lem-Lk-by-Hk-E-LP}-\ref{lem-L0-by-Hk}, we have
\begin{align*}
L_{k-2}(t)&\le C\left((t+1)H_{k-1}(t)+H_{k-1}(t)^{\frac{k+1}{k+2}}\int_0^t\left(\|E_{\mathrm{N}}(s)\|_{k+2}+1\right)\,\mathrm{d}s+\int_0^t H_{k}(s)\,\mathrm{d}s\right)\\
&\quad+CH_{k-2}(t)\left(t+1+H_{k}(t)^{\frac{a}{k-2}}t\right),
\end{align*}
Inserting it into Lemma~\ref{lem-Hk-by-Lk-E-LP}, we have
\begin{align*}
H_k(t)&\le H_k(0)+C\left(H_{k}(t)^{\frac{k+2}{k+3}}\int_0^t\left(\|E_{\mathrm{N}}(s)\|_{k+3}+1\right)\,\mathrm{d}s\right)+CH_{k-2}(t)\left(t+1+H_{k}(t)^{\frac{a}{k-2}}t\right)\\
&\quad+ C\left((t+1)H_{k-1}(t)+H_{k-1}(t)^{\frac{k+1}{k+2}}\int_0^t\left(\|E_{\mathrm{N}}(s)\|_{k+2}+1\right)\,\mathrm{d}s+\int_0^t H_{k}(s)\,\mathrm{d}s\right),
\end{align*}
By Lemma~\ref{lem-interpo-h-LP} and Proposition~\ref{prn-conservation-laws}, we have 
\begin{gather*}
H_i(t)\le C\;\text{for all}\;i\le 2,\\
H_i(t)\le CH_k(t)^{\frac{i-2}{k-2}}\le CH_k(t)^{\frac{k+2}{k+3}}\;\text{for all}\;2\le i\le k-1,
\end{gather*}
and by \eqref{eq-Lp-uniform-bound-electric-field}, for all $0\le i\le k$
\begin{equation*}
\|E_{\mathrm{N}}(s)\|_{i+3}\le \|E_{\mathrm{N}}(s)\|_3^{\frac{3(k-i)}{i+3}}\|E_{\mathrm{N}}(s)\|_{k+3}^{\frac{ik+3i}{ik+3k}}\le C(\|E_{\mathrm{N}}(s)\|_{k+3}+1).
\end{equation*}
Hence 
\begin{align*}
H_k(t)\le& H_k(0)+CH_{k}(t)^{\frac{k+2}{k+3}}\left(\int_0^t\|E_{\mathrm{N}}(s)\|_{k+3}\,\mathrm{d}s+t+1\right)+C\int_0^t H_{k}(s)\,\mathrm{d}s\\
&+CH_{k}(t)^{\frac{k-4+a}{k-2}}t.
\end{align*}
Now take $a=\frac{1}{16}$, we get the conclusion.
\end{proof}

\subsection{The second step: Representation of \(\rho\)}

Now we represent the macrocharge density $\rho$ along the back-time straight-line characteristics
\begin{equation*}
X^{\rm sl}(s,t,x,v)=x+(s-t)v, \quad V^{\rm sl}(s,t,x,v)\equiv v. 
\end{equation*}
We denote by $B(t, x, v) = (t^{\rm sl}, x^{\rm sl}, v)$ the possible boundary point when the trajectory hits $\partial \Omega$. We first note that for $v \neq 0$, there exists a unique $x^{\rm sl} = x^{\rm sl}(x, v) \in \partial \Omega$ along the back-time straight-line characteristics from $(t, x, v)$ since $\Omega$ is convex. Let $t^{\rm sl}$ be the time when the characteristics from $(t, x, v)$ hits the boundary. Then $x^{\rm sl} = x +  (t^{\rm sl} - t)v$ and $t^{\rm sl} - t = [(x^{\rm sl} - x) \cdot v]/|v|^2$. We define
\[
a(x, v) = -[(x^{\rm sl} - x) \cdot v]/|v|^2 =t-t^{\rm sl},
\]
to see that the function $a(x, v)$ is locally differentiable as follows: Note
\[
0 = \operatorname{d}_{\partial\Omega}(x^{\rm sl}) = \operatorname{d}_{\partial\Omega}\big(x + v (t^{\rm sl} - t)\big).
\]
By \cite[Lemma 14.16]{GT01}, \(\operatorname{d}_{\partial\Omega}\) is \(C^4\) near the boundary and \(\nabla\operatorname{d}_{\partial\Omega}(x)=-n_x\) if \(x\in\partial\Omega\). Set $s = t^{\rm sl} - t$ to get that $0 = \operatorname{d}_{\partial\Omega}(x + s v) = \operatorname{d}_{\partial\Omega}(s; x, v)$ and $\partial \operatorname{d}_{\partial\Omega}/ \partial s = \nabla \operatorname{d}_{\partial\Omega}(x^{\rm sl}) \cdot v = n_{x^{\rm sl}} \cdot v < 0$ since $\Omega$ is convex. By the implicit function theory, $s = t^{\rm sl} - t = -a(x, v)$ is a locally differentiable function of $x$ and $v$. We collect three preliminary lemmas well-established in \cite{Guo94,Hwa04}.
\begin{lemma}\label{lem-back-time-a-equality}
For \(v \neq 0\), \(x \in \Omega\), it holds
\[ \frac{1}{2}\nabla_x a^2 + \nabla_v a = 0.\]
\end{lemma}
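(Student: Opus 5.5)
The plan is to compute both gradients of $a$ explicitly by implicit differentiation of the defining relation for the backward exit point. Write $x^{\rm sl}=x-a(x,v)v$, since $t^{\rm sl}-t=-a$. By definition this point satisfies
\begin{equation*}
\operatorname{d}_{\partial\Omega}\bigl(x-a(x,v)v\bigr)=0 .
\end{equation*}
The discussion preceding the statement gives three facts we will use. First, $\operatorname{d}_{\partial\Omega}$ is $C^4$ near $\partial\Omega$ with $\nabla\operatorname{d}_{\partial\Omega}(x^{\rm sl})=-n_{x^{\rm sl}}$. Second, $n_{x^{\rm sl}}\cdot v<0$ by convexity. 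Third, by the implicit function theorem $a$ is locally differentiable at every $(x,v)$ with $v\ne0$, $x\in\Omega$. So the identity above can be differentiated in $x$ and in $v$.

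\emph{Differentiation in $x_i$.} The chain rule gives
\begin{equation*}
-n_{x^{\rm sl}}\cdot\bigl(e_i-v\,\partial_{x_i}a\bigr)=0 .
\end{equation*}
Since $n_{x^{\rm sl}}\cdot v\neq0$, this yields
\begin{equation*}
\nabla_x a=\frac{n_{x^{\rm sl}}}{n_{x^{\rm sl}}\cdot v}.
\end{equation*}

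\emph{Differentiation in $v_i$.} Now both the factor $a$ and the vector $v$ depend on $v$, so the chain rule gives
\begin{equation*}
-n_{x^{\rm sl}}\cdot\bigl(-a\,e_i-v\,\partial_{v_i}a\bigr)=0 .
\end{equation*}
Hence
\begin{equation*}
\nabla_v a=-\frac{a\,n_{x^{\rm sl}}}{n_{x^{\rm sl}}\cdot v}.
\end{equation*}

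\emph{Conclusion.} Multiplying the first formula by $a$ and adding the second gives $a\nabla_xa+\nabla_va=0$, which is exactly $\tfrac12\nabla_xa^2+\nabla_va=0$.

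There is no serious obstacle; the only point needing care is justifying differentiability and the nonvanishing of $n_{x^{\rm sl}}\cdot v$, both already established above via convexity. As a sanity check, the identity also follows from two invariances of $a$. Translation along the ray gives $a(x+\tau v,v)=a(x,v)+\tau$, so $v\cdot\nabla_xa=1$. Rescaling the velocity gives $a(x,\lambda v)=a(x,v)/\lambda$. These are consistent with the formulas above, since $v\cdot\nabla_x a=1$ and $v\cdot\nabla_v a=-a$.
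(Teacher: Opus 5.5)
Your implicit differentiation of $\operatorname{d}_{\partial\Omega}(x-a(x,v)v)=0$ is correct. It is exactly the computation called for by the paper's setup, which gives no proof of its own beyond that implicit-function framework (with $n_{x^{\rm sl}}\cdot v<0$) and a citation to \cite{Guo94,Hwa04}. Strictly, you should differentiate the signed distance or some other local defining function of $\partial\Omega$, since the unsigned distance has a kink at the boundary; only the direction $n_{x^{\rm sl}}$ of its gradient enters, so your formulas stand.

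Your closing remark overstates, however. The two invariances along $v$ yield only the $v$-component $v\cdot(a\nabla_x a+\nabla_v a)=a-a=0$, not the full vector identity. The transverse components require your differentiation, or equivalently the exact invariance $a(x+au,v+u)=a(x,v)$ for small $u$, which keeps the exit point $x-av$ fixed.
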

Based on Lemma~\ref{lem-back-time-a-equality}, we obtain the representation formula for the macrocharge density.
\begin{lemma}\label{lem-represen-rho-back-time-line-character}
Let
\[
B(t, x, v) = (t - a(x, v),\, x - a(x, v) v,\, v) \in [0,\mathring{T})\times\Gamma.
\]
Then
\begin{align*}
\rho (t, x) =\ & \int_{a(x,v) \geq t} f_0(x - t v, v)\,\mathrm{d}v 
+ \int_{a(x,v) \leq t} g \circ B(t, x, v)\,\mathrm{d}v \\
& - \operatorname{div}_x \int_{a(x,v) \geq t} \int_0^t (t-s)\, (E_{\mathrm{N}}^{\rm pla}  f)(s, x - (t-s) v, v)\, \,\mathrm{d}s\,\mathrm{d}v \\
& - \operatorname{div}_x \int_{a(x,v) \leq t} \int_{t-a(x,v)}^t (t-s)\, (E_{\mathrm{N}}^{\rm pla} f)(s, x - (t-s) v, v)\, \,\mathrm{d}s\,\mathrm{d}v.
\end{align*}
\end{lemma}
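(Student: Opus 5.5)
Our plan is to follow the argument of \cite{Guo94,Hwa04}, i.e.\ to write $f$ via Duhamel along straight lines. Fix $(t,x,v)$ with $v\ne0$, and set $F(s):=f(s,x-(t-s)v,v)$ for $s\in[\max(0,t-a(x,v)),t]$. This segment lies in $\bar\Omega$ by convexity and the definition of $a$. The Vlasov equation \eqref{eq-Vlasov-short} gives
\[
F'(s)=-\bigl(E_{\mathrm{N}}^{\rm pla}\cdot\nabla_vf\bigr)(s,x-(t-s)v,v)=-\bigl(\operatorname{div}_v(E_{\mathrm{N}}^{\rm pla}f)\bigr)(s,x-(t-s)v,v),
\]
because the field does not depend on $v$. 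Integrating from the starting time to $t$, we get
\[
f(t,x,v)=\begin{cases}\mathring f(x-tv,v)-\int_0^t(\operatorname{div}_v(E_{\mathrm{N}}^{\rm pla}f))(s,x-(t-s)v,v)\,\mathrm{d}s,& a(x,v)\ge t,\\
g\circ B(t,x,v)-\int_{t-a}^t(\operatorname{div}_v(E_{\mathrm{N}}^{\rm pla}f))(s,x-(t-s)v,v)\,\mathrm{d}s,& a(x,v)\le t.\end{cases}
\]
In the second case we use that $(x-av,v)\in\Gamma_+$, so the boundary value is given by \eqref{eq-absorption-boundary}. We then integrate in $v$, and the first two terms of the claimed formula appear immediately.

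Next we convert $\operatorname{div}_v$ into $\operatorname{div}_x$. For $\Phi(s,y,v)$ we use the identity
\[
\bigl(\nabla_v\Phi\bigr)(s,x-(t-s)v,v)=\nabla_v\bigl[\Phi(s,x-(t-s)v,v)\bigr]+(t-s)\nabla_x\bigl[\Phi(s,x-(t-s)v,v)\bigr].
\]
Taking $\Phi=E_{\mathrm{N}}^{\rm pla}f$, the $(t-s)\nabla_x$ part yields, after pulling $\operatorname{div}_x$ outside the $v,s$ integrals, exactly the two $\operatorname{div}_x$ terms in the claim. The pulled-out derivative falls also on the domains $\{a\gtrless t\}$ and on the lower limit $t-a(x,v)$. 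We therefore still have to show that the pure $\operatorname{div}_v$ part, together with these extra derivative terms, vanishes.

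This cancellation is the main obstacle. For the region $a\ge t$ we write $\int_{a\ge t}\operatorname{div}_v\int_0^t\cdots\,\mathrm{d}s\,\mathrm{d}v$ and apply the divergence theorem in $v$. This produces a flux across the surface $\{a(x,v)=t\}$ with normal proportional to $\nabla_va$, and there is no contribution at infinity since $f$ is compactly supported. For $a\le t$ the $\operatorname{div}_v$ also hits the lower limit, giving $-\nabla_va\cdot(E_{\mathrm{N}}^{\rm pla}f)(t-a,x-av,v)$, plus a flux across the same surface of the opposite sign. On $\{a=t\}$ the two representations coincide, so the surface fluxes cancel. For the same reason the $\operatorname{div}_x$ derivatives of the domain indicators produce surface terms that cancel.

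What remains are the lower-limit terms. From $\operatorname{div}_v$ we get a term with $\nabla_va$, and from $\operatorname{div}_x$ acting on $t-a(x,v)$ we get a term with $(t-s)|_{s=t-a}\nabla_xa=a\nabla_xa$. Their sum is proportional to $(\nabla_va+\tfrac12\nabla_xa^2)\cdot(E_{\mathrm{N}}^{\rm pla}f)(t-a,x-av,v)$, which vanishes by Lemma~\ref{lem-back-time-a-equality}. This is precisely where that lemma enters.

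Regularity is not an issue: the local differentiability of $a$ for $v\ne0$ (from the implicit function argument above) and $f\in C^1$ justify every step. The set $v=0$ has measure zero, and \eqref{eq-initial-singular-sets} keeps $f$ away from grazing points and from the $\xi_\alpha$, so $E_{\mathrm{N}}^{\rm pla}f$ is bounded and $C^1$ on the support. If needed, we would interpret the identity in the distributional sense in $x$.
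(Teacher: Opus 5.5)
Your proposal is correct and follows the same route the paper relies on. The paper gives no proof of its own and simply defers to \cite{Guo94,Hwa04}, whose argument is exactly yours: Duhamel's formula along backward straight lines, the conversion $\nabla_v\to\nabla_v+(t-s)\nabla_x$, cancellation of the fluxes across $\{a(x,v)=t\}$ because $\int_{\max(0,t-a)}^t$ is continuous there, and elimination of the remaining lower-limit terms $\bigl(\nabla_va+a\nabla_xa\bigr)\cdot(E_{\mathrm{N}}^{\rm pla}f)(t-a,x-av,v)$ via Lemma~\ref{lem-back-time-a-equality}, whose two contributions indeed enter with the same sign.
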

Based on Lemma~\ref{lem-represen-rho-back-time-line-character}, we obtain the preliminary estimate on the electric field.
\begin{lemma}\label{lem-prel-estim-electric-field-by-repres-rho}
We have
\begin{equation*}
\left\| E_{\mathrm{N}}(t) \right\|_{p} 
\leq 
C+\sum_{i=1}^2\left\| 
\int_{\mathbb{R}^3} \int_0^t (t-s) \tilde{f}_i\left| E_{\mathrm{N}}^{\rm pla}(s, x - (t-s)v) \right| \,\mathrm{d}s\,\mathrm{d}v 
\right\|_{p},
\end{equation*}
where 
\begin{align*}
\tilde{f}_1&=\mathds{1}_{\{a \geq t\}}(v) f(s, x - (t-s)v, v),\\
\tilde{f}_2&=\mathds{1}_{\{a \leq t\}}(v) \mathds{1}_{(t-a, t)}(s)f(s, x - (t-s)v, v),
\end{align*}
and $1 \leq p < \infty$, $C$ is a constant depending only on the data $\mathring{f}$ and $g^{\rm pla}$.
\end{lemma}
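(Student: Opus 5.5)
We start from the representation of Lemma~\ref{lem-represen-rho-back-time-line-character} and apply the Neumann elliptic representation
\begin{equation*}
E_{\mathrm{N}}(t,x)=\int_{\Omega}\nabla_xG_{\mathrm{N}}(x,y)\rho(t,y)\,\mathrm{d}y-\int_{\partial\Omega}\nabla_xG_{\mathrm{N}}(x,y)h_{\mathrm{N}}(y)\,\mathrm{d}S_y .
\end{equation*}
The boundary term is bounded in $L^p(\Omega)$ for every $p<\infty$. Indeed, $\vert\nabla_xG_{\mathrm{N}}(x,y)\vert\le C\vert x-y\vert^{-2}$ by Lemma~\ref{lem-estimate-Green}, and $\int_{\partial\Omega}\vert x-y\vert^{-2}\,\mathrm{d}S_y\le C(1+\vert\ln\operatorname{d}_{\partial\Omega}(x)\vert)$ by Lemma~\ref{lem-boundary-domain-integral-d-1-a}, which lies in every $L^p(\Omega)$. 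For the bulk term we split $\rho=\rho_0+\rho_g+\rho_1+\rho_2$, following the four terms of Lemma~\ref{lem-represen-rho-back-time-line-character}.

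First, the terms $\rho_0$ (from $\mathring{f}$) and $\rho_g$ (from $g^{\rm pla}$) are bounded in $L^\infty$. The data are compactly supported in $v$ and bounded, so each $v$-integral runs over a bounded set. Since $\Omega$ is bounded, this gives a bound in every $L^q(\Omega)$. By Lemma~\ref{lem-elliptic-estimate-electric-field}, their contribution to $E_{\mathrm{N}}$ is bounded in $L^\infty$, hence in $L^p$, by a constant depending only on the data.

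Second, for the divergence terms $\rho_i=-\operatorname{div}_x\mathcal{F}_i$, with
\begin{equation*}
\mathcal{F}_i(t,x)=\int\int_0^t(t-s)\tilde{f}_iE_{\mathrm{N}}^{\rm pla}(s,x-(t-s)v)\,\mathrm{d}s\,\mathrm{d}v,
\end{equation*}
we integrate by parts in $y$:
\begin{equation*}
\int_{\Omega}\nabla_xG_{\mathrm{N}}(x,y)\operatorname{div}_y\mathcal{F}_i(y)\,\mathrm{d}y=\int_{\partial\Omega}\nabla_xG_{\mathrm{N}}(x,y)\,n_y\cdot\mathcal{F}_i(y)\,\mathrm{d}S_y-\int_{\Omega}\nabla_y\nabla_xG_{\mathrm{N}}(x,y)\mathcal{F}_i(y)\,\mathrm{d}y .
\end{equation*}
The boundary term requires the value of $\mathcal{F}_i$ on $\partial\Omega$. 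Along the back-time straight line from a boundary point with incoming direction, the cutoffs $\mathds{1}_{\{a\ge t\}}$ and $\mathds{1}_{(t-a,t)}$ make the $s$-interval degenerate. For outgoing directions I expect to handle the term through the structure of $B$, or by absorbing it with the same kernel argument as the interior term. The interior term has kernel $\nabla^2G_{\mathrm{N}}=\nabla^2G+\nabla^2\bar g_{\mathrm{N}}$. The singular part $\nabla^2G$ is a Calder\'on--Zygmund operator plus a multiple of the identity, so it is bounded on $L^p$ for $1<p<\infty$. For the regular part I would use the layer-potential representation of Section~\ref{sec-Green-Robin}. The aim is to show that its second derivatives, as an operator on functions in $\Omega$, are bounded on $L^p$; this is the standard $W^{2,p}$ Neumann estimate on a $C^4$ domain, which gives $\|\nabla E\|_p\lesssim\|\rho\|_p$. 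This yields
\begin{equation*}
\|E_{\mathrm{N}}(t)\|_p\le C+C\sum_i\|\mathcal{F}_i(t)\|_p\le C+\sum_i\Big\|\int\int_0^t(t-s)\tilde f_i\vert E_{\mathrm{N}}^{\rm pla}\vert\,\mathrm{d}s\,\mathrm{d}v\Big\|_p ,
\end{equation*}
with the absolute value moved inside. The constant in front is harmless and can be absorbed by the convention on $C$, or kept as written; this is the stated inequality.

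The \textbf{main obstacle} is the case $p=1$, which the statement includes, since Calder\'on--Zygmund operators are not bounded on $L^1$. For $p=1$ I would avoid integrating by parts. Since $\Omega$ is bounded, $\|E\|_1\le C\|E\|_{3/2}$, and for $\|E\|_{3/2}$ the divergence-form estimate $\|\nabla\nabla^{-1}\operatorname{div}\mathcal{F}\|_{3/2}\lesssim\|\mathcal{F}\|_{3/2}$ holds. That is not literally an $L^1$ bound on the right-hand side, so the honest fallback is to treat the range $1<p<\infty$ as the substantive claim. The secondary difficulty is the boundary trace of $\mathcal{F}_i$, which is where the convexity of $\Omega$ and the geometry of $a(x,v)$ enter, as in \cite{Guo94,Hwa04}.
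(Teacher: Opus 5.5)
The gap is the boundary term produced by your integration by parts. It is not secondary: in the Neumann case it is the whole difficulty.

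\textbf{Why the cited argument does not transfer.} The paper gives no proof of this lemma, only a reference to \cite{Guo94,Hwa04}. In the absorption setting treated there, the potential is Dirichlet. Then $G_{\mathrm{D}}(x,y)=0$ for every $x\in\Omega$ once $y\in\partial\Omega$. Hence $\nabla_xG_{\mathrm{D}}(x,y)=0$ on $\partial\Omega$, the term $\int_{\partial\Omega}\nabla_xG_{\mathrm{D}}(x,y)\,n_y\cdot\mathcal{F}_i(y)\,\mathrm{d}S_y$ vanishes identically, and only the Calder\'on--Zygmund term remains. For $G_{\mathrm{N}}$ nothing of the sort holds: the Neumann condition constrains $\partial_{n_y}G_{\mathrm{N}}$, not $\nabla_xG_{\mathrm{N}}(x,y)$ for $y\in\partial\Omega$.

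\textbf{Why the boundary term is genuinely present.} The density $n_y\cdot\mathcal{F}_i(y)$ does not vanish either. You are right that incoming directions give $a(y,v)=0$ and an empty $s$-range. But for $v\cdot n_y>0$ the backward chord from $y$ has length $a(y,v)>0$. Its contribution is $\int\int(t-s)\,(n_y\cdot E_{\mathrm{N}}^{\rm pla}f)(s,y-(t-s)v,v)\,\mathrm{d}s\,\mathrm{d}v$, evaluated at interior points, where $n\cdot E_{\mathrm{N}}=h_{\mathrm{N}}$ says nothing. So your boundary term is the gradient of a single-layer potential whose density is the trace of $n\cdot\mathcal{F}_i$ on $\partial\Omega$. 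Neither ``the structure of $B$'' nor ``the same kernel argument'' controls it, because an $L^p(\Omega)$ bound carries no information on a trace.

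\textbf{No $L^p$-only argument can close this.} An argument using only $\|\mathcal{F}_i\|_{L^p(\Omega)}$, which is all the right-hand side of the lemma provides, cannot work for $G_{\mathrm{N}}$. Take $\mathcal{F}_\epsilon(x)=\theta(\operatorname{d}_{\partial\Omega}(x)/\epsilon)\,\nu(x)$, where $\nu$ is a smooth extension of $n$, $\theta(0)=1$, and $\theta=0$ on $[1,\infty)$. Set $E_\epsilon(x)=\int_\Omega\nabla_xG_{\mathrm{N}}(x,y)\,(-\operatorname{div}\mathcal{F}_\epsilon)(y)\,\mathrm{d}y$.
\begin{itemize}
\item On one side, $\|\mathcal{F}_\epsilon\|_p\sim\epsilon^{1/p}\to0$.
\item On the other, testing the equation against $H^1$ functions gives $\|E_\epsilon-\nabla u_0\|_2\le\|\mathcal{F}_\epsilon\|_2$, where $\Delta u_0=|\partial\Omega|/|\Omega|$ and $\partial_nu_0=1$.
\item So $E_\epsilon\to\nabla u_0\neq0$ while $\mathcal{F}_\epsilon\to0$.
\end{itemize}
Proving the lemma for $\#=\mathrm{N}$ therefore needs either a separate estimate of the boundary flux $n\cdot\mathcal{F}_i|_{\partial\Omega}$ exploiting its kinetic structure, or a different splitting of $E_{\mathrm{N}}$. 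In the first case the bound would presumably carry an extra term. Your proposal supplies neither.

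\textbf{Minor points.}
\begin{itemize}
\item The interior operator $\mathcal{F}\mapsto\nabla_x\int_\Omega\nabla_yG_{\mathrm{N}}(x,y)\cdot\mathcal{F}(y)\,\mathrm{d}y$ is bounded on $L^p$, $1<p<\infty$, by the $W^{1,p}$ estimate for the Neumann problem in divergence form with conormal data. The $W^{2,p}$ estimate for $\rho\mapsto\nabla^2\phi$ that you cite is a different operator.
\item Dropping $p=1$ is legitimate: singular integrals do not give it, and the lemma is only used with $p=k+3$.
\item The Calder\'on--Zygmund constant in front of the sum cannot be absorbed into the additive $C$. It must appear in the statement, which is harmless for the later use.
\end{itemize}
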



\subsection{The Third Step: The Estimate on \(E_{\mathrm{N}}\) by \(H_k\)}

\begin{lemma}\label{lem-EN-by-Hk-LP}
Let $f,\{\xi_{\alpha},\eta_{\alpha}\}$ be a classical solution given in Theorem~\ref{thm-local-wellposedness-absorption}. Assume $k>3$, then there exist $r_1>\frac{3}{2}$, $\varepsilon>0$ such that for all $0<t_0<t<\mathring{T}$,
\begin{align*}
\left\| E_{\mathrm{N}}(t) \right\|_{k+3} 
\leq 
&C+C \ln \frac{t}{t-t_0}  H_k(t)^{\frac{1}{k+3}}+C(t-t_0)^{2-\frac{3}{r_1}} e^{Ct}H_{k}(t)^{\frac{3(k_1+3)}{(k+3)^2}}\nonumber\\
&+C(t-t_0)^{\frac{\varepsilon}{2}}e^{Ct}H_{k}(t)^{\frac{3(k_2+3)}{(k+3)^2}},
\end{align*}
where $C$ depends on $k,r_1,\varepsilon$, $\|f_0\|_1$, $\|f_0\|_{\infty}$, $\|h_{\mathrm{N}}\|_{\infty}$, $\mathcal{E}(0)$, $H_k(0)$, $H_{k_1}(0)$, $H_{k_2}(0)$, and $k_1,k_2$ are determined by $k,r_1,\varepsilon$.
\end{lemma}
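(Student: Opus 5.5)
Starting from Lemma~\ref{lem-prel-estim-electric-field-by-repres-rho}, I will follow the Lions--Perthame/Guo scheme. The field splits as $E_{\mathrm{N}}^{\rm pla}=E_{\mathrm{N}}+\sum_\beta\nabla_x\bar g_{\mathrm{N}}(\cdot,\xi_\beta)+\sum_\beta\frac{x-\xi_\beta}{4\pi|x-\xi_\beta|^3}$. By \eqref{eq-uniform-bound-point-charge-1}, the middle term is uniformly bounded, so the singular part is $|x-\xi_\beta|^{-2}$. For each $i=1,2$ I will split the time integral at $t_0$ into $[0,t_0]$ (with $t-s\ge t-t_0$) and $[t_0,t]$ (short times). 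Both pieces will be estimated pointwise in $x$ and then in $L^{k+3}$.

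\textbf{Long-time piece $s\in[0,t_0]$.} Change variables $y=x-(t-s)v$, so $dv=(t-s)^{-3}dy$. The integrand becomes
\[
\int_0^{t_0}(t-s)^{-2}\int f(s,y,\tfrac{x-y}{t-s})\,|E_{\mathrm{N}}^{\rm pla}(s,y)|\,dy\,ds .
\]
Bounding $f\le\|\mathring f\|_\infty$ alone is too crude. Instead I will interpolate as in \cite{LP91,Guo94}: use $\|f\|_\infty$ on a ball $|v|\le R$ and the $H_k$ moment outside it, where the $H_k$ moment is controlled by Lemma~\ref{lem-interpo-h-LP} together with $\|E_{\mathrm{N}}\|_{15/4}\le C$ from \eqref{eq-Lp-uniform-bound-electric-field}. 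Optimizing $R$ and integrating $(t-s)^{-1}$ produces the term $C\ln\frac{t}{t-t_0}H_k(t)^{1/(k+3)}$. The Coulomb contributions $|y-\xi_\beta(s)|^{-2}$ are locally $L^{r}$ for $r<3/2$, and they are handled the same way, with the point-charge separation \eqref{eq-uniform-bound-point-charge-1} keeping constants uniform. The boundary cutoff $\mathds 1_{\{a\ge t\}}$ or $\mathds 1_{(t-a,t)}$ only shrinks the domain, so it is harmless.

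\textbf{Short-time piece $s\in[t_0,t]$.} Here I will not use straight lines. I will compare $f(s,x-(t-s)v,v)$ with $f(t,\cdot)$ along true characteristics: $f$ is transported, and over a short interval $|V(s)-v|\le\int_s^t|E^{\rm pla}|$. To control this velocity displacement I will use a Gr\"onwall argument on $\sup|V|$-type moments, which gives the factor $e^{Ct}$. The key estimate is a bound on $\int_{t_0}^t(t-s)|E^{\rm pla}(s,x-(t-s)v)|\,ds$ that yields a positive power of $(t-t_0)$. For the smooth part $E_{\mathrm{N}}$ I will apply H\"older in $s$ with $E_{\mathrm{N}}\in L^{r_1}$, $r_1>3/2$. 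The dimensional gain $(t-s)^{1-3/r_1}$ from the change of variables then integrates to $(t-t_0)^{2-3/r_1}$. For the Coulomb part $|x-(t-s)v-\xi_\beta(s)|^{-2}$, I will integrate along the line first. Since $\xi_\beta$ is Lipschitz with bounded $\eta_\beta$, the integral $\int(t-s)|\cdot|^{-2}ds$ is bounded by $C(t-t_0)^{\varepsilon/2}$ times an $|v-\eta_\beta|$-dependent factor $|v-\eta_\beta|^{-1+\varepsilon}$-type. That factor is integrable against $f$ with one lost moment, and the lost moment is absorbed into $H_{k_2}$. The remaining density factors, $\int h^{k_j/2}f\,dv$ in $L^{(k+3)/(k_j+3)}$, are interpolated by Lemma~\ref{lem-interpo-h-LP}. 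This gives $H_k^{3(k_j+3)/(k+3)^2}$, with $k_1,k_2<k$ chosen so that the exponents match $r_1$ and $\varepsilon$. Finally I will collect the terms. The constant $C$ comes from the data terms of Lemma~\ref{lem-prel-estim-electric-field-by-repres-rho}, and the initial moments $H_{k_j}(0)$ enter through the Gr\"onwall step.

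The main obstacle is the short-time Coulomb piece. The moving singularity $\xi_\beta(s)$ and the straight-line path can nearly coincide when $v\approx\eta_\beta$, and I must extract a positive power of $t-t_0$ while losing only an integrable amount in $v$. If the line-integral bound fails, my fallback is to use the singular moment $L_0$ bound of Lemma~\ref{lem-L0-by-Hk} in place of a pointwise estimate.
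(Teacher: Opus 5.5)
Your long-time piece matches the paper's in substance: the field in $L^{3/2}$ against the density in $L^3$ gives $(t-s)^{-1}$ and hence the logarithm. The short-time piece, however, has a genuine gap, and your claim $k_1,k_2<k$ is the symptom.

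\textbf{The smooth part and the missing moment step.} After $y=x-(t-s)v$, H\"older with $E_{\mathrm{N}}(s)\in L^{r_1}$ gives $(t-s)^{1-3/r_1}\|E_{\mathrm{N}}(s)\|_{r_1}(\int\tilde f_i\,dv)^{1/r_1'}$. Integrability at $s=t$ forces $r_1>3/2$, i.e.\ $r_1'<3$. So you need $\int\tilde f_i\,dv$ in $L^{(k+3)/r_1'}$ with $(k+3)/r_1'>(k+3)/3$. By Lemma~\ref{lem-interpo-h-LP} this costs $H_{k_1}$ with $\frac{k_1+3}{3}=\frac{k+3}{r_1'}$, i.e.\ $k_1>k$; likewise $k_2>k$ for the Coulomb part. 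If $k_j<k$ you would simply interpolate between $H_2$ and $H_k$, with no $e^{Ct}$ and an exponent below $\frac{1}{k+3}$. So the exponents $\frac{3(k_j+3)}{(k+3)^2}$ do not come out of your argument.

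The missing idea is how to return from $H_{k_j}$ to $H_k$. The paper applies Proposition~\ref{prn-Hk-by-Hk-E-LP} at order $k_j$ and then Gr\"onwall, giving $H_{k_j}(t)\le Ce^{Ct}[H_{k_j}(0)+\sup_{s\le t}\|E_{\mathrm{N}}(s)\|_{k_j+3}^{k_j+3}]$. This is where both $e^{Ct}$ and the dependence on $H_{k_j}(0)$ come from. It then bounds $\|E_{\mathrm{N}}\|_{k_j+3}$ by $\|\rho\|_{3(k_j+3)/(k_j+6)}$ via Lemma~\ref{lem-elliptic-estimate-electric-field} and interpolates. This closes only if $\frac{6k_j+9}{k_j+6}<k$, and choosing $r_1,\varepsilon$ to achieve it is exactly where $k>3$ is used.

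Your substitute (comparing with true characteristics and a Gr\"onwall bound on $\sup|V|$) is circular. It needs pointwise control of $E_{\mathrm{N}}^{\rm pla}$ along trajectories, which is the velocity bound this section is trying to prove. It is also unnecessary: the straight-line formula involves $f(s)$ itself, so only $\|f(s)\|_\infty$ and velocity moments of $f(s)$ are needed.

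\textbf{The short-time Coulomb piece.} For fixed $(x,v)$, the line integral $\int_{t_0}^t(t-s)|x-(t-s)v-\xi_\beta(s)|^{-2}\,ds$ is not bounded by $C(t-t_0)^{\varepsilon/2}|v-\eta_\beta|^{-1+\varepsilon}$. It diverges when the line meets $\xi_\beta$, because near the meeting time $s_*$ the distance is $\approx|s-s_*||v-\eta_\beta|$. For a miss at distance $d$ it is of order $(t-s_*)d^{-1}|v-\eta_\beta|^{-1}$, which for fixed $v$ is not in $L^{k+3}_x$. You must integrate in $v$ before $s$. The paper writes $(t-s)=(t-s)^{1+\varepsilon}|v|^{\varepsilon}|(t-s)v|^{-\varepsilon}$ and applies H\"older in $v$ with $r_2=3/(2+\varepsilon/2)$. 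This gives $(t-s)^{\varepsilon/2-1}|x-\xi_\alpha(s)|^{-\varepsilon/2}(\int|v|^{\varepsilon r_2'}\tilde f_i\,dv)^{1/r_2'}$, and H\"older in $x$ then absorbs $|x-\xi_\alpha|^{-\varepsilon/2}$.

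Your fallback via Lemma~\ref{lem-L0-by-Hk} does not substitute. $L_0$ controls $\int\!\!\int f(s)|y-\xi_\alpha(s)|^{-2}$, which after the change of variables is only the $L^1_x$ analogue of the quantity you need in $L^{k+3}_x$.

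\textbf{A smaller point in the long-time Coulomb term.} Taking $|y-\xi_\beta|^{-2}\in L^r$ with $r<3/2$ gives $(t-s)^{1-3/r}$. Over $[0,t_0]$ this integrates to a negative power of $t-t_0$, not to $\ln\frac{t}{t-t_0}$. You need the endpoint, for instance $\int g\,|y-\xi_\beta|^{-2}dy\le C\|g\|_\infty^{2/3}\|g\|_1^{1/3}$. The paper handles this endpoint with the weak-$L^{3/2}$ norm.
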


\begin{proof}
From Lemma~\ref{lem-prel-estim-electric-field-by-repres-rho}, we have
\begin{equation}\label{eq-pf-lem-prel-estim-electric-field-by-repres-rho-1}
\left\| E_{\mathrm{N}}(t) \right\|_{k+3} 
\leq 
C+\sum_{i=1}^2\left\| 
\int_{\mathbb{R}^3} \int_0^t (t-s) \tilde{f}_i\left| E_{\mathrm{N}}^{\rm pla}(s, x - (t-s)v) \right| \,\mathrm{d}s\,\mathrm{d}v 
\right\|_{k+3}.
\end{equation}
We shall estimate the two terms in the right hand side by the sum of the long-time integral and the short-time integral:
\begin{equation*}
\left\| \int_0^{t_0} \cdots \right\|_{k+3} + \left\| \int_{t_0}^t \cdots \right\|_{k+3},
\end{equation*}
For the long-time integral over $(0, t_0)$, by the H\"older inequality with $r' = 3,\, r = \frac{3}{2}$, Lemma~\ref{lem-interpo-h-LP} and \eqref{eq-Lp-uniform-bound-electric-field}
\begin{align}
\left\| \int_0^{t_0} \cdots \right\|_{k+3}
&\leq C \int_0^{t_0} \frac{1}{t-s}\left(\| E_{\mathrm{N}}(s) \|_{\frac{3}{2}}+\| F_{\mathrm{N}}(s) \|_{\frac{3}{2},{\rm w}}\right) H_k(s)^{\frac{1}{k+3}}\,\mathrm{d}s\nonumber \\
&\leq C \ln \frac{t}{t-t_0} \sup_{s \in (0, t)} \left(\| E_{\mathrm{N}}(s) \|_{\frac{3}{2}}+\| F_{\mathrm{N}}(s) \|_{\frac{3}{2},{\rm w}}\right) H_k(s)^{\frac{1}{k+3}}\nonumber\\
&\leq C \ln \frac{t}{t-t_0}  H_k(t)^{\frac{1}{k+3}}.\label{eq-pf-lem-prel-estim-electric-field-by-repres-rho-2}
\end{align}

For the short-time integral over $(t_0, t)$, note
\begin{align*}
&\left\|\int_{t_0}^{t} \cdots \tilde{f}_i\left| E_{\mathrm{N}}^{\rm pla}\right|\cdots \right\|_{k+3}\\
&\leq \left\|\int_{t_0}^{t}\cdots \tilde{f}_i\left| E_{\mathrm{N}}\right| \cdots \right\|_{k+3}+\left\|\int_{t_0}^{t}\cdots \tilde{f}_i\left| F_{\mathrm{N}}\right|\cdots \right\|_{k+3}.
\end{align*}
Take $\frac{3}{2}<r_1\le\frac{15}{4}$, by the H\"older inequality, Lemma~\ref{lem-interpo-h-LP} and \eqref{eq-Lp-uniform-bound-electric-field}, we have
\begin{align}
&\left\|\int_{t_0}^{t}\cdots \tilde{f}_i\left| E_{\mathrm{N}}\right| \cdots \right\|_{k+3}\nonumber \\
&\leq C \int_{t_0}^{t} (t-s)^{1-\frac{3}{r_1}}\| E_{\mathrm{N}}(s) \|_{r_1} H_{k_1}(s)^{\frac{1}{k+3}}\,\mathrm{d}s\nonumber \\
&\leq C  (t-t_0)^{2-\frac{3}{r_1}} H_{k_1}(t)^{\frac{1}{k+3}},\label{eq-pf-lem-prel-estim-electric-field-by-repres-rho-3}
\end{align}
where $k_1$ is given by $\frac{k_1+3}{3}=\frac{k+3}{r_1'}$. Note
\begin{align*}
&\left\|\int_{t_0}^{t}\cdots \tilde{f}_i|F_{\mathrm{N}}|\cdots \right\|_{k+3}\\
&\le C\sum_{\alpha}\left\| 
\int_{\mathbb{R}^3} \int_{t_0}^{t}  \frac{(t-s) \tilde{f}_i}{|x - (t-s)v-\xi_{\alpha}(s)|^2} \,\mathrm{d}s\,\mathrm{d}v
\right\|_{k+3},
\end{align*}
and by the H\"older's inequality with $r_2=\frac{3}{2+\varepsilon/2}$,
\begin{align*}
&\int_{\mathbb{R}^3} \frac{(t-s) \tilde{f}_i}{|x - (t-s)v-\xi_{\alpha}(s)|^2} \,\mathrm{d}v\\
&=\int_{\mathbb{R}^3} \frac{(t-s)^{1+\varepsilon} |v|^{\varepsilon}\tilde{f}_i}{|x - (t-s)v-\xi_{\alpha}(s)|^2|(t-s)v|^{\varepsilon}} \,\mathrm{d}v\\
&\le C(t-s)^{1+\varepsilon-\frac{3}{r_2}}\left(\int_{\mathbb{R}^3} \frac{1}{|x - v-\xi_{\alpha}(s)|^{2r_2}|v|^{\varepsilon r_2}} \,\mathrm{d}v\right)^{
\frac{1}{r_2}}\left(\int_{\mathbb{R}^3} |v|^{\varepsilon r_2'}\tilde{f}_i^{r_2'}\,\mathrm{d}v\right)^{
\frac{1}{r_2'}}\\
&\le C(t-s)^{1+\varepsilon-\frac{3}{r_2}}|x -\xi_{\alpha}(s)|^{\frac{3}{r_2}-(2+\varepsilon)}\left(\int_{\mathbb{R}^3} |v|^{\varepsilon r_2'}\tilde{f}_i^{r_2'}\,\mathrm{d}v\right)^{
\frac{1}{r_2'}},
\end{align*}
where $\varepsilon>0$ is small enough. Hence we have
\begin{align*}
&\left\|\int_{t_0}^{t}\cdots \tilde{f}_i|F_{\mathrm{N}}|\cdots \right\|_{k+3}\\
&\le C\sum_{\alpha}\int_{t_0}^{t}(t-s)^{\frac{\varepsilon}{2}-1}\left\||x -\xi_{\alpha}(s)|^{-\frac{\varepsilon}{2}}\left(\int_{\mathbb{R}^3} |v|^{\varepsilon r_2'}\tilde{f}_i^{r_2'}\,\mathrm{d}v\right)^{
\frac{1}{r_2'}}\right\|_{k+3}\,\mathrm{d}s.
\end{align*}
Now, take $r_3=\frac{2}{\varepsilon(k+3)}$, by the H\"older's inequality, we have
\begin{align*}
&\left\||x -\xi_{\alpha}(s)|^{-\frac{\varepsilon}{2}}\left(\int_{\mathbb{R}^3} |v|^{\varepsilon r_2'}\tilde{f}_i^{r_2'}\,\mathrm{d}v\right)^{
\frac{1}{r_2'}}\right\|_{k+3}\\
&\le C\left\||x -\xi_{\alpha}(s)|^{-\frac{\varepsilon(k+3)}{2}}\right\|_{r_3}^{\frac{1}{k+3}}\left\|\left(\int_{\mathbb{R}^3} |v|^{\varepsilon r_2'}\tilde{f}_i^{r_2'}\,\mathrm{d}v\right)^{
\frac{k+3}{r_2'}}\right\|_{r_3'}^{\frac{1}{k+3}}\\
&\le C\left\|\left(\int_{\mathbb{R}^3} |v|^{\varepsilon r_2'}\tilde{f}_i\,\mathrm{d}v\right)^{
\frac{k+3}{r_2'}}\right\|_{r_3'}^{\frac{1}{k+3}}.
\end{align*}
By Lemma~\ref{lem-interpo-h-LP}, we have
\begin{equation*}
\left\|\left(\int_{\mathbb{R}^3} |v|^{\varepsilon r_2'}\tilde{f}_i\,\mathrm{d}v\right)^{
\frac{k+3}{r_2'}}\right\|_{r_3'}^{\frac{1}{k+3}}\le CH_{k_2}(s)^{\frac{1}{k+3}},
\end{equation*}
where $k_2$ is given by $\frac{k_2+3}{\varepsilon r_2'+3}=\frac{(k+3)r_3'}{r_2'}$. Hence
\begin{equation}\label{eq-pf-lem-prel-estim-electric-field-by-repres-rho-4}
\left\|\int_{t_0}^{t}\cdots \tilde{f}_i|F_{\mathrm{N}}|\cdots \right\|_{k+3}\le C(t-t_0)^{\frac{\varepsilon}{2}}H_{k_2}(t)^{\frac{1}{k+3}}.
\end{equation}
Inserting \eqref{eq-pf-lem-prel-estim-electric-field-by-repres-rho-2}, \eqref{eq-pf-lem-prel-estim-electric-field-by-repres-rho-3}, \eqref{eq-pf-lem-prel-estim-electric-field-by-repres-rho-4} into \eqref{eq-pf-lem-prel-estim-electric-field-by-repres-rho-1}, we have
\begin{align}
\left\| E_{\mathrm{N}}(t) \right\|_{k+3} 
\leq 
&C+C \ln \frac{t}{t-t_0}  H_k(t)^{\frac{1}{k+3}}+C  (t-t_0)^{2-\frac{3}{r_1}} H_{k_1}(t)^{\frac{1}{k+3}}\nonumber\\
&+C(t-t_0)^{\frac{\varepsilon}{2}}H_{k_2}(t)^{\frac{1}{k+3}}.\label{eq-pf-lem-prel-estim-electric-field-by-repres-rho-5}
\end{align}

Now we estimate $H_{k_1},H_{k_2}$ by $H_k$. It can be deduced by Proposition~\ref{prn-Hk-by-Hk-E-LP} that
\begin{equation*}
H_{k_1}(t)\le  Ce^{Ct}\left[H_{k_1}(0)+\sup_{0\le s\le t}\|E_{\mathrm{N}}(s)\|_{k_1+3}^{k_1+3}\right].
\end{equation*}
By Lemma~\ref{lem-elliptic-estimate-electric-field} and Lemma~\ref{lem-interpo-h-LP},
\begin{equation*}
\|E_{\mathrm{N}}(s)\|_{k_1+3}^{k_1+3}\le C\|\rho(s)\|_{\frac{3(k_1+3)}{k_1+6}}^{k_1+3}\le CH_{\frac{6k_1+9}{k_1+6}}(t)^{\frac{k_1+6}{3}}.
\end{equation*}
Take $r_1'=6+9\gamma-k$. Since $k>3$, we have $r_1'\in[\frac{15}{11},3)$ as long as $\gamma\in(\max(0,\frac{11k-51}{99}),\frac{k-3}{9})$. Then recall $k_1=\frac{3(k+3-r_1')}{r_1'}$, we have $\frac{6k_1+9}{k_1+6}=\frac{k-3\gamma}{1+\gamma}< k$. Note $\frac{k_1+6}{3(k_1+3)}=\frac{3(1+\gamma)}{k+3}$ and by Lemma~\ref{lem-interpo-h-LP} with $b=\frac{6k_1+9}{k_1+6}$, $a=2$, $c=k$, we have
\begin{equation*}
H_{\frac{6k_1+9}{k_1+6}}(t)^{\frac{k_1+6}{3}}\le CH_k(t)^{(1-\frac{5\gamma}{k-2})\frac{3(k_1+3)}{k+3}}\le CH_k(t)^{\frac{3(k_1+3)}{k+3}},
\end{equation*}
Hence choosing appropriate $r_1\in(\frac{3}{2},\frac{15}{4}]$, we have
\begin{equation}\label{eq-pf-lem-prel-estim-electric-field-by-repres-rho-6}
H_{k_1}(t)\le Ce^{Ct}\left[H_{k_1}(0)+H_k(t)^{\frac{3(k_1+3)}{k+3}}\right],
\end{equation}
with $k_1$ determined by $k,r_1$.

Now by the definitions of $r_2,r_3,k_2$, we have
\begin{equation*}
\frac{6k_2+9}{k_2+6}=\frac{(12k+18)+(15k+48)\varepsilon}{(2k+12)-(6+2k)\varepsilon}.
\end{equation*}
Note $k>3$ implies that, there exists $\varepsilon>0$ such that
\begin{equation*}
\frac{(12k+18)+(15k+48)\varepsilon}{(2k+12)-(6+2k)\varepsilon}<k.
\end{equation*}
Hence, similarly to \eqref{eq-pf-lem-prel-estim-electric-field-by-repres-rho-6}, we can deduce that
\begin{equation}\label{eq-pf-lem-prel-estim-electric-field-by-repres-rho-7}
H_{k_2}(t)\le Ce^{Ct}\left[H_{k_2}(0)+H_k(t)^{\frac{3(k_2+3)}{k+3}}\right],
\end{equation}
with $k_2$ determined by $k,\varepsilon$. 

Inserting \eqref{eq-pf-lem-prel-estim-electric-field-by-repres-rho-6}, \eqref{eq-pf-lem-prel-estim-electric-field-by-repres-rho-7} into \eqref{eq-pf-lem-prel-estim-electric-field-by-repres-rho-5}, we have
\begin{align*}
\left\| E_{\mathrm{N}}(t) \right\|_{k+3} 
\leq 
&C+C \ln \frac{t}{t-t_0}  H_k(t)^{\frac{1}{k+3}}+C(t-t_0)^{2-\frac{3}{r_1}}e^{Ct} H_{k}(t)^{\frac{3(k_1+3)}{(k+3)^2}}\nonumber\\
&+C(t-t_0)^{\frac{\varepsilon}{2}}e^{Ct}H_{k}(t)^{\frac{3(k_2+3)}{(k+3)^2}}.
\end{align*}

\end{proof}

\subsection{The fourth step: a Gr\"onwall's argument}

\begin{proposition}\label{prn-Hk-unif-bound-LP}
Let $f,\{\xi_{\alpha},\eta_{\alpha}\}$ be a classical solution given in Theorem~\ref{thm-local-wellposedness-absorption}. Assume $k>3$, then 
\[
H_k(t)\le\exp(\exp(C(1+t))).
\]
\end{proposition}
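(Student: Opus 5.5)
We will combine Proposition~\ref{prn-Hk-by-Hk-E-LP} with Lemma~\ref{lem-EN-by-Hk-LP}, following the Lions--Perthame argument of \cite{Guo94}.

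\textbf{Step 1: an integral inequality for $H_k$.} Fix $k>3$ and write $Y(t):=H_k(t)+e$, which is nondecreasing. Dividing the inequality of Proposition~\ref{prn-Hk-by-Hk-E-LP} by $Y(t)^{\frac{k+2}{k+3}}$ and using $Y\ge1$, we get
\begin{equation*}
Y(t)^{\frac{1}{k+3}}\le C+C\int_0^t\|E_{\mathrm{N}}(s)\|_{k+3}\,\mathrm{d}s+Ct+C\int_0^tY(s)^{\frac{1}{k+3}}\,\mathrm{d}s .
\end{equation*}
Since $H_k$ is monotone, the last term is obtained from $\int_0^t H_k(s)\,\mathrm{d}s\le Y(t)^{\frac{k+2}{k+3}}\int_0^tY(s)^{\frac1{k+3}}\,\mathrm{d}s$.

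\textbf{Step 2: control of $\|E_{\mathrm{N}}\|_{k+3}$.} We bound $\|E_{\mathrm{N}}(s)\|_{k+3}$ by Lemma~\ref{lem-EN-by-Hk-LP} with a time-dependent cutoff $t_0=s-\tau(s)$. The exponents satisfy
\begin{equation*}
\theta_i:=\frac{3(k_i+3)}{(k+3)^2}\quad\text{for }i=1,2,
\end{equation*}
and the power-law terms are
\begin{equation*}
C\tau^{2-3/r_1}e^{Cs}Y(s)^{\theta_1}\quad\text{and}\quad C\tau^{\varepsilon/2}e^{Cs}Y(s)^{\theta_2}.
\end{equation*}
We choose
\begin{equation*}
\tau(s)=\min\Big\{s,\;e^{-C's}Y(s)^{-N}\Big\},
\end{equation*}
with $N$ large, depending on $\theta_i,r_1,\varepsilon$. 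Then both power-law terms are $\le C Y(s)^{\frac{1}{k+3}}$, or even $\le C$. The logarithmic term becomes
\begin{equation*}
\ln\frac{s}{\tau}\,Y^{\frac{1}{k+3}}\le C\big(1+s+\ln Y(s)\big)Y(s)^{\frac{1}{k+3}}.
\end{equation*}
For small $s$, where $\tau=s$ is forced, the Lemma gives no useful splitting. On short intervals we therefore use instead the local bound coming from the local theory, or take $t_0$ arbitrarily close to $0$; either way this is absorbed into constants.

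\textbf{Step 3: Gr\"onwall.} Setting $Z(t)=Y(t)^{\frac1{k+3}}$, we obtain
\begin{equation*}
Z(t)\le C(1+t)+C\int_0^t(1+s+\ln Z(s))Z(s)\,\mathrm{d}s.
\end{equation*}
This is an Osgood-type inequality with nonlinearity $Z\ln Z$. With $W=\ln Z$ it reduces to $W'\lesssim 1+s+W$. Hence $W\le e^{C(1+t)}$, and therefore
\begin{equation*}
H_k(t)\le \exp\big(\exp(C(1+t))\big).
\end{equation*}

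\textbf{Main obstacle.} The crux is Step 2: choosing $\tau$ so that the positive-power terms from the short-time integrals are neutralised while the long-time term costs only a logarithm. This is possible only because Lemma~\ref{lem-EN-by-Hk-LP} gives the short-time terms a positive power of $t-t_0$ multiplied by a polynomial in $H_k$. If the exponents $\theta_i$ exceeded a threshold, the polynomial factor $\tau^{-N}$ would still only enter through $\ln(1/\tau)\lesssim s+\ln Y$, so the double-exponential bound should survive. We also need to check that $\ln\frac{s}{s-t_0}$ is well defined, i.e. $t_0<s$; this holds by the choice of $\tau$.
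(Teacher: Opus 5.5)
Your proposal is correct and follows essentially the paper's own argument. You choose $t_0$ in Lemma~\ref{lem-EN-by-Hk-LP} as a function of $H_k$, so that the short-time power-law terms are bounded while the long-time term costs only a factor $\ln H_k$. You then insert this into Proposition~\ref{prn-Hk-by-Hk-E-LP}, divide by $H_k^{\frac{k+2}{k+3}}$, and solve the resulting $Z\ln Z$ Osgood-type inequality.

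There is one small difference. The paper uses the relative parameter $\mu=(t-t_0)/t$ with $\mu^{\delta}H_k(t)^{\bar{k}}=1$ and keeps an additive $e^{Ct}$ term. You instead absorb $e^{Cs}$ into the absolute length $\tau$, and both choices give the same double-exponential bound. Your explicit handling of the $\int_0^t H_k$ term and of the limit $t_0\to 0$ fills in details that the paper leaves implicit.
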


\begin{proof}
Denote $\mu=\frac{t-t_0}{t}$, $\delta=\min\{2-\frac{3}{r_1},\frac{\varepsilon}{2}\}$, $\bar{k}=\max\{\frac{3(k_1+3)}{(k+3)^2},\frac{3(k_2+3)}{(k+3)^2}\}$. We have from Lemma~\ref{lem-EN-by-Hk-LP}
\begin{equation}\label{eq-pf-prn-Hk-unif-bound-LP-1}
\|E_{\mathrm{N}}(t)\|_{k+3}\le C+C \ln\big(\frac{1}{\mu}\big)H_k(t)^{\frac{1}{k+3}}+C\mu^{\delta}e^{Ct}H_k(t)^{\bar{k}}.
\end{equation}
Now choosing $t_0$ such that
\begin{equation*}
\mu^{\delta}H_k(t)^{\bar{k}}=1,
\end{equation*}
inserting it into \eqref{eq-pf-prn-Hk-unif-bound-LP-1}, we have
\begin{equation*}
\|E(t)\|_{k+3}\le Ce^{Ct}+C \ln H_kH_k(t)^{\frac{1}{k+3}}.
\end{equation*}
Inserting it into the estimate in Proposition~\ref{prn-Hk-by-Hk-E-LP}, we have
\begin{equation*}
H_k(t)\le CH_{k}(t)^{\frac{k+2}{k+3}}e^{Ct}+CH_{k}(t)^{\frac{k+2}{k+3}}\int_0^t\ln H_k(s)H_k(s)^{\frac{1}{k+3}}\,ds,
\end{equation*}
solving this integral inequality, we get the moment propagation as follows
\begin{equation*}
H_k(t)\le \exp(\exp(C(1+t))).
\end{equation*}
\end{proof}

\section{The global existence for reflection-Neumann boundary}\label{sec-global-reflection}

We define the pointwise energy related to the $\alpha$-th charge as in \cite{MMP11}
\begin{equation*}
h_{\alpha}(t,x,v)=\frac{\vert v-\eta_{\alpha}(t)\vert^2}{2}+\frac{1}{\vert x-\xi_{\alpha}(t)\vert}+K_1,
\end{equation*}
where $K_1>1$ is a large constant depending only on $\|\mathring{f}\|_{1}$, $\|\mathring{f}\|_{\infty}$, $\mathcal{E}(0)$. We define
\begin{equation*}
Q_{t,\delta}=\max_{\alpha}\sup\{\sqrt{h_{\alpha}}(t,x,v): (x,v)\in\operatorname{supp} f(s),\,t-\delta\le s\le t\}.
\end{equation*}
Note
\begin{equation*}
Q_{t,\delta}=\max_{\alpha}\sup\{\sqrt{h_{\alpha}}(t,X(s),V(s)): (x,v)\in\operatorname{supp} f(t-\delta),\,t-\delta\le s\le t\}.
\end{equation*}
where $X(s)=X(s,t-\delta,x,v)$, $V(s)=V(s,t-\delta,x,v)$.

Theorem~\ref{thm-reflection-Neumann-case} follows by verifying the continuation criterion in Theorem~\ref{thm-local-wellposedness-reflection} for $\#=\mathrm{N}$. Note \eqref{eq-continuation-point-charge-reflection} can be obtained by \eqref{eq-uniform-bound-point-charge-1}, \eqref{eq-uniform-bound-point-charge-2}. The aim of the section is to prove \eqref{eq-continuation-plasma-reflection}, which is the following proposition.
\begin{proposition}\label{prn-uniform-bound-pointwise-energy}
We have for all $T>0$
\begin{equation*}
Q_{T,T}\le(Q_{0,0}+C)\exp(C(1+T)).
\end{equation*}
\end{proposition}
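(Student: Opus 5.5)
The plan is a Pfaffelmoser-type bootstrap with the singular sets separated in time, as announced in the introduction. Fix $T$ and $Q:=Q_{T,T}$. By Proposition~\ref{prn-conservation-laws}, the point charges stay at distance $\ge\delta_2$ from $\partial\Omega$ and from each other, and $\vert\eta_\alpha\vert\le C$. All the charge-related forces $\nabla H_{\mathrm N}$, $F_{\mathrm N,\alpha}$ and $\nabla\bar g_{\mathrm N}(\cdot,\xi_\beta)$ are therefore bounded, uniformly in time. The key step is a one-step estimate: for a suitable $\delta\in(0,T]$ and every $t\in[\delta,T]$,
\begin{equation*}
Q_{t,\delta}\le Q_{t-\delta,\delta}+C\,Q\,\delta ,
\end{equation*}
with $\delta$ chosen as a negative power of $Q$ (for instance $\delta\sim Q^{-\theta}$, together with an $O(1)$ lower restriction coming from Lemma~\ref{lem-estimate-a0-ne-bar-alpha} when $M>1$). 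Iterating over $[0,T]$ in steps of length $\delta$ gives $Q\le Q_{0,0}+CQT$. This is too weak as it stands, so I will sharpen it to the form $\frac{\mathrm d}{\mathrm dt}\sqrt{h_\alpha}\le C(1+\|E_{\mathrm N}\|_\infty)$ along characteristics, with $\|E_{\mathrm N}(t)\|_\infty\le C(1+\ln Q)$ or an analogous log-type bound. A Gronwall argument in $Q$ then produces the claimed $(Q_{0,0}+C)e^{C(1+T)}$.

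Along a generalized characteristic $(X,V)$ on $[t-\delta,t]$, I compute $\frac{\mathrm d}{\mathrm ds}h_\alpha$. The Coulomb self-term of charge $\alpha$ cancels exactly, as in \cite{MMP11}. The remaining terms are $(V-\eta_\alpha)\cdot(E_{\mathrm N}+\sum_{\beta\ne\alpha}\nabla G(X,\xi_\beta)+\text{bounded})-(V-\eta_\alpha)\cdot E^{\rm cha}_{\mathrm N,\alpha}$. At a specular reflection $\vert V\vert$ is preserved, but $\vert V-\eta_\alpha\vert$ changes by at most $C\vert\eta_\alpha\vert\le C$. Since near the boundary $\vert X-\xi_\alpha\vert\ge\delta_2/2$, and near $\partial\Omega$ all $h_\alpha$ are comparable up to additive constants, this causes no trouble. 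I then split according to the position of the particle. If $X$ stays at distance $\ge\delta_2/4$ from every $\xi_\beta$ on the time step, then the terms $\nabla G(X,\xi_\beta)$ are bounded. To control $\int\vert E_{\mathrm N}(s,X(s))\vert\,\mathrm ds$, I follow \cite{HV10}: I cut $\delta$ into subintervals $\tilde\delta$, use Lemma~\ref{lem-velocity-lemma} and Lemma~\ref{lem-min-dichotomy} to keep the relative distances of two characteristics stable near the curved boundary, and apply Pfaffelmoser's splitting into good, bad and ugly sets with $\|\rho\|_{5/3}\le C$ from \eqref{eq-Lp-uniform-bound-rho}. If $X$ comes within $\delta_2/4$ of some $\xi_{\bar\alpha}$, then, because $\delta Q\ll\delta_2$, the particle stays away from $\partial\Omega$ during the whole step. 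I then run the argument of \cite{MMP11} for $h_{\bar\alpha}$, and use Lemma~\ref{lem-estimate-a0-ne-bar-alpha} to transfer the bound to $h_\beta$ for $\beta\ne\bar\alpha$.

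The main obstacle is the field integral $\int_{t-\delta}^t\vert E_{\mathrm N}(s,X(s))\vert\,\mathrm ds$, in particular the ugly set near the boundary. There the relative velocity of two characteristics is not preserved through reflections. My first attempt is to reduce the problem to the half-space computation of \cite{HV10}, by flattening the boundary with the coordinates of Lemma~\ref{lem-Vlasov-new-coordinates} on each $\tilde\delta$-interval and summing the contributions. I must also check that the extra charge-induced field, which is bounded near $\partial\Omega$ and only weakly singular near the $\xi_\beta$, does not spoil the measure estimates; these estimates rely on the $C^{0;1/2}$ control used in Lemma~\ref{lem-velocity-lemma}. With this step done, the bound is $\int_{t-\delta}^t\vert E_{\mathrm N}\vert\,\mathrm ds\le C\delta(1+\ln Q)$ up to harmless powers. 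Summing over the steps and applying Gronwall then closes the estimate of the proposition.
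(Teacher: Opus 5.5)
Your one-step estimate $Q_{t,\delta}\le Q_{t-\delta,\delta}+CQ\delta$ is exactly what the paper proves. It does so with $\delta=c_0Q_{T,T}^{-1}$, with the finer step $\tilde\delta=c_0Q_{T,T}^{-5/4}$ used only inside the boundary term $\mathcal J_1$, and with the Case I/II separation. The proof breaks where you close. You call $Q\le Q_{0,0}+CQT$ too weak and replace it by $\|E_{\mathrm N}(t)\|_\infty\le C(1+\ln Q)$, or by $\int_{t-\delta}^t|E_{\mathrm N}(s,X(s))|\,\mathrm{d}s\le C\delta(1+\ln Q)$.

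Nothing in your outline produces such a bound. In the good/bad/ugly splitting, the bad and ugly contributions are of size $\tilde l\,\tilde\delta\ln Q$ and $\tilde l^{-1}$. Balancing them alone costs $(\tilde\delta\ln Q)^{1/2}$ per step, i.e. $(\ln Q/\tilde\delta)^{1/2}$ per unit time, which is a positive power of $Q$ once $\tilde\delta$ is a negative power of $Q$. No logarithmic bound of this type is known even for Vlasov--Poisson in $\mathbb R^3$ without boundary or charges, so the proposal rests on an unproved estimate. If the ``harmless powers'' in your last sentence mean a factor $Q$, you are back at the estimate you discarded.

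That estimate is in fact enough, because all constants depend only on conserved quantities and $\delta_2$. Run the argument on $[T_0,T_1]$ with $\delta=c_0Q_{T_1,T_1}^{-1}$. Summing the steps gives $Q_{T_1,T_1}\le Q_{T_0,T_0}+C+CQ_{T_1,T_1}(T_1-T_0)$. Hence $Q_{T_1,T_1}\le 2(Q_{T_0,T_0}+C)$ whenever $C(T_1-T_0)\le\frac12$. Iterating $\lceil 2CT\rceil$ times gives $(Q_{0,0}+C)e^{C(1+T)}$. This restart, equivalently a discrete Gr\"onwall on the running maximum, is what the paper's ``by induction'' means. No logarithm is needed.

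Three further ingredients are missing.
\begin{enumerate}
\item $E^{\rm cha}_{\mathrm N,\alpha}$ contains $E_{\mathrm N}(s,\xi_\alpha(s))$, the plasma field at the charge. It is not among the bounded forces you list, and it is not evaluated along a plasma characteristic, so your splitting for $X$ does not cover it. The trivial bound $CQ^{4/3}\delta$ per step would make the increments superlinear. The paper controls this term ($\tilde{\mathcal I}$) through the singular moment $L_0$, Lemma~\ref{lem-L0-by-Hk}, getting $C(1+Q\delta)$ per step.
\item No $O(1)$ lower bound on $\delta$ is needed, and one is incompatible with $\delta\sim Q^{-\theta}$. The additive constants from Lemma~\ref{lem-estimate-a0-ne-bar-alpha} and from $\tilde{\mathcal I}$ occur once per step. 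They are harmless as long as $\delta\gtrsim Q^{-1}$, since then $C\le (C/c_0)Q\delta$. With the step $\tilde\delta$ they would accumulate to order $Q^{5/4}T$, which is why $\tilde\delta$ is used only inside $\mathcal J_1$.
\item The separation must be applied to the field characteristics $Y$ as well as to $X$ (the paper's sets $A_1,\dots,A_4$). In Case I, field particles can still sit near a charge; they stay far from $X$ and contribute $\mathcal J_2=O(\delta)$.
\end{enumerate}
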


The method relies on a suitable splitting of $[0,T]$ into small intervals. More precisely, we set
\begin{equation}\label{eq-setting-delta}
\delta=c_0Q_{T,T}^{-1},
\end{equation}
where $c_0$ will be chosen later small enough depending only on $\mathcal{E}(0),\|\mathring{f}\|_{1},\|\mathring{f}\|_{\infty}$. Fix arbitrary $t\in[\delta,T]$, we devote to obtain the estimate on $Q_{t,\delta}$.

Note
\begin{align*}
&\frac{\mathrm{d}}{\mathrm{d}s}\sqrt{h_{\alpha}}(s,X(s),V(s))\\
&=\frac{V(s)-\eta_{\alpha}(s)}{2\sqrt{h_{\alpha}}(X(s),V(s))}\cdot\left(E_{\mathrm{N}}^{\rm pla}(s,X(s))-E_{\mathrm{N},\alpha}^{\rm cha}(s)-\frac{X(s)-\xi_{\alpha}(s)}{\vert X(s)-\xi_{\alpha}(s)\vert^3}\right)\\
&\le C\iint_{\Omega\times\mathbb{R}^3}\frac{f(s,y,w)}{\vert X(s)-y\vert^2}\,\mathrm{d}y\,\mathrm{d}w+C\sum_{\beta:\beta\ne\alpha}\frac{1}{\vert X(s)-\xi_{\alpha}(s)\vert^2}\\
&\quad+C\sum_{\alpha}\iint_{\Omega\times\mathbb{R}^3}\frac{f(s,y,w)}{\vert \xi_{\alpha}(s)-y\vert^2}\,\mathrm{d}y\,\mathrm{d}w+C,
\end{align*}
where $C$ is a constant depending on \eqref{eq-uniform-bound-point-charge-1}, \eqref{eq-uniform-bound-point-charge-2}.

Let $\tau\in[t-\delta,t]$. Integrating the inequality from $t-\delta$ to $\tau$ to obtain
\begin{align}
&\vert\sqrt{h_{\alpha}}(\tau,X(\tau),V(\tau))-\sqrt{h_{\alpha}}(t-\delta,x,v)\vert\nonumber\\
&\le C\int_{t-\delta}^{\tau}\iint_{\Omega\times\mathbb{R}^3}\frac{f(s,y,w)}{\vert X(s)-y\vert^2}\,\mathrm{d}y\,\mathrm{d}w\,\mathrm{d}s+C\sum_{\beta:\beta\ne\alpha}\int_{t-\delta}^{\tau}\frac{1}{\vert X(s)-\xi_{\alpha}(s)\vert^2}\,\mathrm{d}s\nonumber\\
&\quad+C\sum_{\alpha}\int_{t-\delta}^{\tau}\iint_{\Omega\times\mathbb{R}^3}\frac{f(s,y,w)}{\vert \xi_{\alpha}(s)-y\vert^2}\,\mathrm{d}y\,\mathrm{d}w\,\mathrm{d}s+C(\tau-t+\delta).\label{eq-h-alpha-1}
\end{align}
By \eqref{eq-pre-estimates-electric-field}, \eqref{eq-setting-delta} and taking $c_0$ small enough such that we have
\begin{equation}\label{eq-h-alpha-2}
\vert\sqrt{h_{\alpha}}(\tau,X(\tau),V(\tau))-\sqrt{h_{\alpha}}(t-\delta,x,v)\vert\le CQ_{t,\delta}^2\delta\le \frac{Q_{t,\delta}}{32}.
\end{equation}

To obtain the estimate on $Q_{t,\delta}$, we need to estimate the integral in the right hand side of \eqref{eq-h-alpha-1}. We denote
\begin{align*}
\bar{\mathcal{I}}_{\alpha}&=\sum_{\beta:\beta\ne\alpha}\int_{t-\delta}^{t}\frac{1}{\vert X(s)-\xi_{\beta}(s)\vert^2}\,\mathrm{d}s,\\
\tilde{\mathcal{I}}&=\sum_{\alpha}\int_{t-\delta}^{t}\iint_{\Omega\times\mathbb{R}^3}\frac{f(s,y,w)}{\vert \xi_{\alpha}(s)-y\vert^2}\,\mathrm{d}y\,\mathrm{d}w\,\mathrm{d}s.
\end{align*}

By measure preserving transformation $(y,w)\mapsto(Y(s),W(s))$, we have:
\begin{align}
\mathcal{J}:=&\int_{t-\delta}^{t}\iint_{\Omega\times\mathbb{R}^3}\frac{f(s,y,w)}{\vert X(s)-y\vert^2}\,\mathrm{d}y\,\mathrm{d}w\,\mathrm{d}s\nonumber\\
=&\int_{t-\delta}^{t}\iint_{\Omega\times\mathbb{R}^3}\frac{f(t-\delta,y,w)}{\vert X(s)-Y(s)\vert^2}\,\mathrm{d}y\,\mathrm{d}w\,\mathrm{d}s,\label{eq-measure-preserving-transformation-J}
\end{align}
where $Y(s)=Y(s,t-\delta,y,w)$, $W(s)=W(s,t-\delta,y,w)$. According to \eqref{eq-measure-preserving-transformation-J}, we will divide $\mathcal{J}$ into several parts for the purpose of separation of the singular sets.

\subsection{Separation of the singular sets}

Recall that $\delta_2$ below is given in Proposition~\ref{prn-conservation-laws}.

\begin{lemma}\label{lem-Separation-singular-sets}
Let $(y,w)\in\operatorname{supp}f(t-\delta)$ and $Y(s)=Y(s,t-\delta,y,w)$, $W(s)=W(s,t-\delta,y,w)$. If there exist $t_0\in(t-\delta,t)$, $\alpha\in\mathcal{I}_M$ and $a\in(0,\frac{31}{32})$ such that $\vert Y(t_0)-\xi_{\alpha}(t_0)\vert\le a\delta_2$, then $\forall s\in(t-\delta,t)$
\begin{align}
\vert Y(s)-\xi_{\alpha}(s)\vert&\le \left(a+\frac{1}{32}\right)\delta_2,\label{eq-Separation-singular-sets-1}\\
\operatorname{d}_{\partial\Omega}(Y(s))&\ge\left(\frac{31}{32}-a\right)\delta_2,\label{eq-Separation-singular-sets-2}\\
\min_{\beta:\beta\ne\alpha}\vert Y(s)-\xi_{\beta}(s)\vert&\ge \left(\frac{31}{32}-a\right)\delta_2.\label{eq-Separation-singular-sets-3}
\end{align}

If there exist $t_0\in(t-\delta,t)$ and $b>\frac{1}{32}$ such that $\min_{\beta}\vert Y(t_0)-\xi_{\beta}(t_0)\vert\ge b\delta_2$, then $\forall s\in(t-\delta,t)$
\begin{equation}\label{eq-Separation-singular-sets-4}
\min_{\beta}\vert Y(s)-\xi_{\beta}(s)\vert\ge \left(b-\frac{1}{32}\right)\delta_2.
\end{equation}

\end{lemma}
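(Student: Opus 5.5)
The plan is a Lipschitz-in-time argument for the relative position $Y(s)-\xi_{\alpha}(s)$. Its speed is controlled by $|W(s)|+|\eta_{\alpha}(s)|$. I will show that over a window of length $\delta=c_0Q_{T,T}^{-1}$ this relative position moves by at most $\delta_2/32$. All three conclusions then follow from the separation bounds \eqref{eq-uniform-bound-point-charge-1}--\eqref{eq-uniform-bound-point-charge-2} of Proposition~\ref{prn-conservation-laws} and the triangle inequality.

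\textbf{Step 1 (displacement bound).} For $(y,w)\in\operatorname{supp}f(t-\delta)$ and $s\in[t-\delta,t]$, the point $(Y(s),W(s))$ lies in $\operatorname{supp}f(s)$, since $f$ is transported along the generalized characteristics. Hence, by the definition of $Q_{T,T}$ and of $h_{\alpha}$,
\begin{equation*}
|W(s)-\eta_{\alpha}(s)|\le \sqrt{2h_{\alpha}}(s,Y(s),W(s))\le \sqrt{2}\,Q_{T,T}.
\end{equation*}
If a reflection occurs, only the velocity jumps; $Y$ stays continuous and $|W|$ is preserved by \eqref{eq-character-V-reflection}. So $s\mapsto Y(s)-\xi_{\alpha}(s)$ is Lipschitz with constant $\sqrt{2}Q_{T,T}$, and for any $s,t_0\in(t-\delta,t)$,
\begin{equation*}
\bigl||Y(s)-\xi_{\alpha}(s)|-|Y(t_0)-\xi_{\alpha}(t_0)|\bigr|\le \sqrt2 Q_{T,T}\delta=\sqrt2 c_0\le \delta_2/32,
\end{equation*}
once $c_0\le \delta_2/(32\sqrt2)$. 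Here $\delta_2$ depends only on the admissible quantities, so this restriction on $c_0$ is legitimate. The same estimate applied with each $\beta$ in place of $\alpha$, taking the minimum over $\beta$, gives \eqref{eq-Separation-singular-sets-4} directly.

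\textbf{Step 2 (the three estimates).} Estimate \eqref{eq-Separation-singular-sets-1} is Step 1 applied with $|Y(t_0)-\xi_{\alpha}(t_0)|\le a\delta_2$. For \eqref{eq-Separation-singular-sets-2}, use $\operatorname{d}_{\partial\Omega}(\xi_{\alpha}(s))\ge\delta_2$ together with the $1$-Lipschitz property of $\operatorname{d}_{\partial\Omega}$:
\begin{equation*}
\operatorname{d}_{\partial\Omega}(Y(s))\ge \delta_2-(a+\tfrac1{32})\delta_2.
\end{equation*}
For \eqref{eq-Separation-singular-sets-3}, use $|\xi_{\alpha}(s)-\xi_{\beta}(s)|\ge\delta_2$:
\begin{equation*}
|Y(s)-\xi_{\beta}(s)|\ge \delta_2-(a+\tfrac1{32})\delta_2.
\end{equation*}
The condition $a<31/32$ guarantees that both lower bounds are positive.

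\textbf{Main obstacle.} The key step is justifying the velocity bound by $Q_{T,T}$ uniformly along the characteristic, including across reflections. This needs the support invariance of $f$ along generalized characteristics. It also needs Proposition~\ref{prn-conservation-laws}, which keeps $|\eta_{\alpha}|$ bounded so that $h_{\alpha}$ controls $|W-\eta_{\alpha}|$. Finally, the choice of $c_0$ must be compatible with the later smallness requirements on $c_0$; since all of these are just upper bounds on $c_0$, that should pose no conflict.
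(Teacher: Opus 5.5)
Your proposal is correct and follows the paper's proof. The paper likewise bounds the rate of change of $\vert Y(s)-\xi_{\alpha}(s)\vert$ by $\vert W-\eta_{\alpha}\vert\lesssim Q_{t,\delta}\le Q_{T,T}$ and uses $\delta=c_0Q_{T,T}^{-1}$ with $c_0$ small to get a drift of at most $\delta_2/32$, then obtains the remaining bounds from \eqref{eq-uniform-bound-point-charge-1}--\eqref{eq-uniform-bound-point-charge-2} by the triangle inequality. One minor remark: you need neither boundedness of $\vert\eta_{\alpha}\vert$ nor preservation of $\vert W\vert$ at reflections, since $h_{\alpha}$ already contains $\vert v-\eta_{\alpha}\vert^2/2$ and the bound holds pointwise on the support.
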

\begin{proof}
Firstly we consider the case $\vert Y(t_0)-\xi_{\alpha}(t_0)\vert\le a\delta_2$. Denote $I(s)=\vert Y(s)-\xi_{\alpha}(s)\vert^2$. Differentiating $I(s)$ we get 
\begin{equation*}
\dot{I}=2(Y-\xi_{\alpha})\cdot(W-\eta_{\alpha}).
\end{equation*}
Then, by \eqref{eq-setting-delta}, for all $s\in(t-\delta,t)$
\begin{equation*}
\vert\sqrt{I(s)}-\sqrt{I(t_0)}\vert\le 2Q_{t,\delta}\delta\le\frac{\delta_2}{32}
\end{equation*}
and \eqref{eq-Separation-singular-sets-1} follows. \eqref{eq-Separation-singular-sets-4} can be proved similarly. \eqref{eq-Separation-singular-sets-2}, \eqref{eq-Separation-singular-sets-3} can be deduced immediately by \eqref{eq-Separation-singular-sets-1} and \eqref{eq-uniform-bound-point-charge-1}, \eqref{eq-uniform-bound-point-charge-2}.
\end{proof}

Now we select $(\bar{x},\bar{v})\in\operatorname{supp}f(t-\delta)$, $\bar{\alpha}\in\mathcal{I}_M$, $\bar{s}\in[t-\delta,t]$ such that 
\begin{equation*}
\sqrt{h_{\bar{\alpha}}}(\bar{s},X(\bar{s},t-\delta,\bar{x},\bar{v}),V(\bar{s},t-\delta,\bar{x},\bar{v}))=Q_{t,\delta}.
\end{equation*}
By \eqref{eq-h-alpha-2}, we have $\sqrt{h_{\bar{\alpha}}}(t-\delta,\bar{x},\bar{v})\ge Q_{t,\delta}/2$. Abuse of notation,  we denote $X(s)=X(s,t-\delta,\bar{x},\bar{v})$, $V(s)=V(s,t-\delta,\bar{x},\bar{v})$. We denote $\Gamma=\Omega\times\mathbb{R}^3$, $\Pi=(t-\delta,t)\times\Gamma$. Thanks to Lemma~\ref{lem-Separation-singular-sets}, we can consider two cases as follows. 


{\bf Case I.} There exists $t_0\in(t-\delta,t)$ such that $\min_{\alpha}\vert X(t_0)-\xi_{\alpha}(t_0)\vert>\frac{\delta_2}{4}$. Then, by \eqref{eq-Separation-singular-sets-4}, we have 
\begin{equation*}
\inf_{s\in(t-\delta,t)}\min_{\alpha}\vert X(s)-\xi_{\alpha}(s)\vert\ge\frac{7\delta_2}{32}.
\end{equation*}
We split $\Pi$ into two parts as follows.
\begin{align*}
\Pi&=A_1+A_2=(t-\delta,t)\times \bar{A}_1+(t-\delta,t)\times \bar{A}_2,\\
\bar{A}_1&=\left\{(y,w)\in\Gamma:\exists t_0=t_0(y,w)\in(t-\delta,t)\;\text{s.t.}\;\min_{\alpha}\vert Y(t_0)-\xi_{\alpha}(t_0)\vert>\frac{3\delta_2}{16}\right\},\\
\bar{A}_2&=\left\{(y,w)\in\Gamma:\sup_{s\in(t-\delta,t)}\min_{\alpha}\vert Y(s)-\xi_{\alpha}(s)\vert\le\frac{3\delta_2}{16}\right\}.
\end{align*}
By Lemma~\ref{lem-Separation-singular-sets}, we have
\begin{equation*}
\bar{A}_1\subset\left\{(y,w)\in\Gamma:\inf_{s\in(t-\delta,t)}\min_{\alpha}\vert Y(s)-\xi_{\alpha}(s)\vert>\frac{5\delta_2}{32}\right\},
\end{equation*}
and it is obvious that
\begin{equation*}
\bar{A}_2\subset\left\{(y,w)\in\Gamma:\inf_{s\in(t-\delta,t)}\operatorname{d}_{\partial\Omega}(Y(s))\ge\frac{13\delta_2}{16}\right\}.
\end{equation*}

{\bf Case II.} $\sup_{s\in(t-\delta,t)}\min_{\alpha}\vert X(s)-\xi_{\alpha}(s)\vert\le\frac{\delta_2}{4}$. By \eqref{eq-uniform-bound-point-charge-2} and Lemma~\ref{lem-Separation-singular-sets}, there exists $\alpha_0\in\mathcal{I}_{M}$ such that $\sup_{s\in(t-\delta,t)}\vert X(s)-\xi_{\alpha_0}(s)\vert\le\frac{\delta_2}{4}$ and $\inf_{s\in(t-\delta,t)}\min_{\beta:\beta\ne\alpha_0}\vert X(s)-\xi_{\beta}(s)\vert\ge\frac{3\delta_2}{4}$. Note also in this case, \eqref{eq-h-alpha-2} for $\alpha=\alpha_0$ can be improved to
\begin{equation}\label{eq-h-alpha-3}
\vert\sqrt{h_{\alpha_0}}(\tau,X(\tau),V(\tau))-\sqrt{h_{\alpha_0}}(t-\delta,x,v)\vert\le \frac{1}{32}Q_{t,\delta}^{\frac{1}{3}}.
\end{equation}
We split $\Pi$ into two parts as follows.
\begin{align*}
\Pi&=A_3+A_4=(t-\delta,t)\times \bar{A}_3+(t-\delta,t)\times \bar{A}_4,\\
\bar{A}_3&=\left\{(y,w)\in\Gamma:\inf_{s\in(t-\delta,t)}\vert Y(s)-\xi_{\alpha_0}(s)\vert>\frac{3\delta_2}{8}\right\},\\
\bar{A}_4&=\left\{(y,w)\in\Gamma:\;\exists t_0=t_0(y,w)\in(t-\delta,t)\;\text{s.t.}\;\vert Y(t_0)-\xi_{\alpha_0}(t_0)\vert\le\frac{3\delta_2}{8}\right\}.
\end{align*}
By Lemma~\ref{lem-Separation-singular-sets}, we have
\begin{equation*}
\bar{A}_4\subset\left\{(y,w)\in\Gamma:\inf_{s\in(t-\delta,t)}\operatorname{d}_{\partial\Omega}(Y(s))\ge\frac{19\delta_2}{32}\right\}.
\end{equation*}
Note that there are two subcases for Case II, $\alpha_0=\bar{\alpha}$ and $\alpha_0\ne\bar{\alpha}$. They can be considered uniformly by the following lemma.
\begin{lemma}\label{lem-estimate-a0-ne-bar-alpha}
There exists a constant $C>0$ such that for $t\in[0,T]$ we have for all $\alpha,\beta$
\begin{equation*}
\sqrt{h_{\beta}}(t,X,V)\le\sqrt{h_{\alpha}}(t,X,V)+C,\quad\forall X\in B\left(\xi_{\alpha}(t),\frac{9\delta_2}{32}\right),\,\forall V\in\mathbb{R}^3.
\end{equation*}
\end{lemma}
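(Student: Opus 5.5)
The plan is a direct pointwise comparison, with no dynamics involved. For $\beta=\alpha$ there is nothing to prove. For $\beta\ne\alpha$, write
\begin{equation*}
h_{\beta}(t,X,V)=\frac{\vert V-\eta_{\beta}(t)\vert^2}{2}+\frac{1}{\vert X-\xi_{\beta}(t)\vert}+K_1,
\end{equation*}
and compare each term with the corresponding term of $h_{\alpha}(t,X,V)$.

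For the potential term, take $X\in B(\xi_{\alpha}(t),\frac{9\delta_2}{32})$. The separation bound \eqref{eq-uniform-bound-point-charge-2} gives
\begin{equation*}
\vert X-\xi_{\beta}(t)\vert\ge\vert\xi_{\alpha}(t)-\xi_{\beta}(t)\vert-\vert X-\xi_{\alpha}(t)\vert\ge\delta_2-\frac{9\delta_2}{32}=\frac{23\delta_2}{32}.
\end{equation*}
Hence $\frac{1}{\vert X-\xi_{\beta}(t)\vert}\le\frac{32}{23\delta_2}$, and this is bounded by a constant.

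For the kinetic term, apply the triangle inequality. Using the uniform bound $\vert\eta_{\alpha}\vert,\vert\eta_{\beta}\vert\le C$ from \eqref{eq-uniform-bound-point-charge-1}, we get $\vert V-\eta_{\beta}\vert\le\vert V-\eta_{\alpha}\vert+\vert\eta_{\alpha}-\eta_{\beta}\vert\le\vert V-\eta_{\alpha}\vert+2C$.

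We then work with square roots instead of $h$ itself, since the squared cross term $\vert V-\eta_\alpha\vert\cdot C$ must not spoil an additive constant. By subadditivity of the square root,
\begin{equation*}
\sqrt{h_{\beta}}\le\frac{\vert V-\eta_{\beta}\vert}{\sqrt2}+\Big(\frac{32}{23\delta_2}+K_1\Big)^{1/2}\le\frac{\vert V-\eta_{\alpha}\vert}{\sqrt2}+\sqrt2\,C+\Big(\frac{32}{23\delta_2}+K_1\Big)^{1/2}.
\end{equation*}
Finally, $\frac{\vert V-\eta_{\alpha}\vert}{\sqrt2}\le\sqrt{h_{\alpha}}$, because the remaining terms of $h_{\alpha}$ are nonnegative. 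This gives the claim, with $C$ depending only on $\delta_2$, $K_1$ and the velocity bound, and hence only on the data listed in Proposition~\ref{prn-conservation-laws}.

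No step here is a real obstacle. The only point needing care is to use the uniform-in-time bounds \eqref{eq-uniform-bound-point-charge-1}--\eqref{eq-uniform-bound-point-charge-2}, which hold in the Neumann case, so that $C$ does not depend on $T$. Equally, one must handle the estimate at the level of $\sqrt{h}$, so that the constant comes out additive rather than multiplicative.
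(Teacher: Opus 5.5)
Your proof is correct. The separation bound \eqref{eq-uniform-bound-point-charge-2} controls $1/\vert X-\xi_\beta(t)\vert$ on the ball, the uniform velocity bound \eqref{eq-uniform-bound-point-charge-1} controls $\vert\eta_\alpha-\eta_\beta\vert$, and subadditivity of the square root turns these into an additive constant. The paper states this lemma without proof, evidently treating it as immediate from those same two uniform Neumann-case bounds, so your argument supplies exactly the intended verification.
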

Indeed, by the choice of $\bar{s},\bar{x},\bar{v}$ and Lemma~\ref{lem-estimate-a0-ne-bar-alpha}, we have for both subcases
\begin{equation*}
\sqrt{h_{\alpha_0}}(\bar{s},X(\bar{s}),V(\bar{s}))\ge\sqrt{h_{\bar{\alpha}}}(\bar{s},X(\bar{s}),V(\bar{s}))-C\ge\frac{3}{4}Q_{t,\delta}.
\end{equation*}
Combining with \eqref{eq-h-alpha-3}, we have
\begin{equation}\label{eq-lower-bound-h-alpha0-t-delta}
\sqrt{h_{\alpha_0}}(t-\delta,\bar{x},\bar{v})\ge Q_{t,\delta}/2.
\end{equation}

We denote
\begin{equation*}
\mathcal{J}_i:=\int_{A_i}\frac{f(t-\delta,y,w)}{\vert X(s)-Y(s)\vert^2}\,\mathrm{d}y\,\mathrm{d}w\,\mathrm{d}s.
\end{equation*}
Then $\mathcal{J}$ can be written as:
\begin{equation*}
\mathcal{J}=\mathcal{J}_1+\mathcal{J}_2=\mathcal{J}_3+\mathcal{J}_4.
\end{equation*}

Note for $A_2,A_3$, we have $\vert X(s)-Y(s)\vert\ge\frac{\delta_1}{32}$, hence $\mathcal{J}_2,\mathcal{J}_3$ can be handled directly as 
\begin{equation}\label{eq-estimate-J2-J3}
\mathcal{J}_2\le C\delta_2^{-2}\|f\|_{L^1}\delta,\quad\mathcal{J}_3\le C\delta_2^{-2}\|f\|_{L^1}\delta.
\end{equation}

The core in this section is to estimate $\mathcal{J}_1$, $\mathcal{J}_4$. The main ideas are as follows: in $A_1$, the characteristics $X$ and $Y$ keep a strictly positive distance from the point charges, hence we only need to consider the boundary effect for $\mathcal{J}_1$; in $A_4$ and Case II.2, the characteristics $X$ and $Y$ keep a strictly positive distance from the boundary $\partial\Omega$, hence we only need to consider the impact of the point charge for $\mathcal{J}_4$. Based on these observations, we adapt the method of \cite{HV10} to the estimate on $\mathcal{J}_1$ and \cite{MMP11} to the estimate on $\mathcal{J}_4$ with proper modification.

\subsection{The estimate on $\mathcal{J}_1$}

We further split the time interval in $\mathcal{J}_1$ into smaller step-sized intervals as follows:
\begin{equation*}
\tilde{\mathcal{J}}_1(t):=\int_{t-\tilde{\delta}}^t\int_{\bar{A}_1}\frac{f(t-\delta,y,w)}{\vert X(s)-Y(s)\vert^2}\,\mathrm{d}y\,\mathrm{d}w\,\mathrm{d}s,
\end{equation*}
where
\begin{equation}\label{eq-setting-tilde-delta}
\tilde{\delta}=c_0Q_{T,T}^{-\frac{5}{4}}.
\end{equation}

The following result  establishes that characteristic flows bouncing repeatedly in some time interval should have a small value of the normal component $v_{\perp}$. 
\begin{lemma}\label{lem-estimate-v-perp}
Let $(X(s),V(s))$ be the characteristics satisfying Case I. If $(X,V)$ has more than one bounce in the interval $[t-\tilde{\delta},t]$, then we have for all $s\in[t-\tilde{\delta},t]$
\begin{equation*}
\vert v_{\perp}(s)\vert\le CQ_{t,\tilde{\delta}}^2\tilde{\delta}.
\end{equation*}
\end{lemma}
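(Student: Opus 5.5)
The plan is to adapt the argument of \cite[Lemma 12]{HV10}, working in the boundary coordinates of Lemma~\ref{lem-Vlasov-new-coordinates}.

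First, I would check that the trajectory stays near the boundary. Between consecutive bounces, the velocity is bounded by $CQ_{t,\tilde\delta}$ and $\tilde\delta\ll Q_{T,T}^{-1}$. Hence the trajectory remains in the collar $[\partial\Omega+B_{\delta_0}]\cap\Omega$ for all $s\in[t-\tilde\delta,t]$, so $x_\perp$, $v_\perp$ are well defined there. In Case I, $\min_\alpha\vert X(s)-\xi_\alpha(s)\vert\ge 7\delta_2/32$ and $\operatorname{d}_{\partial\Omega}(\xi_\alpha)\ge\delta_2$ by \eqref{eq-uniform-bound-point-charge-1}. Together with $\vert\nabla_x\bar g_{\mathrm N}\vert\le C$ and \eqref{eq-pre-estimates-electric-field} applied with $R\sim Q_{t,\tilde\delta}$, this gives $\vert E^{\rm pla}_{\mathrm N}(s,X(s))\vert\le C(Q_{t,\tilde\delta}^{4/3}+1)$. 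The curvature terms in $\bar E^{\rm pla}_{\mathrm N,\perp}$ are bounded by $C\vert w\vert^2\le CQ_{t,\tilde\delta}^2$. Therefore
\[
\vert\partial_s\boldsymbol v_\perp\vert=\vert\bar{\boldsymbol E}^{\rm pla}_{\mathrm N,\perp}\vert\le CQ_{t,\tilde\delta}^2
\]
on every free-flight segment, i.e.\ away from bounce times.

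Second, I would exploit two bounces. Let $s_1<s_2$ be two consecutive bounce times in $[t-\tilde\delta,t]$. Then $x_\perp(s_1)=x_\perp(s_2)=0$, and at $s_1^+$ the particle leaves the boundary, so $v_\perp(s_1^+)<0$ with the paper's sign convention ($x_\perp$ increases inward). Since $\partial_s x_\perp=v_\perp$, we have $\int_{s_1}^{s_2}v_\perp\,\mathrm{d}s=0$, so $v_\perp$ changes sign on $(s_1,s_2)$. Combined with the derivative bound from the first step, this gives $\vert v_\perp(s)\vert\le CQ_{t,\tilde\delta}^2(s_2-s_1)\le CQ^2_{t,\tilde\delta}\tilde\delta$ for $s\in(s_1,s_2)$. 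At each bounce only the sign of $v_\perp$ flips and $\vert v_\perp\vert$ is continuous by \eqref{eq-character-V-reflection}. So the bound at $s_1^+$ and $s_2^-$ transfers across the reflections.

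Third, I would extend the bound to the end segments $[t-\tilde\delta,s_{\rm first}]$ and $[s_{\rm last},t]$. Near $s_{\rm first}$, $\vert v_\perp\vert$ is already $\le CQ^2\tilde\delta$ by the previous step, and on the end segment $\vert\partial_s v_\perp\vert\le CQ^2$. Integrating over a length at most $\tilde\delta$ gives $\vert v_\perp(s)\vert\le 2CQ^2_{t,\tilde\delta}\tilde\delta$ everywhere. The same holds on the last segment.

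The main obstacle is making the force bound in the first step rigorous and uniform. Specifically, \eqref{eq-pre-estimates-electric-field} must be used with the $L^{5/3}$ bound \eqref{eq-Lp-uniform-bound-rho}, and the centrifugal term must be controlled uniformly. If that bound only yields $Q^{4/3}+Q^2$, it is still $\le CQ^2$, which suffices. A secondary technical point is excluding grazing accumulation of bounces, which I would handle with the velocity lemma (Lemma~\ref{lem-velocity-lemma}).
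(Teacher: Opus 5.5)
Your proposal is correct and follows the paper's proof. Both bound $\vert\partial_s v_{\perp}\vert\le CQ_{t,\tilde{\delta}}^2$ using \eqref{eq-pre-estimates-electric-field} and the curvature terms in \eqref{eq-character-ODE-new-coordinates}, obtain a zero of $v_{\perp}$ between two bounces, and integrate using that $\vert v_{\perp}\vert$ is continuous across reflections; the paper does this in one step, treating $\vert v_{\perp}\vert$ as Lipschitz on the whole interval rather than segment by segment. One harmless sign slip: with $x=x_{\parallel}-x_{\perp}n$, a particle leaving the boundary has $v_{\perp}(s_1^+)>0$, not $<0$, but your argument only uses the Rolle-type zero of $v_{\perp}$ on $(s_1,s_2)$.
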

\begin{proof}
By \eqref{eq-pre-estimates-electric-field} and \eqref{eq-character-ODE-new-coordinates}, we have
\begin{equation}\label{eq-estimate-derivative-v-perp}
\left\vert\frac{\mathrm{d}\vert v_{\perp}\vert}{\mathrm{d}s}\right\vert\le CQ_{t,\tilde{\delta}}^{2}.
\end{equation}
If $(X,V)$ has more than one bounce, then we have $v_{\perp}(\bar{s})=0$ for some $\bar{s}\in [t-\tilde{\delta},t]$. Integrating \eqref{eq-estimate-derivative-v-perp} we have
\begin{equation*}
\vert v_{\perp}\vert\le CQ_{t,\tilde{\delta}}^{2}\tilde{\delta}.
\end{equation*}
\end{proof}

The following lemma implies the stability of the velocity during a small time interval. Note that $Q_{t,\tilde{\delta}}^2$ in Lemma~\ref{lem-estimate-v-perp} and \eqref{eq-boundary-stability-velocity-2} is the main reason why we replace $\delta$ with $\tilde{\delta}$, due to the unflat boundary.
\begin{lemma}\label{lem-boundary-stability-velocity}
Let $(X(s),V(s))$ be the characteristics satisfying Case I.  If there is no bounce in $(t-\tilde{\delta},t)$, then for all $s,\tau\in(t-\tilde{\delta},t)$, we have
\begin{equation}\label{eq-boundary-stability-velocity-0}
\vert V(s)-V(\tau)\vert\le CQ_{t,\tilde{\delta}}^{4/3}\tilde{\delta}.
\end{equation}
If there is exactly one bounce at $t_1\in(t-\tilde{\delta},t)$, then we have
\begin{equation}\label{eq-boundary-stability-velocity-1}
\begin{split}
\vert V(s)-V(t-\tilde{\delta})\vert&\le CQ_{t,\tilde{\delta}}^{4/3}\tilde{\delta},\;\text{if}\;\;t-\tilde{\delta}<s<t_1,\\
\vert V(s)-V(t)\vert&\le CQ_{t,\tilde{\delta}}^{4/3}\tilde{\delta},\;\text{if}\;\;t_1<s<t.
\end{split}
\end{equation}
If there are at least two bounce in $(t-\tilde{\delta},t)$, then for all $s,\tau\in(t-\tilde{\delta},t)$, we have
\begin{equation}\label{eq-boundary-stability-velocity-2}
\vert V(s)-V(\tau)\vert\le CQ_{t,\tilde{\delta}}^2\tilde{\delta}.
\end{equation}
\end{lemma}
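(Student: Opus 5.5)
The plan is to integrate the characteristic ODE \eqref{eq-character-ODE} on $(t-\tilde{\delta},t)$, using the geometric form \eqref{eq-character-ODE-new-coordinates}, and to bound the acceleration of $V$ between bounces. In Case I, $\inf_s\min_\alpha|X(s)-\xi_\alpha(s)|\ge 7\delta_2/32$. Combined with \eqref{eq-uniform-bound-point-charge-1}--\eqref{eq-uniform-bound-point-charge-2} and Lemma~\ref{lem-estimate-Green}, this gives $|F_{\mathrm{N}}(s,X(s))|\le C$. The full field is therefore $|E_{\mathrm{N}}^{\rm pla}(s,X(s))|\le |E_{\mathrm{N}}(s,X(s))|+C$. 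The plasma part $E_{\mathrm{N}}$ is the Green-function convolution of $\rho$ plus the bounded boundary term from $h_{\mathrm{N}}$.

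For the no-bounce estimate \eqref{eq-boundary-stability-velocity-0}, I would bound $\|E_{\mathrm{N}}(s)\|_\infty$ by the standard splitting $\int \rho(y)|x-y|^{-2}\,\mathrm{d}y\le C\|\rho\|_\infty R+C\|\rho\|_{5/3}R^{-1/5}$. Here $\|\rho(s)\|_{5/3}\le C$ by \eqref{eq-Lp-uniform-bound-rho}. On the support, $|v|\le C\sqrt{h_\alpha}+C\le CQ_{t,\tilde\delta}$, since $|\eta_\alpha|\le C$, so $\|\rho(s)\|_\infty\le CQ_{t,\tilde\delta}^3$. Optimizing in $R$ gives $\|E_{\mathrm{N}}(s)\|_\infty\le CQ^{3\cdot\frac{1}{6}\cdot\ldots}$; more precisely, Lemma~\ref{lem-pre-estimates-electric-field} with radius $R\sim Q$ gives $C Q^{4/3}$. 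Integrating $\dot V=E_{\mathrm{N}}^{\rm pla}$ over an interval of length at most $\tilde\delta$ then yields $|V(s)-V(\tau)|\le CQ_{t,\tilde\delta}^{4/3}\tilde\delta$.

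With exactly one bounce at $t_1$, $V$ is continuous on $(t-\tilde\delta,t_1)$ and on $(t_1,t)$, with the jump $V(t_1+0)=\mathcal{R}V(t_1-0)$ only at $t_1$. Applying the same integration on each side separately gives \eqref{eq-boundary-stability-velocity-1}, because neither subinterval crosses the reflection.

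With at least two bounces, the jumps themselves have to be controlled. At each bounce the jump is $|V(t_j+0)-V(t_j-0)|=2|v_\perp(t_j)|$. Lemma~\ref{lem-estimate-v-perp} gives $|v_\perp|\le CQ^2\tilde\delta$ on the whole interval. The number of bounces, however, is not a priori bounded, so I would not sum the jumps. I would instead work in the coordinates $(\mu_i,x_\perp,w_i,v_\perp)$, where the reflection changes only the sign of $v_\perp$ and leaves $w_i$ continuous. The equations \eqref{eq-character-ODE-new-coordinates} give $|\dot w_i|\le |E^{\rm pla}|+C|v_\perp||w|+C|w|^2$ with $|w|\le CQ$, so $|\dot w_i|\le CQ^2$, using $Q^{4/3}\le Q^2$ since $Q\ge1$. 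Hence the tangential part varies by at most $CQ^2\tilde\delta$, plus the variation of the basis vectors $u_i,n$, which is $C|\dot\mu|\tilde\delta\le CQ\tilde\delta$. The normal part satisfies $|v_\perp|\le CQ^2\tilde\delta$ throughout. Together these give $|V(s)-V(\tau)|\le CQ^2\tilde\delta$ even across bounces, which is \eqref{eq-boundary-stability-velocity-2}.

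The main obstacle is the curvature term $w_jw_\ell\Gamma^i_{j\ell}$. It is of order $Q^2$ rather than $Q^{4/3}$, and this is why the paper uses $\tilde\delta$ instead of $\delta$. I also need to justify that $X$ stays in the $\delta_0$-collar, where the coordinates are defined, throughout the interval. Two bounces within time $\tilde\delta$ with velocity at most $CQ$ keep $x_\perp\le CQ\tilde\delta\le\delta_0$ for $c_0$ small, so the collar condition should hold.
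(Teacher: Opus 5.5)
Your proposal is correct and follows essentially the same route as the paper: the no-bounce and one-bounce bounds come from integrating $|E_{\mathrm{N}}^{\rm pla}(s,X(s))|\le CQ_{t,\tilde\delta}^{4/3}$ on reflection-free intervals. That field bound uses Lemma~\ref{lem-pre-estimates-electric-field} together with the Case~I separation from the charges. The multi-bounce bound comes from writing $V=\sum_i w_iu_i-v_\perp n$, with $|\dot w_i|\le CQ_{t,\tilde\delta}^2$, $|u_i(s)-u_i(\tau)|\le CQ_{t,\tilde\delta}\tilde\delta$ (multiplied by $|w_i|\le CQ_{t,\tilde\delta}$), and $|v_\perp|\le CQ_{t,\tilde\delta}^2\tilde\delta$ from Lemma~\ref{lem-estimate-v-perp}. (In your heuristic splitting the far-field term should be $\|\rho\|_{5/3}R^{-4/5}$ rather than $R^{-1/5}$; since you then invoke Lemma~\ref{lem-pre-estimates-electric-field} directly, the $Q_{t,\tilde\delta}^{4/3}$ bound is unaffected.)
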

\begin{proof}
\eqref{eq-boundary-stability-velocity-0} is standard. \eqref{eq-boundary-stability-velocity-1} is a consequence of \eqref{eq-boundary-stability-velocity-0}. Now we consider \eqref{eq-boundary-stability-velocity-2}. W.L.G., assume $s>\tau$. We can prove analogously to Lemma~\ref{lem-Separation-singular-sets} that $X$ is close enough to the boundary in $[t-\tilde{\delta},t]$. Hence we can write $V$ by
\begin{equation*}
V(s)=\sum_{i=1,2}w_i(s)u_i(s)-v_{\perp}(s)n(s).
\end{equation*}
Hence
\begin{align*}
\vert V(s)-V(\tau)\vert=&\left\vert\sum_{i=1,2}w_i(s)u_i(s)-v_{\perp}(s)n(s)-\sum_{i=1,2}w_i(\tau)u_i(\tau)+v_{\perp}(\tau)n(\tau)\right\vert\\
\le&\sum_{i=1,2}\vert w_i(s)-w_i(\tau)\vert\vert u_i(s)\vert+\sum_{i=1,2}\vert u_i(s)-u_i(\tau)\vert\vert w_i(\tau)\vert+\vert v_{\perp}(\tau)\vert+\vert v_{\perp}(s)\vert.
\end{align*}
Since $\partial\Omega$ is $C^4$ and $\min_{\alpha}\vert X(s)-\xi_{\alpha}(s)\vert\ge\frac{7\delta_2}{32}$, by \eqref{eq-pre-estimates-electric-field} and \eqref{eq-character-ODE-new-coordinates} we have
\begin{align*}
&\vert\partial_t\mu_i\vert\le CQ_{t,\tilde{\delta}},\quad \vert\partial_tw_i\vert\le CQ_{t,\tilde{\delta}}^2,\\
&\vert u_i(\tau)-u_i(s)\vert\le\sum_{j=1,2}\int_{\tau}^s\vert\partial_{\mu_j}u_i\partial_t\mu_j\vert\,\mathrm{d}\tau\le CQ_{t,\tilde{\delta}}(s-\tau).
\end{align*}
And by Lemma~\ref{lem-estimate-v-perp}, we finish the proof.
\end{proof}

Define $\tilde{L}(w)=\min\{\vert w-V(t-\tilde{\delta})\vert,\vert w-V(t)\vert\}$. We split $A_1$ into three parts by
\begin{align*}
\tilde{S}_1&=\left\{(s,y,w)\in A_1:\min\{\vert W(s)\vert,\tilde{L}(W(s))\}\le\tilde{R}\right\},\\
\tilde{S}_2&=\left\{(s,y,w)\in A_1:\vert X(s)-Y(s)\vert\le\tilde{\mathcal{L}}\;\text{or}\;\vert X(s)-Y(s)\vert\ge\delta_2/16\right\}\setminus\tilde{S}_1,\\
\tilde{S}_3&=A_1\setminus(\tilde{S}_1\cup\tilde{S}_2),
\end{align*}
where
\begin{equation}\label{eq-J1-parameters-setting}
\tilde{R}=Q_{t,\tilde{\delta}}^{3/4},\quad\tilde{\mathcal{L}}=\frac{\tilde{l}}{\vert w\vert^2}\tilde{\mathcal{L}}_0,\quad\tilde{l}=Q_{t,\tilde{\delta}}^{1-\epsilon},
\end{equation}
$\epsilon>0$ is a small enough constant. According to Lemma~\ref{lem-boundary-stability-velocity}, we define $\tilde{\mathcal{L}}_0(s,y,w)$ as follows. If both $(X,V)$ and $(Y,W)$ have no bounce or have more than one bounce, then
\begin{equation*}
\tilde{\mathcal{L}}_0=\frac{1}{\tilde{L}(w)}.
\end{equation*}
If $(X,V)$ has no bounce or has more than one bounce, $(Y,W)$ has exactly one bounce at $t_1\in(t-\tilde{\delta},t)$, then
\begin{equation*}
\tilde{\mathcal{L}}_0=\left\{\begin{split}
\frac{1}{\tilde{L}(w)}\;&\;\text{if}\;\;t-\tilde{\delta}<s\le t_1,\\
\frac{1}{\tilde{L}(W(t))}\;&\;\text{if}\;\;t_1<s<t.
\end{split}\right.
\end{equation*}
If $(X,V)$ has exactly one bounce at $t_1\in(t-\tilde{\delta},t)$, $(Y,W)$ has no bounce or has more than one bounce, then
\begin{equation*}
\tilde{\mathcal{L}}_0=\left\{\begin{split}
\frac{1}{\vert w-V(t-\tilde{\delta})\vert}\;&\;\text{if}\;\;t-\tilde{\delta}<s\le t_1,\\
\frac{1}{\vert w-V(t)\vert}\;&\;\text{if}\;\;t_1<s<t.
\end{split}\right.
\end{equation*}
If $(X,V)$ has exactly one bounce at $t_1\in(t-\tilde{\delta},t)$, $(Y,W)$ has exactly one bounce at $t_2\in(t-\tilde{\delta},t)$. W.L.G., assume $t_2>t_1$, then
\begin{equation*}
\tilde{\mathcal{L}}_0=\left\{\begin{split}
\frac{1}{\vert w-V(t-\tilde{\delta})\vert}\;&\;\text{if}\;\;t-\tilde{\delta}<s\le t_1,\\
\frac{1}{\vert w-V(t)\vert}\;&\;\text{if}\;\;t_1<s\le t_2,\\
\frac{1}{\vert W(t)-V(t)\vert}\;&\;\text{if}\;\;t_2<s<t.
\end{split}\right.
\end{equation*}

\begin{lemma}\label{lem-J1-S2-S3-velocity-stability}
In the sets $\tilde{S}_2$ and $\tilde{S}_3$, we have
\begin{equation*}
\vert w\vert/2\le\vert W(s)\vert\le 2\vert w\vert,\quad\tilde{\mathcal{L}}_0^{-1}(s)\ge\tilde{L}(W(s))/2,\quad\vert V(s)-W(s)\vert\ge\tilde{R}/2.
\end{equation*}
\end{lemma}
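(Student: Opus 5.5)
The plan is to prove the three inequalities one at a time, in the order $\vert W(s)\vert$, then $\tilde{\mathcal{L}}_0^{-1}$, then $\vert V-W\vert$. Each one combines the velocity stability of Lemma~\ref{lem-boundary-stability-velocity}, applied to both characteristics $(X,V)$ and $(Y,W)$, with the lower bound $\min\{\vert W(s)\vert,\tilde{L}(W(s))\}>\tilde{R}=Q_{t,\tilde{\delta}}^{3/4}$. That lower bound holds on $\tilde{S}_2\cup\tilde{S}_3=A_1\setminus\tilde{S}_1$ by definition. The point is that every velocity oscillation given by Lemma~\ref{lem-boundary-stability-velocity} is at most $CQ_{t,\tilde{\delta}}^2\tilde{\delta}\le Cc_0Q_{T,T}^{2-5/4}=Cc_0Q_{T,T}^{3/4}$, by the choice \eqref{eq-setting-tilde-delta} of $\tilde{\delta}$ and $Q_{t,\tilde{\delta}}\le Q_{T,T}$. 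For $c_0$ small this is a small multiple of $\tilde{R}$, provided $Q_{t,\tilde{\delta}}$ is comparable to $Q_{T,T}$. I would first reduce to that situation: if $Q_{t,\tilde\delta}\le \frac12 Q_{T,T}$ there is nothing to prove in Proposition~\ref{prn-uniform-bound-pointwise-energy} at this time step, or one compares with $Q_{t,\delta}$ via \eqref{eq-h-alpha-2}. I would check that this is how $\tilde{R}$ is meant to be used. Note also that $(Y,W)$ with $(y,w)\in\bar{A}_1$ stays away from the point charges by Lemma~\ref{lem-Separation-singular-sets}, so Lemma~\ref{lem-boundary-stability-velocity} applies to it as well as to $(X,V)$.

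\emph{Step 1} ($\vert w\vert/2\le\vert W(s)\vert\le2\vert w\vert$). I would split according to the number of bounces of $(Y,W)$ in $(t-\tilde\delta,t)$. The key observation is that a specular reflection preserves $\vert W\vert$ (see \eqref{eq-character-V-reflection}), so the modulus is governed only by the smooth force. Hence $\big\vert\vert W(s)\vert-\vert w\vert\big\vert\le CQ_{t,\tilde{\delta}}^{2}\tilde{\delta}$ in every case, using \eqref{eq-pre-estimates-electric-field}. Since $\vert W(s)\vert>\tilde{R}$, this oscillation is at most $\tilde{R}/4$. Then $\vert w\vert\ge\vert W(s)\vert-\tilde R/4\ge\frac34\vert W(s)\vert$, and similarly in the other direction, which gives both inequalities.

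\emph{Step 2} ($\tilde{\mathcal{L}}_0^{-1}(s)\ge\tilde{L}(W(s))/2$). I would go through the four cases in the definition of $\tilde{\mathcal{L}}_0$. In each, $\tilde{\mathcal{L}}_0^{-1}(s)$ has the form $\vert W(\sigma)-V(\sigma')\vert$ or $\tilde L(W(\sigma))$, where $\sigma$ is chosen in the same no-bounce or many-bounce sub-interval as $s$, and $V(\sigma')$ is one of the two endpoint values, again in the same sub-interval. By \eqref{eq-boundary-stability-velocity-0}--\eqref{eq-boundary-stability-velocity-2}, $\vert W(\sigma)-W(s)\vert\le CQ^2_{t,\tilde\delta}\tilde\delta\le\tilde R/4$. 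Also $\tilde L$ is $1$-Lipschitz in its argument. Together these give $\tilde{\mathcal{L}}_0^{-1}(s)\ge \tilde L(W(s))-\tilde R/4\ge\tilde L(W(s))/2$, using $\tilde L(W(s))>\tilde R$. In the mixed cases one uses $\vert w-V(t-\tilde\delta)\vert\ge\tilde L(w)$ and the analogous bound at time $t$.

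\emph{Step 3} ($\vert V(s)-W(s)\vert\ge\tilde R/2$). Lemma~\ref{lem-boundary-stability-velocity} gives that $V(s)$ lies within $CQ^2_{t,\tilde\delta}\tilde\delta\le\tilde R/2$ of either $V(t-\tilde\delta)$ or $V(t)$. In the one-bounce case this is the endpoint on the same side of the bounce. Therefore $\vert V(s)-W(s)\vert\ge\tilde L(W(s))-\tilde R/2>\tilde R/2$.

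The main obstacle is the exponent bookkeeping. One has to make sure that the worst oscillation $CQ^2\tilde\delta$, in the many-bounce case, is genuinely $\le \tilde R/4$. This needs the relation between $Q_{t,\tilde\delta}$ and $Q_{T,T}$ described above. If that reduction fails, I would instead interpret $\tilde R$ with $Q_{T,T}$ in place of $Q_{t,\tilde\delta}$, which leaves the argument unchanged.
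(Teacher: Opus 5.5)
Your Steps 1--3 are the paper's argument written out in detail. The paper's proof is just the observation $Q_{t,\tilde{\delta}}^2\tilde{\delta}\le c_0\tilde{R}$, followed by Lemma~\ref{lem-boundary-stability-velocity} (applied to both characteristics) and the case-by-case definition of $\tilde{\mathcal{L}}_0$. Your individual ingredients are all fine: conservation of $\vert W\vert$ under reflection, the $1$-Lipschitz property of $\tilde{L}$, and the endpoint comparison for $\vert V-W\vert$.

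What needs correcting is the ``main obstacle''. It does not exist; it comes from bounding the whole factor $Q_{t,\tilde{\delta}}^2$ by $Q_{T,T}^2$. Keep three quarters of the exponent instead. By \eqref{eq-setting-tilde-delta} and \eqref{eq-J1-parameters-setting},
\[
Q_{t,\tilde{\delta}}^2\tilde{\delta}=c_0\,Q_{t,\tilde{\delta}}^{3/4}\left(\frac{Q_{t,\tilde{\delta}}}{Q_{T,T}}\right)^{5/4}\le c_0\,Q_{t,\tilde{\delta}}^{3/4}=c_0\tilde{R}.
\]
This uses only $Q_{t,\tilde{\delta}}\le Q_{T,T}$. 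A small $Q_{t,\tilde{\delta}}$ helps rather than hurts, since $2>3/4$. Choosing $c_0$ small then gives $CQ_{t,\tilde{\delta}}^2\tilde{\delta}\le\tilde{R}/4$ on every window, with no comparability assumption, and your three steps go through as written.

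You should drop the proposed reduction, because as stated it is not a valid step. The lemma is needed on every $\tilde{\delta}$-window, since $\mathcal{J}_1$ is a sum of $\tilde{\mathcal{J}}_1$ over all of them. So you cannot skip windows on which $Q_{t,\tilde{\delta}}\le\frac12 Q_{T,T}$ by saying there is ``nothing to prove''. Also, \eqref{eq-h-alpha-2} only controls the oscillation of $\sqrt{h_\alpha}$ along a single characteristic. At best it relates $Q_{t,\tilde{\delta}}$ to $Q_{t,\delta}$, not to $Q_{T,T}$, which is what enters $\tilde{\delta}$. Likewise, redefining $\tilde{R}$ with $Q_{T,T}$ would change the statement and the sets $\tilde{S}_i$, and is unnecessary.
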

\begin{proof}
By \eqref{eq-setting-tilde-delta}, we have $Q_{t,\tilde{\delta}}^2\tilde{\delta}\le c_0\tilde{R}$. The inequalities are consequence of Lemma~\ref{lem-boundary-stability-velocity} and the definition of $\tilde{\mathcal{L}}_0$.
\end{proof}

The following lemma will play a key role as the separation property in \cite[Lemma 14]{HV10}.
\begin{lemma}\label{lem-J1-S3-separation}
In the set $\tilde{S}_3$, we have
\begin{equation}\label{eq-J1-S3-separation}
\int_{t-\tilde{\delta}}^{t}\frac{\mathds{1}_{\{\vert X(s)-Y(s)\vert>\tilde{\mathcal{L}}\}}}{\vert X(s)-Y(s)\vert^2}\,\mathrm{d}s\le\frac{C\vert w\vert^2}{\tilde{l}}.
\end{equation}
\end{lemma}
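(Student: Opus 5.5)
The plan follows the separation argument of \cite[Lemma 14]{HV10}, adapted to the velocity stability of Lemma~\ref{lem-boundary-stability-velocity}. Fix $(y,w)$ with $(s,y,w)\in\tilde{S}_3$, and set $Z(s)=X(s)-Y(s)$ and $U(s)=V(s)-W(s)$. On $\tilde{S}_3$ we have $\vert Z(s)\vert<\delta_2/16$, and both trajectories stay at distance at least $5\delta_2/32$ from every point charge. Hence along the relevant times the relative acceleration is controlled only by the plasma field and the (smooth) charge fields. By \eqref{eq-pre-estimates-electric-field} this gives $\vert\dot U\vert\le CQ_{t,\tilde{\delta}}^{4/3}$ between bounces.

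Next I split $(t-\tilde{\delta},t)$ into at most three subintervals using the bounce times $t_1,t_2$ that appear in the definition of $\tilde{\mathcal{L}}_0$. On each subinterval $I_j$, Lemma~\ref{lem-boundary-stability-velocity} together with Lemma~\ref{lem-J1-S2-S3-velocity-stability} gives a reference velocity $U_j$ such that
\begin{equation*}
\vert U(s)-U_j\vert\le CQ_{t,\tilde{\delta}}^2\tilde{\delta}\le c_0\tilde{R}\quad\text{for } s\in I_j,
\end{equation*}
with $\vert U_j\vert\ge \tilde{R}/2$. The reference velocity is chosen so that $\vert U_j\vert$ is comparable to $\tilde{\mathcal{L}}_0^{-1}$ on $I_j$: for instance $U_j=V(t-\tilde{\delta})-w$ or $V(t)-W(t)$, matching the case distinctions. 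Since $\tilde{\delta}\vert U_j\vert$ is small compared with $\delta_2$ only in the regime where it matters, I then write
\begin{equation*}
Z(s)=Z(s_j)+(s-s_j)U_j+O\bigl(c_0\tilde{R}\,\vert s-s_j\vert\bigr).
\end{equation*}
Choosing $s_j\in \bar I_j$ to minimize $\vert Z\vert$ on $I_j$, I get a linear lower bound
\begin{equation*}
\vert Z(s)\vert\ge \tfrac14\vert s-s_j\vert\,\vert U_j\vert
\end{equation*}
in the spirit of Lemma~\ref{lem-min-dichotomy}. At a bounce the reflected relative velocity could a priori point back toward the other particle; Lemma~\ref{lem-min-dichotomy} rules out a minimum of $\vert Z_\perp\vert$ at an interior bounce of only one of the particles, and this justifies restarting the linear estimate on the next subinterval.

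With the linear lower bound in hand, on each $I_j$ I estimate
\begin{equation*}
\int_{I_j}\frac{\mathds{1}_{\{\vert Z\vert>\tilde{\mathcal{L}}\}}}{\vert Z\vert^2}\,\mathrm{d}s\le\int_{\{\vert s-s_j\vert\,\vert U_j\vert/4>\tilde{\mathcal{L}}\}}\frac{16\,\mathrm{d}s}{\vert s-s_j\vert^2\vert U_j\vert^2}\le\frac{C}{\tilde{\mathcal{L}}\,\vert U_j\vert}.
\end{equation*}
Inserting $\tilde{\mathcal{L}}=\tilde{l}\,\tilde{\mathcal{L}}_0\vert w\vert^{-2}$ and $\vert U_j\vert\ge c\,\tilde{\mathcal{L}}_0^{-1}$, which follows from Lemma~\ref{lem-J1-S2-S3-velocity-stability} and the definition of $\tilde{\mathcal{L}}_0$, yields the bound $C\vert w\vert^2/\tilde{l}$. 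Summing over at most three subintervals gives \eqref{eq-J1-S3-separation}.

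I expect the main obstacle to be the step at the bounces, namely guaranteeing the linear lower bound across reflections. The issue is worst when one particle bounces several times, so that its tangential velocity stays stable but $v_\perp$ is only small, as in Lemma~\ref{lem-estimate-v-perp}. My first attempt is to treat the multi-bounce trajectory as having essentially constant velocity, using \eqref{eq-boundary-stability-velocity-2}. The normal component error, of size $CQ^2\tilde{\delta}$, is absorbed into $c_0\tilde{R}$, which is exactly where the choices \eqref{eq-setting-tilde-delta} and $\tilde{R}=Q_{t,\tilde{\delta}}^{3/4}$ enter.
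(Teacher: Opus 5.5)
Your argument is correct. Where some trajectory bounces more than once, it takes a genuinely different and shorter route than the paper.

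\textbf{How the two routes differ.} The paper proves a claim with at most three centers $\bar t_i$.
\begin{itemize}
\item When each trajectory bounces at most once, it does what you do: cut at the bounce times, minimize $\vert D\vert$ on each $C^2$ piece, and compare with the linearization $\bar D$.
\item In the multi-bounce case it follows \cite{HV10}: it passes to boundary coordinates and splits into a tangential-dominated case and a normal-dominated case. The first uses that $D_\parallel$ is $C^1$ through reflections. The second uses a sign analysis of $W_\perp-V_\perp$, Lemma~\ref{lem-min-dichotomy}, and four endpoint subcases.
\end{itemize}
You instead use that \eqref{eq-boundary-stability-velocity-2} already controls the full Cartesian velocity across any number of bounces. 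By Lemma~\ref{lem-estimate-v-perp} the normal component itself stays $O(Q_{t,\tilde{\delta}}^2\tilde{\delta})$, while the tangential components evolve continuously through reflections. Hence, after cutting only at single-bounce times, each of at most three pieces sees $X-Y$ as a Lipschitz curve. Its derivative stays within $CQ_{t,\tilde{\delta}}^2\tilde{\delta}\le Cc_0\tilde{R}$ of a fixed $U_j$ with $\vert U_j\vert\gtrsim\tilde{\mathcal{L}}_0^{-1}\gtrsim\tilde{R}$.

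\textbf{What each buys.} This is exactly what the choice $\tilde{\delta}=c_0Q_{T,T}^{-5/4}$ provides. The paper's own Cases 2--3 also absorb errors of size $Q_{t,\tilde{\delta}}^2\tilde{\delta}$, so its finer decomposition gains nothing in this setting. Your version avoids boundary coordinates entirely.

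\textbf{Three small repairs.}
\begin{itemize}
\item Lemma~\ref{lem-min-dichotomy} plays no role in your argument. It concerns minima of $\vert Y_\perp-X_\perp\vert$, whereas you minimize $\vert Z\vert$. Restarting on a new piece needs no justification beyond there being at most three pieces.
\item Because $Z$ has corners at the small-jump bounces, your minimizer step must not use $\dot Z(s_j)$. One fix is to project onto $e=U_j/\vert U_j\vert$. For $c_0$ small, $Z\cdot e$ has a.e.\ derivative at least $\frac78\vert U_j\vert$. Take $s_j$ to be its zero, or the endpoint where $\vert Z\cdot e\vert$ is smallest; then $\vert Z(s)\vert\ge\frac78\vert U_j\vert\,\vert s-s_j\vert$. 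Using one-sided derivatives of $\vert Z\vert^2$ at the minimizer also works.
\item The first inequality in your integral display drops the set $\vert s-s_j\vert\le 4\tilde{\mathcal{L}}/\vert U_j\vert$, where $\vert Z\vert>\tilde{\mathcal{L}}$ can still hold. There the integrand is at most $\tilde{\mathcal{L}}^{-2}$, which adds $8/(\tilde{\mathcal{L}}\vert U_j\vert)$. The final bound $C\vert w\vert^2/\tilde{l}$ is unaffected.
\end{itemize}
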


\begin{proof}

We claim that there exist at most three points $\bar{t}_1,\bar{t}_2,\bar{t}_3\in [t-\tilde{\delta},t]$ such that
\begin{equation*}
\vert X(s)-Y(s)\vert\ge\frac{1}{8}\min_{i=1,2,3}\{\vert s-\bar{t}_i\vert\}\tilde{\mathcal{L}}_0^{-1}(s),\quad\forall s\in [t-\tilde{\delta},t].
\end{equation*}
W.L.G., we consider the case that $(X,V)$ has no bounce or has more than one bounce, $(Y,W)$ has exactly one bounce at $t_1\in(t-\tilde{\delta},t)$, then
\begin{align*}
&\int_{t-\tilde{\delta}}^{t}\frac{\mathds{1}_{\{\vert X(s)-Y(s)\vert>\tilde{\mathcal{L}}\}}}{\vert X(s)-Y(s)\vert^2}\,\mathrm{d}s\\
&\le\sum_{i=1}^3\frac{\vert w\vert^4\tilde{L}(w)^2}{\tilde{l}^2}\int_{\vert s-\bar{t}_i\vert\le m_1}\,\mathrm{d}s+\sum_{i=1}^3\frac{1}{\tilde{L}(w)^2}\int_{\vert s-\bar{t}_i\vert>m_1}\frac{64}{\vert s-\bar{t}_i\vert^2}\,\mathrm{d}s\\
&\quad+\sum_{i=1}^3\frac{\vert w\vert^4\tilde{L}(W(t))^2}{\tilde{l}^2}\int_{\vert s-\bar{t}_i\vert\le m_2}\,\mathrm{d}s+\sum_{i=1}^3\frac{1}{\tilde{L}(W(t))^2}\int_{\vert s-\bar{t}_i\vert>m_2}\frac{64}{\vert s-\bar{t}_i\vert^2}\,\mathrm{d}s\\
&\le C\frac{\vert w\vert^4\tilde{L}(w)^2}{\tilde{l}^2}m_1+\frac{C}{\tilde{L}(w)^2m_1}+C\frac{\vert w\vert^4\tilde{L}(W(t))^2}{\tilde{l}^2}m_2+\frac{C}{\tilde{L}(W(t))^2m_2}.
\end{align*}
Optimizing $m_1,m_2$ we obtain \eqref{eq-J1-S3-separation}. The other cases can be proved in the same way.

\noindent{\bf Proof of the claim.}  We further divide into three cases.

{\bf Case 1.} If both trajectories have at most one bounce in $[t-\tilde{\delta},t]$. W.L.G., assume that $(X,V)$ has exactly one bounce at $s_1\in(t-\tilde{\delta},t)$, $(Y,W)$ has exactly one bounce at $s_2\in(t-\tilde{\delta},t)$ with $t-\tilde{\delta}<s_1<s_2<t$. For convenience, we denote $s_0=t-\tilde{\delta},s_3=t$. As usual, we analyse the quantity $D\in C^2([s_i,s_{i+1}])$ defined as $D(s)=X(s)-Y(s)$ on $(s_i,s_{i+1})$ for $i=0,1,2$ and the values of $D,\dot{D},\ddot{D}$ at endpoints $s_i,s_{i+1}$ are defined by the right and left limits respectively. Pick $\bar{s}_i\in[s_i,s_{i+1}]$ such that $\vert D(\bar{s}_i)\vert$ is minimal in $[s_i,s_{i+1}]$. Set
\begin{equation*}
\bar{D}(s)=D(\bar{s}_i)+(s-\bar{s}_i)\dot{D}(\bar{s}_i).
\end{equation*}
Notice
\begin{equation*}
(s-\bar{s}_i)D(\bar{s}_i)\cdot\dot{D}(\bar{s}_i)\ge 0,\quad\forall s\in[s_i,s_{i+1}],
\end{equation*}
we have
\begin{equation}\label{eq-lem-J1-S3-separation-claim-1}
\vert \bar{D}(s)\vert^2\ge(s-\bar{s}_i)^2\vert\dot{D}(\bar{s}_i)\vert^2=(s-\bar{s}_i)^2\vert V(\bar{s}_i)-W(\bar{s}_i)\vert^2,\quad\forall s\in[s_i,s_{i+1}],
\end{equation}
where $V(\bar{s}_i),W(\bar{s}_i)$ are defined by the right or left limits if $\bar{s}_i=s_i\text{ or }s_{i+1}$. Hence, by Lemma~\ref{lem-boundary-stability-velocity} and Lemma~\ref{lem-J1-S2-S3-velocity-stability}, we have
\begin{equation*}
\vert V(\bar{s}_i)-W(\bar{s}_i)\vert\ge\tilde{\mathcal{L}}_0^{-1}(s)-CQ_{t,\tilde{\delta}}^{4/3}\tilde{\delta}\ge\tilde{\mathcal{L}}_0^{-1}(s)/2.
\end{equation*}

Now it follows from Taylor's theorem that
\begin{equation*}
\vert\ddot{D}(s)-\ddot{\bar{D}}(s)\vert=\vert E_{\mathrm{N}}^{{\rm pla}}(s,X(s))-E_{\mathrm{N}}^{{\rm pla}}(s,Y(s))\vert
\end{equation*}
so by Lemma~\ref{lem-pre-estimates-electric-field} for all $s\in[s_i,s_{i+1}]$
\begin{equation}\label{eq-lem-J1-S3-separation-claim-2}
\vert D(s)-\bar{D}(s)\vert\le\left\vert\int_{\bar{s}_i}^{s}\int_{\bar{s}_i}^{\tau}\vert E_{\mathrm{N}}^{{\rm pla}}(\zeta,X(\zeta))\vert+\vert E_{\mathrm{N}}^{{\rm pla}}(\zeta,Y(\zeta))\vert\,\mathrm{d}\zeta\,\mathrm{d}\tau\right\vert\le CQ_{t,\tilde{\delta}}^{4/3}\tilde{\delta}\vert s-\bar{s}_i\vert.
\end{equation}
Therefore by \eqref{eq-lem-J1-S3-separation-claim-1} and \eqref{eq-lem-J1-S3-separation-claim-2} we have
\begin{equation*}
\vert D(s)\vert\ge\vert\bar{D}(s)\vert-\vert D(s)-\bar{D}(s)\vert\ge\frac{\tilde{\mathcal{L}}_0^{-1}(s)}{4}\vert s-\bar{s}_i\vert.
\end{equation*}
The claim holds for $\bar{t}_i=\bar{s}_{i-1}$, $i=1,2,3$.

{\bf Case 2.} Now if at least one of the trajectories has more than one bounce in $(t-\tilde{\delta},t)$. W.L.G., let $(X,V)$ has more than one bounce in $(t-\tilde{\delta},t)$. Since $\vert X(s)-Y(s)\vert<\delta_2/16$ in the set $\tilde{S}_3$, by Lemma~\ref{lem-Separation-singular-sets}, $X(s),Y(s)\in\partial\Omega+B_{\delta_0}^3$. Hence $X(s),Y(s)$ can be expressed by the local coordinate:
\begin{align*}
X(s)=X_{\parallel}(s)-X_{\perp}(s)n(s),\quad V(s)=V_{\parallel}(s)-V_{\perp}(s)n(s),\quad V_{\parallel}(s)=\sum_{i=1,2}w_i(s)u_i(s),\\
Y(s)=Y_{\parallel}(s)-Y_{\perp}(s)\tilde{n}(s),\quad W(s)=W_{\parallel}(s)-W_{\perp}(s)\tilde{n}(s),\quad W_{\parallel}(s)=\sum_{i=1,2}\tilde{w}_i(s)\tilde{u}_i(s).
\end{align*}
We first consider the case
\begin{equation}\label{eq-lem-J1-S3-separation-claim-3}
\vert W_{\parallel}(s)-V_{\parallel}(s)\vert\ge \frac{1}{2}\vert W(s)-V(s)\vert\;\text{for all}\;s\in[t-\tilde{\delta},t].
\end{equation}
Note that the tangential parts of the trajectories are $C^1([t-\tilde{\delta},t])$, we define $D_{\parallel}(s)=X_{\parallel}(s)-Y_{\parallel}(s)$ on $[t-\tilde{\delta},t]$. Pick $\underline{s}_1\in[t-\tilde{\delta},t]$ such that $\vert D_{\parallel}(\underline{s}_1)\vert=\min_{s\in[t-\tilde{\delta},t]}\vert D_{\parallel}(s)\vert$. Consider the equations \eqref{eq-character-ODE-new-coordinates} for the tangential components of the position and velocity:
\begin{align*}
\dot{D}_{\parallel}(s)&=\sum_{i=1,2}\left(\frac{w_iu_i(s)}{1+k_ix_{\perp}(s)}-\frac{\tilde{w}_i\tilde{u}_i(s)}{1+\tilde{k}_i\tilde{x}_{\perp}(s)}\right)\\
&=V_{\parallel}(s)-W_{\parallel}(s)+\sum_{i=1,2}\left(\frac{\tilde{w}_i\tilde{u}_i\tilde{k}_i\tilde{x}_{\perp}(s)}{1+\tilde{k}_i\tilde{x}_{\perp}(s)}-\frac{w_iu_ik_ix_{\perp}(s)}{1+k_ix_{\perp}(s)}\right)\\
&=V_{\parallel}(\underline{s}_1)-W_{\parallel}(\underline{s}_1)+\mathcal{O}(Q_{t,\tilde{\delta}}^2\tilde{\delta}),
\end{align*}
where we have used the fact $\vert x_{\perp}\vert\le Q_{t,\tilde{\delta}}\tilde{\delta}$. Hence we have
\begin{equation*}
D_{\parallel}(s)=D_{\parallel}(\underline{s}_1)+\left(V_{\parallel}(\underline{s}_1)-W_{\parallel}(\underline{s}_1)+\mathcal{O}(Q_{t,\tilde{\delta}}^2\tilde{\delta})\right)(s-\underline{s}_1).
\end{equation*}
Notice
\begin{equation*}
(s-\underline{s}_1)D_{\parallel}(\underline{s}_1)\cdot\dot{D}_{\parallel}(\underline{s}_1)\ge 0,\quad\forall s\in[t-\tilde{\delta},t].
\end{equation*}
Thus we have
\begin{equation}\label{eq-lem-J1-S3-separation-claim-4}
\vert D_{\parallel}(s)\vert^2\ge(s-\underline{s}_1)^2\vert V_{\parallel}(\underline{s}_1)-W_{\parallel}(\underline{s}_1)+\mathcal{O}(Q_{t,\tilde{\delta}}^2\tilde{\delta})\vert^2,\quad\forall s\in[t-\tilde{\delta},t].
\end{equation}
Note we can prove that 
\begin{equation*}
\vert V_{\parallel}(s)-W_{\parallel}(s)\vert\le 2\vert V_{\parallel}(\tau)-W_{\parallel}(\tau)\vert\;\text{for all}\;s,\tau\in[t-\tilde{\delta},t].
\end{equation*}
By Lemmas~\ref{lem-boundary-stability-velocity} and \ref{lem-J1-S2-S3-velocity-stability}, we have
\begin{equation*}
\vert V_{\parallel}(\underline{s}_1)-W_{\parallel}(\underline{s}_1)\vert\ge\tilde{\mathcal{L}}_0^{-1}(s)-CQ_{t,\tilde{\delta}}^{4/3}\tilde{\delta}\ge\tilde{\mathcal{L}}_0^{-1}(s)/2.
\end{equation*}
Therefore by \eqref{eq-lem-J1-S3-separation-claim-4} and \eqref{eq-lem-J1-S3-separation-claim-3} we have for all $s\in[t-\tilde{\delta},t]$
\begin{equation*}
\vert X(s)-Y(s)\vert\ge\vert D_{\parallel}(s)\vert\ge\frac{\tilde{\mathcal{L}}_0^{-1}(s)}{4}\vert s-\underline{s}_1\vert.
\end{equation*}
The claim holds for $\bar{t}_i=\underline{s}_{1}$, $i=1,2,3$.

{\bf Case 3.} We now consider the complementary case to \eqref{eq-lem-J1-S3-separation-claim-3}. Then there exists $\bar{s}\in[t-\tilde{\delta},t]$ such that
\begin{equation}\label{eq-lem-J1-S3-separation-claim-5}
\vert W_{\perp}(\bar{s})\tilde{n}(\bar{s})-V_{\perp}(\bar{s})n(\bar{s})\vert\ge \frac{1}{2}\vert W(\bar{s})-V(\bar{s})\vert.
\end{equation}
By \eqref{eq-estimate-derivative-v-perp} we have
\begin{align*}
\left\vert \vert W_{\perp}(s)\vert-\vert V_{\perp}(s)\vert\right\vert\ge\left\vert \vert W_{\perp}(\bar{s})\vert-\vert V_{\perp}(\bar{s})\vert\right\vert-CQ_{t,\tilde{\delta}}^2\tilde{\delta}.
\end{align*}
On the other hand, using Lemma~\ref{lem-estimate-v-perp} and the triangle inequality it follows that
\begin{align*}
\left\vert \vert W_{\perp}(s)\vert-\vert V_{\perp}(s)\vert\right\vert&=\left\vert \vert W_{\perp}(s)\tilde{n}(s)-V_{\perp}(s)n(s)+V_{\perp}(s)n(s)\vert-\vert V_{\perp}(s)\vert\right\vert.\\
&\le\vert W_{\perp}(s)\tilde{n}(s)-V_{\perp}(s)n(s)\vert+2\vert V_{\perp}(s)\vert\\
&\le\vert W_{\perp}(s)-V_{\perp}(s)\vert+CQ_{t,\tilde{\delta}}^2\tilde{\delta}.
\end{align*}
Similarly we get
\begin{align*}
\vert \vert W_{\perp}(\bar{s})\vert-\vert V_{\perp}(\bar{s})\vert\vert\ge\vert W_{\perp}(\bar{s})\tilde{n}(\bar{s})-V_{\perp}(\bar{s})n(\bar{s})\vert-CQ_{t,\tilde{\delta}}^2\tilde{\delta}.
\end{align*}
Thus by \eqref{eq-lem-J1-S3-separation-claim-5} we obtain for all $s\in[t-\tilde{\delta},t]$
\begin{align}
\vert W_{\perp}(s)-V_{\perp}(s)\vert&\ge\vert W_{\perp}(\bar{s})\tilde{n}(\bar{s})-V_{\perp}(\bar{s})n(\bar{s})\vert-CQ_{t,\tilde{\delta}}^2\tilde{\delta},\nonumber\\
&\ge\frac{1}{2}\vert W(\bar{s})-V(\bar{s})\vert-CQ_{t,\tilde{\delta}}^2\tilde{\delta}.\label{eq-lem-J1-S3-separation-claim-6}
\end{align}

Using Lemmas~\ref{lem-estimate-v-perp} and \ref{lem-J1-S2-S3-velocity-stability} it follows that:
\begin{equation}\label{eq-lem-J1-S3-separation-claim-7}
\vert W_{\perp}(s)\vert\ge\frac{\tilde{R}}{8}
\end{equation}
for all $s\in[t-\tilde{\delta},t]$. It follows that $W_{\perp}(s)$ changes sign, by reflection, at most once in the interval $s\in[t-\tilde{\delta},t]$. Combining Lemma~\ref{lem-estimate-v-perp}, \eqref{eq-lem-J1-S3-separation-claim-6} and \eqref{eq-lem-J1-S3-separation-claim-7} it follows that $W_{\perp}(s)-V_{\perp}(s)$ changes sign at most once for $s\in[t-\tilde{\delta},t]$. Suppose that $W_{\perp}(s)$ changes sign (if any) at $s=s_1$, i.e., $(Y,W)$ bounces at $s=s_1$. Since $Y_{\perp}(s)\ge 0$, it follows that $\operatorname{sgn}(W_{\perp}(s))=\operatorname{sgn}(W_{\perp}(s)-V_{\perp}(s))=\operatorname{sgn}(s-s_1)$ for $s\in[t-\tilde{\delta},t]$. Pick $s_0\in[t-\tilde{\delta},t]$ such that
\begin{equation*}
\min_{s\in[t-\tilde{\delta},t]}\vert Y_{\perp}(s)-X_{\perp}(s)\vert=\vert Y_{\perp}(s_0)-X_{\perp}(s_0)\vert.
\end{equation*}
If $s_0\in(t-\tilde{\delta},t)$, then by \eqref{eq-lem-J1-S3-separation-claim-6} and  Lemma~\ref{lem-min-dichotomy}, it must be $s_0=s_1$ and for all $s\in[t-\tilde{\delta},t]$
\begin{equation*}
\vert Y_{\perp}(s)-X_{\perp}(s)\vert =\int_{s_1}^sW_{\perp}(\tau)-V_{\perp}(\tau)\,\mathrm{d}\tau\ge\frac{1}{4}\vert W(\bar{s})-V(\bar{s})\vert\vert s-s_1\vert.
\end{equation*}
If $s_0\notin(t-\tilde{\delta},t)$, we discuss it in four situations:

{\bf a).} If $s_0=t-\tilde{\delta}$, $Y_{\perp}(t-\tilde{\delta})-X_{\perp}(t-\tilde{\delta})\ge 0$, it is obvious that $W_{\perp}(t-\tilde{\delta}+0)<0$ is impossible. Then by $W_{\perp}(t-\tilde{\delta}+0)>0$, we always have $W_{\perp}(s)-V_{\perp}(s)>0$ and for all $s\in[t-\tilde{\delta},t]$
\begin{equation*}
\vert Y_{\perp}(s)-X_{\perp}(s)\vert =Y_{\perp}(t_1)-X_{\perp}(t-\tilde{\delta})+\int_{t-\tilde{\delta}}^sW_{\perp}(\tau)-V_{\perp}(\tau)\,\mathrm{d}\tau\ge\frac{1}{4}\vert W(\bar{s})-V(\bar{s})\vert\vert s-t+\tilde{\delta}\vert.
\end{equation*}

{\bf b).} If $s_0=t-\tilde{\delta}$, $Y_{\perp}(t-\tilde{\delta})-X_{\perp}(t-\tilde{\delta})<0$, since $(Y,W)$ bounces at most once, $(X,V)$ bounce at least twice, it must exists $s_1\in[t-\tilde{\delta},t]$ such that $Y_{\perp}(s_1)-X_{\perp}(s_1)=0$, which is a contradiction. 

{\bf c).} If $s_0=t$, $W_{\perp}(t-0)<0$, we always have $W_{\perp}(s)-V_{\perp}(s)<0$ and $Y_{\perp}(s)-X_{\perp}(s)\ge 0$. Then for all $s\in[t-\tilde{\delta},t]$
\begin{equation*}
\vert Y_{\perp}(s)-X_{\perp}(s)\vert =Y_{\perp}(t)-X_{\perp}(t)+\int_{t}^sW_{\perp}(\tau)-V_{\perp}(\tau)\,\mathrm{d}\tau\ge\frac{1}{4}\vert W(\bar{s})-V(\bar{s})\vert\vert s-t\vert.
\end{equation*}

{\bf d).} If $s_0=t$, $W_{\perp}(t-0)>0$, it is obvious that $Y_{\perp}(t)-X_{\perp}(t)>0$ is impossible. Since $(Y,W)$ bounces at most once, $(X,V)$ bounce at least twice, it must exists $s_2\in(t-\tilde{\delta},t)$ such that $Y_{\perp}(s_2)-X_{\perp}(s_2)=0$, which is a contradiction.

For all situations, we have
\begin{equation*}
\vert Y_{\perp}(s)-X_{\perp}(s)\vert\ge\frac{1}{4}\vert W(\bar{s})-V(\bar{s})\vert\vert s-s_0\vert\quad\forall s\in[t-\tilde{\delta},t].
\end{equation*}
Now assume $\bar{s}\in[t-\tilde{\delta},s_1]$ and there exists also $\tilde{s}\in[s_1,s]$ such that
\begin{equation*}
\vert W_{\perp}(\tilde{s})\tilde{n}(\tilde{s})-V_{\perp}(\tilde{s})n(\tilde{s})\vert\ge \frac{1}{2}\vert W(\tilde{s})-V(\tilde{s})\vert.
\end{equation*}
Then we have by Lemma~\ref{lem-boundary-stability-velocity}
\begin{align*}
\vert Y_{\perp}(s)-X_{\perp}(s)\vert&\ge\frac{1}{4}\max\{\vert W(\bar{s})-V(\bar{s})\vert,\vert W(\tilde{s})-V(\tilde{s})\vert\}\vert s-s_0\vert\\
&\ge\frac{1}{8}\tilde{\mathcal{L}}_0^{-1}(s)\vert s-s_0\vert.
\end{align*}
If $\tilde{s}$ does not exist, we apply the analysis of Case 2 on $[s_1,t]$ and the claim follows for $\bar{t}_i=s_0$, $i=1,2,3$.
\end{proof}

According to the division, we have
\begin{align*}
\tilde{\mathcal{J}}_1=\sum_{i=1}^3\tilde{\mathcal{J}}_{1,i}=\sum_{i=1}^3\int_{\tilde{S}_i}\frac{f(t-\tilde{\delta},y,w)}{\vert X(s)-Y(s)\vert^2}\,\mathrm{d}y\,\mathrm{d}w\,\mathrm{d}s.
\end{align*}

For $\tilde{\mathcal{J}}_{1,1}$, by the measure preserving property and Lemma~\ref{lem-pre-estimates-electric-field}, we have
\begin{equation*}
\tilde{\mathcal{J}}_{1,1}\le \int_{t-\tilde{\delta}}^t\int_{\min\{\vert w\vert,\tilde{L}(w)\}\le \tilde{R}}\frac{f(s,y,w)}{\vert X(s)-y\vert^2}\,\mathrm{d}y\,\mathrm{d}w\,\mathrm{d}s\le C\tilde{R}^{4/3}\tilde{\delta}.
\end{equation*}

For $\tilde{\mathcal{J}}_{1,2}$, by the measure preserving property and Lemma~\ref{lem-J1-S2-S3-velocity-stability}, we have
\begin{align*}
\tilde{\mathcal{J}}_{1,2}&\le \int_{t-\tilde{\delta}}^t\int_{\min\{\vert w\vert,\tilde{L}(w)\}>\tilde{R},\vert X(s)-y\vert\le\frac{4\tilde{l}}{\vert w\vert^2\tilde{L}(w)}}\frac{f(s,y,w)}{\vert X(s)-y\vert^2}\,\mathrm{d}y\,\mathrm{d}w\,\mathrm{d}s\\
&\le C\int_{t-\tilde{\delta}}^t\int_{\tilde{R}\le\min\{\vert w\vert,\tilde{L}(w)\}\le 2Q_{t,\tilde{\delta}}}\frac{4\tilde{l}}{\vert w\vert^2\tilde{L}(w)}\,\mathrm{d}w\,\mathrm{d}s\le C\tilde{l}\ln Q_{t,\tilde{\delta}}\tilde{\delta}.
\end{align*}

For $\tilde{\mathcal{J}}_{1,3}$, we have by Lemma~\ref{lem-J1-S3-separation}
\begin{align*}
\tilde{\mathcal{J}}_{1,3}&\le\iint_{\Gamma}f(t-\tilde{\delta},y,w)\int_{t-\tilde{\delta}}^{t}\frac{\mathds{1}_{\{\vert X(s)-Y(s)\vert>\tilde{\mathcal{L}}\}}}{\vert X(s)-Y(s)\vert^2}\,\mathrm{d}s\,\mathrm{d}y\,\mathrm{d}w\\
&\le C\tilde{l}^{-1}\iint_{\Gamma}\vert w\vert^2f(t-\tilde{\delta},w)\,\mathrm{d}y\,\mathrm{d}w\le C\tilde{l}^{-1}.
\end{align*}

Hence we have
\begin{equation*}
\tilde{\mathcal{J}}_1(t)\le C\tilde{R}^{4/3}\tilde{\delta}+C\tilde{l}\ln Q_{t,\tilde{\delta}}\tilde{\delta}+C\tilde{l}^{-1}\le CQ_{t,\tilde{\delta}}\tilde{\delta}.
\end{equation*}
Note
\begin{equation*}
\mathcal{J}_1=\sum_{i=1}^{\lfloor\frac{\delta}{\tilde{\delta}}\rfloor}\tilde{\mathcal{J}}_1(t-\delta+i\tilde{\delta})+\int_{t-\delta+\lfloor\frac{\delta}{\tilde{\delta}}\rfloor\tilde{\delta}}^t\int_{\bar{A}_1}\frac{f(t-\delta,y,w)}{\vert X(s)-Y(s)\vert^2}\,\mathrm{d}y\,\mathrm{d}w\,\mathrm{d}s.
\end{equation*}
In summary, we have
\begin{equation}\label{eq-estimate-J1}
\mathcal{J}_1\le C\left(\left\lfloor\frac{\delta}{\tilde{\delta}}\right\rfloor+1\right)Q_{T,T}\tilde{\delta}\le CQ_{T,T}\delta.
\end{equation}

\subsection{The estimate on $\mathcal{J}_4$}

We define three parameters $l,R,L$ by setting
\begin{equation}\label{eq-J4-parameters-setting}
l=Q_{t,\delta}^{-\frac{7}{8}},\,R=Q_{t,\delta}^{\frac{3}{4}},\,L=Q_{t,\delta}^{-2}.
\end{equation}
W.L.G., we assume $Q_{t,\delta}\ge K_2>1$ where $K_2$ is a large constant depending only on $\|\mathring{f}\|_{1}$, $\|\mathring{f}\|_{\infty}$, $\mathcal{E}(0)$. We have
\begin{equation}\label{eq-J4-elementary-estimate-1}
(l+1)Q_{t,\delta}^{4/3}\le 2K_2^{-\frac{1}{6}}R^2.
\end{equation}

When the pointwise energy $h_{\alpha_0}(t-\delta,y,w)$ is large, we can improve Lemma~\ref{lem-Separation-singular-sets} in the following lemma.
\begin{lemma}\label{lem-measJ-plasma-near-point charge}
Let $(Y,W)$ be the characteristics. Denote
\begin{equation*}
J=\{s\in(t-\delta,t):\vert Y(s)-\xi_{\alpha_0}(s)\vert<a\},
\end{equation*}
with $a\le\frac{\delta_2}{2}$. Suppose $\sqrt{h_{\alpha_0}}(t-\delta,y,w)>R$, then $J$ is connected and
\begin{equation*}
\operatorname{meas}(J)\le CR^{-1}a.
\end{equation*}
Suppose $\sqrt{h_{\alpha_0}}(t-\delta,y,w)>Q_{t,\delta}/2$, then $J$ is connected and
\begin{equation*}
\operatorname{meas}(J)\le CQ_{t,\delta}^{-1}a.
\end{equation*}
\end{lemma}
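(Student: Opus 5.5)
The plan is to follow the argument of \cite{MMP11}: write the relative motion of $Y$ with respect to $\xi_{\alpha_0}$ in the frame of the point charge and exploit the conservation, up to small errors, of the relative energy $h_{\alpha_0}$. Set
\begin{equation*}
Z(s)=Y(s)-\xi_{\alpha_0}(s),\qquad U(s)=W(s)-\eta_{\alpha_0}(s),\qquad I(s)=\tfrac12|Z(s)|^2 .
\end{equation*}
Whenever $|Z(s)|<\delta_2/2$, Lemma~\ref{lem-Separation-singular-sets} together with \eqref{eq-uniform-bound-point-charge-1}--\eqref{eq-uniform-bound-point-charge-2} keeps $Y$ away from $\partial\Omega$ and from the other charges, so no bounce occurs there. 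The equation of motion is then
\begin{equation*}
\dot U=\frac{Z}{4\pi|Z|^3}+E_{\mathrm N}(s,Y)+\sum_{\beta}\nabla\bar g_{\mathrm N}(Y,\xi_\beta)+\sum_{\beta\neq\alpha_0}\nabla G(Y,\xi_\beta)-E^{\rm cha}_{\mathrm N,\alpha_0}(s).
\end{equation*}
All terms except the first are bounded by $CQ_{t,\delta}^{4/3}+C$, using \eqref{eq-pre-estimates-electric-field}, the bounds on the charges, and the $L^{5/3}$ bound on $\rho$.

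\textbf{Step 1 (energy stays large).} Run the argument that gave \eqref{eq-h-alpha-2} and \eqref{eq-h-alpha-3}, now along $(Y,W)$. This gives
\begin{equation*}
\sqrt{h_{\alpha_0}}(s,Y(s),W(s))\ge \sqrt{h_{\alpha_0}}(t-\delta,y,w)-\tfrac{1}{32}Q_{t,\delta}^{1/3}
\end{equation*}
for $s\in(t-\delta,t)$. On $J$ we therefore have $|U|^2/2+1/|Z|\ge cR^2$ in the first case and $\ge cQ_{t,\delta}^2$ in the second. Adjusting constants, and using \eqref{eq-J4-elementary-estimate-1}, is exactly what makes $Q^{1/3}$ negligible against $R$.

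\textbf{Step 2 (convexity of $I$).} Compute
\begin{equation*}
\ddot I=|U|^2+Z\cdot\dot U\ge |U|^2+\frac{1}{4\pi|Z|}-C|Z|\bigl(Q_{t,\delta}^{4/3}+1\bigr).
\end{equation*}
On $J$ we have $|Z|<a\le\delta_2/2$. By Step 1 and \eqref{eq-J4-elementary-estimate-1}, $|U|^2+1/(4\pi|Z|)\ge cR^2$ dominates the error, so $\ddot I\ge cR^2$ on $J$; the second case gives $\ddot I\ge cQ_{t,\delta}^2$ in the same way. Thus $I$ is strictly convex on each component of $J$.

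\textbf{Step 3 (connectedness).} Suppose $J$ had two components. Between them lies an interval on which $|Z|\ge a$, with $|Z|=a$ at both ends. Take the maximal interval containing it on which $|Z|<\delta_2/2$, which is valid by the near-charge estimate of Lemma~\ref{lem-Separation-singular-sets}. On that interval the convexity argument still applies wherever $1/|Z|$ or $|U|$ is large. Is the convexity bound valid up to $|Z|\le\delta_2/2$? The error term $C|Z|Q^{4/3}$ must be beaten by $|U|^2$ when $1/|Z|$ is only $O(1)$. This holds because Step 1 forces $|U|^2\ge cR^2\gg Q^{4/3}$ when $|Z|\gtrsim 1$. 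Hence $I$ is convex on the whole interval, so $\{I<a^2/2\}$ is an interval, which gives the contradiction.

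\textbf{Step 4 (measure).} Let $s_*$ be the minimum point of $I$ on $J$. Then $I(s)\ge I(s_*)+\tfrac c2R^2(s-s_*)^2$. Since $I<a^2/2$ on $J$, this yields $|s-s_*|\le CaR^{-1}$, so $\operatorname{meas}(J)\le CR^{-1}a$. The second statement follows identically with $R$ replaced by $Q_{t,\delta}/2$.

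The main obstacle is Step 2/3: controlling the error term $Z\cdot(\text{smooth forces})$ uniformly. This requires the relative kinetic-plus-potential energy lower bound from Step 1 to dominate $|Z|Q_{t,\delta}^{4/3}$, which is precisely where the choices \eqref{eq-J4-parameters-setting} and \eqref{eq-J4-elementary-estimate-1} are used.
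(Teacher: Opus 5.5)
Your proposal is correct and follows essentially the paper's argument. Both use convexity of $|Y-\xi_{\alpha_0}|^2$, obtained from the lower bound on $h_{\alpha_0}$ along $(Y,W)$ via \eqref{eq-h-alpha-3} and \eqref{eq-J4-elementary-estimate-1}, followed by quadratic growth around the minimizer. The paper streamlines your Step 3 by noting that once $J\neq\emptyset$, Lemma~\ref{lem-Separation-singular-sets} gives $|Y(s)-\xi_{\alpha_0}(s)|\le\frac{17\delta_2}{32}$ on all of $(t-\delta,t)$ (note $\frac{17\delta_2}{32}$, not $\frac{\delta_2}{2}$). Hence $\ddot I\ge\frac18R^2$ on the whole interval, so $J$ is a sublevel set of a convex function, and no component-by-component contradiction is needed.
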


\begin{proof}

We first assume $\sqrt{h_{\alpha_0}}(t-\delta,y,w)>R$. Denote $I(s)=\vert Y(s)-\xi_{\alpha_0}(s)\vert^2$. Assume $J$ is non-empty, then by Lemma~\ref{lem-Separation-singular-sets} we have $\sqrt{I(s)}\le\frac{17\delta_2}{32}$ for all $s\in(t-\delta,t)$. Differentiating $I(s)$ we get 
\begin{align*}
&\dot{I}=2(Y-\xi_{\alpha_0})\cdot (W-\eta_{\alpha_0}),\\
&\ddot{I}=2\vert W-\eta_{\alpha_0}\vert^2+2(Y-\xi_{\alpha_0})\cdot\left[E_{\mathrm{N}}^{\rm pla}(s,Y)-E_{\mathrm{N},\alpha_0}^{\rm cha}\right].
\end{align*}

By \eqref{eq-h-alpha-3}, we have
\begin{equation*}
\sqrt{h_{\alpha_0}}(s,Y(s),W(s))\ge\frac{R}{2}.
\end{equation*}
Then by \eqref{eq-J4-elementary-estimate-1} and take $K_2$ large enough such that we have
\begin{align*}
\ddot{I}&\ge h_{\alpha_0}(s,Y(s),W(s))-K_1-\vert Y-\xi_{\alpha_0}\vert\left(\vert E_{\mathrm{N}}(s,Y)\vert+\vert E_{\mathrm{N}}(s,\xi_{\alpha_0})\vert+C\right)\\
&\ge\frac{1}{4}R^2-K_1-C\delta_2Q_{t,\delta}^{4/3}\ge\frac{1}{8}R^{2}.
\end{align*}
Hence $I(s)$ is convex on $(t-\delta,t)$ and the sublevel set $J$ is a convex subset of $(t-\delta,t)$, whence $J$ is connected. Now let $t_0\in\bar{J}$ be the minimizer for $I(s)$, then $\dot{I}(t_0)(s-t_0)\ge 0$ and we get for all $s\in J$
\begin{equation*}
a^2\ge I(s)\ge I(t_0)+\dot{I}(t_0)(s-t_0)+\frac{1}{16}R^2(s-t_0)^2\ge\frac{1}{16}R^2(s-t_0)^2,
\end{equation*}
which implies $\operatorname{meas}(J)\le CR^{-1}a$. The case $\sqrt{h_{\alpha_0}}(t-\delta,y,w)>Q_{t,\delta}/2$ can be proved similarly.

\end{proof}

The following lemma will play a key role as \cite[Lemma 3]{MMP11}.
\begin{lemma}\label{lem-ugly-set-plasma-far-from-point charge}
Let $L>0$. Assume that there exists a time interval
\begin{equation*}
J=(t_1,t_2)\subset(t-\delta,t)
\end{equation*}
such that for all $s\in J$ we have
\begin{equation}\label{eq-ugly-set-plasma-far-from-point charge-1}
\inf_{s\in J}\min\left\{\vert X(s)-\xi_{\alpha_0}(s)\vert,\vert Y(s)-\xi_{\alpha_0}(s)\vert\right\}>l.
\end{equation}
If $\vert V(t_1)-W(t_1)\vert\}\le R$, then
\begin{equation}\label{eq-ugly-set-plasma-far-from-point charge-2}
\vert V(s)-W(s)\vert\le\frac{3}{2}R,\quad\forall s\in J.
\end{equation}
If $\vert V(t_1)-W(t_1)\vert\}>R$, then
\begin{equation}\label{eq-ugly-set-plasma-far-from-point charge-3}
\vert V(s)-W(s)\vert>\frac{1}{2}R,\quad\forall s\in J
\end{equation}
and
\begin{equation}\label{eq-ugly-set-plasma-far-from-point charge-4}
\int_{t_1}^{t_2}\frac{\mathds{1}_{\{\vert X(s)-Y(s)\vert>L\}}}{\vert X(s)-Y(s)\vert^2}\,\mathrm{d}s\le\frac{C}{RL}.
\end{equation}
\end{lemma}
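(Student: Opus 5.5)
The proof follows the scheme of \cite[Lemma 3]{MMP11}, adapted to our setting. The key point is that on $J$ both characteristics stay at distance $>l$ from $\xi_{\alpha_0}$. By Case II and the inclusion for $\bar{A}_4$ (we are inside $\mathcal{J}_4$), they also stay at distance $\gtrsim\delta_2$ from $\partial\Omega$ and from the other charges $\xi_\beta$, $\beta\ne\alpha_0$. So on $J$ there are no bounces, and the only singular force is $\frac{x-\xi_{\alpha_0}}{4\pi|x-\xi_{\alpha_0}|^3}$, which is bounded by $Cl^{-2}$ there.

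\textbf{Step 1: velocity stability.} Set $D(s)=X(s)-Y(s)$. Then
\begin{equation*}
\ddot D=E_{\mathrm N}^{\rm pla}(s,X)-E_{\mathrm N}^{\rm pla}(s,Y).
\end{equation*}
The smooth part of the field (the $\bar g_{\mathrm N}$ terms and $F_{\mathrm N}$ restricted to the far charges) is bounded by a constant. By Lemma~\ref{lem-pre-estimates-electric-field}, $|E_{\mathrm N}|\le CQ_{t,\delta}^{4/3}$. The $\alpha_0$-term is at most $Cl^{-2}$ at both $X$ and $Y$. Hence on $J$, using $|J|\le\delta=c_0Q_{T,T}^{-1}$,
\begin{equation*}
|\dot D(s)-\dot D(t_1)|\le C\delta\bigl(Q_{t,\delta}^{4/3}+l^{-2}+1\bigr).
\end{equation*}
With $l=Q_{t,\delta}^{-7/8}$ we get $l^{-2}=Q_{t,\delta}^{7/4}$, so this error is at most $C c_0 Q_{t,\delta}^{3/4}\le \tfrac{1}{2}R$ once $c_0$ is small. (Here one uses $Q_{t,\delta}\le Q_{T,T}$; the exponents were chosen precisely to make this work.) The inequalities \eqref{eq-ugly-set-plasma-far-from-point charge-2} and \eqref{eq-ugly-set-plasma-far-from-point charge-3} follow immediately from this bound.

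\textbf{Step 2: the time integral.} This is the real content. Pick $s_*\in\bar J$ minimizing $|D|$, and let $\bar D(s)=D(s_*)+(s-s_*)\dot D(s_*)$. Since $(s-s_*)D(s_*)\cdot\dot D(s_*)\ge0$, we have $|\bar D(s)|\ge|s-s_*|\,|\dot D(s_*)|\ge\tfrac{1}{2}R|s-s_*|$. By Taylor's theorem and the Step 1 bound,
\begin{equation*}
|D(s)-\bar D(s)|\le C|s-s_*|\,\delta\bigl(Q_{t,\delta}^{4/3}+l^{-2}\bigr)\le\tfrac{1}{4}R|s-s_*|.
\end{equation*}
Therefore $|D(s)|\ge\tfrac{1}{4}R|s-s_*|$ on $J$. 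Splitting $J$ at $|s-s_*|=m$, the integral in \eqref{eq-ugly-set-plasma-far-from-point charge-4} is bounded by
\begin{equation*}
2mL^{-2}+\int_{|s-s_*|>m}\frac{16}{R^2|s-s_*|^2}\,\mathrm ds\le 2mL^{-2}+\frac{C}{R^2m}.
\end{equation*}
Choosing $m=L/R$ gives $C/(RL)$, as claimed.

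\textbf{Main obstacle.} The delicate point is the bookkeeping of exponents: the term $l^{-2}\delta$ must be dominated by $R$. This works because $7/4-1=3/4$, and it is what fixes the choices in \eqref{eq-J4-parameters-setting}. A second point needs care: one must ensure that $Y$ neither bounces nor approaches $\xi_\beta$, $\beta\ne\alpha_0$, on $J$. If $(y,w)\notin\bar A_4$, then $Y$ stays more than $3\delta_2/8$ from $\xi_{\alpha_0}$, but it may still be near the boundary or near another charge. In that case I would instead restrict to pairs with $|X-Y|<\delta_2/16$, as the $\mathcal J_4$ splitting does, so that $Y$ inherits $X$'s separation from the boundary and from the other charges.
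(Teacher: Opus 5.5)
Your proposal is correct and follows the paper's proof essentially step for step. The field bound $CQ_{t,\delta}^{4/3}\delta+Cl^{-2}\delta\le R/2$ yields both velocity alternatives, and the linearization $\bar D$ at the minimizer of $|X-Y|$, together with the split at $|s-s_*|=m\sim L/R$, gives the $C/(RL)$ bound; your explicit check that $l^{-2}\delta\le c_0Q_{t,\delta}^{3/4}$ via $Q_{t,\delta}\le Q_{T,T}$ supplies the justification that the paper only asserts. Your closing worry about $(y,w)\notin\bar{A}_4$ is unnecessary, because the lemma is only invoked inside $\mathcal{J}_4$, where $(y,w)\in\bar{A}_4$ and Lemma~\ref{lem-Separation-singular-sets} already excludes bounces and near-approaches to $\xi_\beta$, $\beta\ne\alpha_0$. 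Also, the restriction $|X-Y|<\delta_2/16$ you mention belongs to the $\mathcal{J}_1$ splitting, not $\mathcal{J}_4$.
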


\begin{proof}

Since there is no bounce, we have
\begin{align*}
\left\vert\vert V(s)-W(s)\vert-\vert V(t_1)-W(t_1)\vert\right\vert\le\int_{t_1}^{s}\vert E_{\mathrm{N}}^{{\rm pla}}(\tau,X(\tau))\vert+\vert E_{\mathrm{N}}^{{\rm pla}}(\tau,Y(\tau))\vert\,\mathrm{d}\tau.
\end{align*}
Notice by \eqref{eq-pre-estimates-electric-field}, \eqref{eq-ugly-set-plasma-far-from-point charge-1} and take $c_0$ small enough, we have
\begin{equation}\label{eq-ugly-set-plasma-far-from-point charge-5}
\int_{t_1}^{s}\vert E_{\mathrm{N}}^{{\rm pla}}(\tau,X(\tau))\vert+\vert E_{\mathrm{N}}^{{\rm pla}}(\tau,Y(\tau))\vert\,\mathrm{d}\tau\le CQ_{t,\delta}^{\frac{4}{3}}\delta+2l^{-2}\delta\le\frac{R}{2}.
\end{equation}
\eqref{eq-ugly-set-plasma-far-from-point charge-2}-\eqref{eq-ugly-set-plasma-far-from-point charge-3} follow.

Assume $\vert V(t_1)-W(t_1)\vert\}>R$. As usual, denote $D(s)=X(s)-Y(s)$. Pick $\bar{s}$ such that $\vert D(\bar{s})\vert$ is minimal in $[t_1,t_2]$. Set
\begin{equation*}
\bar{D}(s)=D(\bar{s})+(s-\bar{s})\dot{D}(\bar{s}).
\end{equation*}
Notice
\begin{equation*}
(s-\bar{s})D(\bar{s})\cdot\dot{D}(\bar{s})\ge 0,\quad\forall s\in[t_1,t_2],
\end{equation*}
we have by \eqref{eq-ugly-set-plasma-far-from-point charge-3}
\begin{equation}\label{eq-ugly-set-plasma-far-from-point charge-6}
\vert \bar{D}(s)\vert^2\ge(s-\bar{s})^2\vert\dot{D}(\bar{s})\vert^2=(s-\bar{s})^2\vert V(\bar{s})-W(\bar{s})\vert^2\ge\frac{R^2}{4}(s-\bar{s})^2,\quad\forall s\in[t_1,t_2].
\end{equation}
Now it follows from Taylor's theorem that
\begin{equation*}
\vert\ddot{D}(s)-\ddot{\bar{D}}(s)\vert=\vert E_{\mathrm{N}}^{{\rm pla}}(s,X(s))-E_{\mathrm{N}}^{{\rm pla}}(s,Y(s))\vert
\end{equation*}
so that similarly to \eqref{eq-ugly-set-plasma-far-from-point charge-5} for all $s\in[t_1,t_2]$
\begin{equation}\label{eq-ugly-set-plasma-far-from-point charge-7}
\vert D(s)-\bar{D}(s)\vert\le\left\vert\int_{\bar{s}}^{s}\int_{\bar{s}}^{\tau}\vert E_{\mathrm{N}}^{{\rm pla}}(\zeta,X(\zeta))\vert+\vert E_{\mathrm{N}}^{{\rm pla}}(\zeta,Y(\zeta))\vert\,\mathrm{d}\zeta\,\mathrm{d}\tau\right\vert\le\frac{R}{4}\vert s-\bar{s}\vert.
\end{equation}
Therefore by \eqref{eq-ugly-set-plasma-far-from-point charge-6} and \eqref{eq-ugly-set-plasma-far-from-point charge-7} we have
\begin{equation*}
\vert D(s)\vert\ge\vert\bar{D}(s)\vert-\vert D(s)-\bar{D}(s)\vert\ge\frac{R}{4}\vert s-\bar{s}\vert.
\end{equation*}
Hence
\begin{align*}
\int_{t_1}^{t_2}\frac{\mathds{1}_{\{\vert X(s)-Y(s)\vert>L\}}}{\vert X(s)-Y(s)\vert^2}\,\mathrm{d}s&\le\int_{\vert s-\bar{s}\vert\le m}+\int_{\vert s-\bar{s}\vert>m}\frac{\mathds{1}_{\{\vert X(s)-Y(s)\vert>L\}}}{\vert X(s)-Y(s)\vert^2}\,\mathrm{d}s\\
&\le\int_{\vert s-\bar{s}\vert\le m}L^{-2}\,\mathrm{d}s+\int_{\vert s-\bar{s}\vert>m}\frac{64}{R^2\vert s-\bar{s}\vert^2}\,\mathrm{d}s\\
&\le 6L^{-2}m+\frac{6\times 64}{R^2m}.
\end{align*}
Optimizing $m$ we obtain \eqref{eq-ugly-set-plasma-far-from-point charge-4}.

\end{proof}

Recall \eqref{eq-lower-bound-h-alpha0-t-delta}. By virtue of Lemma~\ref{lem-measJ-plasma-near-point charge}, let $(t_-,t_+)$ be the connected interval in which $\vert X(s)-\xi_{\alpha_0}(s)\vert<2l$, combining with \eqref{eq-pre-estimates-electric-field} we obtain
\begin{equation}\label{eq-estimate-J4-X-near-xi}
\int_{t_{-}}^{t_{+}}\int_{\bar{A}_4}\frac{f(t-\delta,y,w)}{\vert X(s)-Y(s)\vert^2}\,\mathrm{d}y\,\mathrm{d}w\,\mathrm{d}s\le CQ_{t,\delta}^{\frac{1}{3}}l.
\end{equation}

To estimate $\mathcal{J}_4$, it remains to control the integrals
\begin{equation*}
\int_{t-\delta}^{t_{-}}\int_{\bar{A}_4}\frac{f(t-\delta,y,w)}{\vert X(s)-Y(s)\vert^2}\,\mathrm{d}y\,\mathrm{d}w\,\mathrm{d}s\text{ and }\int_{t_{+}}^{t}\int_{\bar{A}_4}\frac{f(t-\delta,y,w)}{\vert X(s)-Y(s)\vert^2}\,\mathrm{d}y\,\mathrm{d}w\,\mathrm{d}s
\end{equation*}
which can be handled in the same way. Denote $A_4^{-}:=(t-\delta,t_-)\times\bar{A}_4$ we split the integral into four parts:
\begin{align*}
\int_{A_4^{-}}\frac{f(t-\delta,y,w)}{\vert X(s)-Y(s)\vert^2}\,\mathrm{d}y\,\mathrm{d}w\,\mathrm{d}s=\sum_{j=1}^4\int_{S_j}\frac{f(t-\delta,y,w)}{\vert X(s)-Y(s)\vert^2}\,\mathrm{d}y\,\mathrm{d}w\,\mathrm{d}s=\sum_{j=1}^4\mathcal{J}_{4,j},
\end{align*}
where
\begin{align*}
S_1&=\left\{(s,y,w)\in A_4^{-}:\sqrt{h_{\alpha_0}}(t-\delta,y,w)\le R\right\},\\
S_2&=\left\{(s,y,w)\in A_4^{-}:\vert X(s)-Y(s)\vert\le L\right\}\setminus S_1,\\
S_3&=\left\{(s,y,w)\in A_4^{-}:\vert Y(s)-\xi_{\alpha_0}(s)\vert\le l\right\}\setminus(S_1\cup S_2),\\
S_4&=A_4^{-}\setminus\cup_{i=1}^3S_i.
\end{align*}

For $\mathcal{J}_{4,1}$, using the measure-preserving property, \eqref{eq-h-alpha-2} and \eqref{eq-pre-estimates-electric-field} we have
\begin{equation}\label{eq-estimate-J41}
\mathcal{J}_{4,1}\le\int_{t-\delta}^{t_-}\int_{\sqrt{h_{\alpha_0}}(t-\delta,y,w)\le 2R}\frac{f(s,y,w)}{\vert X(s)-y\vert^2}\,\mathrm{d}y\,\mathrm{d}w\,\mathrm{d}s\le CR^{4/3}\delta.
\end{equation}

For $\mathcal{J}_{4,2}$, using the measure-preserving property, we have
\begin{equation}\label{eq-estimate-J42}
\mathcal{J}_{4,2}\le \int_{t-\delta}^{t_-}\int_{\vert X(s)-y\vert\le L}\frac{C\|\mathring{f}\|_{\infty}Q_{t,\delta}^3}{\vert X(s)-y\vert^2}\,\mathrm{d}y\,\mathrm{d}s\le CLQ_{t,\delta}^3\delta.
\end{equation}
In $S_3$, we have $\vert X(s)-Y(s)\vert\ge\vert X(s)-\xi_{\alpha_0}(s)\vert-\vert Y(s)-\xi_{\alpha_0}(s)\vert\ge l$. By Lemma~\ref{lem-measJ-plasma-near-point charge} and Proposition~\ref{prn-conservation-laws}, we have
\begin{align}
\mathcal{J}_{4,3}&\le l^{-2}\int_{\sqrt{h_{\alpha_0}}(t-\delta,y,w)>R}f(t-\delta,y,w)\int_{t-\delta}^{t_-}\mathds{1}_{\{\vert Y(s)-\xi_{\alpha_0}(s)\vert\le l\}}\,\mathrm{d}s\,\mathrm{d}y\,\mathrm{d}w\nonumber\\
&\le l^{-2}R^{-2}\int h_{\alpha_0}f(t-\delta,y,w)\int_{t-\delta}^{t_-}\mathds{1}_{\{\vert Y(s)-\xi_{\alpha_0}(s)\vert\le l\}}\,\mathrm{d}s\,\mathrm{d}y\,\mathrm{d}w\le Cl^{-1}R^{-3}.\label{eq-estimate-J43}
\end{align}

Finally, by virtue of Lemma~\ref{lem-measJ-plasma-near-point charge}, for each $(y,w)$ such that $\sqrt{h_{\alpha_0}}(t-\delta,y,w)>R$, we may split the set
\begin{equation*}
\left\{s\in(t-\delta,t_-):\vert Y(s)-\xi_{\alpha_0}(s)\vert>l\right\}
\end{equation*}
into two at most intervals $J^1(y,w)$, $J^2(y,w)$ such that
\begin{equation*}
\mathcal{J}_{4,4}\le \sum_{k=1,2}\int_{\sqrt{h_{\alpha_0}}(t-\delta,y,w)>R}f(t-\delta,y,w)\int_{J^k(y,w)}\frac{\mathds{1}_{\{\vert Y(s)-X(s)\vert>L\}}}{\vert Y(s)-X(s)\vert^2}\,\mathrm{d}s\,\mathrm{d}y\,\mathrm{d}w=\sum_{k=1,2}J_{4,4}^k.
\end{equation*}
It suffices to control the integral $J_{4,4}^1$ because $J_{4,4}^2$ can be handled in the same way. We set $J^1(y,w)=(t_1,t_2)$ and split further the integration domain as follows:
\begin{align*}
S_4^1=\left\{(y,w): \sqrt{h_{\alpha_0}}(t-\delta,y,w)>R\;\text{and}\;\vert V(t_1)-W(t_1)\vert\le R\right\},\\
S_4^2=\left\{(y,w): \sqrt{h_{\alpha_0}}(t-\delta,y,w)>R\;\text{and}\;\vert V(t_1)-W(t_1)\vert>R\right\}.
\end{align*}
Then
\begin{equation*}
J_{4,4}^1\le\sum_{k=1,2}\int_{S_4^k}f(t-\delta,y,w)\int_{t_1}^{t_2}\frac{\mathds{1}_{\{\vert Y(s)-X(s)\vert>L\}}}{\vert Y(s)-X(s)\vert^2}\,\mathrm{d}s\,\mathrm{d}y\,\mathrm{d}w=\sum_{k=1,2}J_{4,4}^{1,k}.
\end{equation*}

By \eqref{eq-pre-estimates-electric-field}, \eqref{eq-ugly-set-plasma-far-from-point charge-2} and the measure-preserving property, we have
\begin{equation}\label{eq-estimate-J441}
\mathcal{J}_{4,4}^{1,1}\le\int_{t-\delta}^{t_-}\int_{\vert V(s)-w\vert\le \frac{3}{2}R}\frac{f(s,y,w)}{\vert y-X(s)\vert^2}\,\mathrm{d}y\,\mathrm{d}w\,\mathrm{d}s\le CR^{4/3}\delta.
\end{equation}

By \eqref{eq-ugly-set-plasma-far-from-point charge-4} and Proposition~\ref{prn-conservation-laws}, we have
\begin{align}
\mathcal{J}_{4,4}^{1,2}&\le R^{-2}\int_{S_4^2}h_{\alpha_0}f(t-\delta,y,w)\int_{t_1}^{t_2}\frac{\mathds{1}_{\{\vert Y(s)-X(s)\vert>L\}}}{\vert Y(s)-X(s)\vert^2}\,\mathrm{d}s\,\mathrm{d}y\,\mathrm{d}w\nonumber\\
&\le CL^{-1}R^{-3}.\label{eq-estimate-J442}
\end{align}

Hence by \eqref{eq-estimate-J4-X-near-xi}, \eqref{eq-estimate-J41}, \eqref{eq-estimate-J42}, \eqref{eq-estimate-J43}, \eqref{eq-estimate-J441}, \eqref{eq-estimate-J442}, we have
\begin{equation}\label{eq-estimate-J4}
\mathcal{J}_4\le C\left(Q_{t,\delta}^{\frac{1}{3}}l+LQ_{t,\delta}^3\delta+l^{-1}R^{-3}+R^{4/3}\delta+L^{-1}R^{-3}\right).
\end{equation}

\subsection{Estimates on $\bar{\mathcal{I}}_{\alpha_0}$ and $\tilde{\mathcal{I}}$}
In both Case I and II, we have $\min_{\beta:\beta\ne\alpha_0}\inf_{s}\vert X(s)-\xi_{\beta}(s)\vert\ge\frac{\delta_2}{4}$. Hence
\begin{equation}\label{eq-estimate-I-alpha-0}
\bar{\mathcal{I}}_{\alpha_0}\le \frac{16M}{\delta_2^2}\delta.
\end{equation}

Note $\tilde{\mathcal{I}}$ is the 0-th order singular moment. By Lemma~\ref{lem-L0-by-Hk}, we have
\begin{equation}\label{eq-estimate-I}
\tilde{\mathcal{I}}\le C\left(1+H_{\tilde{k}}(t)^{\frac{a}{\tilde{k}-2}}\delta\right)\le C\left(1+Q_{T,T}\delta\right),
\end{equation}
where $C$ depending only on $\|\mathring{f}\|_{1}$, $\|\mathring{f}\|_{\infty}$, $\mathcal{E}(0)$.

\subsection{Proof of Proposition~\ref{prn-uniform-bound-pointwise-energy}}

By Lemma~\ref{lem-estimate-a0-ne-bar-alpha}, we have
\begin{align*}
Q_{t,\delta}=\sqrt{h_{\bar{\alpha}}}(X(\bar{s}),V(\bar{s}))&\le\sqrt{h_{\alpha_0}}(X(\bar{s}),V(\bar{s}))+C\\
&\le\sqrt{h_{\alpha_0}}(\bar{x},\bar{v})+\bar{\mathcal{I}}_{\alpha_0}+\tilde{\mathcal{I}}+\mathcal{J}+C
\end{align*}

We sum up the estimates \eqref{eq-estimate-J2-J3}, \eqref{eq-estimate-J1}, \eqref{eq-estimate-J4} on $\mathcal{J}_i$ and by the setting of the parameters \eqref{eq-setting-delta}, \eqref{eq-J1-parameters-setting}, \eqref{eq-J4-parameters-setting} to obtain
\begin{align*}
\sum_{i=1}^4\mathcal{J}_i&\le C\left(\delta+\tilde{R}^{4/3}\delta+\tilde{l}\ln Q_{t,\delta}\delta+\tilde{l}^{-1}+Q_{t,\delta}^{\frac{1}{3}}l+LQ_{t,\delta}^3\delta+l^{-1}R^{-3}+R^{4/3}\delta+L^{-1}R^{-3}\right)\\
&\le CQ_{t,\delta}\delta.
\end{align*}
Combining with \eqref{eq-estimate-I-alpha-0} and \eqref{eq-estimate-I}, we have
\begin{align*}
Q_{t,\delta}\le Q_{t-\delta,\delta}+CQ_{T,T}\delta.
\end{align*}
Then Proposition~\ref{prn-uniform-bound-pointwise-energy} follows by induction.

\section{The justification of boundary effects}\label{sec-justification}

In this section we aim to verify the boundary effects on the point charges by deducing the point charge model from a modified Vlasov-Poisson system. Through this derivation, the boundary term $H_{\#}$ in \eqref{eq-Newton} arises naturally. 

\subsection{The modified Vlasov-Poisson system}\label{subsec-justification-VP}

The modified Vlasov-Poisson system is written by
\begin{equation}\label{eq-modified-Vlasov-Poisson}
\partial_tf_{\varepsilon}+v\cdot\nabla_xf_{\varepsilon}+\nabla_x\phi_{\#,\varepsilon}^{\varsigma}\cdot\nabla_vf_{\varepsilon}=0,
\end{equation}
where $\phi_{\#,\varepsilon}^{\varsigma}$ is given by
\begin{equation*}
\phi_{\#,\varepsilon}^{\varsigma}(t,x)=\int_{\Omega} G_{\#}^{\varsigma}(x,y)\rho_{\varepsilon}(t,y)\,\mathrm{d}y-\int_{\partial\Omega} G_{\#}(x,y)h_{\#}^{\rm cha}(y)\,\mathrm{d}S_y,
\end{equation*}
and $f_{\varepsilon}$ satisfies the reflection boundary condition
\begin{equation*}
f_{\varepsilon}(t,x,v)=f_{\varepsilon}(t,x,\mathcal{R}_xv),\quad\forall x\in\partial\Omega.
\end{equation*}

$G_{\#}^{\varsigma}$ is a modified Green function, given by
\begin{equation*}
G_{\#}^{\varsigma}=G^{\varsigma}+\bar{g}_{\#},
\end{equation*}
where
\begin{equation*}
G^{\varsigma}(x,y)=G(x,y)\tilde{\chi}_{\varsigma}(\vert x-y\vert).
\end{equation*}

Let $0<\varepsilon<\min_{\alpha}(\min_{\beta:\beta\ne\alpha}\vert\mathring{\xi}_{\alpha}-\mathring{\xi}_{\beta}\vert,\operatorname{d}_{\partial\Omega}(\mathring{\xi}_{\alpha}))/2$ and $\varepsilon\ll 1$. The cut-off parameter $\varsigma$ will be given later, depending on $\varepsilon$ and $\lim_{\varepsilon\to 0}\varsigma=0$.

The initial datum $f_{\varepsilon}\vert_{t=0}=f_{0,\varepsilon}$ satisfies
\begin{equation*}
f_{0,\varepsilon}(x,v)=\sum_{\alpha}f_{0\alpha,\varepsilon}(x),
\end{equation*}
where $f_{0\alpha,\varepsilon}\in C_c^{1,\mu}(\bar{\Omega}\times\mathbb{R}^3)$ for some $\mu>0$ satisfying
\begin{align*}
&\operatorname{supp}f_{0\alpha,\varepsilon}\subset B(\mathring{\xi}_{\alpha},\varepsilon)\times B(\mathring{\eta}_{\alpha},\varepsilon),\\
&0\le f_{0\alpha,\varepsilon}(x,v)\le 2\varepsilon^{-6}\omega_3^2,\quad\forall (x,v)\in\Omega\times\mathbb{R}^3,\\
&\iint_{\Omega\times\mathbb{R}^3}f_{0\alpha,\varepsilon}(x,v)\,\mathrm{d}x\,\mathrm{d}v=1.
\end{align*}
$h_{\mathrm{D}}^{\rm cha}=0$, $h_{\mathrm{N}}^{\rm cha}$ is a $C^3$ given function satisfying
\begin{equation*}
\int_{\partial\Omega}h_{\mathrm{N}}^{\rm cha}(x)\,\mathrm{d}S_x=\iint_{\Omega\times\mathbb{R}^3}\mathring{f}(x,v)\,\mathrm{d}x\,\mathrm{d}v=M.
\end{equation*}

The generalized characteristics $(X_{\varepsilon}(s,t,x,v),V_{\varepsilon}(s,t,x,v))$ are defined as follows. For $X_{\varepsilon}(s,t,x,v)\in\Omega$, the generalized characteristics satisfy the following ODEs
\begin{equation}\label{eq-modified-ODEs}
\left\{\begin{split}
&\frac{\mathrm{d}}{\mathrm{d}s}X_{\varepsilon}(s,t,x,v)=V_{\varepsilon}(s,t,x,v),\\
&\frac{\mathrm{d}}{\mathrm{d}s}V_{\varepsilon}(s,t,x,v)=\nabla_x\phi_{\#,\varepsilon}^{\varsigma}(s,X_{\varepsilon}(s,t,x,v)),\\
&X_{\varepsilon}(t,t,x,v)=x,\quad V_{\varepsilon}(t,t,x,v)=v.
\end{split}\right.
\end{equation}
If $X_{\varepsilon}(\bar{s},t,x,v)\in\partial\Omega$, and $X_{\varepsilon}(s,t,x,v)\in\Omega$ for $s\in(\bar{s}-\epsilon,\bar{s}+\epsilon)\setminus\{\bar{s}\}$ with some $\epsilon>0$, the generalized characteristics satisfy the following reflection boundary condition
\begin{equation*}
\mathcal{R}_{X_{\varepsilon}(\bar{s},t,x,v)}V_{\varepsilon}(\bar{s}+0,t,x,v)=V_{\varepsilon}(\bar{s}-0,t,x,v).
\end{equation*}

It is no difficulty to see that Theorem~\ref{thm-reflection-Neumann-case} holds also for the modified Vlasov-Poisson system. Hence there exists a unique global classical solution to \eqref{eq-modified-Vlasov-Poisson} satisfying $\forall T>0$, $f_{\varepsilon}\in C^{1}([0,T];C_c^{1,\lambda}(\bar{\Omega}\times\mathbb{R}^3;\mathbb{R}_+))$, $\phi_{\#,\varepsilon}^{\varsigma}\in C^{1}([0,T];C^{3,\lambda}(\bar{\Omega}))$ for some $\lambda>0$, which can be represented by
\begin{equation*}
f_{\varepsilon}(t,x,v)=\sum_{\alpha}f_{\alpha,\varepsilon}(t,x,v)=\sum_{\alpha}f_{0\alpha,\varepsilon}(X_{\varepsilon}(0,t,x,v),V_{\varepsilon}(0,t,x,v)).
\end{equation*}
Moreover, by the representation of $\phi_{\mathrm{N},\varepsilon}^{\varsigma}$ and the elliptic estimates, we have
\begin{equation}\label{eq-electric-field-cutoff-C2-estimate}
\|\phi_{\#,\varepsilon}^{\varsigma}\|_{C^2}\le C\varsigma^{-3},\quad \vert \nabla_x\phi_{\#,\varepsilon}^{\varsigma}(t,x)-\nabla_x\phi_{\#,\varepsilon}^{\varsigma}(t,y)\vert\le C\varsigma^{-3}\vert x-y\vert,
\end{equation}
where $C$ is a constant depending only on the $C^3$ regularity of $h_{\#}^{\rm cha}$ and $\partial\Omega$.


\subsection{The point charge model}

The point charge model is written by
\begin{equation}\label{eq-point-charge}
\left\{\begin{split}
&\dot{\xi}_{\alpha}(t)=\eta_{\alpha}(t),\\
&\dot{\eta}_{\alpha}(t)=\tilde{E}_{\#,\alpha}^{\rm cha}(t),\\
&\xi_{\alpha}(0)=\mathring{\xi}_{\alpha},\quad \eta_{\alpha}(0)=\mathring{\eta}_{\alpha},
\end{split}\right.
\end{equation}
where
\begin{equation*}
\tilde{E}_{\#,\alpha}^{\rm cha}(t)=F_{\#,\alpha}(t)+\nabla_xH_{\#}(\xi_{\alpha}(t)).
\end{equation*}
It is no difficult to obtain global well-posedness for $\#=\mathrm{N}$ and local well-posedness for $\#=\mathrm{D}$ if initial data satisfy the first condition in \eqref{eq-initial-singular-sets}. Moreover, there exist $T_0,\delta_2>0$ depending on $\delta_1$ such that
\begin{equation}\label{eq-T0-distance-point-charges-each-other}
\inf_{t\in[0,T_0]}\min_{\alpha}\left\{\operatorname{d}_{\partial\Omega}(\xi_{\alpha}(t)),\min_{\beta:\beta\ne\alpha}\vert\xi_{\alpha}(t)-\xi_{\beta}(t)\vert\right\}\ge\delta_2.
\end{equation}
Note $T_0=+\infty$ if $\#=\mathrm{N}$. Let $0<\varsigma<\frac{1}{8}\delta_2$, then the point charge model is equivalent to the modified point charge model for $t\in[0,T_0]$
\begin{equation}\label{eq-modified-point-charge}
\left\{\begin{split}
&\dot{\xi}_{\alpha}(t)=\eta_{\alpha}(t),\\
&\dot{\eta}_{\alpha}(t)=\tilde{E}_{\#,\alpha}^{{\rm cha},\varsigma}(t),\\
&\xi_{\alpha}(0)=\mathring{\xi}_{\alpha},\quad \eta_{\alpha}(0)=\mathring{\eta}_{\alpha},
\end{split}\right.
\end{equation}
where
\begin{equation*}
\tilde{E}_{\#,\alpha}^{{\rm cha},\varsigma}(t)=\sum_{\beta:\beta\ne\alpha}\nabla_xG_{\#}^{\varsigma}(\xi_{\alpha}(t),\xi_{\beta}(t))+\nabla_xH_{\#}(\xi_{\alpha}(t)).
\end{equation*}

\subsection{The desingularisation argument}

Define
\begin{align*}
&\xi_{\alpha,\varepsilon}(t)=\iint_{\Omega\times\mathbb{R}^3} X_{\varepsilon}(t)f_{0\alpha,\varepsilon}(x,v)\,\mathrm{d}x\,\mathrm{d}v,\\
&\eta_{\alpha,\varepsilon}(t)=\iint_{\Omega\times\mathbb{R}^3} V_{\varepsilon}(t)f_{0\alpha,\varepsilon}(x,v)\,\mathrm{d}x\,\mathrm{d}v,
\end{align*}
where $X_{\varepsilon}(t)=X_{\varepsilon}(t,0,x,v)$, $V_{\varepsilon}(t)=V_{\varepsilon}(t,0,x,v)$ for short. Now we state the main result in section.
\begin{theorem}\label{thm-justification-boundary-effect}
Assume that $\{\xi_{\alpha},\eta_{\alpha}\}$ is a solution to \eqref{eq-point-charge} as well as \eqref{eq-modified-point-charge} in some interval $[0,T_0]$ satisfying \eqref{eq-T0-distance-point-charges-each-other}. Denote
\begin{equation*}
p_{\varepsilon}(t)=\sum_{\beta}\left[\vert \xi_{\beta,\varepsilon}(t)-\xi_{\beta}(t)\vert+\vert \eta_{\beta,\varepsilon}(t)-\eta_{\beta}(t)\vert\right].
\end{equation*}
Then for $T=T_0$ if $\#=\mathrm{D}$ and for $T>0$ arbitrary if $\#=\mathrm{N}$, setting $\varsigma=\left(\frac{CT}{\vert\ln\varepsilon\vert}\right)^{\frac{1}{3}}$, it holds
\begin{equation*}
\lim_{\varepsilon\to 0}\sup_{t\in[0,T]}p_{\varepsilon}(t)=0.
\end{equation*}
\end{theorem}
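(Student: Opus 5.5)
The plan is a Gr\"onwall argument on $p_\varepsilon$, combined with a bootstrap on the spatial concentration of each blob $f_{\alpha,\varepsilon}$ around its center $\xi_{\alpha,\varepsilon}$. First compute the dynamics of the centers. Since $f_{0\alpha,\varepsilon}$ has unit mass and the flow is measure preserving,
\begin{equation*}
\dot\xi_{\alpha,\varepsilon}=\eta_{\alpha,\varepsilon},\qquad
\dot\eta_{\alpha,\varepsilon}=\iint\nabla_x\phi_{\#,\varepsilon}^{\varsigma}(t,X_\varepsilon(t))f_{0\alpha,\varepsilon}\,\mathrm{d}x\,\mathrm{d}v .
\end{equation*}
This holds as long as trajectories do not hit $\partial\Omega$, which the bootstrap guarantees. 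Split $\nabla_x\phi^{\varsigma}_{\#,\varepsilon}$ into three contributions.
\begin{enumerate}[(i)]
\item The self-interaction of blob $\alpha$ through $G^{\varsigma}$. This term vanishes identically by the antisymmetry $\nabla_xG^{\varsigma}(x,y)=-\nabla_yG^{\varsigma}(y,x)$.
\item The self-interaction through the regular part $\bar g_{\#}$, together with the boundary-data term $-\int_{\partial\Omega}\nabla_xG_{\#}h^{\rm cha}_{\#}$.
\item The interaction with the other blobs $\beta\neq\alpha$ through $\nabla_xG^{\varsigma}_{\#}$.
\end{enumerate}
For (ii), use symmetry of $\bar g_{\#}$: $\iint\nabla_x\bar g_{\#}(x,y)\rho_\alpha(x)\rho_\alpha(y)=\tfrac12\int\nabla R_{\#}\,\rho_\alpha+O(\text{spread}^2)$, where $\nabla R_{\#}(x)=2\nabla_1\bar g_{\#}(x,x)$. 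This produces exactly $\nabla_xH_{\#}(\xi_{\alpha,\varepsilon})$, and is where $\frac12R_{\#}$ in $H_{\#}$ comes from. For (iii), as long as the supports stay at distance $\ge\delta_2/2$ from each other and from $\partial\Omega$, Taylor expansion of the smooth kernels gives $F_{\#,\alpha}$ evaluated at the centers, up to an error controlled by the support radii.

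Define the concentration radius
\begin{equation*}
r_\varepsilon(t)=\max_\alpha\sup\{|X_\varepsilon(t)-\xi_{\alpha,\varepsilon}(t)|+|V_\varepsilon(t)-\eta_{\alpha,\varepsilon}(t)|:(x,v)\in\operatorname{supp}f_{0\alpha,\varepsilon}\}.
\end{equation*}
The key estimate controls $r_\varepsilon$ by linearizing the characteristic ODE about the center dynamics. Subtracting the center equations from \eqref{eq-modified-ODEs} and using the Lipschitz bound \eqref{eq-electric-field-cutoff-C2-estimate}, $|\nabla\phi^{\varsigma}(x)-\nabla\phi^{\varsigma}(y)|\le C\varsigma^{-3}|x-y|$, gives $\dot r_\varepsilon\le C(1+\varsigma^{-3})r_\varepsilon$. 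Hence
\begin{equation*}
r_\varepsilon(t)\le 2\varepsilon e^{C\varsigma^{-3}t}.
\end{equation*}
With $\varsigma^3=CT/|\ln\varepsilon|$ for a suitable constant, this becomes $r_\varepsilon(t)\le 2\varepsilon^{1/2}$ on $[0,T]$, which tends to $0$. This calibration is exactly what the stated choice of $\varsigma$ is designed for.

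The bootstrap runs as follows. Suppose that on $[0,t^*]$ one has $p_\varepsilon\le\delta_2/8$ and $r_\varepsilon\le\varsigma$. Then every blob stays at distance $\ge\delta_2/2$ from $\partial\Omega$ and from the other blobs, so there are no bounces. In this regime the center equations differ from \eqref{eq-modified-point-charge} by $O(r_\varepsilon)$ plus a Lipschitz term in the centers: the kernels $\nabla G_{\#}^{\varsigma}$ between distinct charges and $\nabla H_{\#}$ are smooth with constants depending only on $\delta_2$, because $\varsigma<\delta_2/8$ means the cutoff is inactive there. Gr\"onwall then gives
\begin{equation*}
p_\varepsilon(t)\le C\Bigl(p_\varepsilon(0)+\sup_{[0,t]}r_\varepsilon\Bigr)e^{Ct}\le C\varepsilon^{1/2}e^{CT}.
\end{equation*}
This closes the bootstrap for small $\varepsilon$ and yields the theorem, with $T=T_0$ in the Dirichlet case, since \eqref{eq-T0-distance-point-charges-each-other} is only available up to $T_0$. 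The constant $C$ in $\varsigma$ must dominate the Lipschitz constant of \eqref{eq-electric-field-cutoff-C2-estimate}.

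The main obstacle is contribution (ii): identifying the blob's interaction with its own boundary image, $\int\nabla_x\bar g_{\#}\rho_\alpha\rho_\alpha$, with $\tfrac12\nabla R_{\#}$, and doing so uniformly in $\varepsilon$. This needs $C^2$ bounds on $\bar g_{\#}$ away from $\partial\Omega$ (Lemma~\ref{lem-estimate-Green}) and the symmetry of $\bar g_{\#}$, which must be secured for $\#=\mathrm{N}$ through the symmetric choice of $G_{\mathrm{N}}$. In the Neumann case one must also check that the $\tilde g$ part of $\bar g_{\mathrm{N}}$ and the boundary datum $h^{\rm cha}_{\mathrm{N}}$, whose total mass $M$ matches the compatibility condition, combine to give exactly $-\int_{\partial\Omega}\nabla G_{\mathrm{N}}h^{\rm cha}_{\mathrm{N}}$ in $H_{\mathrm{N}}$ with no leftover term.
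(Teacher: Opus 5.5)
Your argument is correct, but it is organized differently from the paper's. The paper never writes down the equations for the blob centers. Instead it runs a single Gr\"onwall argument on $\sum_\alpha\bar{\mathcal I}_{\alpha,\varepsilon}(t)=\sum_\alpha\iint(|Y_\varepsilon-\xi_\alpha|^2+|W_\varepsilon-\eta_\alpha|^2)f_{0\alpha,\varepsilon}$, the second moment of each blob about the \emph{limiting} charge trajectory. It pairs this with the analogous moment $P_{\alpha,\varepsilon}$ about an individual trajectory; $P_{\alpha,\varepsilon}$ plays the role of your $r_\varepsilon$ and keeps the supports away from $\partial\Omega$ through $T_\varepsilon$. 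The paper does not cancel the self-force. It uses $\nabla_xG_{\#}^{\varsigma}(\xi_\alpha,\xi_\alpha)=\nabla_x\bar g_{\#}(\xi_\alpha,\xi_\alpha)=\frac12\nabla R_{\#}(\xi_\alpha)$ together with the $C\varsigma^{-3}$ Lipschitz bound in $y$ of $\nabla_xG^{\varsigma}_{\#}(\xi_\alpha,\cdot)$ to absorb every $\beta$, including $\beta=\alpha$. As a result, both the center motion and the spreading grow at rate $\varsigma^{-3}$, and $\bar{\mathcal I}\le C\varepsilon^2e^{C\varsigma^{-3}t}\le C\varepsilon$. Your route separates the two mechanisms. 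The exact antisymmetric cancellation of the $G^{\varsigma}$ self-interaction and the centering identity $\int(x-\xi_{\alpha,\varepsilon})\rho_\alpha\,\mathrm{d}x=0$ make the centers obey the point-charge ODE up to $O(r_\varepsilon)$ with a $\varsigma$-independent Lipschitz constant, so $\varsigma$ enters only through the internal spreading of each blob. This costs a bit more bookkeeping: a Taylor expansion of $\nabla_x\bar g_{\#}$ about the center and the symmetry of $\bar g_{\#}$. In return it shows explicitly that $\frac12\nabla R_{\#}$ is the blob's self-interaction through the regular part of the Green function, which is what the section sets out to justify, and it gives the quantitative rate $p_\varepsilon=O(\varepsilon^{1/2}e^{CT})$. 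The paper's version is shorter and needs the identity only at the single point $\xi_\alpha$. Two small remarks. The interior smoothness of $\bar g_{\#}$ that you need comes from harmonicity on $\{\operatorname{d}_{\partial\Omega}\ge\delta_2/2\}^2$, not from Lemma~\ref{lem-estimate-Green}, which only bounds $G_{\#}$ near the diagonal. The Neumann ``leftover'' you worry about cannot occur: the boundary datum enters $\phi^{\varsigma}_{\#,\varepsilon}$ and $H_{\#}$ through the identical integral, and $\tilde g$ already sits inside $\bar g_{\mathrm{N}}$ with $\tilde g(x,x)=0$ and $\nabla_x\tilde g(x,y)\vert_{y=x}=0$.
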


\begin{proof}
We define 
\begin{equation*}
T_{\varepsilon}:=\sup\left\{t\ge 0: \inf_{s\in[0,t]}\inf_{(x,v)\in\operatorname{supp}f_{0,\varepsilon}}\operatorname{d}_{\partial\Omega}(X_{\varepsilon}(t,0,x,v))\ge\frac{1}{2}\delta_2\right\}.
\end{equation*}
By the regularity of the solution $f_{\varepsilon}$ and $\phi_{\#,\varepsilon}^{\varsigma}$ stated in Subsection~\ref{subsec-justification-VP}, for each $\varepsilon\in(0,\frac{1}{8}\delta_2)$, we have $T_{\varepsilon}>0$. 

Define
\begin{equation*}
\mathcal{I}_{\alpha,\varepsilon}(t,x,v)=\iint_{\Omega\times\mathbb{R}^3} \left(\vert Y_{\varepsilon}(t)-x\vert^2+\vert W_{\varepsilon}(t)-v\vert^2\right)f_{0\alpha,\varepsilon}(y,w)\,\mathrm{d}y\,\mathrm{d}w,
\end{equation*}
where $Y_{\varepsilon}(t)=Y_{\varepsilon}(t,0,y,w)$, $W_{\varepsilon}(t)=W_{\varepsilon}(t,0,y,w)$. Denote
\begin{align*}
\bar{\mathcal{I}}_{\alpha,\varepsilon}(t)&=\mathcal{I}_{\alpha,\varepsilon}(t,\xi_{\alpha}(t),\eta_{\alpha}(t)),\\
P_{\alpha,\varepsilon}(t,x,v)&=\mathds{1}_{\operatorname{supp}f_{0\alpha,\varepsilon}}(x,v)\mathcal{I}_{\alpha,\varepsilon}(t,X_{\varepsilon}(t),V_{\varepsilon}(t)).
\end{align*}

For $t\in(0,T_{\varepsilon})$, $(X_{\varepsilon}(t,0,x,v),V_{\varepsilon}(t,0,x,v))$ satisfies the ODEs \eqref{eq-modified-ODEs}, we have 
\begin{align*}
\frac{\mathrm{d}}{\mathrm{d}t}\bar{\mathcal{I}}_{\alpha,\varepsilon}(t)=\iint_{\Omega\times\mathbb{R}^3} 2(W_{\varepsilon}(t)-\eta_{\alpha}(t))\cdot\Big[Y_{\varepsilon}(t)&-\xi_{\alpha}(t)+\nabla_x\phi_{\#,\varepsilon}^{\varsigma}(t,Y_{\varepsilon}(t))\\
&-\tilde{E}_{\#,\alpha}^{{\rm cha},\varsigma}(t)\Big]f_{0\alpha,\varepsilon}(y,w)\,\mathrm{d}y\,\mathrm{d}w,
\end{align*}
and
\begin{align*}
\frac{\mathrm{d}}{\mathrm{d}t}P_{\alpha,\varepsilon}(t,x,v)=\iint_{\Omega\times\mathbb{R}^3} 2&(W_{\varepsilon}(t)-V_{\varepsilon}(t))\cdot\Big[Y_{\varepsilon}(t)-X_{\varepsilon}(t)+\nabla_x\phi_{\#,\varepsilon}^{\varsigma}(t,Y_{\varepsilon}(t))\\
&-\nabla_x\phi_{\#,\varepsilon}^{\varsigma}(t,X_{\varepsilon}(t))\Big]\mathds{1}_{\operatorname{supp}f_{0\alpha,\varepsilon}}(x,v)f_{0\alpha,\varepsilon}(y,w)\,\mathrm{d}y\,\mathrm{d}w,
\end{align*}
where
\begin{equation*}
\dot{\eta}_{\alpha,\varepsilon}(t)=\iint_{\Omega\times\mathbb{R}^3} \nabla_x\phi_{\#,\varepsilon}^{\varsigma}(t,X_{\varepsilon}(t))f_{0\alpha,\varepsilon}(x,v)\,\mathrm{d}x\,\mathrm{d}v.
\end{equation*}

By \eqref{eq-electric-field-cutoff-C2-estimate} and the definition of $G_{\#}^{\varsigma}$, it can be calculated directly that
\begin{align*}
&\left \vert \nabla_x\phi_{\#,\varepsilon}^{\varsigma}(t,Y_{\varepsilon}(t))-\tilde{E}_{\#,\alpha}^{{\rm cha},\varsigma}(t)\right\vert \\
&\le\left \vert\nabla_x\phi_{\#,\varepsilon}^{\varsigma}(t,Y_{\varepsilon}(t))-\nabla_x\phi_{\#,\varepsilon}^{\varsigma}(t,\xi_{\alpha}(t))\right\vert+\left \vert \nabla_x\phi_{\#,\varepsilon}^{\varsigma}(t,\xi_{\alpha}(t))-\tilde{E}_{\#,\alpha}^{{\rm cha},\varsigma}(t)\right\vert\\
&\le C\varsigma^{-3}\left \vert Y_{\varepsilon}(t)-\xi_{\alpha}(t)\right\vert\\
&\quad+\sum_{\beta}\iint_{\Omega\times\mathbb{R}^3} \left\vert\nabla_xG_{\#}^{\varsigma}(\xi_{\alpha}(t),y)-\nabla_xG_{\#}^{\varsigma}(\xi_{\alpha}(t),\xi_{\beta}(t))\right\vert f_{\beta,\varepsilon}(t,y,w)\,\mathrm{d}y\,\mathrm{d}w\\
&\le C\varsigma^{-3}\left \vert Y_{\varepsilon}(t)-\xi_{\alpha}(t)\right\vert+C\varsigma^{-3}\sum_{\beta}\iint_{\Omega\times\mathbb{R}^3}\vert y-\xi_{\beta}(t)\vert f_{\beta,\varepsilon}(t,y,w)\,\mathrm{d}y\,\mathrm{d}w.
\end{align*}
Hence
\begin{equation*}
\left\vert\frac{\mathrm{d}}{\mathrm{d}t}\sum_{\alpha}\bar{\mathcal{I}}_{\alpha,\varepsilon}(t)\right\vert\le C\varsigma^{-3}\sum_{\alpha}\bar{\mathcal{I}}_{\alpha,\varepsilon}(t).
\end{equation*}

Similarly, we have
\begin{equation*}
\left\vert\frac{\mathrm{d}}{\mathrm{d}t}P_{\alpha,\varepsilon}(t,x,v)\right\vert\le C\varsigma^{-3}P_{\alpha,\varepsilon}(t,x,v).
\end{equation*}

By Gr\"onwall's inequality, we have
\begin{align*}
\sum_{\alpha}\bar{\mathcal{I}}_{\alpha,\varepsilon}(t)&\le  \sum_{\alpha}\bar{\mathcal{I}}_{\alpha,\varepsilon}(0)e^{C\varsigma^{-3}t},\\
P_{\alpha,\varepsilon}(t,x,v)&\le P_{\alpha,\varepsilon}(0,x,v)e^{C\varsigma^{-3}t}.
\end{align*}
Note
\begin{equation*}
\bar{\mathcal{I}}_{\alpha,\varepsilon}(0)\le C\varepsilon^2,\quad P_{\alpha,\varepsilon}(0,x,v)\le C\varepsilon^2.
\end{equation*}

Hence for arbitrary $T>0$ and $\varsigma=\left(\frac{CT}{\vert\ln\varepsilon\vert}\right)^{\frac{1}{3}}$, we have for $0<t\le\min(T_{\varepsilon},T)$
\begin{align*}
\sum_{\alpha}\bar{\mathcal{I}}_{\alpha,\varepsilon}(t)\le C\varepsilon,\quad P_{\alpha,\varepsilon}(t,x,v)\le C\varepsilon.
\end{align*}

Take $\varepsilon>0$ small enough depending only on $\delta_2$ such that
\begin{align*}
\mathds{1}_{\operatorname{supp}f_{0\alpha,\varepsilon}}(x,v)\left[\vert X_{\varepsilon}(t)-\xi_{\alpha,\varepsilon}(t)\vert^2+\vert V_{\varepsilon}(t)-\eta_{\alpha,\varepsilon}(t)\vert^2\right]&\le P_{\alpha,\varepsilon}(t,x,v)\le\frac{\delta_2^2}{64},\\
\vert \xi_{\alpha,\varepsilon}(t)-\xi_{\alpha}(t)\vert^2+\vert \eta_{\alpha,\varepsilon}(t)-\eta_{\alpha}(t)\vert^2&\le  \bar{\mathcal{I}}_{\alpha,\varepsilon}(t)\le\frac{\delta_2^2}{64}.
\end{align*}
Hence
\begin{equation*}
\mathds{1}_{\operatorname{supp}f_{0\alpha,\varepsilon}}(x,v)\vert \xi_{\alpha}(t)-X_{\varepsilon}(t)\vert\le\frac{1}{4}\delta_2.
\end{equation*}
Combining with \eqref{eq-T0-distance-point-charges-each-other}
\begin{equation*}
\inf_{s\in[0,T]}\inf_{(x,v)\in\operatorname{supp}f_{0,\varepsilon}}\operatorname{d}_{\partial\Omega}(X_{\varepsilon}(t,0,x,v))\ge\frac{3}{4}\delta_2.
\end{equation*}
According to the definition of $T_{\varepsilon}$, we have $T_{\varepsilon}\ge T$. Hence for $0<t\le T$
\begin{equation*}
\sum_{\alpha}\bar{\mathcal{I}}_{\alpha,\varepsilon}(t)\le C\varepsilon,
\end{equation*}
which implies
\begin{equation*}
\lim_{\varepsilon\to 0}\sup_{t\in[0,T]}p_{\varepsilon}(t)=0.
\end{equation*}

\end{proof}

\paragraph{Funding} This work is partially supported by the National Natural Science Foundation of China (Grant No. 12501282).

\paragraph{Data Availability} Not applicable.

\section*{Declarations}

\paragraph{Conflict of interest} The authors have no Conflict of interest.
 

\bibliographystyle{abbrv}
{\footnotesize\bibliography{ref}}
\end{document}